\pgfplotsset{compat=1.18}
\definecolor{uuuuuu}{rgb}{0.27,0.27,0.27}
\definecolor{sqsqsq}{rgb}{0.1255,0.1255,0.1255}
\newtheorem{definition}{Definition} [section]
\newtheorem{theorem}[definition]{Theorem}
\newtheorem{proposition}[definition]{Proposition}
\newtheorem{corollary}[definition]{Corollary}
\newtheorem{conjecture}[definition]{Conjecture}
\newtheorem{claim}[definition]{Claim}
\newtheorem{problem}[definition]{Problem}
\newtheorem{fact}[definition]{Fact}
\begin{document}
%%%%%%%%%%%%%%%%%%%%%%%%%%%%%%%%%%%%%%%%%%%%%%%%%%%%%%%
\title{\bf\Large Uniquely colorable hypergraphs}
\date{\today}
%%%%%%%%%%%%%%%%%%%%%%%%%%%%%%%%%%%%%%%%%%%%%%%%%%%%
\author[1]{Xizhi Liu\thanks{Research supported by ERC Advanced Grant 101020255. Email: \texttt{xizhi.liu.ac@gmail.com}}}
\author[2]{Jie Ma\thanks{Research supported by National Key Research and Development Program of China 2023YFA1010201 and National Natural Science Foundation of China grant 12125106. Email: \texttt{jiema@ustc.edu.cn}}}
\author[2]{Tianhen Wang\thanks{Research supported by Innovation Program for Quantum Science and Technology 2021ZD0302902. Email: \texttt{wth1115060377@mail.ustc.edu.cn}}}
\author[2]{Tianming Zhu\thanks{Research supported by Innovation Program for Quantum Science and Technology 2021ZD0302902. Email: \texttt{zhutianming@mail.ustc.edu.cn}}}
% %%%%%%%%%%%%%%%%%%%%%%%%%%%%%%%%%%%%%%%%%%%%%%%%%%%%%

\affil[1]{Mathematics Institute and DIMAP,
            University of Warwick, 
            Coventry, CV4 7AL, UK}
\affil[2]{School of Mathematical Sciences,
            University of Science and Technology of China, 
            Hefei, Anhui, 230026, China}

%%%%%%%%%%%%%%%%%%%%%%%%%%%%%%%%%%%%%%%%%%%%%%%%%%%
\maketitle
%\footnote{footnote}
%%%%%%%%%%%%%%%%%%%%%%%%%%%%%%%%%%%%%%%%%%%%%%%%%
%%%%%%%%%%%%%%%%%%%%%%%%%%%
\begin{abstract}
An $r$-uniform hypergraph is uniquely $k$-colorable if there exists exactly one partition of its vertex set into $k$ parts such that every edge contains at most one vertex from each part. 
For integers $k \ge r \ge 2$, let $\Phi_{k,r}$ denote the minimum real number such that every $n$-vertex $k$-partite $r$-uniform hypergraph with positive codegree greater than $\Phi_{k,r} \cdot n$ and no isolated vertices is uniquely $k$-colorable. 
A classic result by of Bollob\'{a}s~\cite{Bol78} established that $\Phi_{k,2} = \frac{3k-5}{3k-2}$ for every $k \ge 2$. 

We consider the uniquely colorable problem for hypergraphs. 
Our main result determines the precise value of $\Phi_{k,r}$ for all $k \ge r \ge 3$.
In particular, we show that $\Phi_{k,r}$ exhibits a phase transition at approximately $k = \frac{4r-2}{3}$, a phenomenon not seen in the graph case. 
As an application of the main result, combined with a classic theorem by Frankl--F\"{u}redi--Kalai, we derive general bounds for the analogous problem on minimum positive $i$-degrees for all $1\leq i<r$, which are tight for infinitely many cases.
\medskip

\textbf{Keywords:}  hypergraph, homomorphism, positive codegree, uniquely colorable. 
%
% \medskip
%
% \textbf{MSC2020:} 05C65, 05C35, 05D99. 
%Find suitable code from https://mathscinet.ams.org/msc/msc2010.html
\end{abstract}
%%%%%%%%%%%%%%%%%%%%%%%%%%%%%%%%%%%%%%%%%%%%%%%%%%%%%%
\section{Introduction}\label{SEC:Intorduction}
Given an integer $r\ge 2$, an \textbf{$r$-uniform hypergraph} (henceforth \textbf{$r$-graph}) $\mathcal{H}$ is a collection of $r$-subsets of some finite set $V$.
We identify a hypergraph $\mathcal{H}$ with its edge set and use $V(\mathcal{H})$ to denote its vertex set. 
The size of $V(\mathcal{H})$ is denoted by $v(\mathcal{H})$. 

Given an $r$-graph $\mathcal{H}$, the \textbf{shadow} of $\mathcal{H}$ is 
\begin{align*}
    \partial\mathcal{H}
    \coloneqq \left\{e\in \binom{V(\mathcal{H})}{r-1} \colon \text{exists $E\in \mathcal{H}$ such that $e\subseteq  E$}\right\}.
\end{align*}
For every $(r-1)$-set $S\subseteq  V(\mathcal{H})$, the \textbf{neighborhood} of $S$ in $\mathcal{H}$ is 
\begin{align*}
    N_{\mathcal{H}}(S) 
    \coloneqq \left\{v\in V(\mathcal{H}) \colon S\cup \{v\} \in \mathcal{H}\right\}, 
\end{align*}
and the \textbf{degree} of $S$ in $\mathcal{H}$ is $d_{\mathcal{H}}(S) \coloneqq |N_{\mathcal{H}}(S)|$.
Following the definition of Balogh--Lemons--Palmer~\cite{BLP21}, the \textbf{minimum positive codegree} of $\mathcal{H}$ is given by
\begin{align*}
    \delta_{r-1}^{+}(\mathcal{H})
    \coloneqq 
    \min\left\{d_{\mathcal{H}}(e) \colon e \in \partial \mathcal{H}\right\}.
\end{align*}
Given a vertex $v\in V(\mathcal{H})$, the \textbf{link} of $v$ in $\mathcal{H}$ is 
\begin{align*}
    L_{\mathcal{H}}(v)
    \coloneqq \left\{e\in \partial\mathcal{H} \colon e\cup \{v\} \in \mathcal{H}\right\},  
\end{align*}
and the \textbf{degree} of $v$ in $\mathcal{H}$ is $d_{\mathcal{H}}(v) \coloneqq |L_{\mathcal{H}}(v)|$.
The \textbf{minimum degree} of $\mathcal{H}$ is denoted by $\delta(\mathcal{H})$.
% We use $\delta(\mathcal{H})$, $\Delta(\mathcal{H})$, and $d(\mathcal{H})$ to denote the \textbf{minimum}, \textbf{maximum}, and \textbf{average degree} of $\mathcal{H}$, respectively.
Note that $\delta^{+}_{1}(G) = \delta(G)$ for every graph $G$ without isolated vertices.
% The subscript $\mathcal{H}$ will be omitted from the notations defined above if it is clear from the context.

Given two $r$-graphs $\mathcal{H}$ and $\mathcal{G}$, a map $\psi \colon V(\mathcal{H})\to V(\mathcal{G})$ is a \textbf{homomorphism}\footnote{To avoid any ambiguity, every hypergraph considered in this paper is vertex-labeled, with each vertex having a unique label.} from $\mathcal{H}$ to $\mathcal{G}$ if $\psi(e) \in \mathcal{G}$ for all $e\in \mathcal{H}$. 
An \textbf{automorphism} of $\mathcal{H}$ is simply a surjective homomorphism from $\mathcal{H}$ to itself. 
We use $\mathrm{Hom}(\mathcal{H}, \mathcal{G})$ to denote the collection of all homomorphisms from $\mathcal{H}$ to $\mathcal{G}$, and use $\mathrm{Aut}(\mathcal{H})$ to denote the collection of all automorphisms of $\mathcal{H}$.
We say $\mathcal{H}$ is \textbf{$\mathcal{G}$-colorable} if $\mathrm{Hom}(\mathcal{H}, \mathcal{G}) \neq \emptyset$. 
Two homomorphisms $\psi_1, \psi_2 \in \mathrm{Hom}(\mathcal{H}, \mathcal{G})$ are \textbf{equivalent}, denoted by $\psi_1 \cong \psi_2$, if there exists an automorphism $\eta\in \mathrm{Aut}(\mathcal{G})$ such that $\eta \circ \psi_1 = \psi_2$.

Given integers $k \ge r \ge 2$, we use $K_{k}^{r}$ to denote the complete $r$-graph on $k$ vertices. For convenience, we always assume that the vertex set of $K_{k}^{r}$ is $[k]$.
We say an $r$-graph $\mathcal{H}$ is \textbf{$k$-colorable} if it is $K_{k}^{r}$-colorbale. 
In other words,  $\mathcal{H}$ is $k$-colorable if and only if it is $k$-partite. 
A $k$-colorable $r$-graph $\mathcal{H}$ is called \textbf{uniquely $k$-colorable} if $\psi_1 \cong \psi_2$ for all $\psi_1, \psi_2 \in \mathrm{Hom}(\mathcal{H},K_{k}^{r})$.

A classical theorem by Bollob\'{a}s~\cite{Bol78} from 1970s established the tight minimum degree (equivalently, the minimum positive codegree) bound that forces an $n$-vertex $k$-partite graph to be uniquely $k$-colorable.  
\begin{theorem}[Bollob\'{a}s~\cite{Bol78}]\label{THM:Bollobas-clique}
    Let $n\ge k\ge 2$ be integers. 
    Suppose that $G$ is a $k$-partite graph on $n$ vertices with 
    \begin{align*}
        \delta(G)
        >\frac{3k-5}{3k-2}n. 
    \end{align*}
    Then $G$ is uniquely $k$-colorable. 
    Moreover, the constant $\frac{3k-5}{3k-2}$ is optimal. 
\end{theorem}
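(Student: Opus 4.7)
The plan is to prove the upper bound by a contradiction argument comparing two valid $k$-colorings, and then to exhibit a matching construction for sharpness.

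From the minimum degree hypothesis, if $V_1, \ldots, V_k$ is any valid $k$-coloring and $v \in V_i$, then $V_i \setminus \{v\}$ lies in the non-neighborhood of $v$, giving $|V_i| \leq n - \delta(G) < \frac{3n}{3k-2}$. Applying the same argument to $v \in V_i \cap U_j$ for two valid colorings $\mathcal{V}, \mathcal{U}$ yields the stronger bound $|V_i \cup U_j| < \frac{3n}{3k-2}$, since both $V_i \setminus \{v\}$ and $U_j \setminus \{v\}$ consist entirely of non-neighbors of $v$.

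Suppose for contradiction that $\mathcal{V} = (V_1, \ldots, V_k)$ and $\mathcal{U} = (U_1, \ldots, U_k)$ are inequivalent. Let $A_{ij} = V_i \cap U_j$ and form the bipartite support graph $H$ on $[k] \sqcup [k]$ with $(i,j) \in E(H)$ iff $A_{ij} \neq \emptyset$. The colorings are equivalent precisely when $H$ is a perfect matching, so $|E(H)| \geq k+1$. The key structural claim is that $H$ contains a ``leaf'' row $i'$ (with $d_L(i') = 1$) whose unique column neighbor $j$ satisfies $d_R(j) \geq 2$; equivalently, $V_{i'} \subseteq U_j$ and $A_{ij} \neq \emptyset$ for some $i \neq i'$. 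Given this, for $v \in A_{ij}$,
\begin{align*}
|V_i| + |V_{i'}| \leq |V_i \cup U_j| < \frac{3n}{3k-2},
\end{align*}
so combining with $|V_l| < \frac{3n}{3k-2}$ for the remaining $k-2$ classes gives
\begin{align*}
n = \sum_{l=1}^{k} |V_l| < \frac{3n}{3k-2} + (k-2) \cdot \frac{3n}{3k-2} = \frac{3(k-1)n}{3k-2} < n,
\end{align*}
since $3(k-1) < 3k-2$ for every $k \geq 2$, yielding the desired contradiction.

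The main obstacle is establishing the structural claim. If it fails (together with its symmetric version for leaf columns), then every leaf row of $H$ is matched to a leaf column, so $H$ decomposes as a perfect matching on leaves together with a bipartite graph $H_{TS}$ on non-leaf rows $T$ and non-leaf columns $S$ with $|T| = |S| = t \geq 2$ and minimum degree at least $2$. A weighted double count of $|V_i \cup U_j| < \frac{3n}{3k-2}$ over the edges of $H_{TS}$, combined with the mass bound $\sum_{i \in T} |V_i| > \frac{(3t-2)n}{3k-2}$ (from the size bound applied to the $k-t$ leaf rows), closes the case $t=2$ immediately; the general $t \geq 3$ case requires a more careful weighting exploiting the degree lower bounds on $H_{TS}$.

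For optimality, I would exhibit the following extremal graph. Take four sets $A, B, C, D$ of size $a$ each and $k-2$ further sets $P_1, \ldots, P_{k-2}$ of size $3a$ each. Within $A \cup B \cup C \cup D$ place only the complete bipartite edges between $A$ and $D$ and between $B$ and $C$, and join every vertex of each $P_i$ to every other vertex outside $P_i$. The resulting graph has $n = (3k-2)a$ and $\delta(G) = (3k-5)a = \frac{3k-5}{3k-2}\,n$, and admits two inequivalent $k$-colorings $(A \cup B,\ C \cup D,\ P_1, \ldots, P_{k-2})$ and $(A \cup C,\ B \cup D,\ P_1, \ldots, P_{k-2})$, confirming that the constant $\frac{3k-5}{3k-2}$ cannot be improved.
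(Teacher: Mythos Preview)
Your proof has a genuine gap in the analysis of Case~2 (no usable leaf), specifically for $t\ge 3$.  You write that this ``requires a more careful weighting exploiting the degree lower bounds on $H_{TS}$,'' but you never supply it, and the naive double count you set up does not close the case.  Summing $|V_i\cup U_j|<\frac{3n}{3k-2}$ over all edges of $H_{TS}$ and using $d_i,d'_j\ge 2$ gives only $3M<|E(H_{TS})|\cdot\frac{3n}{3k-2}$; combined with your mass bound $M>\frac{(3t-2)n}{3k-2}$ this yields merely $|E(H_{TS})|>3t-2$, which is not a contradiction once some row or column of $H$ has degree $\ge 3$.  (The subcase where $H_{TS}$ is $2$-regular \emph{does} work: then $|E|=2t$ and you get $3M<2t\cdot\frac{3n}{3k-2}$, hence $M<\frac{2tn}{3k-2}\le\frac{(3t-2)n}{3k-2}<M$.)  You also need to treat $t=k$ separately, since there are no leaf rows and your mass bound degenerates to $M=n$ rather than a strict inequality.

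For comparison: the paper cites Bollob\'as's theorem as a known result and remarks that the original proof is a straightforward induction on~$k$.  The paper also observes that its own machinery yields an alternative proof in the case $r=2$, and that route supplies exactly the idea you are missing.  Namely, one first shows that every vertex $v$ lies in a copy of $K_k$ (by greedily extending a clique one vertex at a time using $\delta(G)>\frac{3k-5}{3k-2}n$), and deduces via a common-neighbourhood count that $|\vartheta(V_i)|\le 2$ for every $i$.  This forces your support graph $H$ to have all row and column degrees at most~$2$, so $H_{TS}$ is $2$-regular --- precisely the subcase where your double count succeeds.  If you add that lemma (each $\varphi$-class receives at most two $\vartheta$-colours), your outline becomes a complete proof; without it, the $t\ge 3$ case is genuinely open in your argument.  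Your sharpness construction is correct and matches the paper's $\mathcal{H}_{k,2}(3,m)$.
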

% %
% \textbf{Remark.}  
% Constructions from~\cite{Bol78} show that the constant $\frac{3k-5}{3k-2}$ is optimal.
%

In this work, we extend the theorem of Bollob\'{a}s to $r$-graphs for all $r \ge 3$. 
More specifically, for integers $k \ge r \ge 2$, we study the minimum real number $\Phi_{k,r}$ such that for $n \ge k$, every $k$-partite $r$-graph on $n$ vertices without isolated vertices and with $\delta_{r-1}^{+}(\mathcal{H}) > \Phi_{k,r}\cdot n$ is uniquely $k$-colorable. 
Note that Bollob\'{a}s' theorem can be restated as $\Phi_{k,2} = \frac{3k-5}{3k-2}$ for every $k \ge 2$.

In the following theorem, we determine the exact value of $\Phi_{k,r}$ for all integers $k \ge r \ge 3$. In particular, our results reveal an interesting phenomenon$\colon$ for each fixed $r \ge 3$, $\Phi_{k,r}$ as a function of $k$ exhibits a phase transition around $\frac{4r-2}{3}$ (see Figure~\ref{fig:Phi-k-6}), a feature not seen in the case of graphs.
\begin{theorem}\label{THM:main}
    Let $n \ge k \ge r \ge 2$ be integers. 
    Suppose that $\mathcal{H}$ is a $k$-partite $r$-graph on $n$ vertices with no isolated vertices, and 
    \begin{align}\label{equ:THM:main}
        \delta_{r-1}^{+}(\mathcal{H})
        > 
        \begin{cases}
            \frac{k-r+1}{k+2}n,   & \quad\text{if}\quad \ k <  \frac{4r-2}{3}, \\
            \frac{3k-3r+1}{3k-2}n,   & \quad\text{if}\quad \ k \ge  \frac{4r-2}{3}.
        \end{cases}
    \end{align}
    Then $\mathcal{H}$ is uniquely $k$-colorable. 
    Moreover, both constants $\frac{k-r+1}{k+2}$ and $\frac{3k-3r+1}{3k-2}$ are optimal. 
\end{theorem}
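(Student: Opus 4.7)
The plan is to argue by contradiction. Assume $\mathcal{H}$ satisfies the codegree bound in~\eqref{equ:THM:main} but admits two non-equivalent homomorphisms $\psi_1, \psi_2 \in \mathrm{Hom}(\mathcal{H}, K_k^r)$. Set $V_i \coloneqq \psi_1^{-1}(i)$, $W_j \coloneqq \psi_2^{-1}(j)$, and consider the common refinement $U_{ij} \coloneqq V_i \cap W_j$; each $U_{ij}$ is independent in $\mathcal{H}$. Using the no-isolated-vertex hypothesis one can reduce to the situation where every $V_i$ and every $W_j$ is non-empty, so the bipartite pattern of non-empty cells meets exactly $k$ rows and $k$ columns. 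The assumption $\psi_1 \not\cong \psi_2$ then forces this pattern to differ from a permutation matrix, which in turn yields a ``zig-zag'' triple $\{a,b,c\} \subseteq V(\mathcal{H})$ satisfying $\psi_1(a) = \psi_1(b) \neq \psi_1(c)$ and $\psi_2(a) \neq \psi_2(b) = \psi_2(c)$.

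The driving estimate is the following. For every $(r-1)$-set $e \in \partial\mathcal{H}$, every edge $e\cup\{x\}\in\mathcal{H}$ requires $\psi_1(x)$ and $\psi_2(x)$ to avoid the colors already used on $e$, so $N_{\mathcal{H}}(e)$ is disjoint from $\bigcup_{v \in e}(V_{\psi_1(v)} \cup W_{\psi_2(v)})$. Combined with $d_{\mathcal{H}}(e) > \Phi_{k,r}\cdot n$, this gives
\[
  \Bigl|\bigcup_{v \in e} V_{\psi_1(v)} \;\cup\; \bigcup_{v \in e} W_{\psi_2(v)}\Bigr| < (1 - \Phi_{k,r}) \, n.
\]
I would apply this inequality to several carefully chosen $(r-1)$-sets, each containing one or two vertices of $\{a,b,c\}$ and completed by an auxiliary set of $r-2$ or $r-3$ vertices drawn from distinct cells of the refinement so that the whole set lies in $\partial\mathcal{H}$. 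Summing the resulting upper bounds and combining with $\sum_i |V_i| = \sum_j |W_j| = n$ produces a contradiction. Two summation schemes compete$\colon$ one combines the three $V_i$'s and three $W_j$'s through $\{a,b,c\}$ with $r-2$ additional ``new'' classes, yielding the bound $\frac{k-r+1}{k+2}$, while the other triple-counts just the three rows and three columns through $\{a,b,c\}$ in the style of Bollob\'{a}s~\cite{Bol78}, yielding $\frac{3k-3r+1}{3k-2}$. Equating the two expressions gives exactly the phase-transition point $k = \frac{4r-2}{3}$, with each bound dominating on its respective side.

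The matching lower bounds require two extremal $k$-partite $r$-graphs, each admitting two inequivalent $k$-colorings whose minimum positive codegree saturates the stated threshold. In the small-$k$ regime, I expect the construction to feature one ``fat'' cell of size proportional to $\frac{n}{k+2}$ that can be reassigned between two color classes; in the large-$k$ regime, an $r$-uniform analogue of Bollob\'{a}s' construction, with three distinguished cells of size $\frac{3n}{3k-2}$ arranged in a triangular exchange pattern, should work. The main obstacle I anticipate is in the hypergraph adaptation of the zig-zag step$\colon$ unlike in graphs, an $(r-1)$-set must be extendable to an edge of $\mathcal{H}$, so the auxiliary vertices completing each chosen $(r-1)$-set must be selected simultaneously to lie in $\partial\mathcal{H}$ and in distinct cells of the refinement. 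Managing these compatibility constraints uniformly, and verifying that the summation inequality is tight against the matching construction in each regime, is where the bulk of the technical work will lie.
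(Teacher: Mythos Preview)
Your plan shares the starting point with the paper (two non-equivalent colorings, the cell refinement $V_i\cap W_j$, and codegree constraints on $(r-1)$-sets), but there is a genuine gap at the step you yourself flag as the main obstacle, and your intuition about the extremal constructions is off.

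\textbf{The extendability problem is the heart of the proof, not a detail.} You write that you will choose $(r-1)$-sets ``so that the whole set lies in $\partial\mathcal{H}$,'' but nothing in your outline explains how to arrange this. The codegree hypothesis gives you no control over an arbitrary transversal of cells; it only tells you about sets that already lie in the shadow. The paper resolves this not by a direct selection argument but by a structural route: it classifies parts as \emph{large}/\emph{small} (or \emph{good}/\emph{bad}) according to size thresholds $\frac{3n}{3k-2}$ and $\frac{n}{k+2}$, proves that at most $r-2$ parts are large (Proposition~\ref{PROP:Prelim-A}), and then establishes extendability lemmas (Propositions~\ref{PROP:Prelim-B} and~\ref{PROP:Prelim-C}) guaranteeing edges through prescribed vertices and colors by iterated maximality arguments. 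The subsequent proof then runs by induction on $r$: for a vertex $v$ in a large (respectively good) part, the link $L_{\mathcal{H}}(v)$ is a $(k-1)$-partite $(r-1)$-graph meeting the corresponding threshold, so it is uniquely $(k-1)$-colorable by induction; this forces $|\vartheta(V_i)|\le 2$ for every $i$, after which a separate argument (Proposition~\ref{PROP:Final-compute}, with its cyclic-structure analysis) finishes. There is also a completely different argument (quasi-sunflowers, Proposition~\ref{PROP:case-all-small}) when \emph{all} parts are small. Your zig-zag/summing scheme does not supply any of this machinery, and I do not see how to make the two claimed summations go through without it.

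\textbf{The constructions are not as you describe.} Both tight examples in the paper have the \emph{same} combinatorial shape: take $k-2$ ordinary parts of size $\lfloor\alpha m\rfloor$ and split each of the last two parts into two halves of size $m$, then take the union of two complete $k$-partite $r$-graphs that swap the roles of the half-pairs. The difference between the two regimes is only the ratio $\alpha$ (namely $\alpha=1$ versus $\alpha=3$). A single ``fat cell reassigned between two classes'' cannot give inequivalent colorings (it is just a permutation of colors), and no three-cell triangular exchange appears. This mismatch suggests that the arithmetic behind your two ``competing summation schemes'' is not yet correct, since the constants $\frac{k-r+1}{k+2}$ and $\frac{3k-3r+1}{3k-2}$ both arise from the same $2\times 2$ swap geometry at different size ratios.
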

\textbf{Remarks.}
\begin{itemize}
    \item The assumption that $\mathcal{H}$ has no isolated vertices cannot be omitted, as there exist $n$-vertex $k$-partite $r$-graphs satisfying~\eqref{equ:THM:main} that contain isolated vertices, which are clearly not uniquely $k$-colorable.
    \item 
    %Both inequalities in~\eqref{equ:THM:main} are tight, as shown by the constructions $\mathcal{H}_{k,r}(1, m)$ and $\mathcal{H}_{k,r}(3, m)$, defined below.
    Theorem~\ref{THM:main} can be restated as 
    \begin{align*}
        \Phi_{k,r}
        = \max\left\{\frac{k-r+1}{k+2},\ \frac{3k-3r+1}{3k-2}\right\}
        \quad\text{for all}\quad k \ge r \ge 3.
    \end{align*}
    The witnesses for the lower bounds come from the constructions $\mathcal{H}_{k,r}(1, m)$ and $\mathcal{H}_{k,r}(3, m)$, defined below. 
    \item The theorem of Bollob\'as (Theorem~\ref{THM:Bollobas-clique}) employs a straightforward induction on $k$. 
    Our proofs, however, differ significantly. 
    In addition to the induction on the uniformity $r$, our approach relies on several technical innovations, notably Propositions~\ref{PROP:Prelim-B} and~\ref{PROP:Final-compute}. Proposition~\ref{PROP:Prelim-B} asserts the existence of special edges containing specified vertices and colors, while Proposition~\ref{PROP:Final-compute} provides a useful certificate for the equivalence of two homomorphisms. 
    It is worth noting that Proposition~\ref{PROP:Final-compute}, together with some additional arguments, provides a different proof of Bollob\'as' theorem (see the remark after Proposition~\ref{PROP:case-all-small}).
    \item In general, beyond the uniquely $K_{k}^r$-colorable problem, one could also explore the uniquely $F$-colorable problem, where $F$ is a fixed $r$-graph (see Section~\ref{SEC:Remarks}). 
    This type of problem was studied in~\cite{Lai87a,Lai89b} for odd cycles and in~\cite{HLZ24} for general hypergraphs.
    However, for $r\ge 3$, the complete $r$-graphs $K_{k}^{r}$ are the only family for which the tight bound is known. 
\end{itemize}
\textbf{Construction $\mathcal{H}_{k,r}(\alpha, m) \colon$}
Let $k \ge r \ge 3, m \ge 1$ be integers, and $\alpha >0$ be a real number.
Let $V_1, \ldots, V_{k-2}, V_{k-1, 1}, V_{k-1,2}, V_{k,1}, V_{k,2}$ be pairwise disjoint sets with 
\begin{align*}
    |V_1| = \cdots = |V_{k-2}| = \lfloor\alpha m \rfloor
    \quad\text{and}\quad
    |V_{k-1, 1}| = |V_{k-1,2}| = |V_{k,1}| = |V_{k,2}| = m. 
\end{align*}
Let $V \coloneqq V_1\cup \cdots\cup V_{k-2}\cup V_{k-1, 1}\cup V_{k-1,2}\cup V_{k,1}\cup V_{k,2}$, $U_1 \coloneqq V\setminus(V_{k-1,2}\cup V_{k,2})$, and $U_2 \coloneqq V\setminus(V_{k-1,1}\cup V_{k,1})$. 
Let $\mathcal{H}_{k,r}(\alpha, m)$ denote the $r$-graph on $V$ whose edge set is the union of the following two sets$\colon$
\begin{align*}
    & \left\{e \in \binom{U_1}{r} \colon \text{$|e\cap V_i| \le 1$ for $i\in [k-2]$,\  $|e\cap V_{k-1,1}| \le 1$, \text{ and } $|e\cap V_{k,1}| \le 1$} \right\}, \\
    & \left\{e \in \binom{U_2}{r} \colon \text{$|e\cap V_i| \le 1$ for $i\in [k-2]$,\  $|e\cap V_{k-1,2}| \le 1$, \text{ and } $|e\cap V_{k,2}| \le 1$} \right\}.
\end{align*}
It is clear from the definition that $\mathcal{H}_{k,r}(\alpha, m)$ is a $k$-partite $r$-graph without isolated vertices, and it has two non-equivalent $k$-colorings as follows$\colon$
\begin{align*}
    \psi_1(V_i) = i \ \text{ for }\ i \in [k-2],\quad 
    \psi_1(V_{k-1, 1}) = \psi_1(V_{k-1, 2}) = k-1,\quad 
    \psi_1(V_{k, 1}) = \psi_1(V_{k, 2}) = k. \\
    \psi_2(V_i) = i \ \text{ for }\ i \in [k-2],\quad 
    \psi_2(V_{k-1, 1}) = \psi_2(V_{k, 2}) = k-1,\quad 
    \psi_2(V_{k-1, 2}) = \psi_2(V_{k, 1}) = k.
\end{align*}

% $\mathcal{H}_{k,r}(\alpha, m)$ is not uniquely $k$-colorable. 

%%%%%%%%%%%%%%%%%%%%%%%%%%
\begin{figure}[htbp]
\centering
\begin{tikzpicture}[xscale=7,yscale=7]
\draw [line width=1pt, ->] (0,0)--(1.1,0);
\draw [line width=1pt, ->] (0,0)--(0,0.65);
% \draw [dash pattern=on 1pt off 1.2pt] (0,0.5)--(1,0.5);
% \draw [dash pattern=on 1pt off 1.2pt] (1,0)--(1,0.5);
%
% \draw [line width=1pt,dash pattern=on 1pt off 1.2pt,domain=0:2/3] plot(\x,{2/9});
% \draw [line width=1pt,dash pattern=on 1pt off 1.2pt] (2/3,0) -- (2/3,2/9);
% \draw [line width=1pt,dash pattern=on 1pt off 1.2pt] (6/7,0) -- (6/7,6/49);
% \draw [line width=1pt,dash pattern=on 1pt off 1.2pt] (8/9,0) -- (8/9,8/81);
% \draw [line width=1pt,dash pattern=on 1pt off 1.2pt] (0,6/49) -- (6/7,6/49);
% \draw [line width=1pt,dash pattern=on 1pt off 1.2pt] (0,8/81) -- (8/9,8/81);

% \draw[line width=1pt,color=sqsqsq,dash pattern=on 1pt off 1.2pt] (0, 0.166667)--(0.05, 0.180328)--(0.1, 0.193548)--(0.15, 0.206349)--(0.2, 0.21875)--(0.25, 0.230769)--(0.3, 0.242424)--(0.333333, 0.25);
% %
% \draw[line width=1pt,color=sqsqsq,dash pattern=on 1pt off 1.2pt] (0.333333, 0.25)--(0.404762, 0.275862)--(0.47619, 0.3)--(0.547619, 0.322581)--(0.619048, 0.34375)--(0.690476, 0.363636)--(0.761905, 0.382353)--(0.833333, 0.4)--(0.904762, 0.416667)--(0.97619, 0.432432);
%%%%%%%%%%%%%%%%%%%%%%%%%%%%%%%%%%%,dash pattern=on 0.5pt off 0.5pt
\draw[line width=0.7pt,color=cyan, dash pattern=on 1pt off 1.2pt] (0., 0.125)--(0.02, 0.135802)--(0.04, 0.146341)--(0.06, 0.156627)--(0.08, 0.166667)--(0.1, 0.176471)--(0.12, 0.186047)--(0.14, 0.195402)--(0.16, 0.204545)--(0.18, 0.213483)--(0.2, 0.222222)--(0.22, 0.230769)--(0.24, 0.23913)--(0.26, 0.247312)--(0.28, 0.255319)--(0.3, 0.263158)--(0.32, 0.270833)--(0.34, 0.278351)--(0.36, 0.285714)--(0.38, 0.292929)--(0.4, 0.3);
\draw[line width=0.7pt,color=magenta,dash pattern=on 1pt off 1.2pt] (0.4, 0.318182)--(0.466667, 0.347826)--(0.533333, 0.375)--(0.6, 0.4)--(0.666667, 0.423077)--(0.733333, 0.444444)--(0.8, 0.464286)--(0.866667, 0.482759)--(0.933333, 0.5)--(1., 0.516129);
% \draw[line width=1pt,color=sqsqsq,dash pattern=on 1pt off 1.2pt] (0.266667, 0.25)--(0.333333, 0.285714)--(0.4, 0.318182)--(0.466667, 0.347826)--(0.533333, 0.375)--(0.6, 0.4)--(0.666667, 0.423077)--(0.733333, 0.444444)--(0.8, 0.464286)--(0.866667, 0.482759)--(0.933333, 0.5)--(1., 0.516129);

\begin{small}
\draw [fill=uuuuuu] (1/5,2/9) circle (0.2pt);
\draw [fill=uuuuuu] (2/5,7/22) circle (0.2pt);
\draw [fill=uuuuuu] (3/5,2/5) circle (0.2pt);
\draw [fill=uuuuuu] (4/5,13/28) circle (0.2pt);
\draw [fill=uuuuuu] (1,16/31) circle (0.2pt);
%\draw[color=uuuuuu] (2/3+0.12,2/9+0.07) node {$\left(\frac{2}{3},\frac{2}{9}\right)$};
% \draw [fill=uuuuuu] (2/3,0) circle (0.2pt);
% \draw[color=uuuuuu] (2/3,0-0.07) node {$\frac{2}{3}$};
% \draw [fill=uuuuuu] (6/7,0) circle (0.2pt);
% \draw[color=uuuuuu] (6/7-0.01,0-0.07) node {$\frac{6}{7}$};
\draw [fill=uuuuuu] (0, 1/8) circle (0.2pt);
\draw[color=uuuuuu] (0-0.05,1/8) node {$\frac{1}{8}$};
% \draw [fill=uuuuuu] (0, 2/9) circle (0.2pt);
% \draw[color=uuuuuu] (0-0.05,2/9) node {$\frac{2}{9}$};
%
\draw [fill=uuuuuu] (0,0) circle (0.2pt);
\draw[color=uuuuuu] (0,0-0.05) node {$6$};
\draw [fill=uuuuuu] (1/5,0) circle (0.2pt);
\draw[color=uuuuuu] (1/5,0-0.05) node {$7$};
\draw [fill=uuuuuu] (2/5,0) circle (0.2pt);
\draw[color=uuuuuu] (2/5,0-0.05) node {$8$};
\draw [fill=uuuuuu] (3/5,0) circle (0.2pt);
\draw[color=uuuuuu] (3/5,0-0.05) node {$9$};
\draw [fill=uuuuuu] (4/5,0) circle (0.2pt);
\draw[color=uuuuuu] (4/5,0-0.05) node {$10$};
\draw [fill=uuuuuu] (1,0) circle (0.2pt);
\draw[color=uuuuuu] (1,0-0.05) node {$11$};
%
% \draw [fill=uuuuuu] (0,2/9) circle (0.2pt);
% \draw[color=uuuuuu] (0-0.08,2/9) node {$2/9$};
% \draw [fill=uuuuuu] (0,6/49) circle (0.2pt);
% \draw[color=uuuuuu] (0-0.08,6/49+0.02) node {$6/49$};
% \draw [fill=uuuuuu] (0,8/81) circle (0.2pt);
% \draw[color=uuuuuu] (0-0.08,8/81-0.02) node {$9/81$};
% \draw [fill=uuuuuu] (0,1/2) circle (0.2pt);
% \draw[color=uuuuuu] (0-0.08,1/2) node {$1/2$};
\draw[color=uuuuuu] (1+0.1,0-0.05) node {$k$};
\draw[color=uuuuuu] (0-0.08,0.54+0.08) node {$\Phi_{k,6}$};
\end{small}
\end{tikzpicture}
\caption{$\Phi_{k,6}$ for $k\in [6,11]$.}
\label{fig:Phi-k-6}
\end{figure}
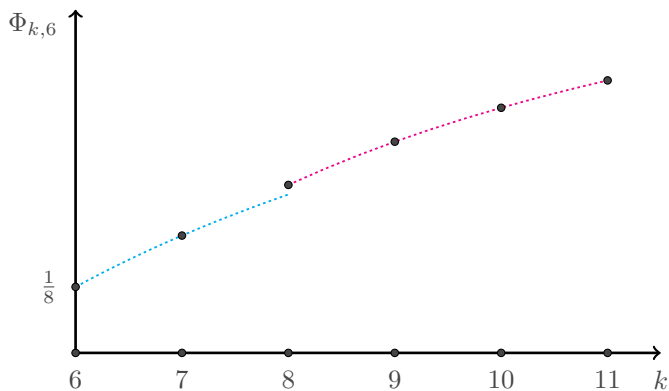
%%%%%%%%%%%%%%%%%%%%%%%%%%

Given integers $r > i \ge 1$, the \textbf{$i$-th shadow} $\partial_{i}\mathcal{H}$ of an $r$-graph $\mathcal{H}$ is defined inductively by letting $\partial_{i}\mathcal{H} \coloneqq \partial\partial_{i-1}\mathcal{H}$ with $\partial_{1}\mathcal{H} \coloneqq \partial\mathcal{H}$.
For every $i$-set $S\subseteq  V(\mathcal{H})$, the \textbf{link} of $S$ in $\mathcal{H}$ is 
\begin{align*}
    L_{\mathcal{H}}(S)
    \coloneqq \left\{e\in \partial_{i}\mathcal{H} \colon S\cup e \in \mathcal{H}\right\}, 
\end{align*}
and the \textbf{degree} of $S$ is $d_{\mathcal{H}}(S) \coloneqq |L_{\mathcal{H}}(S)|$. 
Similar to the definition of $\delta^{+}_{r-1}(\mathcal{H})$, the \textbf{positive minimum $i$-degree of $\mathcal{H}$} is defined as 
\begin{align*}
    \delta^{+}_{i}(\mathcal{H})
    \coloneqq \min\left\{d_{\mathcal{H}}(e) \colon e\in \partial_{r-i}\mathcal{H}\right\}. 
\end{align*}
%
% In this section, we study the positive minimum $i$-degree constraint that forces a $k$-partite $r$-graph $\mathcal{H}$ to be uniquely $k$-colorable. 
For integers $k \ge r> i \ge 1$, let $\Phi_{k,r,i}$ denote the minimum real number such that for $n \ge k$, every $n$-vertex $k$-partite $r$-graph $\mathcal{H}$ without isolated vertices and with $\delta^{+}_{i}(\mathcal{H}) > \Phi_{k,r,i} \cdot n^{r-i}$ is uniquely $k$-colorable. 

We establish the following inequality concerning $\Phi_{k,r,i}$ using a classical Kruskal--Katona-type theorem by Frankl--F\"{u}redi--Kalai~\cite{FFK88}.

\begin{theorem}\label{THM:s-degree-beta}
    Suppose that $k \ge r_1 \ge r_2 >i \ge 1$ are integers.
    Then
    \begin{align*}
        \left ( \frac{\Phi_{k,r_1,i}}{\binom{k-i}{r_1-i}} \right ) ^{1/(r_1-i)} \le  \left ( \frac{\Phi_{k,r_2,i}}{\binom{k-i}{r_2-i}} \right ) ^{1/(r_2-i)}.
    \end{align*}
\end{theorem}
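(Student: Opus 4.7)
My plan is to reduce the statement to the smaller uniformity $r_2$ by taking iterated shadows, with the Frankl--F\"{u}redi--Kalai (FFK) shadow theorem as the quantitative tool.

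First, given a $k$-partite $r_1$-graph $\mathcal{H}_1$ on $n$ vertices with no isolated vertices, I pass to the auxiliary $r_2$-graph $\mathcal{H}_2 \coloneqq \partial_{r_1-r_2}\mathcal{H}_1$ on the same vertex set. Then $\mathcal{H}_2$ is $k$-partite (inheriting the color classes of $\mathcal{H}_1$) and has no isolated vertices. Moreover, every $\psi \in \mathrm{Hom}(\mathcal{H}_1,K_k^{r_1})$ automatically lies in $\mathrm{Hom}(\mathcal{H}_2,K_k^{r_2})$, since each edge of $\mathcal{H}_2$ is contained in an edge of $\mathcal{H}_1$; and because $\mathrm{Aut}(K_k^{r}) = S_k$ for every $r$, the equivalence relation $\cong$ between colorings depends only on the vertex-color map. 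Consequently, unique $k$-colorability of $\mathcal{H}_2$ implies unique $k$-colorability of $\mathcal{H}_1$.

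The quantitative heart is to lower-bound $\delta_i^+(\mathcal{H}_2)$ in terms of $\delta_i^+(\mathcal{H}_1)$. For every $i$-set $S \in \partial_{r_1-i}\mathcal{H}_1 = \partial_{r_2-i}\mathcal{H}_2$, the link $L_{\mathcal{H}_1}(S)$ is a $(k-i)$-partite $(r_1-i)$-graph, and a direct unpacking of definitions gives $L_{\mathcal{H}_2}(S) = \partial_{r_1-r_2} L_{\mathcal{H}_1}(S)$. Applying FFK to $L_{\mathcal{H}_1}(S)$ with its standard parameters $(r,s,k)$ replaced by $(r_1-i,\, r_2-i,\, k-i)$ yields the Lagrangian-type comparison
\begin{align*}
\left(\frac{d_{\mathcal{H}_1}(S)}{\binom{k-i}{r_1-i}}\right)^{1/(r_1-i)} \le \left(\frac{d_{\mathcal{H}_2}(S)}{\binom{k-i}{r_2-i}}\right)^{1/(r_2-i)}.
\end{align*}
Rearranging to $d_{\mathcal{H}_2}(S) \ge \binom{k-i}{r_2-i}(d_{\mathcal{H}_1}(S)/\binom{k-i}{r_1-i})^{(r_2-i)/(r_1-i)}$ and using that the right-hand side is increasing in $d_{\mathcal{H}_1}(S)$, taking the minimum over $S$ transfers the inequality to the positive minimum $i$-degrees themselves.

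To conclude, pick any $C > \binom{k-i}{r_1-i}(\Phi_{k,r_2,i}/\binom{k-i}{r_2-i})^{(r_1-i)/(r_2-i)}$. If $\delta_i^+(\mathcal{H}_1) > C\, n^{r_1-i}$, the transferred bound gives $\delta_i^+(\mathcal{H}_2) > \Phi_{k,r_2,i}\, n^{r_2-i}$, so $\mathcal{H}_2$ is uniquely $k$-colorable by the definition of $\Phi_{k,r_2,i}$, and the first step lifts this to $\mathcal{H}_1$. Letting $C$ tend to its infimum and invoking the definition of $\Phi_{k,r_1,i}$ produces exactly the claimed inequality after elementary rearrangement. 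The main anticipated obstacle is invoking the correct form of FFK, so that the $(k-i)$-partite link is compared against its $(r_1-r_2)$-th shadow with the Lagrangian normalizations $\binom{k-i}{r_1-i}$ and $\binom{k-i}{r_2-i}$ appearing in the right places; the remaining steps amount to organizational bookkeeping of the definitions.
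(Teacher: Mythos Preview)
Your proposal is correct and follows essentially the same approach as the paper: both pass from $\mathcal{H}_1$ to its iterated shadow, identify $L_{\mathcal{H}_2}(S)=\partial_{r_1-r_2}L_{\mathcal{H}_1}(S)$, apply the Frankl--F\"uredi--Kalai inequality to the $(k-i)$-partite link, and conclude via the definition of $\Phi_{k,r_2,i}$. The only cosmetic difference is that the paper first reduces to the one-step case $r_2=r_1-1$ and iterates, whereas you invoke the general iterated-shadow form of the FFK theorem to go from $r_1$ to $r_2$ in a single jump; both are valid and amount to the same argument.
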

As an application of the main theorem (i.e.  Theorem~\ref{THM:main}), we provide the following general upper bound for $\Phi_{k,r,i}$, which is tight when $k \le \frac{4i+2}{3}$.
\begin{corollary}\label{CORO:i-degree-tight-bound}
    Suppose that $k \ge r > i \ge 1$ are integers. 
    Then 
    \begin{align*}
        \Phi_{k,r,i}
        \le \binom{k-i}{r-i} \cdot \left(\frac{1}{k-i} \cdot \max \left \{ \frac{k-i}{k+2} ,\ \frac{3k-3i-2}{3k-2} \right \}\right)^{r-i}. 
    \end{align*}
    In particular, 
    \begin{align*}
        \Phi_{k,r,i}
        = \binom{k-i}{r-i} \left(\frac{1}{k+2}\right)^{r-i} 
        \quad\text{for all}\quad k \le \frac{4i+2}{3}.
    \end{align*}
\end{corollary}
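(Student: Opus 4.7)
The plan is to derive the general upper bound from Theorems~\ref{THM:main} and~\ref{THM:s-degree-beta}, and then supply a matching lower bound in the regime $k \le (4i+2)/3$ using the construction $\mathcal{H}_{k,r}(1,m)$.

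For the upper bound, I would apply Theorem~\ref{THM:s-degree-beta} with $r_1 = r$ and $r_2 = i+1$, which after raising to the power $r-i$ rearranges to
\begin{align*}
    \Phi_{k,r,i}
    \le \binom{k-i}{r-i} \left ( \frac{\Phi_{k,i+1,i}}{k-i} \right )^{r-i}.
\end{align*}
Since the positive minimum $i$-degree of an $(i+1)$-uniform hypergraph coincides with its positive minimum codegree, $\Phi_{k,i+1,i} = \Phi_{k,i+1}$. For $i \ge 2$, Theorem~\ref{THM:main} evaluates this as $\max\{(k-i)/(k+2),\,(3k-3i-2)/(3k-2)\}$; for $i = 1$, Bollob\'{a}s' Theorem~\ref{THM:Bollobas-clique} gives $\Phi_{k,2} = (3k-5)/(3k-2)$, which a short comparison shows agrees with the same maximum for every $k \ge 2$. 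Substituting this value yields the first inequality of the corollary.

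For the "in particular" claim, the condition $k \le (4i+2)/3$ is precisely the threshold at which $(k-i)/(k+2) \ge (3k-3i-2)/(3k-2)$, so the upper bound simplifies to $\binom{k-i}{r-i}/(k+2)^{r-i}$. A matching lower bound comes from $\mathcal{H}_{k,r}(1,m)$, which has $n = (k+2)m$ vertices and fails to be uniquely $k$-colorable (via the inequivalent pair $\psi_1,\psi_2$ displayed after the construction). A short case analysis shows that its minimum positive $i$-degree is attained by an $i$-set $S$ lying inside $U_1$ and meeting $V_{k-1,1} \cup V_{k,1}$, for which every containing edge must lie in $U_1$; such an $S$ has exactly $\binom{k-i}{r-i} m^{r-i} = \binom{k-i}{r-i}(n/(k+2))^{r-i}$ extensions to edges. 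The main (though routine) obstacle is verifying that $i$-sets $S \subseteq V_1 \cup \cdots \cup V_{k-2}$, which extend into both $U_1$ and $U_2$, have strictly larger degree (namely $(2\binom{k-i}{r-i} - \binom{k-2-i}{r-i})m^{r-i}$ by inclusion--exclusion), so that the minimum is indeed the value above and the matching lower bound $\Phi_{k,r,i} \ge \binom{k-i}{r-i}/(k+2)^{r-i}$ follows.
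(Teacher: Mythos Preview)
Your proposal is correct and follows essentially the same route as the paper: apply Theorem~\ref{THM:s-degree-beta} with $(r_1,r_2)=(r,i+1)$, evaluate $\Phi_{k,i+1,i}=\Phi_{k,i+1}$ via Theorem~\ref{THM:main} (or Bollob\'as for $i=1$, which is subsumed since Theorem~\ref{THM:main} is stated for $r\ge 2$), and obtain the matching lower bound from $\mathcal{H}_{k,r}(1,m)$. Your case analysis for $\delta_i^+(\mathcal{H}_{k,r}(1,m))$ spells out what the paper leaves as ``simple calculations'', but the argument is identical in substance.
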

The remainder of this paper is organized as follows$\colon$
The proof of Theorem~\ref{THM:main} is presented in Section~\ref{SEC:proof-r-1-degree}. 
Proofs for Theorem~\ref{THM:s-degree-beta} and Corollary~\ref{CORO:i-degree-tight-bound} are presented in Section~\ref{SEC:proof-i-degree}.
Section~\ref{SEC:Remarks} includes some remarks and open questions.

%%%%%%%%%%%%%%%%%%%%%%%%%%%%%%%%%%%%
\section{Proof of Theorem~\ref{THM:main}}\label{SEC:proof-r-1-degree}
\subsection{Preliminaries}
In this subsection, we present several necessary definitions and preliminary results. 

Given an $r$-graph $\mathcal{H}$ and a vertex $v \in V(\mathcal{H})$, 
% the \textbf{link} of $v$ is 
% \begin{align*}
%     L_{\mathcal{H}}(v)
%     \coloneqq \left\{e\in \partial\mathcal{H} \colon e\cup \{v\} \in \mathcal{H} \right\}. 
% \end{align*}
the \textbf{neighborhood} of $v$ is 
    \begin{align*}
        N_{\mathcal{H}}(v)
        \coloneqq 
        \left\{u \in V(\mathcal{H})\setminus\{v\} \colon \ \text{exists } e\in \mathcal{H} \text{ such that } \{u,v\} \subseteq  e\right\}.
    \end{align*}
Throughout this subsection, we will assume the following conditions$\colon$
\begin{enumerate}[label=(\textbf{\alph*})]
    \item\label{assume:1} $n, k, r$ are integers satisfying $n \ge k \ge r \ge 2$. 
    \item\label{assume:2} $\mathcal{H}$ is a $k$-colorable $n$-vertex $r$-graph without isolated vertices and satisfies
    \begin{align}\label{equ:assumption-mincodegree}
        \delta_{r-1}^{+}(\mathcal{H})
        > \max \left \{ \frac{3k-3r+1}{3k-2} n,\ \frac{k-r+1}{k+2} n \right \}.
    \end{align}
\end{enumerate}
Given a vertex set $S \subseteq  V(\mathcal{H})$, we classify $S$ as follows$\colon$
\begin{itemize}
    \item $S$ is \textbf{good} if $|S| \ge \frac{n}{k+2}$; otherwise, it is \textbf{bad}. 
    \item $S$ is \textbf{large} if $|S| \ge \frac{3 n}{3k-2}$; otherwise, it is \textbf{small}.  
\end{itemize}
For every $\varphi \in \mathrm{Hom}(\mathcal{H},K_{k}^{r})$, define 
\begin{align*}
    I_{\varphi} 
    \coloneqq \left\{j\in[k]\colon \varphi^{-1}(j)\ \text{is good}\right\}
    \quad\text{and}\quad 
    J_{\varphi} 
    \coloneqq \left\{j\in[k]\colon \varphi^{-1}(j)\ \text{is large}\right\}. 
    %  &I_{\varphi} 
    % \coloneqq \left\{j\in[k]\colon \varphi^{-1}(j)\ \text{is good}\right\}
    % \quad\text{and}\quad 
    % \overline{I_{\varphi}}
    % \coloneqq [k]\setminus I_{\varphi}, \\
    % &J_{\varphi} 
    % \coloneqq \left\{j\in[k]\colon \varphi^{-1}(j)\ \text{is large}\right\}
    % \quad\text{and}\quad 
    % \overline{J_{\varphi}}
    % \coloneqq [k]\setminus J_{\varphi}. 
\end{align*}
Note that, by definition, every large set is good, and hence, $J_{\varphi} \subseteq I_{\varphi}$.
% Let $\overline{I_{\varphi}} \coloneqq [k]\setminus I_{\varphi}$ and $\overline{J_{\varphi}} \coloneqq [k]\setminus J_{\varphi}$. 

For every vertex $v \in  V(\mathcal{H})$, let
\begin{align*}
    [v]_{\varphi}
    \coloneqq \left\{u \in V(\mathcal{H}) \colon \varphi(u) = \varphi(v)\right\}. 
\end{align*}
% let $k,r$ be integers such that $k\ge \lceil \frac{4r-2}{3}\rceil$ and $\mathcal{H}$ be an $k$-partite $r$-uniform hypergraph on $n$ vertices such that $\delta (\mathcal{H})>0$ and $\delta_{r-1}^{+}(\mathcal{H})>\frac{3k-3r+1}{3k-2}n$.
For a subset $A\subseteq V(\mathcal{H})$, we denote by $\varphi(A)$ the set of all colors $\varphi(v)$ for $v\in A$. 
It will be convenient later to set $\overline{S} \coloneqq [k]\setminus S$ for every $S \subseteq  [k]$. 
In particular, for every $\varphi \in \mathrm{Hom}(\mathcal{H}, K_{k}^{r})$, we have $\overline{I_{\varphi}} = [k]\setminus I_{\varphi}$, $\overline{J_{\varphi}} = [k]\setminus J_{\varphi}$, and $\overline{\varphi(e)} = [k]\setminus \varphi(e)$ for every $e \in \mathcal{H}$. 

\begin{proposition}\label{PROP:Prelim-A}
    The following statements hold for every $\varphi \in \mathrm{Hom}(\mathcal{H},K_{k}^{r})$. 
    \begin{enumerate}[label=(\roman*)]
        \item\label{PROP:Prelim-A-1} 
        %$\left|\varphi^{-1}(J)\right| \ge \delta_{r-1}^{+}(\mathcal{H})$ for every $J \in \binom{[k]}{k-r+1}$. Consequently, 
        $\left|\varphi^{-1}(J)\right| \ge \frac{|J|}{k-r+1} \cdot \delta_{r-1}^{+}(\mathcal{H})$ for every set $J \subseteq  [k]$ of size at least $k-r+1$.
        \item\label{PROP:Prelim-A-2} $\varphi$ is surjective. 
        \item\label{PROP:Prelim-A-3} $|J_{\varphi}| \le r-2$. 
        % \item\label{PROP:Prelim-A-4} $|J_{\varphi}| \le r-2$. 
    \end{enumerate}    
\end{proposition}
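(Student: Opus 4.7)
My plan is to prove the three parts in the order (i), (iii), (ii), since both (iii) and (ii) follow from (i) by short arguments.

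For (i), I would first establish the base case $|J|=k-r+1$ and then extend to general $|J|\ge k-r+1$ by averaging. Set $S=\varphi^{-1}(J)$. Since $|\overline{J}|=r-1<r$ and $\varphi$ is injective on each edge (the image of an edge is an $r$-set in $K_{k}^{r}$), every edge $E\in\mathcal{H}$ must satisfy $|E\cap S|\ge 1$; let $t=\min_{E\in\mathcal{H}}|E\cap S|$. If $t=r$ then every edge lies entirely in $S$ and the no-isolated-vertices hypothesis forces $S=V(\mathcal{H})$, giving $|S|=n\ge\delta_{r-1}^{+}(\mathcal{H})$ trivially. Otherwise $1\le t\le r-1$; pick an edge $E$ with $|E\cap S|=t$, any vertex $s\in E\cap S$, and set $e=E\setminus\{s\}\in\partial\mathcal{H}$, so $|e\cap S|=t-1$. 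For any extension $E'=e\cup\{u\}\in\mathcal{H}$ the vertex $u$ must lie in $S$, for otherwise $|E'\cap S|=t-1<t$ would contradict the minimality of $t$; hence $N_{\mathcal{H}}(e)\subseteq S$ and so $|S|\ge |N_{\mathcal{H}}(e)|\ge\delta_{r-1}^{+}(\mathcal{H})$. For general $|J|\ge k-r+1$, I would sum this base-case bound over all $(k-r+1)$-subsets $J'\subseteq J$: each $v\in\varphi^{-1}(J)$ lies in exactly $\binom{|J|-1}{k-r}$ such $J'$, and there are $\binom{|J|}{k-r+1}$ subsets in total, so
\[
|\varphi^{-1}(J)|\;\ge\;\frac{\binom{|J|}{k-r+1}}{\binom{|J|-1}{k-r}}\,\delta_{r-1}^{+}(\mathcal{H})\;=\;\frac{|J|}{k-r+1}\,\delta_{r-1}^{+}(\mathcal{H}).
\]

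For (iii), I would suppose $|J_{\varphi}|\ge r-1$ and pick $T\subseteq J_{\varphi}$ with $|T|=r-1$. Applying (i) to $J=[k]\setminus T$ (of size $k-r+1$) together with the codegree hypothesis gives
\[
|\varphi^{-1}(T)|\;=\;n-|\varphi^{-1}([k]\setminus T)|\;<\;n-\tfrac{3k-3r+1}{3k-2}\,n\;=\;\tfrac{3(r-1)}{3k-2}\,n.
\]
But every color in $T\subseteq J_{\varphi}$ is large, so $|\varphi^{-1}(T)|\ge (r-1)\cdot\tfrac{3n}{3k-2}=\tfrac{3(r-1)}{3k-2}\,n$, a contradiction.

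For (ii), I would suppose $\varphi$ misses some color $j^{*}$ and view $\varphi$ as a homomorphism into $K_{k-1}^{r}$. The argument for (i) uses only the size of the target palette, so it applies verbatim with $k$ replaced by $k-1$; applied to $J=[k]\setminus\{j^{*}\}$ of size $k-1\ge k-r$, it gives $n=|\varphi^{-1}(J)|\ge\tfrac{k-1}{k-r}\,\delta_{r-1}^{+}(\mathcal{H})$, i.e.\ $\delta_{r-1}^{+}(\mathcal{H})\le\tfrac{k-r}{k-1}\,n$. A direct calculation $(k-r)(3k-2)-(3k-3r+1)(k-1)=1-r\le 0$ (strict for $r\ge 2$) yields $\tfrac{k-r}{k-1}\le\tfrac{3k-3r+1}{3k-2}$, contradicting the codegree hypothesis. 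The most delicate point in the whole proof is the cover-level dichotomy in the base case of (i); once that is cleanly set up, the averaging step and the restriction argument for (ii) are routine.
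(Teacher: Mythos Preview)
Your proof is correct. For (i) and (ii) you follow essentially the paper's route: the paper chooses an edge maximizing $|\varphi(e)\cap\overline{J}|$, which (since $\varphi$ is injective on edges) is equivalent to your minimization of $|E\cap S|$, and your $t=r$ sub-case is harmless but unnecessary; for (ii) the paper likewise passes to a $(k-1)$-coloring and invokes (i). One small omission: when $k=r$ your ratio $\tfrac{k-1}{k-r}$ is undefined, so you should dispose of that case first (as the paper does, noting surjectivity is trivial since $K_{r-1}^{r}$ has no edges).

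Your argument for (iii), however, is genuinely shorter than the paper's. The paper works directly with an edge $e$ maximizing $|\varphi(e)\cap J_{\varphi}|$ and splits into the cases $|\varphi(e)\cap J_{\varphi}|\ge r-1$ and $\le r-2$, bounding a codegree in each. You instead apply (i) once to the complement of an $(r-1)$-set $T\subseteq J_{\varphi}$ and obtain the contradiction immediately. Your route treats (i) as a black box and avoids the case split; the paper's hands-on extremal-edge argument is longer here but rehearses the technique that is reused throughout the next proposition.
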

\begin{proof}[Proof of Proposition~\ref{PROP:Prelim-A}]
    % Fix a map $\varphi \in \mathrm{Hom}(\mathcal{H},K_{k}^{r})$ and a $(k-r+1)$-set $J \subseteq  [k]$. 
    First we prove Proposition~\ref{PROP:Prelim-A}~\ref{PROP:Prelim-A-1}. 
    Note that it suffices to show this for every $J \in \binom{[k]}{k-r+1}$, as the other cases follow from a standard averaging argument. 
    Fix a set $J \subseteq  [k]$ of size $k-r+1$. 
    %Let $\overline{J} \coloneqq [k] \setminus J$. 
    Let $e \in \mathcal{H}$ be an edge such that $|\varphi(e) \cap \overline{J}|$ is maximized.
    Since $|\overline{J}|=r-1$, there exists a vertex $v\in e \setminus \varphi^{-1}(\overline{J})$. 
    Let $e' \coloneqq e\setminus \{v\}$.
    It follows from the maximality of $e$ that $N_{\mathcal{H}}(e') \subseteq  \varphi^{-1}(J)$. Therefore, $|\varphi^{-1}(J)| \ge d_{\mathcal{H}}(e') \ge \delta_{r-1}^{+}(\mathcal{H})$.
    % \begin{align*}
    %     |\varphi^{-1}(J)|
    %     \ge d_{\mathcal{H}}(e') 
    %     \ge \delta_{r-1}^{+}(\mathcal{H}), 
    % \end{align*}
    % which proves Proposition~\ref{PROP:Prelim-A}~\ref{PROP:Prelim-A-1}. 

    Next, we prove Proposition~\ref{PROP:Prelim-A}~\ref{PROP:Prelim-A-2}. 
    The case $k = r$ is trivially true, so we may assume that $k\ge r+1$. 
    Suppose to the contrary that $\varphi$ is not surjective. 
    Then there exists a homomorphism $\psi \in \mathrm{Hom}(\mathcal{H}, K_{k-1}^{r})$. 
    Let $V_i \coloneqq \psi^{-1}(i)$ for $i \in [k-1]$. 
    Proposition~\ref{PROP:Prelim-A}~\ref{PROP:Prelim-A-1} applied to $\psi$ yields 
    \begin{align*}
        % \binom{k-2}{k-r-1} n
        %  = \sum_{J \in \binom{[k-1]}{k-r}} \sum_{i\in J}|V_i| 
        % \ge \binom{k-1}{k-r} \cdot \delta_{r-1}^{+}(\mathcal{H}), 
        n 
        = \sum_{i\in [k-1]}|V_i|
        = |\psi^{-1}([k-1])|
        \ge \frac{k-1}{(k-1)-r+1} \cdot \delta^{+}_{r-1}(\mathcal{H}),
    \end{align*}
    which implies that $\delta_{r-1}^{+}(\mathcal{H}) \le \frac{k-r}{k-1} n$.
    Simple calculations show that $\frac{k-r}{k-1}<\frac{3k-3r+1}{3k-2}$, which contradicts Inequality~\eqref{equ:assumption-mincodegree}. 

    Now we prove Proposition~\ref{PROP:Prelim-A}~\ref{PROP:Prelim-A-3}.
    Suppose to the contrary that $|J_{\varphi}| \ge r-1$. 
    Fix an edge $e\in \mathcal{H}$ such that $|\varphi(e) \cap J_{\varphi}|$ is maximized. 
    
    Suppose that $|\varphi(e) \cap J_{\varphi}| \ge r-1$. 
    Then there exists a vertex $v \in e$ such that $\varphi(e\setminus \{v\}) \subseteq  J_{\varphi}$. 
    It follows from the definition of $J_{\varphi}$ that 
    \begin{align*}
        d_{\mathcal{H}}(e\setminus \{v\})
        \le n - \sum_{j \in \varphi(e\setminus \{v\})} |V_j|
        \le n - (r-1) \cdot \frac{3 n}{3k-2}
        = \frac{3k-3r+1}{3k-2} n, 
    \end{align*}
    contradicting Inequality~\eqref{equ:assumption-mincodegree}. 
    
    Suppose that $|\varphi(e) \cap J_{\varphi}| \le r-2$. 
    Then fix a vertex $v\in e$ such that $\varphi(v)\notin J_{\varphi}$. 
    Note that the set $M \coloneqq J_{\varphi}\cup \varphi(e\setminus \{v\})$ satisfies $|M| \ge  |\varphi(e\setminus \{v\})|+1 \ge  r$. 
    % In particular, $|[v]_{\varphi}| < \frac{3n}{3k-2}$. 
    It follows from the maximality of $e$ and the definition of $J_{\varphi}$ that 
    \begin{align*}
        d_{\mathcal{H}}(e\setminus \{v\})
        = 
        %\sum_{i\in [k]\setminus (J_{\varphi}\cup \varphi(e\setminus \{v\}))} 
        \sum_{j \in \overline{M}} |\varphi^{-1}(j)|
        \le (k-r) \cdot \frac{3 n}{3k-2}
        < \frac{3k-3r+1}{3k-2} n, 
    \end{align*}
    contradicting Inequality~\eqref{equ:assumption-mincodegree}.
\end{proof}
The following simple but crucial proposition will be used extensively throughout the paper.
\begin{proposition}\label{PROP:Prelim-B}
    The following statements hold for every $\varphi \in \mathrm{Hom}(\mathcal{H},K_{k}^{r})$, $i\in [k]$, and $v \in \varphi^{-1}(i)$. 
    \begin{enumerate}[label=(\roman*)]
        \item\label{PROP:Prelim-B-1} Suppose that  $i\in J_{\varphi}$.
        Then for every $(r-|J_{\varphi}|)$-set $I \subseteq \overline{J_{\varphi}}$, there exists an edge $e \in \mathcal{H}$ containing $v$ such that  $\varphi(e)=J_{\varphi}\cup I$.
        \item\label{PROP:Prelim-B-2} Suppose that $i\in \overline{J_{\varphi}}$.
        Then for every $(r-1-|J_{\varphi}|)$-set $I \subseteq \overline{J_{\varphi}}$, there exists an edge $e \in \mathcal{H}$ containing $v$ such that  $\varphi(e)=J_{\varphi}\cup I\cup\{i\}$.
        \item\label{PROP:Prelim-B-3} For every $j \in [k] \setminus \{i\}$, there exists an edge $e\in \mathcal{H}$ containing $v$ such that $j \in \varphi(e)$ and 
        \begin{align*}
            \min\left\{|\varphi^{-1}(\ell)| \colon \ell \in \varphi(e) \setminus \{i,j\}\right\}
            \ge 
            \max\left\{|\varphi^{-1}(\ell)| \colon \ell \in \overline{\varphi(e)} \right\}.
        \end{align*}
        \item\label{PROP:Prelim-B-4} Suppose that $J_{\varphi} = \emptyset$.
        Then for every $(r-1)$-set $I \subseteq [k]$ there exists an edge $e\in \mathcal{H}$ containing $v$ such that $I \subseteq  \varphi(e)$.
    \end{enumerate}    
\end{proposition}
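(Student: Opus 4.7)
My plan is to prove parts~\ref{PROP:Prelim-B-1} and~\ref{PROP:Prelim-B-2} directly via a double extremization, and then derive parts~\ref{PROP:Prelim-B-3} and~\ref{PROP:Prelim-B-4} as consequences. The guiding intuition is that any edge $e\ni v$ can be ``pushed'' to first cover all of $J_\varphi$ and then any prescribed $(r-|J_\varphi|)$- or $(r-1-|J_\varphi|)$-set of colors in $\overline{J_\varphi}$, because any failure of such inclusions would confine the neighborhood of a suitable $(r-1)$-set $e'\subseteq e$ entirely to $\varphi^{-1}(\overline{J_\varphi})$, whose classes are all too small (of size $<\frac{3n}{3k-2}$) to meet the minimum positive codegree bound.

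For parts~\ref{PROP:Prelim-B-1}/\ref{PROP:Prelim-B-2}, I fix $v$ and $I$ and pick $e\ni v$ to maximize the pair $(|\varphi(e)\cap J_\varphi|,\,|\varphi(e)\cap I|)$ lexicographically. Assume the conclusion fails. Then either (a)~$J_\varphi\not\subseteq\varphi(e)$, or (b)~$J_\varphi\subseteq\varphi(e)$ but $I\not\subseteq\varphi(e)$. Using $|J_\varphi|\le r-2$ from Proposition~\ref{PROP:Prelim-A}~\ref{PROP:Prelim-A-3} together with a count of $|\varphi(e)\setminus J_\varphi|$ (resp.\ $|\varphi(e)\setminus(J_\varphi\cup I)|$), I find $u\in e\setminus\{v\}$ with $\varphi(u)\notin J_\varphi$ (and additionally $\varphi(u)\notin I$ in case~(b)); then $e':=e\setminus\{u\}$ contains $v$ and lies in $\partial\mathcal{H}$. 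Combining the lex-maximality with the proper-coloring constraint $\varphi(w)\notin\varphi(e')$ for $w\in N_\mathcal{H}(e')$, one obtains $N_\mathcal{H}(e')\subseteq \varphi^{-1}([k]\setminus M)$, where $M:=\varphi(e')\cup J_\varphi$ in case~(a) and $M:=\varphi(e')\cup J_\varphi\cup I$ in case~(b). Simple arithmetic yields $|M|\ge r$ and $J_\varphi\subseteq M$ in each case, so $\overline{M}\subseteq \overline{J_\varphi}$ has size at most $k-r$, giving
\[
  d_\mathcal{H}(e') \;\le\; |\overline{M}|\cdot \frac{3n}{3k-2} \;\le\; (k-r)\cdot \frac{3n}{3k-2} \;<\; \frac{3k-3r+1}{3k-2}\,n \;<\; \delta^{+}_{r-1}(\mathcal{H}),
\]
contradicting $e'\in\partial\mathcal{H}$.

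For part~\ref{PROP:Prelim-B-3}, let $T^*\subseteq[k]\setminus\{i,j\}$ be any set of $r-2$ colors whose class sizes dominate those of the remaining colors in $[k]\setminus\{i,j\}$ (a ``top $r-2$'' choice, with ties broken arbitrarily). Any edge $e\ni v$ with $\varphi(e)=T^*\cup\{i,j\}$ satisfies the required min--max inequality. Since $|J_\varphi\setminus\{i,j\}|\le|J_\varphi|\le r-2$ and large classes strictly dominate non-large ones, $J_\varphi\setminus\{i,j\}\subseteq T^*$. A short case split on whether $i$ and $j$ lie in $J_\varphi$ writes $T^*\cup\{i,j\}$ as $J_\varphi\cup I$ when $i\in J_\varphi$ (matching part~\ref{PROP:Prelim-B-1}) or as $J_\varphi\cup I\cup\{i\}$ when $i\notin J_\varphi$ (matching part~\ref{PROP:Prelim-B-2}), with $I\subseteq\overline{J_\varphi}$ of the prescribed size; the already-proved part then produces $e$. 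Part~\ref{PROP:Prelim-B-4} follows from part~\ref{PROP:Prelim-B-2} applied with $J_\varphi=\emptyset$: given an $(r-1)$-set $I\subseteq[k]$, set $I':=I$ if $i\notin I$ and $I':=(I\setminus\{i\})\cup\{j^*\}$ for any $j^*\in[k]\setminus I$ otherwise, so that applying part~\ref{PROP:Prelim-B-2} to $I'$ delivers $e\ni v$ with $I\subseteq\varphi(e)=I'\cup\{i\}$.

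The main technical obstacle lies in case~(b): after removing $u$ one must preserve $J_\varphi\subseteq\varphi(e')$ (so that $J_\varphi\subseteq M$ and $\overline{M}\cap J_\varphi=\emptyset$), which is precisely why $u$ is chosen with $\varphi(u)\notin J_\varphi$ and why the first-coordinate maximization is performed before the second. This bookkeeping is what provides the uniform bound $|\varphi^{-1}(c)|<\frac{3n}{3k-2}$ for $c\in\overline{M}$ that drives the codegree contradiction above.
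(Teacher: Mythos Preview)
Your proof is correct and follows essentially the same approach as the paper's. The only cosmetic difference is that you combine the two extremization steps into a single lexicographic maximization on $(|\varphi(e)\cap J_\varphi|,\,|\varphi(e)\cap I|)$, whereas the paper first maximizes $|\varphi(e)\cap J_\varphi|$ to obtain $J_\varphi\subseteq\varphi(e)$ and then, among such edges, maximizes $|\varphi(e)\cap I|$; for parts~\ref{PROP:Prelim-B-3} and~\ref{PROP:Prelim-B-4} the paper simply asserts that they follow from~\ref{PROP:Prelim-B-1},~\ref{PROP:Prelim-B-2} and $|J_\varphi|\le r-2$, and your ``top $r-2$'' argument is exactly the intended unpacking.
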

\begin{proof}[Proof of Proposition~\ref{PROP:Prelim-B}]
    We will present the proof for Proposition~\ref{PROP:Prelim-B}~\ref{PROP:Prelim-B-1} only, as the proof for Proposition~\ref{PROP:Prelim-B}~\ref{PROP:Prelim-B-2} is nearly identical. 
    Additionally, Proposition~\ref{PROP:Prelim-B}~\ref{PROP:Prelim-B-3} and~\ref{PROP:Prelim-B-4} follow directly from Proposition~\ref{PROP:Prelim-B}~\ref{PROP:Prelim-B-1}~\ref{PROP:Prelim-B-2} and the fact that $|J_{\varphi}|\le r-2$ (see Proposition~\ref{PROP:Prelim-A}~\ref{PROP:Prelim-A-3}).
    
    Fix $\varphi \in \mathrm{Hom}(\mathcal{H},K_{k}^{r})$, $i\in J_{\varphi}$, and $v \in \varphi^{-1}(i)$. 
    % Suppose that $i\in J_{\varphi}$. 
    Fix an arbitrary set $I \subseteq  \overline{J_{\varphi}}$ of size $r - |J_{\varphi}|$. 
    First, we show that there exists an edge $e \in \mathcal{H}$ containing $\{v\}$ such that $J_{\varphi} \subseteq \varphi(e)$. 
    
    Let $e \in \mathcal{H}$ be an edge containing $v$ such that $|\varphi(e) \cap J_{\varphi}|$ is maximized. 
    Suppose to the contrary that $|\varphi(e) \cap J_{\varphi}|\le |J_{\varphi}|-1$. 
    Then it follows from Proposition~\ref{PROP:Prelim-A}~\ref{PROP:Prelim-A-3} that there exists a vertex $u\in e$ such that $[u]_{\varphi}$ is small. 
    Similar to the proof of Proposition~\ref{PROP:Prelim-A}~\ref{PROP:Prelim-A-3}, the set $M' \coloneqq J_{\varphi} \cup \varphi(e\setminus \{u\})$ has size at least $r$.
    Thus, by the maximality of $e$, we have 
    \begin{align*}
        d_{\mathcal{H}}(e\setminus\{u\})
        \le \sum_{j\in \overline{M'}} |\varphi^{-1}(j)|
        \le (k-r) \cdot \frac{3 n}{3k-2}
        < \frac{3k-3r+1}{3k-2} n,
    \end{align*}
    contradicting Inequality~\eqref{equ:assumption-mincodegree}.
    Therefore, we have $J_{\varphi} \subseteq  \varphi(e)$. 

    % Let $\tilde{e} \in \mathcal{H}$ be an edge such that $v \in  \tilde{e}$, $J_{\varphi} \subseteq  \varphi(\tilde{e})$, and subject to the above conditions, $|\varphi(\tilde{e}) \cap I|$ is maximized. 
    Let $\tilde{e} \in \mathcal{H}$ be an edge that contains $v$ and satisfies $J_{\varphi} \subseteq  \varphi(\tilde{e})$, such that $|\varphi(\tilde{e}) \cap I|$ is maximized among all such edges. 
    Suppose to the contrary that $|\varphi(\tilde{e}) \cap I| \le |I| - 1$. 
    Then there exists a vertex $w \in \tilde{e}$ such that $\varphi(w) \in \overline{I \cup J_{\varphi}}$. 
    In particular, $[w]_{\varphi}$ is small. 
    Similar to the argument above, the set $M \coloneqq I \cup \varphi(e\setminus \{w\})$ has size least $r$. 
    Thus, by the maximality of $\tilde{e}$, we have  
    \begin{align*}
        d_{\mathcal{H}}(e\setminus\{w\})
        \le \sum_{j\in \overline{M}} |\varphi^{-1}(j)|
        \le (k-r) \cdot \frac{3 n}{3k-2}
        < \frac{3k-3r+1}{3k-2} n,
    \end{align*}
    contradicting Inequality~\eqref{equ:assumption-mincodegree}.
\end{proof}
The following result provides a lower bound for the size of $I_{\varphi}$.
\begin{proposition}\label{PROP:Prelim-D}
    We have $|I_{\varphi}| \ge r$ for every $\varphi \in \mathrm{Hom}(\mathcal{H},K_{k}^{r})$.
\end{proposition}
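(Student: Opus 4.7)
The plan is to proceed by contradiction: assume $|I_{\varphi}| \le r-1$ and derive a violation of the minimum positive codegree hypothesis~\eqref{equ:assumption-mincodegree}. The key observation is that if $I_\varphi$ is small, then its complement $\overline{I_\varphi}$ has size at least $k-r+1$, which is precisely the regime where Proposition~\ref{PROP:Prelim-A}~\ref{PROP:Prelim-A-1} provides a nontrivial lower bound on $|\varphi^{-1}(\overline{I_\varphi})|$. This lower bound will then conflict with the upper bound one obtains from summing the badness condition $|\varphi^{-1}(j)| < n/(k+2)$ over all $j \in \overline{I_\varphi}$.

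More concretely, assume $|I_\varphi| \le r-1$ and set $J := \overline{I_\varphi}$, so that $|J| = k - |I_\varphi| \ge k-r+1$. Then Proposition~\ref{PROP:Prelim-A}~\ref{PROP:Prelim-A-1} gives
\[
    |\varphi^{-1}(\overline{I_\varphi})| \;\ge\; \frac{|\overline{I_\varphi}|}{k-r+1}\cdot \delta^{+}_{r-1}(\mathcal{H}).
\]
On the other hand, by definition every color $j \in \overline{I_\varphi}$ satisfies $|\varphi^{-1}(j)| < n/(k+2)$, so summing over $j \in \overline{I_\varphi}$ yields
\[
    |\varphi^{-1}(\overline{I_\varphi})| \;<\; |\overline{I_\varphi}|\cdot \frac{n}{k+2}.
\]
Combining the two displays and cancelling the common factor $|\overline{I_\varphi}|$ (which is nonzero, since $|I_\varphi|\le r-1 < k$) gives $\delta^{+}_{r-1}(\mathcal{H}) < \frac{k-r+1}{k+2}\,n$, contradicting Inequality~\eqref{equ:assumption-mincodegree}.

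I do not anticipate any genuine obstacle: the argument is essentially a one-line consequence of Proposition~\ref{PROP:Prelim-A}~\ref{PROP:Prelim-A-1} combined with the definition of a bad set. Indeed, this is precisely why the threshold $n/(k+2)$ in the definition of good/bad has been chosen the way it is: the $\frac{k-r+1}{k+2}\,n$ branch of the minimum positive codegree assumption is calibrated exactly so that the above averaging argument forces at least $r$ good color classes.
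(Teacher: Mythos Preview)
Your proof is correct but takes a different route from the paper. The paper's argument invokes Proposition~\ref{PROP:Prelim-B}: assuming $|I_\varphi|\le r-1$, it produces an edge $e$ with $I_\varphi\subseteq\varphi(e)$, picks a vertex $u\in e$ with $\varphi(u)\notin I_\varphi$, and then bounds $d_{\mathcal{H}}(e\setminus\{u\})\le (k-r+1)\cdot\frac{n}{k+2}$ by summing over the $k-r+1$ colors in $\overline{\varphi(e\setminus\{u\})}$, all of which are bad. You bypass this edge construction entirely by applying Proposition~\ref{PROP:Prelim-A}~\ref{PROP:Prelim-A-1} directly to $J=\overline{I_\varphi}$ and comparing the resulting lower bound on $|\varphi^{-1}(\overline{I_\varphi})|$ with the trivial upper bound $|\overline{I_\varphi}|\cdot\frac{n}{k+2}$. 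Your approach is shorter and more transparent, showing that Proposition~\ref{PROP:Prelim-D} is really an immediate corollary of Proposition~\ref{PROP:Prelim-A}~\ref{PROP:Prelim-A-1}; the paper's approach instead leans on Proposition~\ref{PROP:Prelim-B}, which is already developed for use elsewhere and so costs nothing extra in context. Both routes end at the same contradiction $\delta^{+}_{r-1}(\mathcal{H})\le\frac{k-r+1}{k+2}\,n$.
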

\begin{proof}[Proof of Proposition~\ref{PROP:Prelim-D}]
    Suppose to the contrary that $|I_{\varphi}| \le r-1$ for some $\varphi \in \mathrm{Hom}(\mathcal{H},K_{k}^{r})$.
    Then it follows from Proposition~\ref{PROP:Prelim-B}~\ref{PROP:Prelim-B-1} (or~\ref{PROP:Prelim-B-2}) and the fact $J_{\varphi} \subseteq  I_{\varphi}$ that there exists an edge $e \in \mathcal{H}$ such that $I_{\varphi} \subseteq  \varphi(e)$. 
    In particular, $|\varphi^{-1}(j)| \le \frac{n}{k+2}$ for every $j \in \overline{\varphi(e)}$. 
    Since $|I_{\varphi}| \le r-1$, there exists a vertex $u \in e$ such that $\varphi(u) \in \overline{I_{\varphi}}$. 
    It follows that  
    \begin{align*}
        d_{\mathcal{H}}(e \setminus \{u\})
        \le \sum_{j\in \overline{\varphi(e\setminus \{u\})}} |\varphi^{-1}(j)|
        \le (k-r+1) \cdot \frac{n}{k+2},
    \end{align*}
    contradicting Inequality~\eqref{equ:assumption-mincodegree}.
\end{proof}
%
%%%%%%%%%%%%%%%%%%%%%%%%%%%%%%%%%%%%%%%%%%%%%
\subsection{Two key propositions}
In this subsection, we present two technical but crucial results that are necessary for the proof of Theorem~\ref{THM:main}. 
The proofs of these results are deferred to  Sections~\ref{SEC:proof-Prop-small} and~\ref{SEC:proof-final-compute}.

We continue to assume that Assumptions~\ref{assume:1} and~\ref{assume:2} hold throughout this subsection. 
Recall that two homomorphisms $\varphi,\vartheta \in \mathrm{Hom}(\mathcal{H},K_{k}^{r})$ are equivalent (denoted by $\vartheta\cong \varphi$) if there exists an automorphism $\eta\in \mathrm{Aut}(K_{k}^{r})$ such that $\eta \circ \varphi = \vartheta$.
% The key lemma for the proof of Theorem~\ref{THM:main} is as follows. 
%
\begin{proposition}\label{PROP:Final-compute}
    Let $\varphi,\vartheta \in \mathrm{Hom}(\mathcal{H},K_{k}^{r})$. 
    If 
    %$|(\vartheta \circ \varphi^{-1})(i)| \le 2$ 
    $|\vartheta(\varphi^{-1}(i))| \le 2$ for all $i\in [k]$, then $\vartheta\cong \varphi$. 
\end{proposition}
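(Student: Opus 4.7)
The plan is to argue by contradiction: assume $\vartheta \not\cong \varphi$ and then violate the codegree hypothesis~\eqref{equ:assumption-mincodegree}. Set $\pi(i) := \vartheta(\varphi^{-1}(i)) \subseteq [k]$; by hypothesis $1 \le |\pi(i)| \le 2$ for every $i \in [k]$, the lower bound coming from surjectivity of $\varphi$ (Proposition~\ref{PROP:Prelim-A}~\ref{PROP:Prelim-A-2}). If $|\pi(i)| = 1$ for every $i$, writing $\sigma(i)$ for the unique element of $\pi(i)$ defines a map $\sigma \colon [k] \to [k]$; surjectivity of $\vartheta$ forces $\sigma$ to be onto $[k]$, hence a bijection, hence an element of $\mathrm{Aut}(K_{k}^{r}) = S_{k}$, and $\vartheta = \sigma \circ \varphi$ contradicts $\vartheta \not\cong \varphi$. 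Thus $P := \{i \in [k] : |\pi(i)| = 2\} \ne \emptyset$, and we set $Q := \bigcup_{i \in P} \pi(i)$.

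The structural heart of the proof is the claim that for every $j \in Q$ one has $|\pi^{-1}(\{j\})| \ge 2$ and $\pi^{-1}(\{j\}) \subseteq P$; I would prove both parts using Proposition~\ref{PROP:Prelim-B}. For the lower bound, fix $i_{0} \in P$ with $\pi(i_{0}) = \{j, j'\}$ and pick $u \in \varphi^{-1}(i_{0}) \cap \vartheta^{-1}(j')$; an application of Proposition~\ref{PROP:Prelim-B} to $\vartheta$ with target color $j$ yields an edge $e \ni u$ and a vertex $w \in e$ with $\vartheta(w) = j$, and validity of $\varphi$ on $e$ forces $\varphi(w) \ne i_{0}$, so $\{i_{0}, \varphi(w)\} \subseteq \pi^{-1}(\{j\})$. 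For the containment, suppose $i' \in \pi^{-1}(\{j\}) \setminus P$, so $\pi(i') = \{j\}$ and $\varphi^{-1}(i') \subseteq \vartheta^{-1}(j)$; choosing $u \in \varphi^{-1}(i_{0}) \cap \vartheta^{-1}(j)$ and then invoking Proposition~\ref{PROP:Prelim-B} on $\varphi$ to find an edge $e \ni u$ with $i' \in \varphi(e)$ produces a vertex of $\varphi$-color $i'$ and hence of $\vartheta$-color $j = \vartheta(u)$, violating validity of $\vartheta$. Consequently the bipartite incidence graph $B$ on $[k] \sqcup [k]$ (with edge $(i,j)$ iff $\varphi^{-1}(i) \cap \vartheta^{-1}(j) \ne \emptyset$) restricts to a perfect matching between $[k] \setminus P$ and $[k] \setminus Q$, and to a subgraph on $P \cup Q$ in which every vertex of $P$ has degree $2$ and every vertex of $Q$ has degree at least $2$. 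This subgraph therefore contains an alternating cycle
\[
i_{1},\ j_{1},\ i_{2},\ j_{2},\ \ldots,\ i_{\ell},\ j_{\ell},\ i_{1}
\]
with $\ell \ge 2$ and $\pi(i_{s}) = \{j_{s-1}, j_{s}\}$ (indices modulo $\ell$, with $j_{0} := j_{\ell}$).

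For the contradiction, I use this cycle to exhibit an $(r-1)$-set $S \in \partial\mathcal{H}$ whose codegree is too small. Pick $u$ in one of the two $\vartheta$-parts of $\varphi^{-1}(i_{1})$, choose an $r$-set $T \subseteq [k]$ with $J_{\varphi} \cup \{i_{1}, i_{2}\} \subseteq T$, and apply Proposition~\ref{PROP:Prelim-B}~\ref{PROP:Prelim-B-1} or~\ref{PROP:Prelim-B-2} (according to whether $i_{1} \in J_{\varphi}$) to obtain an edge $e \ni u$ with $\varphi(e) = T$. The constraint $\pi(i_{2}) = \{j_{1}, j_{2}\}$ combined with the rainbow property of $\vartheta$ on $e$ pins down (up to two options) the $\vartheta$-color of the $i_{2}$-vertex $w$ of $e$. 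Setting $S := e \setminus \{w\}$, a case-by-case enumeration of colors yields an explicit expression
\[
|E(S)| \;=\; \bigl| \{v \in V(\mathcal{H}) : \varphi(v) \notin \varphi(S),\ \vartheta(v) \notin \vartheta(S)\} \bigr|
\]
of the shape $n - |\varphi^{-1}(T \setminus \{i_{2}\})| - |X|$, where $X$ is the $\vartheta$-part of $\varphi^{-1}(i_{2})$ selected by the cycle constraints. Since $d_{\mathcal{H}}(S) \le |E(S)|$, averaging this bound over the two possible choices of $u$ (hence of $X$) and optimizing the ``free'' entries of $T$ in $\overline{J_{\varphi} \cup \{i_{1}, i_{2}\}}$ via the Kruskal--Katona-type estimate Proposition~\ref{PROP:Prelim-A}~\ref{PROP:Prelim-A-1} together with the size lower bound $|\varphi^{-1}(c)| \ge 3n/(3k-2)$ for $c \in J_{\varphi}$ forces $|E(S)| \le \max\bigl\{(3k-3r+1)/(3k-2),\ (k-r+1)/(k+2)\bigr\}\,n$, contradicting~\eqref{equ:assumption-mincodegree}. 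The main obstacle is exactly this final optimization: the two regimes in~\eqref{equ:assumption-mincodegree} correspond to the extremal constructions $\mathcal{H}_{k,r}(1,m)$ and $\mathcal{H}_{k,r}(3,m)$, so the choice of $T$ must be tuned to the relevant regime of $k$ and to the position of $\{i_{1}, i_{2}\}$ relative to $J_{\varphi}$.
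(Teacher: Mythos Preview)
Your structural analysis is correct and matches the paper: the bipartite incidence graph between the $\varphi$-classes and $\vartheta$-classes decomposes into a perfect matching on $[k]\setminus P$ and a $2$-regular graph on $P\cup Q$, so there is an alternating cycle $(i_1,j_1,\ldots,i_\ell,j_\ell)$ with $\ell\ge 2$ and $\pi(i_s)=\{j_{s-1},j_s\}$. This is exactly the paper's Proposition~\ref{PROP:cyclic-structure}~\ref{PROP:cyclic-structure-1}.

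The gap is in your contradiction step, which only extracts information from a single adjacent pair $(i_1,i_2)$ of the cycle. That argument is essentially the paper's proof that $\ell\ne 2$ (Proposition~\ref{PROP:cyclic-structure}~\ref{PROP:cyclic-structure-2}), and it does not survive for $\ell\ge 3$. Concretely: when $\ell\ge 3$ you have $\pi(i_1)=\{j_0,j_1\}$ and $\pi(i_2)=\{j_1,j_2\}$ with $j_0\ne j_2$. If you pick $u\in \varphi^{-1}(i_1)$ with $\vartheta(u)=j_0$, then $\vartheta(S)$ need not contain either of $j_1,j_2$, so $N_{\mathcal H}(S)\cap V_{i_2}$ is not forced into a single $\vartheta$-piece and your ``$-|X|$'' term disappears. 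Even restricting to the good choice $\vartheta(u)=j_1$, the best you get is a bound of the shape $\tfrac{1}{2}|V_{i_2}|+\sum_{j\in\overline T}|V_j|$; for instance when $k=r$ (so $\overline T=\emptyset$) this is $\tfrac{1}{2}|V_{i_2}|$, while $|V_{i_2}|$ can be as large as $\tfrac{3n}{k+2}$, exceeding the threshold $\tfrac{n}{k+2}$. So the ``final optimization'' you promise cannot close.

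What the paper does for $\ell\ge 3$ is genuinely different and uses the \emph{entire} cycle, not one step of it. First, a non-adjacency claim (Claims~\ref{Claim:C2_Bt}/\ref{Claim:C2_Bt-b}) shows that certain pairs from non-consecutive pieces $C_2\times B_t$ cannot lie in $\partial_{r-2}\mathcal H$. From this, one deduces that each pair $|B_i|+|B_{i+1}|$ and $|C_i|+|C_{i+1}|$ is large (Claim~\ref{Claim:size-of-Bi}), hence all but at most two of the $V_i$ with $i\in[t]$ are large (Claim~\ref{Claim:many-large-parts}), which via $|J_\varphi|\le r-2$ gives $t\le r$. Only then can one choose an edge $e$ with $[t]\subseteq\varphi(e)$; the cyclic constraints now propagate around the full cycle, forcing $e\cap V_i\subseteq B_i$ for every $i\in[t]$ simultaneously, and the minimum piece satisfies $|B_1|\le \tfrac{1}{2t}\sum_{i\in[t]}|V_i|$ rather than merely $\tfrac{1}{2}|V_{i_2}|$. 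This factor-$t$ improvement is precisely what makes the final inequality go through; your two-vertex averaging cannot reproduce it.
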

We say a homomorphism $\varphi \in \mathrm{Hom}(\mathcal{H},K_{k}^{r})$ is \textbf{small} if $\varphi^{-1}(i)$ is small (i.e. $|\varphi^{-1}(i)| < \frac{3n}{3k-2}$) for every $i\in [k]$.
Note that $\varphi$ is small if and only if $J_{\varphi} = \emptyset$. 
\begin{proposition}\label{PROP:case-all-small}
    Suppose that every homomorphism in $\mathrm{Hom}(\mathcal{H},K_{k}^{r})$ is small. 
    Then for every pair of homomorphisms $\vartheta, \varphi \in \mathrm{Hom}(\mathcal{H},K_{k}^{r})$ and for every $i\in [k]$, we have $|\vartheta(\varphi^{-1}(i))| \le 2$. 
    Consequently, $\mathcal{H}$ is uniquely $k$-colorable. 
\end{proposition}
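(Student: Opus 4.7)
The plan is first to establish the quantitative bound $|\vartheta(\varphi^{-1}(i))| \le 2$ for every pair $\varphi, \vartheta \in \mathrm{Hom}(\mathcal{H}, K_k^r)$ and every $i \in [k]$; the unique $k$-colorability conclusion then follows immediately by applying Proposition~\ref{PROP:Final-compute} to each such pair of homomorphisms (since $\mathcal{H}$ is $k$-colorable, $\mathrm{Hom}(\mathcal{H},K_k^r)$ is nonempty by assumption).

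I argue by contradiction. Suppose that for some $\varphi, \vartheta$ and some $i$, the class $\varphi^{-1}(i)$ contains three vertices $u_1, u_2, u_3$ whose $\vartheta$-images $a_1, a_2, a_3$ are pairwise distinct. Because every homomorphism is small, $J_\varphi = J_\vartheta = \emptyset$, so Proposition~\ref{PROP:Prelim-B}\ref{PROP:Prelim-B-4} applies to both $\varphi$ and $\vartheta$. Apply it to $\vartheta$ at each $u_s$ with an $(r-1)$-set of $\vartheta$-colors containing $\{a_1,a_2,a_3\}\setminus\{a_s\}$ (feasible whenever $r \ge 3$) to produce an edge $e_s \ni u_s$ with $\{a_1,a_2,a_3\} \subseteq \vartheta(e_s)$. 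Since $e_s$ has pairwise distinct $\varphi$-colors and contains $u_s \in \varphi^{-1}(i)$, the vertex $w_{s,t} \in e_s$ with $\vartheta$-color $a_t$ (for $t \ne s$) must satisfy $\varphi(w_{s,t}) \ne i$. Hence each of $a_1, a_2, a_3$ is witnessed by vertices in at least two distinct $\varphi$-classes in the bipartite ``color-pair'' graph $B$ on $[k] \sqcup [k]$ with edges $\bigl\{(i',j) : \varphi^{-1}(i') \cap \vartheta^{-1}(j) \ne \emptyset\bigr\}$.

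Next, I iterate: invoke Proposition~\ref{PROP:Prelim-B}\ref{PROP:Prelim-B-4} on $\varphi$ at the newly produced vertices $w_{s,t}$, choosing $(r-1)$-sets of $\varphi$-colors that propagate witnesses in $B$, and symmetrically interchange the roles of $\varphi$ and $\vartheta$. Each application either populates further cells $\varphi^{-1}(i') \cap \vartheta^{-1}(j)$ around the distinguished colors $i, a_1, a_2, a_3$, or exhibits an $(r-1)$-subset $S \in \partial \mathcal{H}$ whose $\varphi$- or $\vartheta$-color image forces a strong upper bound on its codegree.

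The final contradiction comes from the tightness of the codegree bound in the small regime. Since every class has size strictly less than $\frac{3n}{3k-2}$, the neighborhood of any $(r-1)$-set $S \in \partial\mathcal{H}$ lies in the union of $k-r+1$ classes, whose total size is strictly less than $\frac{(3k-3r+3)n}{3k-2}$; this exceeds the guaranteed $\delta^+_{r-1}(\mathcal{H}) > \frac{(3k-3r+1)n}{3k-2}$ by only $\frac{2n}{3k-2}$. The heart of the argument, and the step I expect to be the main obstacle, is to pick a concrete $(r-1)$-set---most likely assembled from the $u_s$'s and the $w_{s,t}$'s above---so that the configuration derived in the iteration forces its codegree below this tight threshold, contradicting \eqref{equ:assumption-mincodegree}. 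Notice that the extremal construction $\mathcal{H}_{k,r}(1,m)$ already realizes $|\vartheta(\varphi^{-1}(\cdot))| = 2$, so the contradiction must genuinely exploit the presence of the third $\vartheta$-color $a_3$ rather than any two of $a_1, a_2, a_3$ alone; I expect Proposition~\ref{PROP:Prelim-B}\ref{PROP:Prelim-B-3}, which certifies a ``balanced'' edge through each vertex, to be the tool that locates this final $(r-1)$-set.
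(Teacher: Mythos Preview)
Your proposal is not a proof but a plan with its central step explicitly missing. You yourself flag the gap: ``The heart of the argument, and the step I expect to be the main obstacle, is to pick a concrete $(r-1)$-set\dots'' and ``I expect Proposition~\ref{PROP:Prelim-B}\ref{PROP:Prelim-B-3} \dots\ to be the tool.'' Everything before that point is setup; the single step that delivers the contradiction is never carried out. The ``iteration'' paragraph is particularly loose: it does not specify what invariant you maintain, how many steps you take, or why the process terminates with a usable $(r-1)$-set. Without that, there is no argument.

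There is also a concrete defect at the start: your opening move requires an edge $e_s$ with $\{a_1,a_2,a_3\}\subseteq \vartheta(e_s)$, which needs $r\ge 3$. But Proposition~\ref{PROP:case-all-small} must also cover $r=2$ (indeed, for $r=2$ every homomorphism is automatically small by Proposition~\ref{PROP:Prelim-A}\ref{PROP:Prelim-A-3}, and this is exactly how the paper re-derives Bollob\'as' theorem). Your scheme simply does not apply there.

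The paper's proof uses an idea you have not found. Writing $k-1=q(r-1)+s$, it builds, through any vertex $v\in V_i$, a nested family of \emph{quasi-sunflowers} $\mathcal{S}^r(v_1,\dots,v_\ell)$ for $\ell=r,\dots,k-s$ inside $\bigcup_{j\notin I\setminus\{i\}}V_j$; the construction succeeds because the common neighborhood of the $q$ relevant $(r-1)$-sets has size $>\frac{3s+1}{3k-2}n$, which beats the $s$ excluded small classes. Fact~\ref{FACT:quasi-sunflower-2-covered} then forces $\vartheta(v_1),\dots,\vartheta(v_{k-s})$ to be pairwise distinct. A second quasi-sunflower argument pivoting on a new ``core'' set $M$ shows $|[v]_\vartheta\cap\bigcup_{j\in I}V_j|>\frac{n}{3k-2}$ for every $v$, which first yields $|\vartheta(\bigcup_{j\in I}V_j)|\le r$ (not merely $\le 2$), and only then a final application of Proposition~\ref{PROP:Prelim-B}\ref{PROP:Prelim-B-4} with respect to $\vartheta$ gives a codegree below $\frac{3k-3r+1}{3k-2}n$. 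The quasi-sunflower structure is what makes the common-neighborhood counting go through with the right constants; your local three-vertex picture does not supply enough rigidity to force the codegree drop you are hoping for.
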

\textbf{Remark.}
    Notice that in the case where $r=2$ (i.e. for graphs), for every $\varphi \in \mathrm{Hom}(\mathcal{H},K_{k}^{r})$ we have $|J_\varphi| \le  r-2 = 0$ (by Proposition~\ref{PROP:Prelim-A}~\ref{PROP:Prelim-A-3}), meaning that $\varphi$ is small. 
    Consequently, it follows from Proposition~\ref{PROP:case-all-small} that $\mathcal{H}$ is uniquely $k$-colorable. 
    This provides an alternative proof of Bollob\'{a}s' theorem (Theorem~\ref{THM:Bollobas-clique}). 
    % Hence $\mathcal{H}$ is uniquely $k$-colorable by Proposition~\ref{PROP:case-all-small} and~\ref{PROP:Final-compute}, and we get a new proof of Theorem~\ref{THM:Bollobas-clique}. 
    % Let $\varphi\colon \mathcal{H}\to K_{k}^{r}$ be a homomorphism and set $V_{i}\coloneqq \varphi^{-1}(i)$. If for any homomophism $\vartheta: H \to K_{k}^{r}$ and any $i\in [k]$, we have $\vartheta ^{-1}(i)$ is small ($\vartheta=\varphi$ is possible), then for any such $\vartheta$, we have $|\vartheta(V_{i})|\le  2$ for any $i\in [k]$. 

%%%%%%%%%%%%%%%%%%%%
\subsection{Proof of Theorem~\ref{THM:main}}
We present the proof of Theorem~\ref{THM:main} in this subsection, addressing it in two separate cases$\colon$
\begin{proposition}\label{PROP:case-k-large}
    Theorem~\ref{THM:main} holds for $k \ge (4r-2)/3$.
\end{proposition}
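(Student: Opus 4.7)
The plan is to use Proposition~\ref{PROP:Final-compute} as the key reduction: to show that $\mathcal{H}$ is uniquely $k$-colorable, it suffices to prove that for every pair of homomorphisms $\varphi, \vartheta \in \mathrm{Hom}(\mathcal{H}, K_{k}^{r})$, one has $|\vartheta(\varphi^{-1}(i))| \le 2$ for all $i \in [k]$. Since the hypothesis $k \ge (4r-2)/3$ makes $\tfrac{3k-3r+1}{3k-2} \ge \tfrac{k-r+1}{k+2}$, the codegree bound~\eqref{equ:THM:main} matches Assumption~\ref{assume:2} exactly, so every result from Section~2.1 is at our disposal.

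First, I would split off the easy case: if every $\varphi \in \mathrm{Hom}(\mathcal{H}, K_{k}^{r})$ is small (i.e., $J_{\varphi} = \emptyset$ for all $\varphi$), then Proposition~\ref{PROP:case-all-small} directly yields the desired conclusion. Otherwise, I fix a homomorphism $\varphi$ with $J_{\varphi} \neq \emptyset$ and let $\vartheta \in \mathrm{Hom}(\mathcal{H}, K_{k}^{r})$ be arbitrary. The goal reduces to showing $|\vartheta(\varphi^{-1}(i))| \le 2$ for every $i \in [k]$, which I would attack by contradiction: assume there exist some $i^{\star} \in [k]$ and vertices $v_1, v_2, v_3 \in \varphi^{-1}(i^{\star})$ with pairwise distinct $\vartheta$-images.

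The core idea is to use the large classes indexed by $J_{\varphi}$ as rigid scaffolding. For each $a \in \{1,2,3\}$, Proposition~\ref{PROP:Prelim-B}~\ref{PROP:Prelim-B-1} (if $i^{\star} \in J_{\varphi}$) or~\ref{PROP:Prelim-B-2} (if $i^{\star} \notin J_{\varphi}$) yields an edge $e_a \ni v_a$ whose $\varphi$-color set is $J_{\varphi}$ together with a prescribed complementary set $I \subseteq \overline{J_{\varphi}}$. Because the $\vartheta$-values of $v_1, v_2, v_3$ are pairwise distinct while the three edges share an identical $\varphi$-color pattern, a pigeonhole comparison of the $\vartheta$-values of the anchor vertices in $\bigcup_{j \in J_{\varphi}} \varphi^{-1}(j)$ would force either $|\vartheta(\varphi^{-1}(j))| \ge 3$ for some $j \in J_{\varphi}$ (allowing the argument to recurse into a large class) or inflate $|J_{\vartheta}|$ beyond $r-2$, the latter violating Proposition~\ref{PROP:Prelim-A}~\ref{PROP:Prelim-A-3} applied to $\vartheta$. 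The auxiliary set $I$ is to be chosen using Proposition~\ref{PROP:Prelim-B}~\ref{PROP:Prelim-B-3} to guarantee that the pigeonhole has no slack, i.e.\ no valid escape route for $\vartheta$.

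The main obstacle will be the combinatorial bookkeeping in this final propagation step: the sub-cases $i^{\star} \in J_{\varphi}$ and $i^{\star} \notin J_{\varphi}$ must be coordinated uniformly, and one must close the recursion when some large class itself fails the $\le 2$ bound — here the codegree estimate used in Propositions~\ref{PROP:Prelim-A}~\ref{PROP:Prelim-A-3} and~\ref{PROP:Prelim-B} (namely $(k-r)\cdot \tfrac{3n}{3k-2} < \tfrac{3k-3r+1}{3k-2}n$) should do the job. For the optimality claim, the construction $\mathcal{H}_{k,r}(3, m)$ realizes the threshold with equality: a direct count gives $\delta_{r-1}^{+}(\mathcal{H}_{k,r}(3, m)) = (3k-3r+1)m = \tfrac{3k-3r+1}{3k-2} n$, while the two non-equivalent $k$-colorings $\psi_1, \psi_2$ listed after Theorem~\ref{THM:main} show it is not uniquely $k$-colorable, confirming that the constant $\tfrac{3k-3r+1}{3k-2}$ cannot be improved in this regime.
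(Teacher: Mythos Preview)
Your plan correctly identifies Proposition~\ref{PROP:Final-compute} as the endgame and Proposition~\ref{PROP:case-all-small} as the disposal of the all-small case, but the core argument you sketch for the remaining case has a genuine gap: it is missing the paper's main engine, namely \emph{induction on $r$ applied to the link of a vertex in a large class}.

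Concretely, the paper does not try to compare the three edges $e_1,e_2,e_3$ directly via pigeonhole. Instead it first fixes any $v$ in a large class $V_i$ (so $i\in J_\varphi$), views $L_{\mathcal H}(v)$ as an $(r-1)$-graph on $N_{\mathcal H}(v)$, checks that $|N_{\mathcal H}(v)|\le n-\tfrac{3n}{3k-2}$ and that $\delta_{r-2}^{+}(L_{\mathcal H}(v))\ge \delta_{r-1}^{+}(\mathcal H)>\tfrac{3(k-1)-3(r-1)+1}{3(k-1)-2}\,|N_{\mathcal H}(v)|$, and then invokes the \emph{inductive hypothesis} (base case $r=2$ being Bollob\'as) to conclude that $L_{\mathcal H}(v)$ is uniquely $(k-1)$-colorable. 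This is Claim~\ref{CLAIM:k-large-induction}, and it gives the extremely strong consequence that $\vartheta$ restricted to $N_{\mathcal H}(v)$ is a relabelling of $\varphi$: in particular $|\vartheta(N_{\mathcal H}(v)\cap V_j)|=1$ for every $j\neq i$, and these singletons are pairwise distinct and avoid $\vartheta(v)$. From this rigidity statement the bounds $|\vartheta(V_i)|\le 2$ follow by two short counting arguments (Claims~\ref{Claim:small-part} and~\ref{Claim:large-part}), one for small parts and one for large parts.

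Your proposed pigeonhole on the anchor vertices of $e_1,e_2,e_3$ does not yield either of the two alternatives you list. Nothing forces $|\vartheta(V_j)|\ge 3$ for some $j\in J_\varphi$: the three vertices $e_a\cap V_j$ could easily receive only one or two $\vartheta$-colors. And ``inflating $|J_\vartheta|$ beyond $r-2$'' is about sizes of $\vartheta$-preimages, which the color pattern of three edges says nothing about. Even granting the first alternative, the ``recursion into a large class'' has no termination: you would just be moving the offending index around inside $J_\varphi$ without ever producing an inequality that contradicts~\eqref{equ:assumption-mincodegree}. The missing idea is precisely the passage to the link and the drop in uniformity, which is what makes the induction close. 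Your optimality paragraph via $\mathcal{H}_{k,r}(3,m)$ is fine.
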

\begin{proposition}\label{PROP:case-k-small}
    Theorem~\ref{THM:main} holds for $k < (4r-2)/3$.
\end{proposition}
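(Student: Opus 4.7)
The plan is to reduce to the hypothesis of Proposition~\ref{PROP:Final-compute}: show that $|\vartheta(\varphi^{-1}(i))|\le 2$ for every pair $\varphi,\vartheta\in\mathrm{Hom}(\mathcal{H},K_{k}^{r})$ and every $i\in[k]$, which then yields $\vartheta\cong\varphi$ and hence unique $k$-colorability. In this regime the binding codegree hypothesis $\delta^{+}_{r-1}(\mathcal{H})>(k-r+1)n/(k+2)$, together with Proposition~\ref{PROP:Prelim-A}~\ref{PROP:Prelim-A-1} applied to $[k]\setminus I$, yields the refined class-size bound $|\varphi^{-1}(I)|<\frac{|I|+2}{k+2}\,n$ for every $I\subseteq[k]$ with $|I|\le r-1$. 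Since $k\le 2r-3$ here, the same bound also controls $|\varphi^{-1}(I)|$ when $|I|=k-r+1$. Unlike the large-$k$ regime, one cannot simply invoke Proposition~\ref{PROP:case-all-small}: the inequality $3/(k+2)>3/(3k-2)$ (valid for $k\ge 3$) permits homomorphisms with nonempty $J_\varphi$, and the threshold construction $\mathcal{H}_{k,r}(1,m)$ already exhibits such colorings.

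Fix $\varphi,\vartheta$ and suppose toward contradiction that $|\vartheta(\varphi^{-1}(i^{*}))|\ge 3$, witnessed by $v_1,v_2,v_3\in\varphi^{-1}(i^{*})$ with pairwise distinct $\vartheta(v_s)=j_s$. Depending on whether $i^{*}\in J_\varphi$, apply Proposition~\ref{PROP:Prelim-B}~\ref{PROP:Prelim-B-1} or~\ref{PROP:Prelim-B-2} to produce, for each $s$, an edge $e_s\ni v_s$ with a common prescribed $\varphi$-image $\varphi(e_s)=C\supseteq J_\varphi\cup\{i^{*}\}$. Set $f_s=e_s\setminus\{v_s\}$, so $\varphi(f_s)=C\setminus\{i^{*}\}$, $f_s\in\partial\mathcal{H}$, and $d_{\mathcal{H}}(f_s)>(k-r+1)n/(k+2)$.

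The core step is a contradiction obtained by squeezing the three neighborhoods $N_{\mathcal{H}}(f_s)$. Simultaneously, $N_{\mathcal{H}}(f_s)\subseteq\varphi^{-1}(\{i^{*}\}\cup([k]\setminus C))$ (a set of size less than $(k-r+3)n/(k+2)$) and $N_{\mathcal{H}}(f_s)\subseteq\vartheta^{-1}([k]\setminus\vartheta(f_s))$. The cleanest illustration is the case $k=r$, where $C=[k]$ forces $\vartheta(e_s)=[k]$ and hence $\vartheta(f_s)=[k]\setminus\{j_s\}$, so $N_{\mathcal{H}}(f_s)\subseteq\vartheta^{-1}(\{j_s\})$; the distinctness of $j_1,j_2,j_3$ renders the three neighborhoods pairwise disjoint, and $\sum_s|N_{\mathcal{H}}(f_s)|>3n/(k+2)$ forces $|\varphi^{-1}(i^{*})|\ge 3n/(k+2)$, contradicting $|\varphi^{-1}(i^{*})|<3n/(k+2)$. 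For general $k$ in this regime, the $\vartheta(e_s)$ need not coincide, but an analogous overlap-avoidance argument---combining the codegree lower bound with the $\vartheta$-class bound $|\vartheta^{-1}([k]\setminus\vartheta(f_s))|<(k-r+3)n/(k+2)$, iterating Proposition~\ref{PROP:Prelim-B} over suitable choices of $C$, and using inclusion--exclusion---is expected to still force the contradiction.

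The main obstacle is making this overlap analysis go through when $k>r$: then $[k]\setminus C\ne\emptyset$ leaves room for the $\vartheta$-images $\vartheta(e_s)$ to differ, so the $N_{\mathcal{H}}(f_s)$ can intersect, and a more delicate joint control of the $\varphi$- and $\vartheta$-structures is needed. The final case analysis branches on whether $J_\varphi,J_\vartheta$ are empty and on how these sets interact with $\{i^{*},j_1,j_2,j_3\}$; the case where both are nonempty and contain some $j_s$ is the hardest, since every edge supplied by Proposition~\ref{PROP:Prelim-B} must carry all of $J_\varphi$ inside its $\varphi$-image and the analogous constraint on $\vartheta$ must be tracked throughout. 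Tightness of $(k-r+1)/(k+2)$ is witnessed by $\mathcal{H}_{k,r}(1,m)$, which attains $\delta^{+}_{r-1}=(k-r+1)n/(k+2)$ at the threshold, has no isolated vertices, and admits the non-equivalent homomorphisms $\psi_1,\psi_2$ defined in Section~\ref{SEC:Intorduction}.
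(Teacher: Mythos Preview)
Your proposal is not a complete proof: you handle the special case $k=r$ cleanly, but you explicitly leave the general case $k>r$ as an ``expected'' argument (``an analogous overlap-avoidance argument\ldots is expected to still force the contradiction''; ``a more delicate joint control\ldots is needed''). That is precisely the hard part, and the tools you have lined up are not enough to close it. When $k>r$, the common $\varphi$-image $C$ you can prescribe via Proposition~\ref{PROP:Prelim-B} has size $r<k$, so the $\vartheta$-images $\vartheta(e_s)$ are not forced to coincide, the sets $N_{\mathcal{H}}(f_s)$ need not be pairwise disjoint under $\vartheta$, and the class-size bound $|\varphi^{-1}(I)|<\frac{|I|+2}{k+2}n$ together with inclusion--exclusion does not by itself squeeze a contradiction. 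Iterating over choices of $C$ does not help without some structural control linking $\varphi$ and $\vartheta$ on overlapping edges.

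The paper supplies exactly this missing ingredient by \emph{induction on $r$}. For $i\in I_{\varphi}$ (a good part, with $|V_i|\ge n/(k+2)$) and $v\in V_i$, the link $L_{\mathcal{H}}(v)$ is a $(k-1)$-partite $(r-1)$-graph on $|N_{\mathcal{H}}(v)|\le n-\frac{n}{k+2}$ vertices with
\[
\delta^{+}_{r-2}\bigl(L_{\mathcal{H}}(v)\bigr)\;\ge\;\delta^{+}_{r-1}(\mathcal{H})\;>\;\frac{k-r+1}{k+2}\,n\;=\;\frac{(k-1)-(r-1)+1}{(k-1)+2}\cdot\Bigl(n-\tfrac{n}{k+2}\Bigr)\;\ge\;\frac{(k-1)-(r-1)+1}{(k-1)+2}\,|N_{\mathcal{H}}(v)|,
\]
so by the minimality of $r$ (or Proposition~\ref{PROP:case-k-large} when $k-1=\frac{4(r-1)-2}{3}$) the link is uniquely $(k-1)$-colorable. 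This forces $\left.\varphi\right|_{N_{\mathcal{H}}(v)}\cong\left.\vartheta\right|_{N_{\mathcal{H}}(v)}$, which is exactly the joint $\varphi$/$\vartheta$ control you were groping for: it yields $|\vartheta(N_{\mathcal{H}}(v)\cap V_j)|=1$ for all $j\ne i$, with distinct values as $j$ varies and all different from $\vartheta(v)$. From there the paper proves $|\vartheta(V_i)|\le 2$ separately for bad parts (a short counting argument) and good parts (using the three-vertex setup you began, but now with the inductive information to identify the two color classes $i_\ast,j_\ast$ where the conflict must live and to bound the relevant neighborhood sizes by $(|V_{i_\ast}|+|V_{j_\ast}|)/4$), before invoking Proposition~\ref{PROP:Final-compute}. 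Your direct approach recovers the $k=r$ base case nicely, but for $r<k<(4r-2)/3$ you need the induction via links; without it the argument does not close.
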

First, we prove Theorem~\ref{THM:main} for the case $k \ge (4r-2)/3$.
\begin{proof}[Proof of Proposition~\ref{PROP:case-k-large}]
    By contradiction, let $r$ be the smallest integer for which Proposition~\ref{PROP:case-k-large} fails. 
    By Theorem~\ref{THM:Bollobas-clique}, we may assume that $r \ge 3$. 
    % Note that Theorem~\ref{THM:Bollobas-clique} shows that the case $r=2$ holds here, so we may assume that Proposition~\ref{PROP:case-k-large} holds for the case $r-1$.
    Let $\mathcal{H}$ be an $n$-vertex $k$-partite $r$-graph without isolated vertices, and with $\delta_{r-1}^{+}(\mathcal{H})>\frac{3k-3r+1}{3k-2}n$, which is not uniquely $k$-colorable.
    Let $V \coloneqq V(\mathcal{H})$. 

    Fix $\varphi \in \mathrm{Hom}(\mathcal{H},K_{k}^{r})$ such that $|J_{\varphi}|$ is maximized. 
    Let $V_{i} \coloneqq \varphi^{-1}(i)$ for $i\in [k]$. 
    Let $q \coloneqq |J_{\varphi}|$. 
    By relabelling vertices in $K_{k}^{r}$, we may assume that $J_{\varphi} = [q]$, i.e. $V_1, \ldots, V_{q}$ are large sets. 
    We may assume that $q \ge 1$, since otherwise, by Proposition~\ref{PROP:case-all-small}, we are done. 
    Recall from Proposition~\ref{PROP:Prelim-A}~\ref{PROP:Prelim-A-3} that we have $q \le r-2$ as well.

    Fix an arbitrary homomorphism $\vartheta \in \mathrm{Hom}(\mathcal{H},K_{k}^{r})$.

    \begin{claim}\label{CLAIM:k-large-induction}
        For every $i \in [q]$ and $v \in V_i$, the following statements hold$\colon$
        \begin{enumerate}[label=(\roman*)]
            \item\label{CLAIM:k-large-induction-1} $|\vartheta(N_{\mathcal{H}}(v) \cap V_{j})|=1$ for every $j\in [k]\setminus \{i\}$, 
            \item\label{CLAIM:k-large-induction-2} $\vartheta(N_{\mathcal{H}}(v) \cap V_{j}) \neq \vartheta(v)$ for every $j\in [k]\setminus \{i\}$, and 
            \item\label{CLAIM:k-large-induction-3} $\vartheta(N_{\mathcal{H}}(v) \cap V_{j}) \neq \vartheta(N_{\mathcal{H}}(v) \cap V_{j'})$ for all distinct $j,j' \in [k]\setminus \{i\}$. 
        \end{enumerate}    
    \end{claim}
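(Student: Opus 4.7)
The plan is to apply the outer induction on $r$ (used in Proposition~\ref{PROP:case-k-large}) to the link of $v$. Fix $i\in[q]$, $v\in V_i$, and let $\mathcal{H}_v$ denote the $(r-1)$-graph whose edges are the $(r-1)$-sets $e$ with $e\cup\{v\}\in\mathcal{H}$; let $\mathcal{H}_v'$ be the restriction of $\mathcal{H}_v$ to its non-isolated vertices. Since $\varphi(v)=i$, both $\varphi|_{V(\mathcal{H}_v')}$ and $\vartheta|_{V(\mathcal{H}_v')}$ are homomorphisms from $\mathcal{H}_v'$ into $K_{k-1}^{r-1}$: the former avoids color $i$ on $V(\mathcal{H}_v')\subseteq V\setminus V_i$, and the latter avoids color $\vartheta(v)$, since every $u\in V(\mathcal{H}_v')$ shares some edge with $v$, forcing $\vartheta(u)\ne\vartheta(v)$.

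The first step is to verify that $\mathcal{H}_v'$ satisfies the hypotheses of Theorem~\ref{THM:main} with parameters $(r-1,k-1)$. It is $(k-1)$-partite with no isolated vertices by construction, so only the codegree bound requires work. For any $(r-2)$-set $f$ in the shadow of $\mathcal{H}_v'$, we have $d_{\mathcal{H}_v'}(f)=d_\mathcal{H}(f\cup\{v\})\ge\delta^+_{r-1}(\mathcal{H})>\frac{3k-3r+1}{3k-2}\,n$. Since $i\in J_\varphi$, $V_i$ is large, giving $v(\mathcal{H}_v')\le n-|V_i|\le\frac{(3k-5)n}{3k-2}$, and hence
\begin{align*}
\delta^+_{r-2}(\mathcal{H}_v')\;>\;\frac{3k-3r+1}{3k-2}\,n\;\ge\;\frac{3(k-1)-3(r-1)+1}{3(k-1)-2}\,v(\mathcal{H}_v').
\end{align*}
Since $k\ge(4r-2)/3$ forces $k-1\ge(4(r-1)-2)/3$, the induction hypothesis of Proposition~\ref{PROP:case-k-large} for uniformity $r-1$ applies, and $\mathcal{H}_v'$ is uniquely $(k-1)$-colorable.

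Finally I would translate this uniqueness into the three statements. Identifying the two $(k-1)$-element color sets with $[k-1]$, the uniqueness yields a bijection $\sigma\colon[k]\setminus\{i\}\to[k]\setminus\{\vartheta(v)\}$ with $\vartheta(u)=\sigma(\varphi(u))$ for every $u\in V(\mathcal{H}_v')$. Since $V_j\cap V(\mathcal{H}_v')=V_j\cap N_\mathcal{H}(v)$ for each $j\ne i$, this immediately gives $\vartheta(N_\mathcal{H}(v)\cap V_j)=\{\sigma(j)\}$, a singleton (yielding~(i)) that avoids $\vartheta(v)$ (yielding~(ii)) and varies injectively with $j$ (yielding~(iii)). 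Non-emptiness of $N_\mathcal{H}(v)\cap V_j$ for $j\ne i$, needed for~(i) to be meaningful, follows from Proposition~\ref{PROP:Prelim-B}~\ref{PROP:Prelim-B-3}. The principal obstacle in the plan is the codegree verification above: it depends crucially on $|V_i|\ge 3n/(3k-2)$, since without this the vertex shrinkage $n\to n-|V_i|$ would be too small to absorb the tightening of the codegree threshold when passing from uniformity $r$ to $r-1$.
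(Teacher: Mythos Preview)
Your proposal is correct and follows essentially the same approach as the paper: apply the induction on $r$ to the link $L_{\mathcal{H}}(v)$ (your $\mathcal{H}_v'$), using $|V_i|\ge\frac{3n}{3k-2}$ to verify the codegree threshold at uniformity $r-1$, and then read off (i)--(iii) from the equivalence of $\varphi|_N$ and $\vartheta|_N$. Your writeup is in fact slightly more explicit than the paper's in spelling out the bijection $\sigma$ and the non-emptiness of $N_{\mathcal{H}}(v)\cap V_j$ via Proposition~\ref{PROP:Prelim-B}~\ref{PROP:Prelim-B-3}.
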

    \begin{proof}[Proof of Claim~\ref{CLAIM:k-large-induction}] 
        Fix $i \in [q]$ and $v \in V_i$. 
        We may assume that $\vartheta(v)=i$; otherwise, we can replace $\vartheta$ with $\eta \circ \vartheta$, where $\eta \in \mathrm{Aut}(K_{k}^{r})$ satisfies $\eta \circ \vartheta(v) = i$. 
        Let $N \coloneqq N_{\mathcal{H}}(v)$, noting that $N \subseteq  V \setminus V_i$. 
        Since $i \in [q]$, we have $|N| \le n - |V_i| \le n-\frac{3n}{3k-2}$. 
        Additionally, it follows from the assumption $k \ge \frac{4r-2}{3}$ that $k-1 \ge \frac{4(r-1)-2}{3}$. 
        We consider the link $L_{\mathcal{H}}(v)$ as an $(r-1)$-graph on $N$, and, in particular, $L_{\mathcal{H}}(v)$ contains no isolated vertices. 
        Since $L_{\mathcal{H}}(v)$ satisfies  
        \begin{align*}
            \delta_{r-2}^{+}(L_{\mathcal{H}}(v)) 
            \ge \delta_{r-1}^{+}(\mathcal{H})
            > \frac{3k-3r+1}{3k-2}n
            & = \frac{3k-3r+1}{3k-5} \left(n-\frac{3n}{3k-2} \right) \\ 
            & \ge \frac{3k-3r+1}{3k-5}|N|, 
        \end{align*}
        it follows from the minimality of $r$ that $L_{\mathcal{H}}(v)$ is uniquely $(k-1)$-colorable.  
        Therefore, the induced maps $\left.\varphi\right|_{N}, \left.\vartheta\right|_{N}$ (viewed as homomorphisms from $L_{\mathcal{H}}(v)$ to $K_{k-1}^{r-1}$) are equivalent. 
        %are homomorphisms from $L_{\mathcal{H}}(v)$ to $K_{k-1}^{r-1}$. 
        %Since $L_{\mathcal{H}}(v)$ is uniquely $(k-1)$-colorable, we have $\varphi|_{N} \cong \vartheta|_{N}$. 
        This implies Claim~\ref{CLAIM:k-large-induction}~\ref{CLAIM:k-large-induction-1},~\ref{CLAIM:k-large-induction-2}, and~\ref{CLAIM:k-large-induction-3}. 
    \end{proof}

    Our next step is to show that $|\vartheta(V_i)|\le  2$ for every $i \in [k]$. 
    By Proposition~\ref{PROP:Final-compute}, this will complete the proof of Proposition~\ref{PROP:case-k-large}. 
    We will achieve this in two cases, addressed in Claims~\ref{Claim:small-part} and~\ref{Claim:large-part}, respectively. 
    %
    %%%%%%%%%%%%%%%%%%%%%
    %%%%%%%%%%%%%%%%%%%%%%%%%%%%%%%%%https://www.mathcha.io/editor
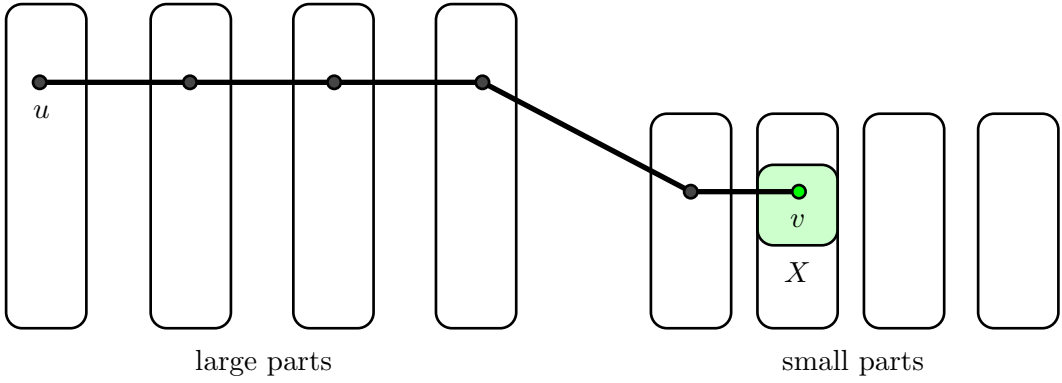
\begin{figure}[htbp]
\centering
%%%%%%%%%%%%%%%%%%%%%%%%%%%%%%%%
\tikzset{every picture/.style={line width=1pt}} %set default line width to 0.75pt        

\begin{tikzpicture}[x=0.75pt,y=0.75pt,yscale=-1,xscale=1]
%uncomment if require: \path (0,257); %set diagram left start at 0, and has height of 257

%Rounded Rect [id:dp23808463623319254] 
\draw   (68.24,48.66) .. controls (68.24,44.23) and (71.83,40.65) .. (76.25,40.65) -- (100.29,40.65) .. controls (104.71,40.65) and (108.3,44.23) .. (108.3,48.66) -- (108.3,195.68) .. controls (108.3,200.1) and (104.71,203.69) .. (100.29,203.69) -- (76.25,203.69) .. controls (71.83,203.69) and (68.24,200.1) .. (68.24,195.68) -- cycle ;
%Rounded Rect [id:dp7487082489832064] 
\draw   (211.61,48.66) .. controls (211.61,44.23) and (215.19,40.65) .. (219.62,40.65) -- (243.65,40.65) .. controls (248.08,40.65) and (251.67,44.23) .. (251.67,48.66) -- (251.67,195.68) .. controls (251.67,200.1) and (248.08,203.69) .. (243.65,203.69) -- (219.62,203.69) .. controls (215.19,203.69) and (211.61,200.1) .. (211.61,195.68) -- cycle ;
%Rounded Rect [id:dp06254095740345611] 
\draw   (140.4,48.66) .. controls (140.4,44.23) and (143.99,40.65) .. (148.41,40.65) -- (172.45,40.65) .. controls (176.87,40.65) and (180.46,44.23) .. (180.46,48.66) -- (180.46,195.68) .. controls (180.46,200.1) and (176.87,203.69) .. (172.45,203.69) -- (148.41,203.69) .. controls (143.99,203.69) and (140.4,200.1) .. (140.4,195.68) -- cycle ;
%Rounded Rect [id:dp3930253736706346] 
\draw   (390.1,103.8) .. controls (390.1,99.38) and (393.69,95.79) .. (398.11,95.79) -- (422.15,95.79) .. controls (426.57,95.79) and (430.16,99.38) .. (430.16,103.8) -- (430.16,195.68) .. controls (430.16,200.1) and (426.57,203.69) .. (422.15,203.69) -- (398.11,203.69) .. controls (393.69,203.69) and (390.1,200.1) .. (390.1,195.68) -- cycle ;
%Rounded Rect [id:dp4439896369414449] 
\draw   (443.27,103.8) .. controls (443.27,99.38) and (446.86,95.79) .. (451.28,95.79) -- (475.32,95.79) .. controls (479.74,95.79) and (483.33,99.38) .. (483.33,103.8) -- (483.33,195.68) .. controls (483.33,200.1) and (479.74,203.69) .. (475.32,203.69) -- (451.28,203.69) .. controls (446.86,203.69) and (443.27,200.1) .. (443.27,195.68) -- cycle ;
%Rounded Rect [id:dp9459514502047228] 
\draw   (496.44,103.8) .. controls (496.44,99.38) and (500.02,95.79) .. (504.45,95.79) -- (528.48,95.79) .. controls (532.91,95.79) and (536.5,99.38) .. (536.5,103.8) -- (536.5,195.68) .. controls (536.5,200.1) and (532.91,203.69) .. (528.48,203.69) -- (504.45,203.69) .. controls (500.02,203.69) and (496.44,200.1) .. (496.44,195.68) -- cycle ;
%Rounded Rect [id:dp412870754577326] 
\draw   (282.81,48.66) .. controls (282.81,44.23) and (286.4,40.65) .. (290.83,40.65) -- (314.86,40.65) .. controls (319.29,40.65) and (322.87,44.23) .. (322.87,48.66) -- (322.87,195.68) .. controls (322.87,200.1) and (319.29,203.69) .. (314.86,203.69) -- (290.83,203.69) .. controls (286.4,203.69) and (282.81,200.1) .. (282.81,195.68) -- cycle ;
%Rounded Rect [id:dp7929912658256035] 
\draw   (553.44,103.8) .. controls (553.44,99.38) and (557.02,95.79) .. (561.45,95.79) -- (585.48,95.79) .. controls (589.91,95.79) and (593.5,99.38) .. (593.5,103.8) -- (593.5,195.68) .. controls (593.5,200.1) and (589.91,203.69) .. (585.48,203.69) -- (561.45,203.69) .. controls (557.02,203.69) and (553.44,200.1) .. (553.44,195.68) -- cycle ;
%Rounded Rect [id:dp8302817931922133] 
\draw [fill=green, fill opacity=0.2]  (443.27,129.51) .. controls (443.27,125.08) and (446.86,121.5) .. (451.28,121.5) -- (475.32,121.5) .. controls (479.74,121.5) and (483.33,125.08) .. (483.33,129.51) -- (483.33,153.97) .. controls (483.33,158.4) and (479.74,161.98) .. (475.32,161.98) -- (451.28,161.98) .. controls (446.86,161.98) and (443.27,158.4) .. (443.27,153.97) -- cycle ;
%Straight Lines [id:da8409601029701463] 
\draw  [line width=2pt]  (85,80) -- (160,80) -- (232,80) -- (306,80) -- (410,135) -- (464,135) ;
\draw [fill=uuuuuu] (85,80) circle (2.5pt);
\draw [fill=uuuuuu] (160,80) circle (2.5pt);
\draw [fill=uuuuuu] (232,80) circle (2.5pt);
\draw [fill=uuuuuu] (306,80) circle (2.5pt);
\draw [fill=uuuuuu] (410,135) circle (2.5pt);
\draw [fill=green] (464,135) circle (2.5pt);
% Text Node
% \draw (337,140.31) node [anchor=north west][inner sep=0.75pt]   [align=left] {$\cdots$};
% Text Node
\draw (454,213) node [anchor=north west][inner sep=0.75pt]   [align=left] {\text{small parts}};
% Text Node
\draw (161,213) node [anchor=north west][inner sep=0.75pt]   [align=left] {\text{large parts}};
% Text Node
\draw (458,145) node [anchor=north west][inner sep=0.75pt]   [align=left] {$v$};
% Text Node
\draw (80,90) node [anchor=north west][inner sep=0.75pt]   [align=left] {$u$};
% % Text Node
% \draw (155,90) node [anchor=north west][inner sep=0.75pt]   [align=left] {$a$};
% % Text Node
% \draw (226,87) node [anchor=north west][inner sep=0.75pt]   [align=left] {$b$};
% % Text Node
% \draw (300,90) node [anchor=north west][inner sep=0.75pt]   [align=left] {$c$};
% % Text Node
% \draw (404,143) node [anchor=north west][inner sep=0.75pt]   [align=left] {$d$};
% Text Node
\draw (455,169) node [anchor=north west][inner sep=0.75pt]   [align=left] {$X$};

\end{tikzpicture}

%%%%%%%%%%%%%%%%%%%%%%%%%%%%
\caption{auxiliary figure for the proof of Claim~\ref{Claim:small-part}.} 
\label{Fig:Claim2-9}
\end{figure}
%%%%%%%%%%%%%%%%%%%%%%%%%%%%%%%%%
    %%%%%%%%%%%%%%%%%%%%%
    %
    \begin{claim}\label{Claim:small-part}
        We have $|\vartheta(V_i)|\le  2$ for every $i \in [q+1, k]$. 
    \end{claim}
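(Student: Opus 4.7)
The plan is to show that for each $c \in \vartheta(V_i)$, the preimage $V_i \cap \vartheta^{-1}(c)$ has size strictly greater than $\frac{n}{3k-2}$. Since these preimages partition $V_i$, summing yields $|V_i| > |\vartheta(V_i)| \cdot \frac{n}{3k-2}$, which combined with the smallness $|V_i| < \frac{3n}{3k-2}$ (as $i \in [q+1, k]$) forces $|\vartheta(V_i)| < 3$, i.e., $|\vartheta(V_i)| \le 2$, as claimed.

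To obtain $|V_i \cap \vartheta^{-1}(c)| > \frac{n}{3k-2}$, I would first produce a vertex $u_c \in V_1$ (recall $q \ge 1$, so $V_1$ is a large part) whose neighbors in $V_i$ all receive $\vartheta$-color $c$. Fixing any $v \in V_i \cap \vartheta^{-1}(c)$ and applying Proposition~\ref{PROP:Prelim-B}~\ref{PROP:Prelim-B-2} to $v$, one obtains an edge through $v$ whose $\varphi$-image contains $J_{\varphi} = [q]$; take $u_c$ to be its vertex in $V_1$. Since $u_c$ lies in the large part $V_1$, Claim~\ref{CLAIM:k-large-induction}~\ref{CLAIM:k-large-induction-1} gives $|\vartheta(N_{\mathcal{H}}(u_c) \cap V_i)| = 1$, and this single color must equal $c$ since $v$ belongs to this set. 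Consequently $N_{\mathcal{H}}(u_c) \cap V_i \subseteq V_i \cap \vartheta^{-1}(c)$, so it suffices to lower-bound $|N_{\mathcal{H}}(u_c) \cap V_i|$.

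The main step is this lower bound. By Proposition~\ref{PROP:Prelim-B}~\ref{PROP:Prelim-B-1} applied to $u_c$, there is an edge $e \ni u_c$ with $\varphi(e) = [q] \cup I \cup \{i\}$ for some $(r-q-1)$-set $I \subseteq [q+1, k] \setminus \{i\}$; set $S := e \setminus (\{u_c\} \cup (e \cap V_i))$, a set of size $r-2$ with $\varphi(S) = [2,q] \cup I$. Then $d_{\mathcal{H}}(S \cup \{u_c\}) \ge \delta_{r-1}^{+}(\mathcal{H}) > \frac{3k-3r+1}{3k-2} n$, while every contributing vertex has $\varphi$-color in $[q+1, k] \setminus I$ — either equal to $i$, or one of the $k-r$ other (necessarily small) colors. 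Subtracting the contribution from those $k-r$ small parts, each of size less than $\frac{3n}{3k-2}$, from the codegree bound isolates the $V_i$-contribution as exceeding $\frac{n}{3k-2}$, giving the required estimate. I expect this codegree computation to be the crux; everything else amounts to bookkeeping with Claim~\ref{CLAIM:k-large-induction} and Proposition~\ref{PROP:Prelim-B}.
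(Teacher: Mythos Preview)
Your proposal is correct and follows essentially the same approach as the paper: for each vertex (equivalently, each color) in $V_i$, you exhibit an edge whose $(r-1)$-subset containing a vertex of $V_1$ has many neighbors in $V_i$, and then invoke Claim~\ref{CLAIM:k-large-induction}~\ref{CLAIM:k-large-induction-1} to see those neighbors are $\vartheta$-monochromatic; the smallness of $V_i$ then forces $|\vartheta(V_i)| < 3$. The only difference is cosmetic: the paper uses a single edge $e \ni v$ with $[q] \subseteq \varphi(e)$ to both locate $u \in V_1$ and compute the codegree of $e \setminus \{v\}$, whereas you first pick one edge through $v$ to find $u_c$ and then build a second edge through $u_c$ for the codegree bound --- an unnecessary detour, since the first edge already serves both purposes.
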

    \begin{proof}[Proof of Claim~\ref{Claim:small-part}]
        Fix $i \in [q+1, k]$ and $v \in V_i$. 
        By Proposition~\ref{PROP:Prelim-B}, there exists an edge $e \in \mathcal{H}$ containing $v$ such that  $[q] \subseteq  \varphi(e)$ (see Figure~\ref{Fig:Claim2-9}). 
        Let 
        \begin{align*}
            X 
            \coloneqq V_{i} \cap N_{\mathcal{H}}(e \setminus \{v\}) 
            = N_{\mathcal{H}}(e\setminus\{v\}) \setminus \bigcup_{j\in \overline{\varphi(e)}} V_{j}.
        \end{align*}
        % Note that $v \in X$. 
        Since $V_j$ is small for $j \in \overline{\varphi(e)} \subseteq [q+1, k]$, we have 
        \begin{align*}
            |X|
            \ge d_{\mathcal{H}}(e \setminus \{v\}) - \sum_{j \in \overline{\varphi(e)}}|V_{j}|
            > \frac{3k-3r+1}{3k-2}n-\left( k-r\right) \cdot \frac{3n}{3k-2}
            =\frac{n}{3k-2}. 
        \end{align*}
        Let $u$ denote the vertex in $e\cap V_1$, noting that $X \subseteq  N_{\mathcal{H}}(u)$. 
        Claim~\ref{CLAIM:k-large-induction}~\ref{CLAIM:k-large-induction-1} applied to $u$ shows that $|\vartheta(X)| = 1$, and hence, $X \subseteq  [v]_{\vartheta}$ (recall from the definition that $v\in X$).
        It follows that 
        \begin{align*}
            |[v]_{\vartheta} \cap V_i|
            \ge |X|
            > \frac{n}{3k-2} 
            \ge \frac{|V_i|}{3}.
        \end{align*}
        Since $v$ was chosen arbitrarily, we conclude that $|\vartheta(V_i)| < \frac{|V_i|}{|V_i|/3} = 3$. 
    \end{proof}
    %
    %%%%%%%%%%%%%%%%%%%%%%%%%
    %%%%%%%%%%%%%%%%%%%%%%%%%%%%%%%%%https://www.mathcha.io/editor
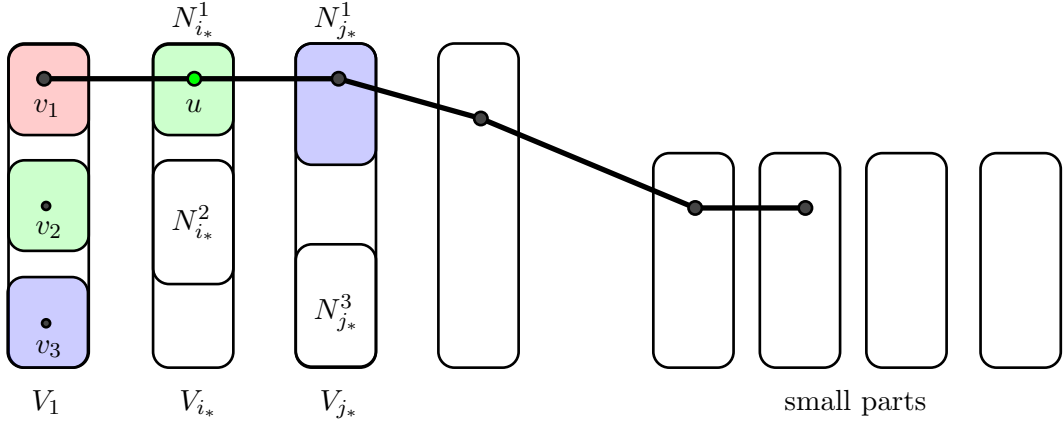
\begin{figure}[htbp]
\centering
%%%%%%%%%%%%%%%%%%%%%%%%%%%%%%%%
\tikzset{every picture/.style={line width=1pt}} %set default line width to 0.75pt        

\begin{tikzpicture}[x=0.75pt,y=0.75pt,yscale=-1,xscale=1]
%uncomment if require: \path (0,243); %set diagram left start at 0, and has height of 243

%Rounded Rect [id:dp7893138493498453] 
\draw   (97.24,30.29) .. controls (97.24,25.87) and (100.83,22.28) .. (105.25,22.28) -- (129.29,22.28) .. controls (133.71,22.28) and (137.3,25.87) .. (137.3,30.29) -- (137.3,177.31) .. controls (137.3,181.73) and (133.71,185.32) .. (129.29,185.32) -- (105.25,185.32) .. controls (100.83,185.32) and (97.24,181.73) .. (97.24,177.31) -- cycle ;
%Rounded Rect [id:dp4438812110977284] 
\draw   (240.61,30.29) .. controls (240.61,25.87) and (244.19,22.28) .. (248.62,22.28) -- (272.65,22.28) .. controls (277.08,22.28) and (280.67,25.87) .. (280.67,30.29) -- (280.67,177.31) .. controls (280.67,181.73) and (277.08,185.32) .. (272.65,185.32) -- (248.62,185.32) .. controls (244.19,185.32) and (240.61,181.73) .. (240.61,177.31) -- cycle ;
%Rounded Rect [id:dp3595675259039779] 
\draw   (169.4,30.29) .. controls (169.4,25.87) and (172.99,22.28) .. (177.41,22.28) -- (201.45,22.28) .. controls (205.87,22.28) and (209.46,25.87) .. (209.46,30.29) -- (209.46,177.31) .. controls (209.46,181.73) and (205.87,185.32) .. (201.45,185.32) -- (177.41,185.32) .. controls (172.99,185.32) and (169.4,181.73) .. (169.4,177.31) -- cycle ;
%Rounded Rect [id:dp00463171141612051] 
\draw   (419.1,85.44) .. controls (419.1,81.01) and (422.69,77.43) .. (427.11,77.43) -- (451.15,77.43) .. controls (455.57,77.43) and (459.16,81.01) .. (459.16,85.44) -- (459.16,177.31) .. controls (459.16,181.73) and (455.57,185.32) .. (451.15,185.32) -- (427.11,185.32) .. controls (422.69,185.32) and (419.1,181.73) .. (419.1,177.31) -- cycle ;
%Rounded Rect [id:dp9952980292575624] 
\draw   (472.27,85.44) .. controls (472.27,81.01) and (475.86,77.43) .. (480.28,77.43) -- (504.32,77.43) .. controls (508.74,77.43) and (512.33,81.01) .. (512.33,85.44) -- (512.33,177.31) .. controls (512.33,181.73) and (508.74,185.32) .. (504.32,185.32) -- (480.28,185.32) .. controls (475.86,185.32) and (472.27,181.73) .. (472.27,177.31) -- cycle ;
%Rounded Rect [id:dp1230504492973894] 
\draw   (525.44,85.44) .. controls (525.44,81.01) and (529.02,77.43) .. (533.45,77.43) -- (557.48,77.43) .. controls (561.91,77.43) and (565.5,81.01) .. (565.5,85.44) -- (565.5,177.31) .. controls (565.5,181.73) and (561.91,185.32) .. (557.48,185.32) -- (533.45,185.32) .. controls (529.02,185.32) and (525.44,181.73) .. (525.44,177.31) -- cycle ;
%Rounded Rect [id:dp33894680857930193] 
\draw   (311.81,30.29) .. controls (311.81,25.87) and (315.4,22.28) .. (319.83,22.28) -- (343.86,22.28) .. controls (348.29,22.28) and (351.87,25.87) .. (351.87,30.29) -- (351.87,177.31) .. controls (351.87,181.73) and (348.29,185.32) .. (343.86,185.32) -- (319.83,185.32) .. controls (315.4,185.32) and (311.81,181.73) .. (311.81,177.31) -- cycle ;
%Rounded Rect [id:dp9759614186643464] 
\draw  [fill=red, fill opacity=0.2] (97.24,30.78) .. controls (97.24,26.36) and (100.83,22.77) .. (105.25,22.77) -- (129.29,22.77) .. controls (133.71,22.77) and (137.3,26.36) .. (137.3,30.78) -- (137.3,60.32) .. controls (137.3,64.74) and (133.71,68.33) .. (129.29,68.33) -- (105.25,68.33) .. controls (100.83,68.33) and (97.24,64.74) .. (97.24,60.32) -- cycle ;
%Rounded Rect [id:dp18301399837541554] 
\draw  [fill=green, fill opacity=0.2] (97.24,89.03) .. controls (97.24,84.61) and (100.83,81.02) .. (105.25,81.02) -- (129.29,81.02) .. controls (133.71,81.02) and (137.3,84.61) .. (137.3,89.03) -- (137.3,118.57) .. controls (137.3,122.99) and (133.71,126.58) .. (129.29,126.58) -- (105.25,126.58) .. controls (100.83,126.58) and (97.24,122.99) .. (97.24,118.57) -- cycle ;
%Rounded Rect [id:dp5918731529769434] 
\draw  [fill=blue, fill opacity=0.2] (96.83,147.78) .. controls (96.83,143.35) and (100.42,139.77) .. (104.84,139.77) -- (128.88,139.77) .. controls (133.3,139.77) and (136.89,143.35) .. (136.89,147.78) -- (136.89,177.31) .. controls (136.89,181.73) and (133.3,185.32) .. (128.88,185.32) -- (104.84,185.32) .. controls (100.42,185.32) and (96.83,181.73) .. (96.83,177.31) -- cycle ;
%Rounded Rect [id:dp7048179791465481] 
\draw [fill=green, fill opacity=0.2]  (169.4,30.78) .. controls (169.4,26.36) and (172.99,22.77) .. (177.41,22.77) -- (201.45,22.77) .. controls (205.87,22.77) and (209.46,26.36) .. (209.46,30.78) -- (209.46,60.32) .. controls (209.46,64.74) and (205.87,68.33) .. (201.45,68.33) -- (177.41,68.33) .. controls (172.99,68.33) and (169.4,64.74) .. (169.4,60.32) -- cycle ;
%Rounded Rect [id:dp2696696961170355] 
\draw  [fill=blue, fill opacity=0.2] (240.61,30.29) .. controls (240.61,25.87) and (244.19,22.28) .. (248.62,22.28) -- (272.65,22.28) .. controls (277.08,22.28) and (280.67,25.87) .. (280.67,30.29) -- (280.67,75.27) .. controls (280.67,79.69) and (277.08,83.28) .. (272.65,83.28) -- (248.62,83.28) .. controls (244.19,83.28) and (240.61,79.69) .. (240.61,75.27) -- cycle ;
%Rounded Rect [id:dp7347871436441218] 
\draw   (169.4,89.03) .. controls (169.4,84.61) and (172.99,81.02) .. (177.41,81.02) -- (201.45,81.02) .. controls (205.87,81.02) and (209.46,84.61) .. (209.46,89.03) -- (209.46,135.27) .. controls (209.46,139.69) and (205.87,143.28) .. (201.45,143.28) -- (177.41,143.28) .. controls (172.99,143.28) and (169.4,139.69) .. (169.4,135.27) -- cycle ;
%Rounded Rect [id:dp5131347134735991] 
\draw   (240.61,131.29) .. controls (240.61,126.87) and (244.19,123.28) .. (248.62,123.28) -- (272.65,123.28) .. controls (277.08,123.28) and (280.67,126.87) .. (280.67,131.29) -- (280.67,176.82) .. controls (280.67,181.24) and (277.08,184.83) .. (272.65,184.83) -- (248.62,184.83) .. controls (244.19,184.83) and (240.61,181.24) .. (240.61,176.82) -- cycle ;
% %Rounded Rect [id:dp2696696961170355] 
% \draw   (240.61,30.78) .. controls (240.61,26.36) and (244.19,22.77) .. (248.62,22.77) -- (272.65,22.77) .. controls (277.08,22.77) and (280.67,26.36) .. (280.67,30.78) -- (280.67,60.32) .. controls (280.67,64.74) and (277.08,68.33) .. (272.65,68.33) -- (248.62,68.33) .. controls (244.19,68.33) and (240.61,64.74) .. (240.61,60.32) -- cycle ;
% %Rounded Rect [id:dp7347871436441218] 
% \draw   (169.4,89.03) .. controls (169.4,84.61) and (172.99,81.02) .. (177.41,81.02) -- (201.45,81.02) .. controls (205.87,81.02) and (209.46,84.61) .. (209.46,89.03) -- (209.46,118.57) .. controls (209.46,122.99) and (205.87,126.58) .. (201.45,126.58) -- (177.41,126.58) .. controls (172.99,126.58) and (169.4,122.99) .. (169.4,118.57) -- cycle ;
% %Rounded Rect [id:dp5131347134735991] 
% \draw   (240.61,147.28) .. controls (240.61,142.86) and (244.19,139.27) .. (248.62,139.27) -- (272.65,139.27) .. controls (277.08,139.27) and (280.67,142.86) .. (280.67,147.28) -- (280.67,176.82) .. controls (280.67,181.24) and (277.08,184.83) .. (272.65,184.83) -- (248.62,184.83) .. controls (244.19,184.83) and (240.61,181.24) .. (240.61,176.82) -- cycle ;
%Rounded Rect [id:dp14524702880123885] 
\draw   (582.44,85.44) .. controls (582.44,81.01) and (586.02,77.43) .. (590.45,77.43) -- (614.48,77.43) .. controls (618.91,77.43) and (622.5,81.01) .. (622.5,85.44) -- (622.5,177.31) .. controls (622.5,181.73) and (618.91,185.32) .. (614.48,185.32) -- (590.45,185.32) .. controls (586.02,185.32) and (582.44,181.73) .. (582.44,177.31) -- cycle ;
%Straight Lines [id:da294343943578498] 
\draw  [line width=2pt]  (115,40) -- (190,40) -- (262,40) -- (333,60) -- (440,105) -- (495,105);
\draw [fill=uuuuuu] (115,40) circle (2.5pt);
\draw [fill=green] (190,40) circle (2.5pt);
\draw [fill=uuuuuu] (262,40) circle (2.5pt);
\draw [fill=uuuuuu] (333,60) circle (2.5pt);
\draw [fill=uuuuuu] (440,105) circle (2.5pt);
\draw [fill=uuuuuu] (495,105) circle (2.5pt);
% %Straight Lines [id:da7688760343179282] 
% \draw    (116.19,96.41) -- (116.19,153.86) ;
%
\draw [fill=uuuuuu] (116,104) circle (1.5pt);
\draw [fill=uuuuuu] (116,163) circle (1.5pt);
% Text Node
\draw (108,49) node [anchor=north west][inner sep=0.75pt]   [align=left] {$v_1$};
\draw (184,49) node [anchor=north west][inner sep=0.75pt]   [align=left] {$u$};
% Text Node
\draw (109,110) node [anchor=north west][inner sep=0.75pt]   [align=left] {$v_2$};
% Text Node
\draw (109,170) node [anchor=north west][inner sep=0.75pt]   [align=left] {$v_3$};
% Text Node
\draw (177,0) node [anchor=north west][inner sep=0.75pt]   [align=left] {$N_{i_{\ast}}^1$};
% Text Node
\draw (248,0) node [anchor=north west][inner sep=0.75pt]   [align=left] {$N_{j_{\ast}}^1$};
% Text Node
\draw (177,102) node [anchor=north west][inner sep=0.75pt]   [align=left] {$N_{i_{\ast}}^2$};
% Text Node
\draw (248,147) node [anchor=north west][inner sep=0.75pt]   [align=left] {$N_{j_{\ast}}^3$};
% Text Node
\draw (107,195) node [anchor=north west][inner sep=0.75pt]   [align=left] {$V_1$};
% Text Node
\draw (181,195) node [anchor=north west][inner sep=0.75pt]   [align=left] {$V_{i_{\ast}}$};
% Text Node
\draw (251,195) node [anchor=north west][inner sep=0.75pt]   [align=left] {$V_{j_{\ast}}$};
% % Text Node
% \draw (375,120) node [anchor=north west][inner sep=0.75pt]   [align=left] {$\cdots$};
% Text Node
\draw (483,195) node [anchor=north west][inner sep=0.75pt]   [align=left] {\text{small parts}};
% % Text Node
% \draw (255,49) node [anchor=north west][inner sep=0.75pt]   [align=left] {$a$};
% % Text Node
% \draw (327,67) node [anchor=north west][inner sep=0.75pt]   [align=left] {$b$};
% % Text Node
% \draw (436,113) node [anchor=north west][inner sep=0.75pt]   [align=left] {$c$};
% % Text Node
% \draw (487,110) node [anchor=north west][inner sep=0.75pt]   [align=left] {$d$};
%
\end{tikzpicture}
%%%%%%%%%%%%%%%%%%%%%%%%%%%%
\caption{auxiliary figure for the proof of Claim~\ref{Claim:large-part}.}
\label{Fig:pf-of-large-parts}
\end{figure}
    \begin{claim}\label{Claim:large-part}
        We have $|\vartheta(V_i)|\le  2$ for every $i\in [q]$. 
    \end{claim}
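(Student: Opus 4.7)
Suppose for contradiction that $|\vartheta(V_1)| \ge 3$; pick $v_1, v_2, v_3 \in V_1$ with pairwise distinct $\vartheta$-colors $a_1, a_2, a_3$, and for each $\ell$ let $\sigma_\ell \colon [k]\setminus\{1\} \to [k]\setminus\{a_\ell\}$ be the bijection from Claim~\ref{CLAIM:k-large-induction} with $\vartheta|_{N^\ell} = \sigma_\ell \circ \varphi|_{N^\ell}$, where $N^\ell \coloneqq N_{\mathcal{H}}(v_\ell)$. The first ingredient is a disjointness statement: for every $j \in [q]\setminus\{1\}$ and distinct $\ell, \ell' \in \{1,2,3\}$, the sets $V_j \cap N^\ell$ and $V_j \cap N^{\ell'}$ are disjoint. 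Indeed, any vertex $u$ in their intersection would, via Claim~\ref{CLAIM:k-large-induction} applied to $u$ (valid because $j \in [q]$), yield a bijection $\tau_u$ satisfying simultaneously $\tau_u(1) = \vartheta(v_\ell) = a_\ell$ and $\tau_u(1) = a_{\ell'}$, contradicting $a_\ell \neq a_{\ell'}$.

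The second ingredient is a codegree-based lower bound $|V_j \cap N^\ell| > n/(3k-2)$ for every $j \in [q]\setminus\{1\}$ and $\ell \in \{1,2,3\}$. Using Proposition~\ref{PROP:Prelim-B}~(i), fix an edge $e \ni v_\ell$ with $\varphi(e) = [q] \cup I$ for some $I \in \binom{[q+1,k]}{r-q}$, and set $S \coloneqq e \setminus \{e \cap V_j\}$; then $\varphi(S) = ([q]\setminus\{j\}) \cup I$, so the only large $\varphi$-part inside the allowed region $[k] \setminus \varphi(S)$ is $V_j$, while the small parts in the allowed region contribute strictly less than $(k-r) \cdot 3n/(3k-2)$. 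Combined with $d_{\mathcal{H}}(S) > (3k-3r+1)n/(3k-2)$, this yields $|N(S) \cap V_j| > n/(3k-2)$, and hence $|V_j \cap N^\ell| > n/(3k-2)$ since $v_\ell \in S$. Combining this bound with the disjointness statement, one obtains $|V_j| > 3n/(3k-2)$ for every $j \in [q] \setminus \{1\}$. A parallel argument treating $V_1$ (taking the link of a vertex $u \in e \cap V_j$ for some $j \in [q]\setminus\{1\}$, and observing via Claim~\ref{CLAIM:k-large-induction} applied to $u$ that every vertex in $V_1 \cap N(u)$ has $\vartheta$-color $\tau_u(1) = a_\ell$) shows $|W_{a_\ell} \cap V_1| > n/(3k-2)$ for each $\ell$, where $W_a \coloneqq \vartheta^{-1}(a)$; pairwise disjointness of the three classes then forces $|V_1| > 3n/(3k-2)$.

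The final step is to derive a contradiction from the combined bounds: the strict lower bounds $|V_j| > 3n/(3k-2)$ for $j \in [q]$, the smallness bounds $|V_j| < 3n/(3k-2)$ for $j \in [q+1,k]$, the lower bounds on the $r-q$ good-small parts from Proposition~\ref{PROP:Prelim-D} (each of size at least $n/(k+2)$), and the equality $\sum_{j \in [k]} |V_j| = n$. The hypothesis $k \ge (4r-2)/3$ is precisely what makes these inequalities mutually incompatible. I expect this numerical aggregation to be the main obstacle: each ingredient only provides strictness at the threshold level, and extracting the contradiction requires a delicate balance between the $3n/(3k-2)$ threshold (governing large/small) and the $n/(k+2)$ threshold (governing good parts), which is exactly where the phase-transition value $(4r-2)/3$ enters.
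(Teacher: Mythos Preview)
Your proposal has a genuine gap in the final step, and the approach breaks down entirely in the important case $q=1$.

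First, when $q=1$ (which is the \emph{only} nonzero value of $q$ when $r=3$, for instance), the set $[q]\setminus\{1\}$ is empty, so your first and second ingredients are vacuous, and your ``parallel argument for $V_1$'' also fails since it requires a vertex $u\in e\cap V_j$ with $j\in[q]\setminus\{1\}$. You obtain no new information whatsoever in this case.

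Second, even when $q\ge 2$, your final numerical aggregation does not yield a contradiction. You correctly derive the strict inequality $|V_j|>3n/(3k-2)$ for all $j\in[q]$, but this---combined with smallness of $V_j$ for $j\in[q+1,k]$, the bound $|I_\varphi|\ge r$, and $\sum_j|V_j|=n$---is not inconsistent. For a concrete witness with $r=4$, $k=5$, $q=2$: take $|V_1|=|V_2|=3n/13+\epsilon$, $|V_3|=|V_4|=3n/13-\epsilon$, $|V_5|=n/13$ for small $\epsilon>0$; all your constraints hold. The underlying problem is that your argument only upgrades the already-known ``large'' property of parts in $J_\varphi$ to a strict inequality at the \emph{same} threshold $3n/(3k-2)$, which carries essentially no quantitative force.

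The paper's approach is structurally different. Rather than restricting attention to large parts $j\in[q]$, it uses Claim~\ref{CLAIM:k-large-induction} (applied to $v_1$) to locate two specific indices $i_*,j_*\in[2,k]$---not necessarily in $J_\varphi$---for which $N_{i_*}^1\cap N_{i_*}^2=\emptyset$ and $N_{j_*}^1\cap N_{j_*}^3=\emptyset$. The minimum of the four sizes $|N_{i_*}^1|,|N_{i_*}^2|,|N_{j_*}^1|,|N_{j_*}^3|$ is then at most $(|V_{i_*}|+|V_{j_*}|)/4$, and this $1/4$ factor is fed into a single codegree inequality for an edge built via Proposition~\ref{PROP:Prelim-B}~\ref{PROP:Prelim-B-3}. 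The contradiction comes from rewriting $n=\sum_j|V_j|$ so as to combine this with an averaging bound over the small parts outside $\{1,i_*,j_*\}$. The key point is that $i_*,j_*$ are determined by the $\vartheta$-coloring, not by the sizes $|V_{i_*}|,|V_{j_*}|$; restricting the disjointness argument to $j\in J_\varphi$ as you do discards precisely the leverage needed.
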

    \begin{proof}[Proof of Claim~\ref{Claim:large-part}]
        Suppose to the contrary that there exists $i\in [q]$ such that $|\vartheta(V_i)|\ge 3$.
        By symmetry, we may assume that $i = 1$. 
        Let $v_1, v_2, v_3 \in V_1$ be three vertices such that $\vartheta(v_1), \vartheta(v_2), \vartheta(v_3)$ are pairwise distinct. 
        % By symmetry, we may assume that $(\vartheta(v_1), \vartheta(v_2), \vartheta(v_3))= (1,2,3)$. % We do not need this assumption. 
        Let $N_{j}^{i} \coloneqq V_j \cap N_{\mathcal{H}}(v_{i})$ for $(i,j) \in [3] \times [k]$. 
        By Claim~\ref{CLAIM:k-large-induction}, there exists a unique pair $\{i_{\ast}, j_{\ast}\} \subseteq  [2,k]$ with $i_{\ast} \neq j_{\ast}$ such that $(\vartheta(N_{i_{\ast}}^{1}), \vartheta(N_{j_{\ast}}^{1})) = (\vartheta(v_2), \vartheta(v_3))$. In addition (by Claim~\ref{CLAIM:k-large-induction}~\ref{CLAIM:k-large-induction-2}), we have $N_{i_{\ast}}^{1} \cap N_{i_{\ast}}^2 = \emptyset$ and $N_{j_{\ast}}^{1} \cap N_{j_{\ast}}^3 = \emptyset$ (see Figure~\ref{Fig:pf-of-large-parts}).
        % By relabeling the vertices in $K_{k}^{r}$, we may assume that $(s,t) = (2,3)$ (see Figure~\ref{Fig:pf-of-large-parts}). 
        
        % We set $N_{j}^{i}\coloneqq N_{\mathcal{H}}(v_{i}) \cap V_{j}$, for $i=1,2,3$ and $j\ge 2$. By Claim~\ref{CLAIM:k-large-induction}, there exist unique $s$ and $t$, such that $\vartheta(N_{t}^{1})=2$ and $\vartheta(N_{s}^{1})=3$, here $2\le  t,s \le  k, t\ne s$. Moreover, $N_{t}^{1} \cap N_{t}^{2} = \emptyset$ and $N_{s}^{1} \cap N_{s}^{3} = \emptyset$.
        Fix a $(k-r)$-set $S \coloneqq \{i_1, \ldots, i_{k-r}\} \subseteq \overline{\{1,i_{\ast},j_{\ast}\}}$ such that 
        \begin{align*}
            \max\left\{|V_{j}| \colon j \in S \right\}
            \le \min\left\{|V_{j}| \colon j \in \overline{\{1,i_{\ast},j_{\ast}\}} \setminus S\right\}.
        \end{align*}
        Let $(m,\ell) \in \{(i_{\ast},1), (i_{\ast},2), (j_{\ast},1), (j_{\ast},3)\}$ be a member such that 
        \begin{align*}
            |N_{m}^{\ell}|
            = \min\left\{|N_{i_{\ast}}^{1}|, |N_{i_{\ast}}^{2}|, |N_{j_{\ast}}^{1}|, |N_{j_{\ast}}^{3}|\right\}. 
        \end{align*}
        In particular, $|N_{m}^{\ell}| \le (|V_{i_{\ast}}|+|V_{j_{\ast}}|)/4$. 
        
        Since there are $q \le r-2$ large parts and $V_1$ is large, each $V_{j}$ is small for $j \in S$.  
        By Proposition~\ref{PROP:Prelim-B}, there exists $e\in \mathcal{H}$ containing $v_{\ell}$ such that $\varphi(e) = \overline{S}$ (see Figure~\ref{Fig:Claim2-9} for the case $(\ell, m) = (1,i_{\ast})$). 
        Let $u$ denote the vertex in $e\cap V_{m}$, noting that $u \in N_{m}^{\ell}$.
        Observe that $N_{\mathcal{H}}(e \setminus \{u\}) \subseteq  N_{m}^{\ell} \cup V_{i_1} \cup \cdots \cup V_{i_{k-r}}$. 
        Hence, 
        \begin{align*}
            d_{\mathcal{H}}(e \setminus \{u\})
            \le |N_{m}^{\ell}| + |V_{i_1}| + \cdots + |V_{i_{k-r}}|
            \le \frac{|V_{i_{\ast}}|+|V_{j_{\ast}}|}{4} + |V_{i_1}| + \cdots + |V_{i_{k-r}}|.
        \end{align*}
        Let $I \coloneqq \left\{j \in \overline{\{1,i_{\ast},j_{\ast}\}} \colon V_j \text{ is small}\right\}$ and $p \coloneqq |I|$, noting that $p \ge k-2-q \ge k-r$ (recall that $V_1$ is large). 
        In addition, since $k\ge \frac{4r-2}{3}$, we have $p \le k-3 < 4k-4r$. 
        It follows from the definition of $S$ and a simple averaging argument that $|V_{i_1}| + \cdots + |V_{i_{k-r}}| \le \frac{k-r}{p} \cdot \sum_{j\in I}|V_j|$. 
        Therefore, the inequality above for $d_{\mathcal{H}}(e \setminus \{u\})$ continues as 
        \begin{align*}
            d_{\mathcal{H}}(e \setminus \{u\})
            \le \frac{|V_{i_{\ast}}|+|V_{j_{\ast}}|}{4} + \frac{k-r}{p} \cdot \sum_{j\in I}|V_j|, 
        \end{align*}
        which implies that $|V_{i_{\ast}}|+|V_{j_{\ast}}| + \frac{4(k-r)}{p} \sum_{j\in I}|V_j| \ge 4\cdot \delta_{r-1}^{+}(\mathcal{H}) > 4\cdot \frac{3k-3r+1}{3k-2} n$. 
        Consequently, 
        \begin{align*}
            n 
            & = \sum_{i\in [k]}|V_i| \\
            & = |V_1|
                +\left( |V_{i_{\ast}}|+|V_{j_{\ast}}| + \frac{4(k-r)}{p} \cdot \sum_{j\in I}|V_j| \right)
                - \frac{4(k-r)-p}{p} \cdot \sum_{j \in I} |V_j| + \sum_{j \in \overline{\{1,i_{\ast},j_{\ast}\}} \setminus I} |V_j| \\
            & > \frac{3 n}{3k-2} + 4\cdot \frac{3k-3r+1}{3k-2} n - \frac{4(k-r)-p}{p} \cdot p\cdot \frac{3 n}{3k-2} + \frac{3(k-3-p)}{3k-2}n 
            = n,
        \end{align*}
        a contradiction. 
    \end{proof}

    Claims~\ref{Claim:small-part} and~\ref{Claim:large-part}, together with Proposition~\ref{PROP:Final-compute}, imply that $\mathcal{H}$ is uniquely $k$-colorable, contradicting the assumption that $\mathcal{H}$ is not uniquely $k$-colorable. 
\end{proof}
Next, we prove Theorem~\ref{THM:main} for the case $k < \frac{4r-2}{3}$. The proof closely parallels that of Proposition~\ref{PROP:case-k-large}. 
\begin{proof}[Proof of Proposition~\ref{PROP:case-k-small}]
    By contradiction, let $r$ be the smallest integer for which Proposition~\ref{PROP:case-k-small} fails. 
    Note that the case $r=2$ is is vacuously true, as $2=r\le k<\frac{4r-2}{3}=2$. 
    So we may assume that $r \ge 3$. 
    Let $\mathcal{H}$ be an $n$-vertex $k$-partite $r$-graph without isolated vertices and with $\delta_{r-1}^{+}(\mathcal{H})>\frac{k-r+1}{k+2}n$ that is not uniquely $k$-colorable.

    Fix $\varphi \in \mathrm{Hom}(\mathcal{H},K_{k}^{r})$ and let $V_{i} \coloneqq \varphi^{-1}(i)$ for $i \in [k]$.
    Let $p \coloneqq |I_{\varphi}|$ and recall from Proposition~\ref{PROP:Prelim-D} that $p \ge r$. 
    By relabelling vertices in $K_{k}^{r}$ we may assume that $I_{\varphi} = [p]$, i.e. $V_1, \ldots, V_{p}$ are good sets.    
    
    Fix an arbitrary homomorphism $\vartheta \in \mathrm{Hom}(\mathcal{H}, K_{k}^{r})$.
    \begin{claim}\label{CLAIM:induction-b}
        For every $i \in [p]$ and $v \in V_i$, the following statements hold$\colon$ 
        \begin{enumerate}[label=(\roman*)]
            \item\label{CLAIM:induction-b-1} $|\vartheta(N_{\mathcal{H}}(v) \cap V_{j})|=1$ for every $j\in [k]\setminus \{i\}$, 
            \item\label{CLAIM:induction-b-2} $\vartheta(N_{\mathcal{H}}(v) \cap V_{j}) \neq \vartheta(v)$ for every $j\in [k]\setminus \{i\}$, and 
            \item\label{CLAIM:induction-b-3} $\vartheta(N_{\mathcal{H}}(v) \cap V_{j}) \neq \vartheta(N_{\mathcal{H}}(v) \cap V_{j'})$ for all distinct $j,j' \in [k]\setminus \{i\}$. 
        \end{enumerate}    
    \end{claim}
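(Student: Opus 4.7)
The plan is to mirror Claim~\ref{CLAIM:k-large-induction}: regard $L_{\mathcal{H}}(v)$ as an $(r-1)$-graph on $N \coloneqq N_{\mathcal{H}}(v)$ and invoke Theorem~\ref{THM:main} with parameters $(k-1, r-1)$, which will hold by the induction hypothesis on $r$ (together with the already established Proposition~\ref{PROP:case-k-large} and Theorem~\ref{THM:Bollobas-clique} as a base case).

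Fixing $i \in [p]$ and $v \in V_i$, one first reduces to $\vartheta(v) = i$ by replacing $\vartheta$ with $\eta \circ \vartheta$ for a suitable $\eta \in \mathrm{Aut}(K_k^r)$. Since $\mathcal{H}$ is $k$-partite, $N \subseteq V(\mathcal{H}) \setminus V_i$; since $i \in I_\varphi$ the part $V_i$ is good, giving $|N| \le n - |V_i| \le \frac{(k+1)n}{k+2}$, and $L_{\mathcal{H}}(v)$ clearly contains no isolated vertices. For every $(r-2)$-set $T \in \partial_{r-2} L_{\mathcal{H}}(v)$, we would observe $d_{L_{\mathcal{H}}(v)}(T) = d_{\mathcal{H}}(T \cup \{v\}) \ge \delta_{r-1}^{+}(\mathcal{H})$, yielding
\[
\delta_{r-2}^{+}(L_{\mathcal{H}}(v)) \;\ge\; \delta_{r-1}^{+}(\mathcal{H}) \;>\; \frac{k-r+1}{k+2}\,n \;\ge\; \frac{k-r+1}{k+1}\,|N|.
\]

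The next step is to confirm that this matches the threshold in Theorem~\ref{THM:main} for $(r-1)$-graphs on $k-1$ parts. Since $k$ is an integer and $k < \frac{4r-2}{3}$, one has $k \le \frac{4r-3}{3}$, equivalently $k-1 \le \frac{4(r-1)-2}{3}$; a short computation shows that precisely in this range, the small-$k$ threshold $\frac{(k-1)-(r-1)+1}{(k-1)+2} = \frac{k-r+1}{k+1}$ dominates the large-$k$ threshold $\frac{3(k-1)-3(r-1)+1}{3(k-1)-2} = \frac{3k-3r+1}{3k-5}$. Hence Theorem~\ref{THM:main}, applied by the minimality of $r$ in Proposition~\ref{PROP:case-k-small} (with Theorem~\ref{THM:Bollobas-clique} covering $r-1 = 2$), concludes that $L_{\mathcal{H}}(v)$ is uniquely $(k-1)$-colorable.

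Finally, since $\vartheta(v) = i$ and $\vartheta$ sends edges of $\mathcal{H}$ to edges of $K_k^r$, both $\varphi|_N$ and $\vartheta|_N$ are homomorphisms from $L_{\mathcal{H}}(v)$ to $K_{k-1}^{r-1}$ on the color set $[k] \setminus \{i\}$. Unique $(k-1)$-colorability then produces a bijection $\pi$ of $[k] \setminus \{i\}$ with $\vartheta|_N = \pi \circ \varphi|_N$, which immediately yields~\ref{CLAIM:induction-b-1},~\ref{CLAIM:induction-b-2}, and~\ref{CLAIM:induction-b-3}. The main technical point is the threshold-matching verification: one must confirm that the small-$k$ branch of Theorem~\ref{THM:main} is the relevant one throughout the range $k < \frac{4r-2}{3}$, so that the codegree bound on $\mathcal{H}$ transfers cleanly to the hypothesis required for $L_{\mathcal{H}}(v)$.
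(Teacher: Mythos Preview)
Your proposal is correct and follows essentially the same approach as the paper: bound $|N|$ using goodness of $V_i$, transfer the codegree assumption to $L_{\mathcal{H}}(v)$, check that $\frac{k-r+1}{k+1}$ is the relevant $(k-1,r-1)$ threshold, and invoke unique $(k-1)$-colorability by induction to force $\varphi|_N \cong \vartheta|_N$. One small point: in your execution you cite only the minimality of $r$ in Proposition~\ref{PROP:case-k-small} together with Theorem~\ref{THM:Bollobas-clique}, but at the boundary $k-1 = \tfrac{4(r-1)-2}{3}$ (with $r-1\ge 3$) you actually need the already-proved Proposition~\ref{PROP:case-k-large}, exactly as you flagged in your plan and as the paper does explicitly by splitting into the two subcases.
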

    \begin{proof}[Proof of Claim~\ref{CLAIM:induction-b}]
        The proof is similar to that of Claim~\ref{CLAIM:k-large-induction}. 
        Fix $i \in [q]$ and $v \in V_i$. 
        We may assume that $\vartheta(v)=i$.
        %otherwise, we can replace $\vartheta$ with $\eta \circ \vartheta$, where $\eta \in \mathrm{Aut}(K_{k}^{r})$ satisfies $\eta \circ \vartheta(v) = i$. 
        Let $N \coloneqq N_{\mathcal{H}}(v)$, noting that $N \subseteq  V \setminus V_i$. 
        Since $i \in [p]$, we have $|N| \le n - |V_i| \le n-\frac{n}{k+2}$. 
        Since both $k$ and $r$ are integers, the assumption $k < \frac{4r-2}{3}$ implies that $k \le \frac{4r-3}{3}$.
        Thus, we have $k-1 \le \frac{4(r-1)-2}{3}$. 
        
        We consider the link $L_{\mathcal{H}}(v)$ as an $(r-1)$-graph on $N$, and, in particular, $L_{\mathcal{H}}(v)$ contains no isolated vertices. 
        Notice that $L_{\mathcal{H}}(v)$ satisfies  
        \begin{align*}
            \delta_{r-2}^{+}(L_{\mathcal{H}}(v)) 
            \ge \delta_{r-1}^{+}(\mathcal{H})
            > \frac{k-r+1}{k+2}n 
             = \frac{k-r+1}{k+1} \left(n - \frac{n}{k+2} \right) 
             \ge \frac{k-r+1}{k+1}|N|.
            %\ge \frac{3k-3r+1}{3k-5}\cdot v(L_{\mathcal{H}}(v))
        \end{align*}
        If $k-1 = \frac{4(r-1)-2}{3}$, then $\frac{3(k-1) - 3(r-1) +1}{3(k-1)-2} = \frac{(k-1)-(r-1)+1}{(k-1)+2}$, and it follows from Proposition~\ref{PROP:case-k-large} that $L_{\mathcal{H}}(v)$ is uniquely $(k-1)$-colorable. 
        If $k-1 < \frac{4(r-1)-2}{3}$, then it follows from the minimality of $r$ that $L_{\mathcal{H}}(v)$ is uniquely $(k-1)$-colorable.
        Thus, in both cases, $L_{\mathcal{H}}(v)$ is uniquely $(k-1)$-colorable. 
        Therefore, the induced maps $\left.\varphi\right|_{N}, \left.\vartheta\right|_{N}$ (viewed as homomorphisms from $L_{\mathcal{H}}(v)$ to $K_{k-1}^{r-1}$) are equivalent. 
        This implies Claim~\ref{CLAIM:induction-b}~\ref{CLAIM:induction-b-1},~\ref{CLAIM:induction-b-2}, and~\ref{CLAIM:induction-b-3}.
    \end{proof}
    Next, we show that $|\vartheta(V_i)|\le  2$ for every $i \in [k]$. 
    By Proposition~\ref{PROP:Final-compute}, this will complete the proof of Proposition~\ref{PROP:case-k-small}. 
    We will achieve this in two cases, addressed in Claims~\ref{Claim:small-part-b} and~\ref{Claim:large-part-b}, respectively. 
    \begin{claim}\label{Claim:small-part-b}
        We have $|\vartheta(V_i)|\le  2$ for every $i \in [p+1, k]$. 
    \end{claim}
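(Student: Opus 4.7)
Following the template of Claim~\ref{Claim:small-part}, my plan is to produce, for each $v \in V_i$ with $i \in [p+1,k]$, an edge $e \in \mathcal{H}$ containing $v$ whose remaining $r-1$ vertices all lie in good parts, set $X := V_i \cap N_\mathcal{H}(e \setminus \{v\})$, and show $|X| > |V_i|/3$. Any $u \in e \setminus \{v\}$ then satisfies $\varphi(u) \in [p]$, so Claim~\ref{CLAIM:induction-b}\ref{CLAIM:induction-b-1} applied to $u$ yields $|\vartheta(N_\mathcal{H}(u) \cap V_i)| = 1$; since $X \subseteq N_\mathcal{H}(u) \cap V_i$ and $v \in X$, this forces $X \subseteq [v]_\vartheta$. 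Consequently $|[v]_\vartheta \cap V_i| > |V_i|/3$, and letting $v$ range over $V_i$ then gives $|\vartheta(V_i)| \le 2$.

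To build $e$, I would use that $i \in \overline{J_\varphi}$ (since $V_i$ is bad and hence not large) together with Proposition~\ref{PROP:Prelim-B}\ref{PROP:Prelim-B-2}: for any $(r-1-|J_\varphi|)$-subset $I \subseteq \overline{J_\varphi}$ there is an edge $e \ni v$ with $\varphi(e) = J_\varphi \cup I \cup \{i\}$. I would choose $I$ to be the $r-1-|J_\varphi|$ largest members of $[p] \setminus J_\varphi$, which is legitimate because $p \ge r$ by Proposition~\ref{PROP:Prelim-D} and $|J_\varphi| \le r-2$ by Proposition~\ref{PROP:Prelim-A}\ref{PROP:Prelim-A-3}. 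With this choice $\varphi(e) \setminus \{i\} \subseteq [p]$, so every vertex of $e \setminus \{v\}$ lies in a good part; and $\overline{\varphi(e)}$ splits into the $p-r+1$ good-but-not-large parts $[p] \setminus (J_\varphi \cup I)$ (each of size $< 3n/(3k-2)$) and the $k-p-1$ bad parts of $[p+1,k] \setminus \{i\}$ (each of size $< n/(k+2)$).

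I expect the main obstacle to be the lower bound on $|X|$. In contrast with the $k \ge (4r-2)/3$ case, here $p \ge r$ prevents $\varphi(e)$ from absorbing all good colors, so $\sum_{j \in \overline{\varphi(e)}} |V_j|$ picks up an unavoidable contribution from good-but-not-large parts. Combining the two estimates above with $\delta_{r-1}^+(\mathcal{H}) > \frac{k-r+1}{k+2}n$ gives
\begin{align*}
    |X| \ge d_\mathcal{H}(e\setminus\{v\}) - \sum_{j\in\overline{\varphi(e)}}|V_j|
    > \frac{k-r+1}{k+2}n - (p-r+1)\cdot\frac{3n}{3k-2} - (k-p-1)\cdot\frac{n}{k+2}
    = \frac{3k - 8p + 8r - 10}{(k+2)(3k-2)}\,n.
\end{align*}
The claim is vacuous when $p = k$, so I may assume $p \le k-1$, in which case the numerator is at least $-5k + 8r - 2$. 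The integer version $3k \le 4r - 3$ of the hypothesis $k < (4r-2)/3$ then yields $-5k + 8r - 2 > (3k-2)/3$, whence $|X| > \frac{n}{3(k+2)} > |V_i|/3$ as required, completing the argument.
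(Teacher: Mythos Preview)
Your proof is correct and follows essentially the same route as the paper's: build an edge $e \ni v$ with $\varphi(e)\setminus\{i\}$ contained in $I_\varphi$, set $X = V_i \cap N_\mathcal{H}(e\setminus\{v\})$, apply Claim~\ref{CLAIM:induction-b}\ref{CLAIM:induction-b-1} at some $u \in e\setminus\{v\}$ to get $X \subseteq [v]_\vartheta$, and show $|X| > |V_i|/3$. The only cosmetic differences are that you invoke Proposition~\ref{PROP:Prelim-B}\ref{PROP:Prelim-B-2} directly (choosing $I \subseteq [p]\setminus J_\varphi$) where the paper cites Proposition~\ref{PROP:Prelim-B}\ref{PROP:Prelim-B-3}, and you split $\sum_{j\in\overline{\varphi(e)}}|V_j|$ into good-but-small and bad contributions while the paper simply bounds every term by $\tfrac{3n}{3k-2}$; both computations reduce to the same numerator $-5k+8r-2$ in the worst case $p=k-1$ and yield $|X| > \tfrac{n}{3(k+2)} > |V_i|/3$.
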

    \begin{proof}[Proof of Claim~\ref{Claim:small-part-b}]
        The proof is similar to that of Claim~\ref{Claim:small-part}. 
        Fix $i \in [p+1, r]$ and $v \in V_i$. 
        By Proposition~\ref{PROP:Prelim-B}~\ref{PROP:Prelim-B-3}, there exists $e \in \mathcal{H}$ containing $v$ such that 
        \begin{align*}
            \min\left\{|[u]_{\varphi}| \colon u \in e \setminus \{v\}\right\}
            \ge 
            \max\left\{|V_j| \colon j \in \overline{\varphi(e)}\right\}.
        \end{align*}
        Since $|J_{\varphi}| \le r-2$ (by Proposition~\ref{PROP:Prelim-A}~\ref{PROP:Prelim-A-3}), we have $J_{\varphi} \subseteq  \varphi(e)$. 
        Thus, $\overline{\varphi(e)} \subseteq  \overline{J_{\varphi}}$.
        Let 
        \begin{align*}
            X 
            \coloneqq N_{\mathcal{H}}(e\setminus \{v\}) \cap V_i
            = N_{\mathcal{H}}(e\setminus \{v\}) \setminus \bigcup_{j \in \overline{\varphi(e)}} V_j. 
        \end{align*}
        % Notice that $v\in X$ and 
        Then simple calculations show that 
        \begin{align*}
            |X| 
            \ge d_{\mathcal{H}}(e\setminus \{v\}) - \sum_{j\in \overline{\varphi(e)}} |V_j|
            & > \frac{k-r+1}{k+2} n - (k-r) \cdot \frac{3n}{3k-2} \\
            & = \frac{1}{3} \cdot \frac{n}{k+2} + \frac{6\left(\frac{4r}{3}-\frac{2}{9} -k\right)}{(3k-2)(k+2)}n 
            > \frac{1}{3} \cdot \frac{n}{k+2}
            \ge \frac{|V_i|}{3}. 
        \end{align*}
        Fix $j \in \varphi(e) \cap I_\varphi$ and let $u$ denote the vertex in $V_j \cap e$. 
        The existence of such a $j$ is guaranteed by the definition of $e$; in fact, by Proposition~\ref{PROP:Prelim-D}, we have $\varphi(e) \setminus \{i\} \subseteq I_{\varphi}$. 
        Note that $X \subseteq  N_{\mathcal{H}}(u)$ and $v \in X$. 
        It follows from Claim~\ref{CLAIM:induction-b}~\ref{CLAIM:induction-b-1} that  $|\vartheta(X)|=1$. 
        In particular, $X \subseteq  [v]_{\vartheta}$. 
        Therefore, 
        \begin{align*}
            |[v]_{\vartheta} \cap V_{i}|
            \ge |X|
            > \frac{|V_{i}|}{3}. 
        \end{align*}
        Since $v$ was chosen arbitrarily, we conclude that $|\vartheta(V_i)| < \frac{|V_i|}{|V_i|/3} = 3$. 
    \end{proof}

    \begin{claim}\label{Claim:large-part-b}
        We have $|\vartheta(V_i)|\le  2$ for every $i \in [p]$. 
    \end{claim}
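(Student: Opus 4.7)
The plan is to adapt the argument of Claim~\ref{Claim:large-part}, replacing the ``large'' threshold $\tfrac{3n}{3k-2}$ by the ``good'' threshold $\tfrac{n}{k+2}$ and the codegree bound accordingly. Suppose for contradiction that $|\vartheta(V_i)|\ge 3$ for some $i\in[p]$; by symmetry, take $i=1$, and fix $v_1,v_2,v_3\in V_1$ with pairwise distinct $\vartheta$-images. Writing $N_j^\ell := V_j\cap N_{\mathcal H}(v_\ell)$ and applying Claim~\ref{CLAIM:induction-b}, I would extract a unique pair $\{i_*,j_*\}\subseteq[k]\setminus\{1\}$ with $\vartheta(N_{i_*}^1)=\vartheta(v_2)$, $\vartheta(N_{j_*}^1)=\vartheta(v_3)$, and $N_{i_*}^1\cap N_{i_*}^2 = N_{j_*}^1\cap N_{j_*}^3 = \emptyset$ (from Claim~\ref{CLAIM:induction-b}~\ref{CLAIM:induction-b-2}). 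Picking $(m,\ell)\in\{(i_*,1),(i_*,2),(j_*,1),(j_*,3)\}$ that minimizes $|N_m^\ell|$ then yields $|N_m^\ell|\le(|V_{i_*}|+|V_{j_*}|)/4$.

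Next, I would choose a $(k-r)$-set $S\subseteq\overline{\{1,i_*,j_*\}}$ consisting of the parts with the smallest $|V_j|$ values, arranged so that $S\cap J_\varphi=\emptyset$. The bound $|J_\varphi|\le r-2$ from Proposition~\ref{PROP:Prelim-A}~\ref{PROP:Prelim-A-3} ensures there is enough room. Proposition~\ref{PROP:Prelim-B}~\ref{PROP:Prelim-B-1} or~\ref{PROP:Prelim-B-2} then supplies an edge $e\ni v_\ell$ with $\varphi(e)=[k]\setminus S$, so that $\{1,m\}\subseteq\varphi(e)$. Letting $u\in V_m\cap e$ and using the inclusion $N_{\mathcal H}(e\setminus\{u\})\subseteq N_m^\ell\cup\bigcup_{j\in S}V_j$ yields
\[
d_{\mathcal H}(e\setminus\{u\})\le \frac{|V_{i_*}|+|V_{j_*}|}{4}+\sum_{j\in S}|V_j|.
\]

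Setting $I' := \{j\in\overline{\{1,i_*,j_*\}} : V_j\text{ is not good}\}$ and $p' := |I'|$, a standard averaging argument gives $\sum_{j\in S}|V_j|\le \tfrac{k-r}{p'}\sum_{j\in I'}|V_j|$. Combined with the codegree hypothesis $\delta_{r-1}^+(\mathcal H)>\tfrac{k-r+1}{k+2}n$, one obtains
\[
|V_{i_*}|+|V_{j_*}|+\frac{4(k-r)}{p'}\sum_{j\in I'}|V_j|>\frac{4(k-r+1)}{k+2}n.
\]
Substituting the bounds $|V_1|\ge \tfrac{n}{k+2}$, $|V_j|<\tfrac{n}{k+2}$ for $j\in I'$, and $|V_j|\ge \tfrac{n}{k+2}$ for the $k-3-p'$ good parts in $\overline{\{1,i_*,j_*\}}\setminus I'$, into the identity $n=|V_1|+|V_{i_*}|+|V_{j_*}|+\sum_{j\in \overline{\{1,i_*,j_*\}}}|V_j|$, the coefficients collapse exactly as in Claim~\ref{Claim:large-part}: after simplification the factor multiplying $\tfrac{n}{k+2}$ becomes $(4r-3k-2)+4(k-r+1)=k+2$, yielding $n>n$, a contradiction.

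The main obstacle will be verifying $p'\ge k-r$, which is what licenses the averaging step; this reduces to showing $|I_\varphi\cap\{1,i_*,j_*\}|\ge p-r+3$, and is delicate when $\{i_*,j_*\}\not\subseteq I_\varphi$ or $p>r$. In such borderline cases, I expect to exploit that the offending part (say $V_{i_*}$) is itself not good, so $|N_m^\ell|\le |V_{i_*}|<\tfrac{n}{k+2}$; this replaces the symmetric bound $(|V_{i_*}|+|V_{j_*}|)/4$ by a strictly smaller quantity, which is enough to close the computation by essentially the same algebra.
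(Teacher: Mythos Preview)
Your overall strategy matches the paper's: fix $v_1,v_2,v_3$, extract $\{i_*,j_*\}$ via Claim~\ref{CLAIM:induction-b}, pick the minimizing $(m,\ell)$, produce an edge $e\ni v_\ell$ with $m\in\varphi(e)$, and bound $d_{\mathcal H}(e\setminus\{u\})$. The divergence---and the gap---is in how you convert this into $n>n$.

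Your averaging step $\sum_{j\in S}|V_j|\le\frac{k-r}{p'}\sum_{j\in I'}|V_j|$ requires $p'\ge k-r$. But $p'=(k-p)-|\{i_*,j_*\}\setminus I_\varphi|$ (since $1\in I_\varphi$), and with $p=|I_\varphi|\ge r$ this forces $p=r$ \emph{and} $\{i_*,j_*\}\subseteq I_\varphi$---neither of which is guaranteed. Your proposed workaround (``use $|N_m^\ell|<\frac{n}{k+2}$ when $V_{i_*}$ is not good'') does not close the gap: you still have to control $\sum_{j\in\overline{\varphi(e)}}|V_j|$, and there the averaging obstacle reappears unchanged. (The separate ``enough room'' worry about achieving $S\cap J_\varphi=\emptyset$ is minor and disappears if you invoke Proposition~\ref{PROP:Prelim-B}~\ref{PROP:Prelim-B-3} instead of~\ref{PROP:Prelim-B-1}/\ref{PROP:Prelim-B-2}, as the paper does.)

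The paper avoids averaging entirely by exploiting the integer consequence $4(k-r)\le k-3$ of $k<\tfrac{4r-2}{3}$. After obtaining
\[
|V_{i_*}|+|V_{j_*}|+4\sum_{j\in\overline{\varphi(e)}}|V_j|>\frac{4(k-r+1)}{k+2}\,n,
\]
it picks the $4(k-r)$ smallest parts $S\subseteq\overline{\{1,i_*,j_*\}}$ and uses the max--min property of $e$ (from Proposition~\ref{PROP:Prelim-B}~\ref{PROP:Prelim-B-3}) to get $4\sum_{j\in\overline{\varphi(e)}}|V_j|\le\sum_{j\in S}|V_j|$. The complement $T=\overline{\{1,i_*,j_*\}}\setminus S$ then has $|T|=4r-3k-3\le r-3\le|I_\varphi|-3$, hence $T\subseteq I_\varphi$ and $\sum_{j\in T}|V_j|\ge\tfrac{4r-3k-3}{k+2}\,n$; substituting yields $n>n$. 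No averaging over $p'$, no case split on whether $i_*,j_*$ are good.
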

    \begin{proof}[Proof of Claim~\ref{Claim:large-part-b}]
        The proof is similar to that of Claim~\ref{Claim:large-part}. 
        Suppose to the contrary that there exists $i \in [p]$ such that $|\vartheta(V_i)|\ge 3$.  
        By symmetry, we may assume that $i = 1$. 
        Let $v_1, v_2, v_3 \in V_1$ be three vertices such that $\vartheta(v_1), \vartheta(v_2), \vartheta(v_3)$ are pairwise distinct.  
        % By symmetry, we may assume that $(\vartheta(v_1), \vartheta(v_2), \vartheta(v_3)) = (1,2,3)$. % We do not need this assumption
        Let $N_{j}^{i}\coloneqq V_{j} \cap N_{\mathcal{H}}(v_{i})$ for $(i,j) \in [3] \times [k]$. 
        By Claim~\ref{CLAIM:induction-b}, there exists a unique pair $\{i_{\ast}, j_{\ast}\} \subseteq  [2,k]$ with $s\neq t$ such that $(\vartheta(N_{i_{\ast}}^{1}), \vartheta(N_{j_{\ast}}^{1})) = (\vartheta(v_2), \vartheta(v_3))$. In addition (by Claim~\ref{CLAIM:induction-b}~\ref{CLAIM:induction-b-2}), we have $N_{i_{\ast}}^{1} \cap N_{i_{\ast}}^2 = \emptyset$ and $N_{j_{\ast}}^{1} \cap N_{j_{\ast}}^3 = \emptyset$ (see Figure~\ref{Fig:pf-of-large-parts}). 
        
        Let $(m,\ell) \in \{(i_{\ast},1), (i_{\ast},2), (j_{\ast},1), (j_{\ast},3)\}$ be a member such that 
        \begin{align*}
            |N_{m}^{\ell}|
            = \min\left\{|N_{i_{\ast}}^{1}|, |N_{i_{\ast}}^{2}|, |N_{j_{\ast}}^{1}|, |N_{j_{\ast}}^{3}|\right\}.
        \end{align*}
        In particular, $|N_{m}^{\ell}| \le (|V_{i_{\ast}}|+|V_{j_{\ast}}|)/4$. 
        % Since there are $q \le r-2$ large parts (by Proposition~\ref{PROP:Prelim-A}~\ref{PROP:Prelim-A-3}) and $V_1$ is large, each $V_{i_j}$ is small for $j \in [k-r]$. 
        
        By Proposition~\ref{PROP:Prelim-B}~\ref{PROP:Prelim-B-3}, there exists $e\in \mathcal{H}$ containing $v_{\ell}$ such that 
        \begin{align*}
            m \in \varphi(e)
            \quad\text{and}\quad 
            \min\left\{|\varphi^{-1}(j)| \colon j \in \varphi(e) \setminus \{1,m\}\right\}
            \ge 
            \max\left\{|\varphi^{-1}(j)| \colon j \in \overline{\varphi(e)} \right\}.
        \end{align*} 
        Assume that $\overline{\varphi(e)}=\{i_{1}, \ldots, i_{k-r}\}$. 
        Let $u$ denote the vertex in $e\cap V_{m}$, noting that $u \in N_{m}^{\ell}$.
        Observe that $N_{\mathcal{H}}(e \setminus \{u\}) \subseteq  N_{m}^{\ell} \cup V_{i_1} \cup \cdots \cup V_{i_{k-r}}$. 
        Therefore, 
        \begin{align*}
            d_{\mathcal{H}}(e \setminus \{u\})
            \le |N_{m}^{\ell}| + |V_{i_1}| + \cdots + |V_{i_{k-r}}|
            \le \frac{|V_{i_{\ast}}|+|V_{j_{\ast}}|}{4} + |V_{i_1}| + \cdots + |V_{i_{k-r}}|, 
        \end{align*}
        which implies that 
        \begin{align*}
            |V_{i_{\ast}}|+|V_{j_{\ast}}| + 4\left(|V_{i_1}| + \cdots + |V_{i_{k-r}}| \right)
            \ge 4 \cdot \delta^{+}_{r-1}(\mathcal{H})
            \ge \frac{4(k-r+1)}{k+2}n.
        \end{align*}
        Consequently, 
        \begin{align}\label{equ:Claim-case-small-n-lower}
            n 
             = \sum_{i\in [k]}|V_i| 
            & = |V_1| 
                + \left(|V_{i_{\ast}}|+|V_{j_{\ast}}| + 4 \sum_{j\in [k-r]}|V_{i_j}| \right) 
                - 4 \sum_{j\in [k-r]}|V_{i_j}|
                + \sum_{j\in \overline{\{1,i_{\ast},j_{\ast}\}}}|V_j|  \notag \\
            & > \frac{n}{k+2} + \frac{4(k-r+1)}{k+2}n 
            - 4 \sum_{j\in [k-r]}|V_{i_j}|
            + \sum_{j\in \overline{\{1,i_{\ast},j_{\ast}\}}}|V_j|.
        \end{align}
        Since both $k$ and $r$ are integers, the assumption $k < \frac{4r-2}{3}$ implies that $k \le \frac{4r-3}{3}$, which is equivalent to $4(k-r) \le k-3$. 
        Therefore, we can choose a set $S \subseteq  \overline{\{1,i_{\ast},j_{\ast}\}}$ of size $4k-4r$ such that 
        \begin{align*}
            \max\left\{|V_j| \colon j \in S\right\}
            \le \min\left\{|V_{j}| \colon j \in \overline{\{1,i_{\ast},j_{\ast}\}}\setminus S\right\}. 
        \end{align*} 
        Let $T \coloneqq \overline{\{1,i_{\ast},j_{\ast}\}}\setminus S$, noting that $|T| = 4r-3k-3 \le r-3 \le |I_{\varphi}| - 3$ (the second inequality is due to Proposition~\ref{PROP:Prelim-D}). 
        So, by the definition of $S$, we have $T \subseteq  I_{\varphi}$. Therefore, 
        \begin{align*}
            \sum_{j\in \overline{\{1,i_{\ast},j_{\ast}\}}}|V_j| - 4 \sum_{j\in [k-r]}|V_{i_j}|
            \ge \sum_{j\in \overline{\{1,i_{\ast},j_{\ast}\}}}|V_j| - \sum_{j\in S}|V_{j}|
            \ge \sum_{j\in T}|V_{j}|
            \ge \frac{4r-3k-3}{k+2}n. 
        \end{align*}
        Combining this with~\eqref{equ:Claim-case-small-n-lower}, we obtain  
        \begin{align*}
            n 
            > \frac{n}{k+2} + 4\cdot \frac{k-r+1}{k+2}n + \frac{4r-3k-3}{k+2}n
            = n, 
        \end{align*}
        a contradiction. 
    \end{proof}

    Claims~\ref{Claim:small-part-b} and~\ref{Claim:large-part-b}, together with Proposition~\ref{PROP:Final-compute}, imply that $\mathcal{H}$ is uniquely $k$-colorable, contradicting the assumption that $\mathcal{H}$ is not uniquely $k$-colorable. 
\end{proof}

%%%%%%%%%%%%%%%%%%%%%%%%%%%%%%%%%%
\subsection{Proof of Proposition~\ref{PROP:case-all-small}}\label{SEC:proof-Prop-small}
We prove Proposition~\ref{PROP:case-all-small} in this subsection. 
The following structure will be crucial for the proof.

Let $m \ge r \ge 2$ be integers and $q \coloneqq \left\lfloor \frac{m-1}{r-1}\right\rfloor$. 
Given an ordered vertex set $(v_1, \ldots, v_m)$, the $r$-uniform \textbf{quasi-sunflower} on $(v_1, \ldots, v_m)$, denoted by $\mathcal{S}^{r}(v_1, \ldots, v_m)$, is the $r$-graph with edge set 
    \begin{align*}
        \left\{ \{v_1,\cdots,v_{r-1},v_m\}, \cdots, \{v_{(q-1)(r-1)+1}, \cdots, v_{q(r-1)},v_m\},\{v_{m-r+1}, \cdots, v_m\} \right\}.
    \end{align*}

%%%%%%%%%%%%%%%%%%%%%%%%%%%%%%%%%%%%
%%%%%%%%%%%%%%%%%%%%%%%%%%%%%%%%%https://www.mathcha.io/editor
\begin{figure}[htbp]
\centering
%%%%%%%%%%%%%%%%%%%%%%%%%%%%%%%%

\tikzset{every picture/.style={line width=1pt}} %set default line width to 0.75pt        

\begin{tikzpicture}[x=0.75pt,y=0.75pt,yscale=-1,xscale=1,line cap=round,line join=round]
%uncomment if require: \path (0,300); %set diagram left start at 0, and has height of 300

%Straight Lines [id:da4627247366182432] 
\draw    (133.34,62.86) -- (133.34,107.99) ;
%Straight Lines [id:da31730296387988166] 
\draw    (133.34,126.57) -- (133.34,171.7) ;
%Straight Lines [id:da5541828212898667] 
\draw    (78.23,217.71) -- (133.34,171.7) ;
%Straight Lines [id:da8909703877051649] 
\draw    (78.23,217.71) -- (133.34,126.57) ;
%Straight Lines [id:da013804996022453109] 
\draw    (78.23,217.71) -- (133.34,107.99) ;
%Straight Lines [id:da03291215246269408] 
\draw    (78.23,217.71) -- (133.34,62.86) ;
\draw [line width=1pt, fill=sqsqsq, fill opacity=0.1] (78.23,217.71) -- (133.34,62.86) -- (133.34,107.99) -- cycle;
\draw [line width=1pt, fill=sqsqsq, fill opacity=0.1] (78.23,217.71) -- (133.34,126.57) -- (133.34,171.7) -- cycle;
\draw [line width=1.3pt, color = red, fill = red] (133.34,62.86) -- (133.34,107.99);
\draw [line width=1.3pt, color = red, fill = red] (133.34,126.57) -- (133.34,171.7);
\draw [line width=1.3pt, color = red, fill = red] (78.23,217.71) -- (133.34,171.7);
\draw [fill=cyan] (78.23,217.71) circle (2.5pt);
\draw [fill=uuuuuu] (133.34,62.86) circle (1.5pt);
\draw [fill=uuuuuu] (133.34,107.99) circle (1.5pt);
\draw [fill=uuuuuu] (133.34,126.57) circle (1.5pt);
\draw [fill=uuuuuu] (133.34,171.7) circle (1.5pt);
%
%Straight Lines [id:da7195908319080693] 
\draw    (221.7,218.6) -- (276.8,172.59) ;
%Straight Lines [id:da9931366789689287] 
\draw [dashed, opacity=0.5]   (221.7,218.6) -- (276.8,127.46) ;
%Straight Lines [id:da45300605424785845] 
\draw  [dashed, opacity=0.5]  (221.7,218.6) -- (276.8,108.88) ;
%Straight Lines [id:da051836908102650625] 
\draw  [dashed, opacity=0.5]  (221.7,218.6) -- (276.8,63.75) ;
%Straight Lines [id:da8233433803803785] 
\draw    (276.8,63.75) -- (374.66,186.74) ;
%Straight Lines [id:da0679002475806958] 
\draw    (276.8,108.88) -- (374.66,186.74) ;
%Straight Lines [id:da9282813420868457] 
\draw    (276.8,127.46) -- (374.66,186.74) ;
%Straight Lines [id:da866540141109184] 
\draw    (276.8,172.59) -- (374.66,186.74) ;
\draw [line width=1pt, fill=sqsqsq, fill opacity=0.1] (276.8,63.75) -- (276.8,108.88) -- (374.66,186.74) -- cycle;
\draw [line width=1pt, fill=sqsqsq, fill opacity=0.1] (276.8,127.46) -- (276.8,172.59) -- (374.66,186.74) -- cycle;
\draw [line width=1pt, fill=sqsqsq, fill opacity=0.1] (221.7,218.6) -- (276.8,172.59) -- (374.66,186.74) -- cycle;
%Straight Lines [id:da8619903903933952] 
\draw [line width=1.3pt, color = red, fill = red]   (276.8,63.75) -- (276.8,108.88) ;
%Straight Lines [id:da04824655567752112] 
\draw  [line width=1.3pt, color = red, fill = red]  (276.8,127.46) -- (276.8,172.59) ;
%Straight Lines [id:da21827113487552796] 
\draw  [line width=1.3pt, color = red, fill = red]  (221.7,218.6) -- (374.66,186.74) ;
\draw [fill=uuuuuu] (276.8,63.75) circle (1.5pt);
\draw [fill=uuuuuu] (276.8,108.88) circle (1.5pt);
\draw [fill=uuuuuu] (276.8,127.46) circle (1.5pt);
\draw [fill=uuuuuu] (276.8,172.59) circle (1.5pt);
\draw [fill=uuuuuu] (221.7,218.6) circle (1.5pt);
\draw [fill=cyan] (374.66,186.74) circle (2.5pt);
%Straight Lines [id:da9101669902970582] 
\draw  [dashed, opacity=0.5]  (428.81,219.48) -- (483.92,173.47) ;
%Straight Lines [id:da7052602061296314] 
\draw  [dashed, opacity=0.5]  (428.81,219.48) -- (483.92,128.34) ;
%Straight Lines [id:da02137128680603495] 
\draw  [dashed, opacity=0.5]  (428.81,219.48) -- (483.92,109.76) ;
%Straight Lines [id:da49777100170975785] 
\draw  [dashed, opacity=0.5]  (428.81,219.48) -- (483.92,64.63) ;
%Straight Lines [id:da20742575034929178] 
\draw  [dashed, opacity=0.5]  (483.92,64.63) -- (581.77,187.63) ;
%Straight Lines [id:da6457779329023758] 
\draw  [dashed, opacity=0.5]  (483.92,109.76) -- (581.77,187.63) ;
%Straight Lines [id:da059512689455178513] 
\draw  [dashed, opacity=0.5]  (483.92,128.34) -- (581.77,187.63) ;
%Straight Lines [id:da6870013501949035] 
\draw  [dashed, opacity=0.5]  (483.92,173.47) -- (581.77,187.63) ;
%Straight Lines [id:da28444150212345365] 
\draw    (483.92,64.63) -- (587.48,102.68) ;
%Straight Lines [id:da4074092891020933] 
\draw    (483.92,109.76) -- (587.48,102.68) ;
%Straight Lines [id:da011654999352662676] 
\draw    (483.92,128.34) -- (587.48,102.68) ;
%Straight Lines [id:da5788040778146846] 
\draw    (483.92,173.47) -- (587.48,102.68) ;
%Curve Lines [id:da2816954428483476] 
\draw    (428.81,219.48) .. controls (499.12,192.05) and (573.22,138.08) .. (587.48,102.68) ;
\draw [line width=1pt, fill=sqsqsq, fill opacity=0.1]  (587.48,102.68) -- (483.92,128.34) -- (483.92,173.47) -- cycle;
\draw [line width=1pt, fill=sqsqsq, fill opacity=0.1]  (587.48,102.68) -- (483.92,64.63) -- (483.92,109.76) -- cycle;
\draw [line width=1pt, fill=sqsqsq, fill opacity=0.1] (581.77,187.63) -- (428.81,219.48) .. controls (499.12,192.05) and (573.22,138.08) .. (587.48,102.68) -- (581.77,187.63);
%Straight Lines [id:da7662452103020576] 
\draw  [line width=1.3pt, color = red, fill = red]  (483.92,64.63) -- (483.92,109.76) ;
%Straight Lines [id:da20978794967675518] 
\draw  [line width=1.3pt, color = red, fill = red]  (483.92,128.34) -- (483.92,173.47) ;
%Straight Lines [id:da6296912629974167] 
\draw  [line width=1.3pt, color = red, fill = red]  (587.48,102.68) -- (581.77,187.63) ;
%Straight Lines [id:da11627003416813908] 
\draw  [line width=1.3pt, color = red, fill = red]  (428.81,219.48) -- (581.77,187.63) ;
\draw [fill=cyan] (587.48,102.68) circle (2.5pt);
\draw [fill=uuuuuu] (483.92,128.34) circle (1.5pt);
\draw [fill=uuuuuu] (483.92,173.47) circle (1.5pt);
\draw [fill=uuuuuu] (483.92,64.63) circle (1.5pt);
\draw [fill=uuuuuu] (483.92,109.76) circle (1.5pt);
\draw [fill=uuuuuu] (428.81,219.48) circle (1.5pt);
\draw [fill=uuuuuu] (581.77,187.63) circle (1.5pt);
% Text Node
\draw (140,55) node [anchor=north west][inner sep=0.75pt]   [align=left] {$1$};
% Text Node
\draw (140,100) node [anchor=north west][inner sep=0.75pt]   [align=left] {$2$};
% Text Node
\draw (140,125) node [anchor=north west][inner sep=0.75pt]   [align=left] {$3$};
% Text Node
\draw (140,165) node [anchor=north west][inner sep=0.75pt]   [align=left] {$4$};
% Text Node
\draw (60,210) node [anchor=north west][inner sep=0.75pt]   [align=left] {$5$};
% Text Node
\draw (380,180) node [anchor=north west][inner sep=0.75pt]   [align=left] {$6$};
% Text Node
\draw (592,95) node [anchor=north west][inner sep=0.75pt]   [align=left] {$7$};
% Text Node
\draw (588,180) node [anchor=north west][inner sep=0.75pt]   [align=left] {$6$};
% Text Node
\draw (412,210) node [anchor=north west][inner sep=0.75pt]   [align=left] {$5$};
% Text Node
\draw (205,210) node [anchor=north west][inner sep=0.75pt]   [align=left] {$5$};
% Text Node
\draw (259,55) node [anchor=north west][inner sep=0.75pt]   [align=left] {$1$};
% Text Node
\draw (259,100) node [anchor=north west][inner sep=0.75pt]   [align=left] {$2$};
% Text Node
\draw (259,125) node [anchor=north west][inner sep=0.75pt]   [align=left] {$3$};
% Text Node
\draw (259,165) node [anchor=north west][inner sep=0.75pt]   [align=left] {$4$};
% Text Node
\draw (466,55) node [anchor=north west][inner sep=0.75pt]   [align=left] {$1$};
% Text Node
\draw (466,100) node [anchor=north west][inner sep=0.75pt]   [align=left] {$2$};
% Text Node
\draw (466,125) node [anchor=north west][inner sep=0.75pt]   [align=left] {$3$};
% Text Node
\draw (466,165) node [anchor=north west][inner sep=0.75pt]   [align=left] {$4$};
\end{tikzpicture}
%%%%%%%%%%%%%%%%%%%%%%%%%%%%
\caption{$\mathcal{S}^3(1,\ldots,5) \to \mathcal{S}^3(1,\ldots,6) \to \mathcal{S}^3(1,\ldots,7)$.} 
\label{Fig:generalized-sunflower}
\end{figure}

The following fact about quasi-sunflowers can be derived through a simple inductive argument (see Figure~\ref{Fig:generalized-sunflower}). 
\begin{fact}\label{FACT:quasi-sunflower-2-covered}
    Let $k \ge m \ge r \ge 2$ be integers and $V\coloneqq (v_1, \ldots, v_{m})$ be an ordered vertex set. 
    Suppose that $\mathcal{H}$ is an $r$-graph on $V$ such that $\mathcal{S}^{r}(v_1, \ldots, v_j) \subseteq \mathcal{H}$ for every $j \in [r, m]$. 
    Then every pair of vertices in $V$ is contained in some edge of $\mathcal{H}$. 
    In particular, $\varphi(v_1), \ldots, \varphi(v_m)$ are pairwise distinct for every homomorphism $\varphi \in \mathrm{Hom}(\mathcal{H}, K_{k}^r)$. 
\end{fact}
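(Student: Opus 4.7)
The plan is to prove the first assertion by induction on $m$ and then deduce the ``in particular'' part as an immediate consequence. For the base case $m = r$ we have $q = \lfloor (r-1)/(r-1)\rfloor = 1$, so $\mathcal{S}^{r}(v_1, \ldots, v_r)$ consists of the single edge $\{v_1, \ldots, v_r\}$, which trivially contains every pair from $V$.

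For the inductive step, assume the statement for ordered tuples of length $m-1$. The hypothesis yields $\mathcal{S}^{r}(v_1, \ldots, v_j) \subseteq \mathcal{H}$ for every $j\in [r, m-1]$, so by induction every pair inside $\{v_1, \ldots, v_{m-1}\}$ is already contained in some edge of $\mathcal{H}$. It remains to show that for each $i\in [m-1]$, the pair $\{v_i, v_m\}$ lies in some edge. For this I would work inside $\mathcal{S}^{r}(v_1, \ldots, v_m) \subseteq \mathcal{H}$, whose edges by definition all contain $v_m$. With $q = \lfloor (m-1)/(r-1)\rfloor$, the first $q$ edges of the quasi-sunflower cover $v_1, \ldots, v_{q(r-1)}$, while the final edge $\{v_{m-r+1}, \ldots, v_m\}$ covers $v_{m-r+1}, \ldots, v_{m-1}$. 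From the definition of $q$ one has $q(r-1) \ge (m-1) - (r-2) = m - r + 1$, so these two ranges together exhaust $\{v_1, \ldots, v_{m-1}\}$. Hence each $v_i$ with $i < m$ co-occurs with $v_m$ in some edge of $\mathcal{S}^{r}(v_1, \ldots, v_m)$, completing the induction.

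The ``in particular'' statement is then immediate: any $\varphi \in \mathrm{Hom}(\mathcal{H}, K_{k}^{r})$ must send each edge $e$ to an edge of $K_{k}^{r}$, i.e., to a set of size exactly $r$, so $\varphi$ is injective on every edge of $\mathcal{H}$. Combined with the first part, which says every pair of vertices in $V$ lies in a common edge, this forces $\varphi$ to be injective on all of $V$. There is no real obstacle here; the only substantive point is the elementary bookkeeping that the prefix $\{v_1, \ldots, v_{q(r-1)}\}$ and the suffix $\{v_{m-r+1}, \ldots, v_{m-1}\}$ jointly cover $\{v_1, \ldots, v_{m-1}\}$, which is exactly the inequality $q(r-1) \ge m - r + 1$ recorded above.
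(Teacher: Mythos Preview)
Your proof is correct and follows exactly the inductive approach the paper indicates (the paper only says the fact ``can be derived through a simple inductive argument'' and gives no further details). Your bookkeeping that $q(r-1) \ge m-r+1$ (from $m-1 = q(r-1)+s$ with $0 \le s \le r-2$) is precisely what is needed to show the prefix and suffix ranges jointly cover $\{v_1,\ldots,v_{m-1}\}$.
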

Now we are ready to prove Proposition~\ref{PROP:case-all-small}. 
\begin{proof}[Proof of Proposition~\ref{PROP:case-all-small}]
    Let $n \ge k \ge r \ge 2$ be integers. 
    Let $q$ and $s$ be integers satisfying $k-1 = q(r-1)+s$ and $0 \le s \le r-2$. 
    Assume that every homomorphism in $\mathrm{Hom}(\mathcal{H},K_{k}^{r})$ is small. 
    Fix $\vartheta, \varphi \in \mathrm{Hom}(\mathcal{H},K_{k}^{r})$ and let $V_{i}\coloneqq \varphi^{-1}(i)$ for $i\in [k]$.  
    Note from the assumption that $J_{\varphi} = J_{\vartheta} = \emptyset$. 
    The key step in this proof is to show that $|\vartheta(\bigcup_{i\in I}V_{i})|\le  r$ for every $(s+1)$-set $I \subseteq [k]$ (i.e.  Claim~\ref{Claim:colors-in-V-I}).
    \begin{claim}\label{CLAIM:find-quasi-sunflower}
        Let $I \subseteq [k]$ be a set of size $s+1$. 
        For every $i \in I$ and $v \in  V_i$, there exists an ordered vertex set $(v_{1},  \cdots, v_{k-s}) \subseteq \bigcup_{j\in \overline{I\setminus\{i\}}} V_j$ with $v_1 = v$  such that  
        \begin{align*}
            S^{r}(v_1,\cdots, v_{\ell}) \subseteq  \mathcal{H}
            \quad\text{for every}\quad 
            \ell \in [r, k-s]. 
        \end{align*}
        In particular, by Fact~\ref{FACT:quasi-sunflower-2-covered}, $\vartheta(v), \vartheta(v_2), \ldots, \vartheta(v_{k-s})$ are pairwise distinct.
        % and 
        % \begin{align}\label{equ:Prop-k-large-theta-v1}
        %     \vartheta(v_1) 
        %     \in [k] \setminus \left\{\vartheta(v_2), \ldots, \vartheta(v_{k-s}) \right\}. 
        % \end{align}
    \end{claim}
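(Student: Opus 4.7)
The plan is to build the ordered tuple $(v_1, v_2, \ldots, v_{k-s})$ inductively in $\ell$, starting with $v_1 = v$, so that at each stage $r \le \ell \le k-s$ the invariant $\mathcal{S}^r(v_1, \ldots, v_\ell) \subseteq \mathcal{H}$ holds and every $v_j$ lies in $\bigcup_{c \in \overline{I \setminus \{i\}}} V_c$. Pairwise distinctness of $\varphi(v_1), \ldots, \varphi(v_\ell)$ then comes for free from the $k$-partiteness of $\mathcal{H}$ together with Fact~\ref{FACT:quasi-sunflower-2-covered} applied to $\varphi$, and once the construction is complete the ``in particular'' conclusion is obtained by applying the same fact to $\vartheta$.

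For the base case $\ell = r$, since $J_\varphi = \emptyset$, Proposition~\ref{PROP:Prelim-B}~\ref{PROP:Prelim-B-2} yields, for any prescribed $(r-1)$-subset $I' \subseteq [k] \setminus \{i\}$, an edge $e$ through $v_1 = v$ with $\varphi(e) = \{i\} \cup I'$. We may take $I' \subseteq [k] \setminus I$ because $|[k] \setminus I| = k - s - 1 \ge r - 1$ (using $k - 1 = q(r-1) + s$ with $q \ge 1$), and enumerating the remaining vertices of $e$ as $v_2, \ldots, v_r$ gives the starting quasi-sunflower. For the inductive step $\ell \to \ell+1$ (with $r \le \ell \le k-s-1$), the fresh edges of $\mathcal{S}^r(v_1, \ldots, v_{\ell+1})$ all use $v_{\ell+1}$ as apex and have companion $(r-1)$-sets $A_j \coloneqq \{v_{(j-1)(r-1)+1}, \ldots, v_{j(r-1)}\}$ for $j = 1, \ldots, \lfloor \ell/(r-1)\rfloor$, together with the tail set $B \coloneqq \{v_{\ell-r+2}, \ldots, v_\ell\}$ whenever it is not already among the $A_j$'s. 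Each such $(r-1)$-set is an $(r-1)$-subset of an edge of $\mathcal{S}^r(v_1, \ldots, v_\ell) \subseteq \mathcal{H}$ and hence belongs to $\partial \mathcal{H}$, so by the minimum positive codegree hypothesis each has at least $\delta_{r-1}^+(\mathcal{H}) > \max\{\tfrac{3k-3r+1}{3k-2}, \tfrac{k-r+1}{k+2}\}n$ neighbors. The task is therefore to select $v_{\ell+1}$ from the intersection of these neighborhoods subject to its color lying in the allowed pool $\overline{I \setminus \{i\}} \setminus \{\varphi(v_1), \ldots, \varphi(v_\ell)\}$, which still has at least $k - s - \ell \ge 1$ colors available, each of size at most $\tfrac{3n}{3k-2}$ by the small-parts assumption.

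The main obstacle lies precisely in this simultaneous-neighborhood selection. A straightforward union bound over the complements of the up-to-$q+1$ neighborhoods $N_\mathcal{H}(A_j), N_\mathcal{H}(B)$, combined with the small-parts estimate on forbidden colors and the identity $(q+1)(r-1) = k + r - 2 - s$, does not close because the resulting count becomes negative when $\ell$ is large. Overcoming this requires exploiting finer structure: the apex $v_\ell$ of the current quasi-sunflower is already a common neighbor of $A_1, \ldots, A_{q_\ell}$, so the genuinely new $(r-1)$-set constraints arising in one inductive step are essentially confined to $B$ and possibly a single newly-introduced $A_{q_{\ell+1}}$; combining this observation with the tighter of the two codegree bounds (according to whether $k < \tfrac{4r-2}{3}$ or $k \ge \tfrac{4r-2}{3}$) is expected to carry the argument through.
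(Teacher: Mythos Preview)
Your setup is sound and matches the paper: the base case via Proposition~\ref{PROP:Prelim-B} is correct, and in the inductive step you correctly identify that one must place $v_{\ell+1}$ in the common neighbourhood of the $(r-1)$-sets $A_1,\dots$ and the tail $B$, while avoiding the parts $V_j$ with $j\in I\setminus\{i\}$. The gap is a miscount of how many such $(r-1)$-sets there are.

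You write ``up to $q+1$'' constraints, and on that basis conclude the union bound fails. In fact there are always at most $q$. For $\ell\in[r,k-s-1]$ the number of distinct constraint sets is $\lfloor(\ell-1)/(r-1)\rfloor+1$: when $(r-1)\mid\ell$ your last block $A_{\ell/(r-1)}$ \emph{coincides} with the tail $B$, so the total is $\ell/(r-1)$; otherwise it is $\lfloor\ell/(r-1)\rfloor+1$. Either way this equals $\lfloor(\ell-1)/(r-1)\rfloor+1$, and since $\ell\le k-s-1=q(r-1)$ this is at most $q$. With $t+1\le q$ sets and $\delta^{+}_{r-1}(\mathcal{H})>\frac{3k-3r+1}{3k-2}n$, the straightforward intersection bound already gives
\[
|U|\;>\;(t+1)\cdot\frac{3k-3r+1}{3k-2}n-t\,n\;=\;\frac{3k-3r+1}{3k-2}n-t\cdot\frac{3(r-1)}{3k-2}n\;\ge\;\frac{3s+1}{3k-2}n,
\]
using $t\le q-1$ and $q(r-1)=k-1-s$. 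This strictly exceeds $s\cdot\frac{3n}{3k-2}\ge\sum_{j\in I\setminus\{i\}}|V_j|$ by the small-parts hypothesis, so $U\setminus\bigcup_{j\in I\setminus\{i\}}V_j\neq\emptyset$ and the induction closes with no further work. This is exactly the paper's argument (presented there as a double count of pairs $(e_j,u)$, which unwinds to the same inequality).

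Two further remarks. First, you over-constrain by also forbidding the colours $\varphi(v_1),\dots,\varphi(v_\ell)$: these are automatically avoided, because the constraint sets cover $\{v_1,\dots,v_\ell\}$ and any common neighbour of an $(r-1)$-set has a colour distinct from every vertex in that set. Second, your proposed rescue---that $v_\ell$ is already a common neighbour of the blocks---does not yield a lower bound on the \emph{number} of common neighbours and so cannot produce a new $v_{\ell+1}$; it is unnecessary once the count is corrected.
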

    \begin{proof}[Proof of Claim~\ref{CLAIM:find-quasi-sunflower}]
        Fix $I \in \binom{[k]}{s+1}$, $i \in I$, and $v \in  V_i$. 
        We will find inductively the ordered vertex set $(v_{1},  \cdots, v_{k-s}) \subset \bigcup_{j\in \overline{I\setminus\{i\}}} V_j$  with $v_1 = v$ such that $S^{r}(v_1,\cdots, v_{\ell}) \subseteq  \mathcal{H}$ for every $\ell \in [r, k-s]$.
            
        The base case $\ell = r$ is guaranteed by 
        Proposition~\ref{PROP:Prelim-B}~\ref{PROP:Prelim-B-4}, so it suffices to focus on the inductive step. 
        Suppose that we have found an ordered vertex set $(v_{1},\dots,v_{\ell}) \in \bigcup_{j\in \overline{I\setminus\{i\}}} V_j$ with $v_1 = v$ for some $\ell \in [r,k-s-1]$ such that $\mathcal{S}^{r}(v_{1},\dots,v_{j}) \subseteq  \mathcal{H}$ for every $j \in [r,\ell]$. 
        Let $t \coloneqq \left\lfloor \frac{\ell-1}{r-1}\right\rfloor\le  q-1$.
        Let 
        \begin{align*}
            e_j 
            \coloneqq \left\{v_{(r-1)j+1}, \ldots, v_{(r-1)(j+1)} \right\}
            \text{~for~} j \in [0, t-1],
            \quad\text{and}\quad
            e_t 
            \coloneqq \left\{v_{\ell-r+2}, \ldots, v_{\ell} \right\}. 
        \end{align*}
        Since $\mathcal{S}^{r}(v_1, \ldots, v_{\ell}) \subseteq \mathcal{H}$, we have $e_j \in  \partial\mathcal{H}$ for every $j \in [0,t]$ (see the red pairs in Figure~\ref{Fig:generalized-sunflower} for the case $r=3$). 
        Let 
        \begin{align*}
            U 
            \coloneqq N_{\mathcal{H}}(e_0) \cap \cdots \cap N_{\mathcal{H}}(e_{t}).  
        \end{align*} 
        Consider the number of pairs $(e_j,u)$ with $j\in [t]$ and $u \in V(\mathcal{H})$ such that $e_j\cup \{u\} \in \mathcal{H}$.
        For each $e_j$, by the assumption on $\delta^{+}_{r-1}(\mathcal{H})$, the number of possible choices for $u$ is greater than $\frac{3k-3r+1}{3k-2} n$. 
        On the other hand, for each $u \in V(\mathcal{H}) \setminus U$, there are at most $t$ choices for $e_j$; and for each $u \in U$, there are at most $t+1$ choices for $e_j$. 
        Therefore, we obtain 
        \begin{align*}
            |U| \cdot (t+1) + \left(n-|U|\right) \cdot t 
            > (t+1) \cdot \frac{3k-3r+1}{3k-2} n.
        \end{align*}
        It follows that
        \begin{align}\label{equ:quasi-sunflower-U-lower}
            |U|
            > \frac{3k-3r+1}{3k-2} n - t \cdot \frac{3 (r-1)}{3 k-2} n
            \ge \frac{3k-3r+1}{3k-2} n - (q-1) \cdot \frac{3 (r-1)}{3 k-2} n 
            = \frac{3s+1}{3k-2} n,
        \end{align}
        which is greater than $s\cdot\frac{3n}{3k-2}\ge \sum_{j\in I\setminus\{i\}}|V_j|$.
        Therefore, $U \setminus \bigcup_{j\in I\setminus\{i\}} V_j \neq \emptyset$. 
        
        Fix a vertex $v_{\ell+1} \in U\setminus\bigcup_{j\in I\setminus\{i\}}V_{j}$. 
        It follows from the definition of $U$ that the set $\{e_j \cup \{v_{\ell+1}\} \colon j \in [0,t]\} \subseteq \mathcal{H}$ forms the quasi-sunflower $\mathcal{S}^r(v_1, \ldots, v_{\ell+1})$ (see Figure~\ref{Fig:generalized-sunflower}). 
        This concludes the proof of the inductive step, and hence, the proof of Claim~\ref{CLAIM:find-quasi-sunflower}. 
        % Hence, there exists an ordered vertex set $(v_{1},  \cdots, v_{k-s}) \subseteq  \bigcup_{j\in \overline{I\setminus\{i\}}}$ with $v_1 = v$ such that $S^{r}(v_1,\cdots, v_{\ell}) \subseteq  \mathcal{H}$ for every $\ell \in [r, k-s]$.
    \end{proof}
    %
    %%%%%%%%%%%%%%%%%%%%%%%%%%%%%%%%%
    %%%%%%%%%%%%%%%%%%%%%%%%%%%%%%%%%https://www.mathcha.io/editor
\begin{figure}[htbp]
\centering
%%%%%%%%%%%%%%%%%%%%%%%%%%%%%%%%

\tikzset{every picture/.style={line width=1pt}} %set default line width to 0.75pt        

\begin{tikzpicture}[x=0.75pt,y=0.75pt,yscale=-1,xscale=1, line cap=round,line join=round]
%uncomment if require: \path (0,300); %set diagram left start at 0, and has height of 300
%
\draw [line width=1pt, fill=sqsqsq, fill opacity=0.1] (157.92,52.63) -- (157.92,97.76) -- (261.48,90.68) -- cycle; 
\draw [line width=1pt, fill=sqsqsq, fill opacity=0.1] (157.92,116.34) -- (157.92,161.47) -- (261.48,90.68) -- cycle; 
\draw [line width=1pt, fill=sqsqsq, fill opacity=0.1]  (102.81,207.48) .. controls (173.12,180.05) and (247.22,126.08) .. (261.48,90.68) -- (255.77,175.63) -- (102.81,207.48); 
%Straight Lines [id:da14748103572208837] 1-2
\draw    (157.92,52.63) -- (157.92,97.76) ;
%Straight Lines [id:da3805013687729566] 3-4
\draw  [line width = 1.3pt, color = yellow, fill = yellow]  (157.92,116.34) -- (157.92,161.47) ;
%Straight Lines [id:da4442535395910827] 5-6
\draw  [line width = 1.3pt, color = yellow, fill = yellow]  (102.81,207.48) -- (255.77,175.63) ;
%Straight Lines [id:da7186111346900492] 7-1
\draw    (157.92,52.63) -- (261.48,90.68) ;
%Straight Lines [id:da4500492464184289] 7-2
\draw  [line width = 1.3pt, color = yellow , fill = yellow]  (157.92,97.76) -- (261.48,90.68) ;
%Straight Lines [id:da01766011131569667] 
\draw    (157.92,116.34) -- (261.48,90.68) ;
%Straight Lines [id:da9908586191520508] 
\draw    (157.92,161.47) -- (261.48,90.68) ;
%Curve Lines [id:da5843400269414252] 
\draw    (102.81,207.48) .. controls (173.12,180.05) and (247.22,126.08) .. (261.48,90.68) ;
%Straight Lines [id:da8860799721036363] 
\draw   (261.48,90.68) -- (255.77,175.63) ;
\draw [fill=cyan] (261.48,90.68) circle (2.5pt);
\draw [fill=uuuuuu] (157.92,97.76) circle (1.5pt);
\draw [fill=uuuuuu] (157.92,52.63) circle (1.5pt);
\draw [fill=uuuuuu] (157.92,116.34) circle (1.5pt);
\draw [fill=uuuuuu] (157.92,161.47) circle (1.5pt);
\draw [fill=uuuuuu] (102.81,207.48) circle (1.5pt);
\draw [fill=uuuuuu] (255.77,175.63) circle (1.5pt);
%%%%%%%%%%%%%%%
%Straight Lines [id:da1583872866977225] 
\draw  [dashed, opacity=0.5]  (442.92,53.63) -- (442.92,98.76) ;
%Straight Lines [id:da691023843275999] 3-4
\draw    (442.92,117.34) -- (442.92,162.47) ;
%Straight Lines [id:da45394980000865304] 5-6
\draw   (387.81,208.48) -- (540.77,176.63) ;
%Straight Lines [id:da603020194396213] 7-1
\draw [dashed, opacity=0.5]   (442.92,53.63) -- (546.48,91.68) ;
%Straight Lines [id:da1624841429029209] 7-2
\draw    (442.92,98.76) -- (546.48,91.68) ;
%Straight Lines [id:da9466605492034923] 
\draw  [dashed, opacity=0.5]  (442.92,117.34) -- (546.48,91.68) ;
%Straight Lines [id:da913291835251163] 
\draw  [dashed, opacity=0.5]  (442.92,162.47) -- (546.48,91.68) ;
%Curve Lines [id:da6257745901382945] 
\draw  [dashed, opacity=0.5]  (387.81,208.48) .. controls (458.12,181.05) and (532.22,127.08) .. (546.48,91.68) ;
%Straight Lines [id:da39063050504732355] 
\draw  [dashed, opacity=0.5]  (546.48,91.68) -- (540.77,176.63) ;
% %
\draw [line width=1pt, fill=sqsqsq, fill opacity=0.1] (337.19,60.28) .. controls (369.39,74.56) and (500.19,84.28) .. (546.48,91.68) -- (442.92,98.76) .. controls (413.19,85.28) and (369.39,74.56) .. (337.19,60.28); 
% %
\draw [line width=1pt, fill=sqsqsq, fill opacity=0.1] (337.19,60.28) .. controls (378.19,88.28) and (400.19,101.28) .. (442.92,117.34) -- (442.92,162.47) .. controls (415.19,122.28) and (381.19,100.28) .. (337.19,60.28); 
% %
\draw [line width=1pt, fill=sqsqsq, fill opacity=0.1] (337.19,60.28) .. controls (391.19,154.28) and (422.19,185.28) .. (540.77,176.63) -- (387.81,208.48) .. controls (387.19,179.28) and (358.19,111.28) .. (337.19,60.28); 

%Curve Lines [id:da41748685376742745] u-7
\draw    (337.19,60.28) .. controls (369.39,74.56) and (500.19,84.28) .. (546.48,91.68) ;
%Curve Lines [id:da3916534587393159] u-2
\draw    (337.19,60.28) .. controls (369.39,74.56) and (413.19,85.28) .. (442.92,98.76) ;
%Curve Lines [id:da2012336426664827] 
\draw    (337.19,60.28) .. controls (378.19,88.28) and (400.19,101.28) .. (442.92,117.34) ;
%Curve Lines [id:da14728772399393475] 
\draw    (337.19,60.28) .. controls (381.19,100.28) and (415.19,122.28) .. (442.92,162.47) ;
%Curve Lines [id:da4087329793365735] 
\draw    (337.19,60.28) .. controls (391.19,154.28) and (422.19,185.28) .. (540.77,176.63) ;
%Curve Lines [id:da05053240664092118] 
\draw    (337.19,60.28) .. controls (358.19,111.28) and (387.19,179.28) .. (387.81,208.48) ;
\draw [fill=cyan] (337.19,60.28) circle (2.5pt);
\draw [fill=uuuuuu] (442.92,53.63) circle (1.5pt);
\draw [fill=uuuuuu] (546.48,91.68) circle (1.5pt);
\draw [fill=uuuuuu] (442.92,98.76) circle (1.5pt);
\draw [fill=uuuuuu] (442.92,117.34) circle (1.5pt);
\draw [fill=uuuuuu] (442.92,162.47) circle (1.5pt);
\draw [fill=uuuuuu] (540.77,176.63) circle (1.5pt);
\draw [fill=uuuuuu] (387.81,208.48) circle (1.5pt);
% Text Node
\draw (141,47) node [anchor=north west][inner sep=0.75pt]   [align=left] {$1$};
% Text Node
\draw (141,90) node [anchor=north west][inner sep=0.75pt]   [align=left] {$2$};
% Text Node
\draw (141,112) node [anchor=north west][inner sep=0.75pt]   [align=left] {$3$};
% Text Node
\draw (141,153) node [anchor=north west][inner sep=0.75pt]   [align=left] {$4$};
% Text Node
\draw (87,202) node [anchor=north west][inner sep=0.75pt]   [align=left] {$5$};
% Text Node
\draw (261,172) node [anchor=north west][inner sep=0.75pt]   [align=left] {$6$};
% Text Node
\draw (268,85) node [anchor=north west][inner sep=0.75pt]   [align=left] {$7$};
%%%%%%%%%%%%%%%%%%%
% Text Node
\draw (427,47) node [anchor=north west][inner sep=0.75pt]   [align=left] {$1$};
% Text Node
\draw (427,97) node [anchor=north west][inner sep=0.75pt]   [align=left] {$2$};
% Text Node
\draw (448,116) node [anchor=north west][inner sep=0.75pt]   [align=left] {$3$};
% Text Node
\draw (448,156) node [anchor=north west][inner sep=0.75pt]   [align=left] {$4$};
% Text Node
\draw (371,202) node [anchor=north west][inner sep=0.75pt]   [align=left] {$5$};
% Text Node
\draw (545,172) node [anchor=north west][inner sep=0.75pt]   [align=left] {$6$};
% Text Node
\draw (550,85) node [anchor=north west][inner sep=0.75pt]   [align=left] {$7$};
% Text Node
\draw (330,43) node [anchor=north west][inner sep=0.75pt]   [align=left] {$u$};
\end{tikzpicture}
%%%%%%%%%%%%%%%%%%%%%%%%%%%%
\caption{$\mathcal{S}^3(1,\ldots,7) \to \mathcal{S}^3(3,4,5,6,2,7,u)$.} 
\label{Fig:generalized-sunflower-2}
\end{figure}
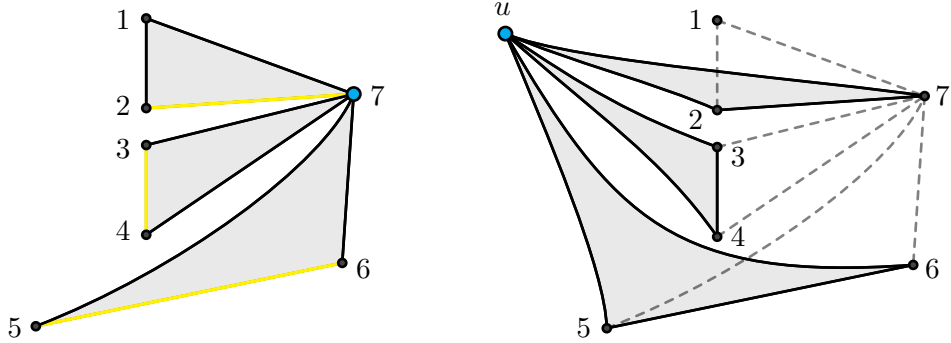
    \begin{claim}\label{CLAIM:quasi-sunflower-M}
        Let $I \subseteq [k]$ be a set of size $s+1$. 
        For every $v \in  \bigcup_{j \in I} V_j$, we have 
        \begin{align}\label{equ:vertices-with-same-color}
            \left|[v]_{\vartheta} \cap \left(\bigcup_{j\in I}V_{j}\right) \right|
                >\frac{n}{3k-2}. 
        \end{align}
    \end{claim}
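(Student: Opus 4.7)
The plan is to exploit the slack calculation that already appeared in the final inductive step of Claim~\ref{CLAIM:find-quasi-sunflower}'s proof: the threshold $\frac{n}{3k-2}$ in Claim~\ref{CLAIM:quasi-sunflower-M} is precisely the surplus produced there. I fix $v \in V_i$ with $i \in I$ and invoke Claim~\ref{CLAIM:find-quasi-sunflower} to obtain an ordered tuple $(v_1, v_2, \ldots, v_{k-s})$ with $v_1 = v$, $v_2, \ldots, v_{k-s} \in V_i \cup \bigcup_{j \notin I} V_j$, and $\mathcal{S}^{r}(v_1, \ldots, v_{\ell}) \subseteq \mathcal{H}$ for every $\ell \in [r, k-s]$. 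Writing $e_j := \{v_{(r-1)j+1}, \ldots, v_{(r-1)(j+1)}\}$ for $j \in [0, q-1]$ and $U := \bigcap_{j=0}^{q-1} N_{\mathcal{H}}(e_j)$, inequality~\eqref{equ:quasi-sunflower-U-lower} combined with the smallness hypothesis $|V_j| < \frac{3n}{3k-2}$ for $j \in I \setminus \{i\}$ gives
\[
\Big|U \setminus \bigcup_{j \in I \setminus \{i\}} V_j\Big| \;>\; \frac{3s+1}{3k-2}\,n - s\cdot\frac{3n}{3k-2} \;=\; \frac{n}{3k-2}.
\]
Each such $u$ completes to a quasi-sunflower $\mathcal{S}^{r}(v_1, \ldots, v_{k-s-1}, u) \subseteq \mathcal{H}$, so Fact~\ref{FACT:quasi-sunflower-2-covered} forces $\vartheta(u)$ into the $(s+1)$-set $D := [k] \setminus \{\vartheta(v_1), \ldots, \vartheta(v_{k-s-1})\}$.

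To convert this into the lower bound on $|[v]_\vartheta \cap \bigcup_{j \in I}V_j|$, I would apply the symmetric analogue of Claim~\ref{CLAIM:find-quasi-sunflower} in which the $\vartheta$-partition plays the role of the $\varphi$-partition: with $c := \vartheta(v)$, choose an $(s+1)$-color subset $I' \subseteq [k]$ containing $c$ that matches the $\vartheta$-image of $\bigcup_{j \in I}V_j$. The dual construction produces a valid-center set contained in $\vartheta^{-1}(c) \cup \bigcup_{c' \notin I'} \vartheta^{-1}(c')$ of size $> \frac{n}{3k-2}$, and the specific choice of $I'$ forces this set inside $[v]_\vartheta \cap \bigcup_{j \in I}V_j$. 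Merging with the $\varphi$-based set $U \setminus \bigcup_{j \in I \setminus \{i\}} V_j$ via a pigeonhole over the $(s+1)$-element set $D$, using the smallness bound $|\vartheta^{-1}(c')| < \frac{3n}{3k-2}$, then yields the claim.

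The hardest step will be arranging the dual construction so that both the $\varphi$-constraint (valid centers avoid $\bigcup_{j \in I \setminus \{i\}} V_j$) and the $\vartheta$-constraint (valid centers avoid $\bigcup_{c' \in I' \setminus \{c\}} \vartheta^{-1}(c')$) can be imposed simultaneously without exhausting the slack. A naive combination would subtract $s\cdot\frac{3n}{3k-2}$ on each side from $|U| > \frac{3s+1}{3k-2}\,n$, leaving nothing. The saving comes from the fact that the small $\varphi$-parts and the small $\vartheta$-classes overlap substantially---each vertex belongs to exactly one part of each---so an inclusion-exclusion exploiting this overlap preserves the slack $\frac{n}{3k-2}$, which is exactly what the final bound needs.
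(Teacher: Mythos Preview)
Your first paragraph goes astray at the choice of petals. By taking $e_0,\ldots,e_{q-1}$ (which together cover $v_1,\ldots,v_{k-s-1}$) you force every $u\in U$ to have $\varphi(u)\notin\{\varphi(v_1),\ldots,\varphi(v_{k-s-1})\}$; but that set of colours is $\{i\}\cup\big([k]\setminus I\setminus\{\varphi(v_{k-s})\}\big)$, so $U\setminus\bigcup_{j\in I\setminus\{i\}}V_j\subseteq V_{\varphi(v_{k-s})}$, which lies \emph{outside} $\bigcup_{j\in I}V_j$. Worse, the quasi-sunflower $\mathcal{S}^r(v_1,\ldots,v_{k-s-1},u)$ forces $\vartheta(u)\ne\vartheta(v_1)=\vartheta(v)$, so these vertices contribute nothing to $[v]_\vartheta$. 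Your set thus misses both the target $\varphi$-region and the target $\vartheta$-class, and the subsequent ``dual construction''/inclusion--exclusion plan is an attempt to recover from this; but the choice of $I'$ is circular (it depends on $\vartheta\big(\bigcup_{j\in I}V_j\big)$, the very object under study), and the overlap heuristic in the last paragraph is not an argument.

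The paper avoids all of this with one shift: replace your petals by $e'_1,\ldots,e'_q$, where $e'_j=e_j$ for $j\in[q-1]$ and $e'_q=\{v_2,\ldots,v_{r-1},v_{k-s}\}$. These $q$ sets together cover exactly $v_2,\ldots,v_{k-s}$, i.e.\ they \emph{omit} $v_1=v$ and use every $\varphi$-colour in $[k]\setminus I$. Hence the common neighbourhood $M:=\bigcap_{j=1}^{q}N_{\mathcal H}(e'_j)$ automatically satisfies $M\subseteq\bigcup_{j\in I}V_j$, while for each $u\in M$ the edges $e'_j\cup\{u\}$ form $\mathcal{S}^r(v_2,\ldots,v_{k-s},u)$, so $\vartheta(u)\in[k]\setminus\{\vartheta(v_2),\ldots,\vartheta(v_{k-s})\}$---an $(s+1)$-set that \emph{contains} $\vartheta(v)$. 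The same counting gives $|M|>\tfrac{3s+1}{3k-2}\,n$, and smallness of $\vartheta$ then yields
\[
|[v]_\vartheta\cap M|\ \ge\ |M|-s\cdot\tfrac{3n}{3k-2}\ >\ \tfrac{n}{3k-2},
\]
which is the claim. No dual construction, no inclusion--exclusion---just choose petals that exclude $v$ rather than include it.
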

    \begin{proof}[Proof of Claim~\ref{CLAIM:quasi-sunflower-M}]
        Fix $I \in \binom{[k]}{s+1}$, $i \in I$, and $v \in  V_i$. 
        Let $(v_{1},  \cdots, v_{k-s}) \subseteq \bigcup_{j\in \overline{I\setminus\{i\}}} V_j$ be an ordered vertex set, with $v_{1} = v$, as guaranteed by Claim~\ref{CLAIM:find-quasi-sunflower}. 
        % It follows from Claim~\ref{FACT:quasi-sunflower-2-covered} that every pair of vertices in $\{v_{1},  \cdots, v_{k-s}\}$ is contained in some edge of $\mathcal{H}$, and hence, $\vartheta(v_1), \ldots, \vartheta(v_{k-s})$ are pairwise distinct. 
        % In particular, 
        % \begin{align}\label{equ:Prop-k-large-theta-v1}
        %     \vartheta(v_1) 
        %     \in [k] \setminus \left\{\vartheta(v_2), \ldots, \vartheta(v_{k-s}) \right\}. 
        % \end{align}
        %
        Let 
        \begin{align*}
            e'_j \coloneqq \{v_{(r-1)j+1}, \ldots, v_{(r-1)(j+1)}\}
            \text{ for } j \in [q-1]
            \quad\text{and}\quad 
            e'_q \coloneqq \{v_2, v_3, \ldots, v_{r-1}, v_{k-s}\}. 
        \end{align*}
        Since $\mathcal{S}^{r}(v_{1},  \cdots, v_{k-s}) \subseteq \mathcal{H}$, we have $e'_j \in \partial\mathcal{H}$ for $j \in [q]$ (see the yellow pairs in Figure~\ref{Fig:generalized-sunflower-2} for the case $r=3$). 
        Let 
        \begin{align*}
            M
            \coloneqq 
            N_{\mathcal{H}}(e'_1) \cap \cdots \cap N_{\mathcal{H}}(e'_q).
        \end{align*}
        It follows from the definition of $M$ that for every vertex $u \in M$, the set $\{e'_j \cup \{u\} \colon j \in [q]\} \subseteq \mathcal{H}$ forms the quasi-sunflower $\mathcal{S}^{r}(v_2, \ldots, v_{k-s}, u)$ (see Figure~\ref{Fig:generalized-sunflower-2}).  
        Therefore, similar to Claim~\ref{CLAIM:find-quasi-sunflower}, we obtain 
        \begin{align*}
            \vartheta(u) 
            \in [k] \setminus \left\{\vartheta(v_2), \ldots, \vartheta(v_{k-s}) \right\}
            \quad\text{for every}\quad 
            u \in M. 
        \end{align*}
        In particular, $\vartheta(M) \subseteq  [k]\setminus\{\vartheta(v_2), \ldots, \vartheta(v_{k-s})\}$ has size at most $s+1$. 
        
        A proof similar to that of~\eqref{equ:quasi-sunflower-U-lower} yields 
        \begin{align}\label{equ:|M|}
            |M|
            >\frac{3s+1}{3k-2} n.
        \end{align}
        On the other hand, since $\vartheta$ is small, we have 
        \begin{align}\label{equ:quasi-sunflower-M-colors}
            |\vartheta^{-1}(j) \cap M| 
            \le |\vartheta^{-1}(j)| \le \frac{3n}{3k-2}
            \quad\text{for every}\quad
            j \in \vartheta(M). 
        \end{align}
        Combining this with~\eqref{equ:|M|}, we obtain $\vartheta(M) \ge s+1$, and hence, $\vartheta(M) = s+1$. 
        This implies that $\vartheta(M) =  [k]\setminus\{\vartheta(v_2), \ldots, \vartheta(v_{k-s})\}$.
        Since $\vartheta(v_1) \in [k]\setminus\{\vartheta(v_2), \ldots, \vartheta(v_{k-s})\} = \vartheta(M)$, it follows from~\eqref{equ:quasi-sunflower-M-colors} that 
        \begin{align*}
            % |[v]_{\vartheta} \cap M|  
            |[v_1]_{\vartheta} \cap M| 
            \ge |M| - s \cdot  \frac{3n}{3k-2}
            > \frac{n}{3k-2}.
        \end{align*}
        Recall that $v_1 = v$ and $M \subseteq  \bigcup_{j\in I}V_j$. 
        Therefore, the inequality above implies~\eqref{equ:vertices-with-same-color}. 
    \end{proof}
    \begin{claim}\label{Claim:colors-in-V-I}
        We have $|\vartheta(\bigcup_{j\in I}V_j)| \le r$ for every $(s+1)$-set $I \subseteq [k]$.
        In particular, $\left| \vartheta(V_{i})\right| \le  r$ for every $i\in [k]$.  
    \end{claim}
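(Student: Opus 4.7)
The plan is to establish the sharper bound $|\vartheta(\bigcup_{j\in I} V_j)| \le s+2$, which yields the claimed $\le r$ via $s \le r-2$. The key observation is that the set $M$ produced in the proof of Claim~\ref{CLAIM:quasi-sunflower-M} already fills up almost all of $\bigcup_{j\in I} V_j$, leaving very little room for additional $\vartheta$-colors. Concretely, I would fix any $v_0 \in \bigcup_{j\in I} V_j$ and rerun the construction inside the proof of Claim~\ref{CLAIM:quasi-sunflower-M} with $v_1 \coloneqq v_0$ to obtain a set $M \subseteq \bigcup_{j\in I} V_j$ with $|M| > \frac{3s+1}{3k-2}n$ and $|\vartheta(M)| = s+1$. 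Since every homomorphism is small, $|V_j| < \frac{3n}{3k-2}$ for every $j$, and hence the residue obeys
\[
\left|\bigcup\nolimits_{j\in I} V_j \setminus M\right| < (s+1)\cdot\frac{3n}{3k-2} - \frac{3s+1}{3k-2}n = \frac{2n}{3k-2}.
\]

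For each color $c \in \vartheta\bigl(\bigcup_{j\in I} V_j\bigr) \setminus \vartheta(M)$, no vertex of $M$ has $\vartheta$-color $c$, so $\vartheta^{-1}(c) \cap \bigcup_{j\in I} V_j$ is contained in the residue $\bigcup_{j\in I} V_j \setminus M$. On the other hand, applying Claim~\ref{CLAIM:quasi-sunflower-M} to any vertex in $\vartheta^{-1}(c) \cap \bigcup_{j\in I} V_j$ (such a vertex exists because $c$ is used there) forces this set to have size greater than $\frac{n}{3k-2}$. Since the sets $\vartheta^{-1}(c) \cap \bigcup_{j\in I} V_j$ are pairwise disjoint over distinct $c$ and jointly lie inside a set of size less than $\frac{2n}{3k-2}$, at most one such color $c$ can exist. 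Therefore $|\vartheta(\bigcup_{j\in I} V_j)| \le (s+1) + 1 = s+2 \le r$.

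For the ``in particular'' statement, since $s \le r-2$ and $k \ge r$, we have $k \ge s+2$, so every $i \in [k]$ lies in some $(s+1)$-subset $I \subseteq [k]$, and $V_i \subseteq \bigcup_{j\in I} V_j$ then gives $|\vartheta(V_i)| \le r$. The main obstacle I anticipated is that the naive double count---each color class has more than $\frac{n}{3k-2}$ vertices in $\bigcup_{j\in I} V_j$ of total size less than $\frac{3(s+1)n}{3k-2}$---only yields $|\vartheta(\bigcup_{j\in I} V_j)| \le 3s+2$, which is too weak once $s$ is large relative to $r$. The remedy above is to restrict the counting to the tiny residue $\bigcup_{j\in I} V_j \setminus M$, where the area budget can accommodate only one ``extra'' $\vartheta$-color.
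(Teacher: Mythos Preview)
Your argument is correct and follows essentially the same route as the paper: both proofs use the set $M$ from Claim~\ref{CLAIM:quasi-sunflower-M} (with $|\vartheta(M)|=s+1$ and $|M|>\frac{3s+1}{3k-2}n$) together with the lower bound $|[w]_{\vartheta}\cap\bigcup_{j\in I}V_j|>\frac{n}{3k-2}$ for every $w\in\bigcup_{j\in I}V_j$, and compare against the upper bound $|\bigcup_{j\in I}V_j|<\frac{3(s+1)}{3k-2}n$ coming from smallness. The only cosmetic difference is packaging: the paper assumes $\ge s+3$ colors, takes two vertices $w_1,w_2$ whose $\vartheta$-colors lie outside $\vartheta(M)$, and obtains the contradiction $|\bigcup_{j\in I}V_j|>\frac{3(s+1)}{3k-2}n$ directly; you instead bound the residue $\bigcup_{j\in I}V_j\setminus M$ first and then cap the number of extra colors by one.
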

    \begin{proof}[Proof of Claim~\ref{Claim:colors-in-V-I}]
        Suppose to the contrary that $|\vartheta(\bigcup_{j\in I}V_j)| \ge r+1 \ge s+3$ for some $(s+1)$-set $I \subseteq [k]$. 
        Fix $i \in I$ and $v \in  V_i$. 
        Let $(v_{1},  \cdots, v_{k-s}) \subseteq \bigcup_{j\in \overline{I\setminus\{i\}}} V_j$ be an ordered vertex set, with $v_{1} = v$, as guaranteed by Claim~\ref{CLAIM:find-quasi-sunflower}.
        Let $M$ be be defined as in the proof of Claim~\ref{CLAIM:quasi-sunflower-M}. 
        Recall that $\vartheta(M) =  [k]\setminus\{\vartheta(v_2), \ldots, \vartheta(v_{k-s})\}$ has size $s+1$. 
        So, there exist two vertices $w_1, w_2 \in \bigcup_{j\in I}V_j$ with $\vartheta(w_1) \neq \vartheta(w_2)$ such that $\{\vartheta(w_1), \vartheta(w_2)\} \subseteq  \{\vartheta(v_2), \ldots, \vartheta(v_{k-s})\}$.
        In particular, $M \cap [w_1]_{\vartheta} = M \cap [w_2]_{\vartheta} = \emptyset$. 
        So, it follows from~\eqref{equ:vertices-with-same-color} and~\eqref{equ:|M|} that 
        \begin{align*}
            \left| \bigcup_{j\in I}V_{j} \right|
            \ge \left| [w_1]_{\vartheta} \cap \bigcup_{j\in I}V_{j} \right|
                +\left| [w_2]_{\vartheta} \cap \bigcup_{j\in I}V_{j} \right|+|M|
            >\frac{3(s+1)}{3k-2}n.  
        \end{align*}
        However, since $\vartheta$ is small, we have $\left| \bigcup_{j\in I}V_{j} \right| \le (s+1) \cdot \frac{3n}{3k-2}$, a contradiction. 
    \end{proof}
    By Proposition~\ref{PROP:case-all-small}, to finish the proof, it suffices to prove that $\left| \vartheta(V_{i})\right| \le  2$ for every $i\in [k]$. 
    Suppose to the contrary that $\left| \vartheta(V_{i})\right| = p \ge 3$ for some $i \in [k]$.
    Then, by averaging, there exists a vertex $v \in V_i$ such that $\left| [v]_{\vartheta} \cap V_i\right| \le \frac{|V_i|}{p}<\frac{3n}{(3k-2)p}\le \frac{n}{3k-2}$.
    Since $\left| \vartheta(V_{i})\right| \le  r$ (due to Claim~\ref{Claim:colors-in-V-I}), Proposition~\ref{PROP:Prelim-B}~\ref{PROP:Prelim-B-4} ensures the existence of an edge $e\in \mathcal{H}$ containing $v$ such that $\vartheta(V_i) \subseteq  \vartheta(e)$.
    This implies that $N_{\mathcal{H}}(e\setminus\{v\}) \cap V_i \subseteq [v]_{\theta} \cap V_i$.  
    Therefore, 
    \begin{align*}
        d_{\mathcal{H}}(e\setminus \{v\})
        \le |[v]_{\vartheta} \cap V_i| + \sum_{j \in \overline{\varphi(e)}} |V_j|
        < \frac{|V_i|}{p} + (k-r) \cdot \frac{3n}{3k-2}
        < \frac{3k-3r+1}{3k-2}n,
    \end{align*}
    contradicting Inequality~\eqref{equ:assumption-mincodegree}. 
\end{proof}

%%%%%%%%%%%%%%%%%%%%%%%%%%%%%%%%%%
\subsection{Proof of Proposition~\ref{PROP:Final-compute}}\label{SEC:proof-final-compute}
We prove Proposition~\ref{PROP:Final-compute} in this subsection. The following results will be useful. 

Recall from Section~\ref{SEC:Intorduction} that the $(r-2)$-th shadow $\partial_{r-2}\mathcal{H}$ of an $r$-graph $\mathcal{H}$ is a graph, where $\{u,v\} \in \partial_{r-2}\mathcal{H}$  if and only if  it is contained in some edge of $\mathcal{H}$. 
\begin{proposition}\label{PROP:Prelim-C}
    Let $n \ge k \ge r \ge 2$ be integers and $\mathcal{H}$ be an $n$-vertex $r$-graph satisfying Assumption~\ref{assume:2}. 
    Let $\varphi \in \mathrm{Hom}(\mathcal{H},K_{k}^{r})$. 
    % Suppose that $u,v \in V(\mathcal{H})$ are two vertices with $\varphi(u) \neq \varphi(v)$ and $\{u,v\}$ is contained in some edge of $\mathcal{H}$. 
    For every pair $\{u,v\} \in \partial_{r-2}\mathcal{H}$, there exists an edge $e\in \mathcal{H}$ containing $\{u,v\}$ such that 
    \begin{align*}
       %\{u,v\}\subseteq e \mbox{~~and~~} 
       \min\left\{|[w]_{\varphi}| \colon w\in e\setminus\{u,v\}\right\}
        \ge \max\left\{|\varphi^{-1}(i)| \colon i\in \overline{\varphi(e)} \right\}.
    \end{align*}
    In particular, by Proposition~\ref{PROP:Prelim-A}~\ref{PROP:Prelim-A-3}, $\varphi^{-1}(i)$ is small for every $i\in \overline{\varphi(e)}$. 
\end{proposition}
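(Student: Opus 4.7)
The plan is to start with an edge $e\in \mathcal{H}$ containing $\{u,v\}$ (which exists since $\{u,v\}\in \partial_{r-2}\mathcal{H}$) that maximizes $\sum_{w\in e\setminus\{u,v\}}|[w]_{\varphi}|$, and to show by color-swap arguments that $e$ automatically satisfies both $J_{\varphi}\subseteq \varphi(e)$ and the required inequality. The engine throughout is the simple observation that if some $w'\in e\setminus\{u,v\}$ can be replaced by a neighbor $x\in N_{\mathcal{H}}(e\setminus\{w'\})$ with $|[x]_{\varphi}|>|[w']_{\varphi}|$, then $e':=(e\setminus\{w'\})\cup\{x\}$ still contains $\{u,v\}$ and has strictly larger sum, contradicting the extremal choice of $e$. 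This is a two-vertex analogue of the extremal strategy used in the proof of Proposition~\ref{PROP:Prelim-B}~\ref{PROP:Prelim-B-1}.

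First I would establish $J_{\varphi}\subseteq \varphi(e)$. Suppose instead that some $j^{\ast}\in J_{\varphi}\setminus\varphi(e)$ exists. A pigeonhole using $|J_{\varphi}|\le r-2$ (Proposition~\ref{PROP:Prelim-A}~\ref{PROP:Prelim-A-3}) and $|\varphi(e\setminus\{u,v\})|=r-2$ forces the existence of some $w\in e\setminus\{u,v\}$ with $\varphi(w)\notin J_{\varphi}$; otherwise $\varphi(e\setminus\{u,v\})=J_{\varphi}\subseteq \varphi(e)$, contradicting $j^{\ast}\notin\varphi(e)$. Since $|[w]_{\varphi}|<\frac{3n}{3k-2}\le |\varphi^{-1}(j)|$ for every $j\in J_{\varphi}\setminus\varphi(e)$, the swap argument forbids $N_{\mathcal{H}}(e\setminus\{w\})$ from meeting any color class in $J_{\varphi}\setminus\varphi(e)$, confining the neighborhood to $\varphi^{-1}(\overline{M'})$ with $M':=J_{\varphi}\cup \varphi(e\setminus\{w\})$. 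The same cardinality count as in the proof of Proposition~\ref{PROP:Prelim-B}~\ref{PROP:Prelim-B-1} gives $|M'|\ge r$, so $d_{\mathcal{H}}(e\setminus\{w\})\le (k-r)\cdot \frac{3n}{3k-2}$, contradicting Inequality~\eqref{equ:assumption-mincodegree}.

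For the main inequality I would argue by contradiction: suppose there exist $w_0\in e\setminus\{u,v\}$ and $i_0\in \overline{\varphi(e)}$ with $|[w_0]_{\varphi}|<|\varphi^{-1}(i_0)|$. Since $J_{\varphi}\subseteq \varphi(e)$ is already in hand, $i_0$ must be a small color, and hence $[w_0]_{\varphi}$ is small as well. Applying the swap argument to $w_0$ against each color in $J_0:=\{j\in \overline{\varphi(e)}\colon |\varphi^{-1}(j)|>|[w_0]_{\varphi}|\}$ (which is nonempty, as it contains $i_0$) rules out $N_{\mathcal{H}}(e\setminus\{w_0\})$ from meeting $\varphi^{-1}(J_0)$. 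The resulting degree bound is at most $(k-r)\cdot|[w_0]_{\varphi}|<(k-r)\cdot\frac{3n}{3k-2}$, which again contradicts Inequality~\eqref{equ:assumption-mincodegree}.

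The main subtlety is ensuring that the vertex to be swapped lies in $e\setminus\{u,v\}$, not merely in $e$, since $u$ and $v$ are fixed and cannot participate in any swap. In the second step this is automatic because $w_0$ is chosen from $e\setminus\{u,v\}$ to begin with; in the first step it reduces to the single cardinality check above, which relies crucially on $|J_{\varphi}|\le r-2$. With that point settled, the rest is a routine translation of the extremal argument of Proposition~\ref{PROP:Prelim-B}~\ref{PROP:Prelim-B-1} to the two-vertex setting.
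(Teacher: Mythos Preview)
Your proof is correct and follows the same extremal-edge-plus-swap mechanism as the paper's, but with a pleasant streamlining: whereas the paper runs two separate maximizations (first maximize $|\varphi(\tilde{e})\cap J_{\varphi}|$ to force $J_{\varphi}\subseteq\varphi(\tilde{e})$, then fix a target set $I$ of largest remaining colors and maximize $|\varphi(e)\cap I|$ to hit $\varphi(e)=I\cup J_{\varphi}\cup\{\varphi(u),\varphi(v)\}$ exactly), you collapse both into a single maximization of $\sum_{w\in e\setminus\{u,v\}}|[w]_{\varphi}|$. This single quantity automatically detects any swap to a strictly larger color class, so the two conclusions $J_{\varphi}\subseteq\varphi(e)$ and the min--max inequality both follow from the same extremal choice. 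The paper's two-step version has the small advantage of pinning down $\varphi(e)$ exactly, but that extra precision is never used downstream; your version is a bit cleaner and just as rigorous. The one delicate point you flag---that the swapped vertex must lie in $e\setminus\{u,v\}$---is handled correctly in both steps via $|J_{\varphi}|\le r-2$.
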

\begin{proof}[Proof of Proposition~\ref{PROP:Prelim-C}]
    The proof is analogous to that of  Proposition~\ref{PROP:Prelim-B}.
    Fix $\varphi \in \mathrm{Hom}(\mathcal{H},K_{k}^{r})$ and $\{u,v\} \in \partial_{r-2}\mathcal{H}$. 
    Recall from Proposition~\ref{PROP:Prelim-A}~\ref{PROP:Prelim-A-3} that $|J_{\varphi}| \le r-2$. 

    First, we claim that there exists an edge $\tilde{e} \in \mathcal{H}$ containing $\{u,v\}$ such that $J_{\varphi} \subseteq \varphi(\tilde{e})$. 
    Indeed, choose an edge $\tilde{e} \in \mathcal{H}$ containing $\{u,v\}$ such that $|\varphi(\tilde{e}) \cap J_{\varphi}|$ is maximized.
    Suppose to the contrary that $J_{\varphi} \not\subseteq  \varphi(\tilde{e})$. 
    Then there exists a vertex $w\in \tilde{e}$ such that $[w]_{\varphi}$ is small. 
    Similar to the proof of Proposition~\ref{PROP:Prelim-A}, it follows from the maximality of $\tilde{e}$ that 
    \begin{align*}
        d_{\mathcal{H}}(\tilde{e}\setminus\{w\})
        \le \sum_{j\in \overline{J_{\varphi} \cup \varphi(\tilde{e}\setminus \{w\})}} |\varphi^{-1}(j)|
        \le (k-r) \cdot \frac{3 n}{3k-2}
            < \frac{3k-3r+1}{3k-2} n,
    \end{align*}
    contradicting Inequality~\eqref{equ:assumption-mincodegree}.
    Therefore, we have $J_{\varphi} \subseteq  \varphi(\tilde{e})$. 

    % The argument above also shows that Proposition~\ref{PROP:Prelim-C} is true if $|J_{\varphi}| = r-2$. So we may assume that $|J_{\varphi}| \le r-3$. 
    Fix a set $I \subseteq  \overline{J_{\varphi} \cup \{\varphi(u), \varphi(v)\}}$ of size $r - |J_{\varphi} \cup \{\varphi(u), \varphi(v)\}|$ such that 
    \begin{align*}
        \min\left\{|\varphi^{-1}(i)| \colon i\in I\right\}
        \ge \max\left\{|\varphi^{-1}(j)| \colon j\in \overline{I \cup J_{\varphi} \cup \{\varphi(u), \varphi(v)\}} \right\}.
    \end{align*}
    It suffices to show that there exists an edge $e\in \mathcal{H}$ containing $\{u,v\}$ such that $\varphi(e) = I \cup J_{\varphi} \cup \{\varphi(u), \varphi(v)\}$.
    Let $e \in \mathcal{H}$ be an edge containing $J_{\varphi} \cup \{\varphi(u), \varphi(v)\}$ such that $|\varphi(e) \cap I|$ is maximized. 
    Suppose to the contrary that $|\varphi(e) \cap I| \le |I| - 1$. 
    Then there exists a vertex $w \in e$ such that  $\varphi(w) \in \overline{I \cup J_{\varphi} \cup \{\varphi(u), \varphi(v)\}}$. 
    In particular, $[w]_{\varphi}$ is small. 
    Therefore, similar to the argument above, we obtain 
    \begin{align*}
        d_{\mathcal{H}}(e\setminus\{w\})
        \le \sum_{j\in \overline{I \cup \varphi(e\setminus \{\tilde{w}\}})} |\varphi^{-1}(j)|
        \le (k-r) \cdot \frac{3 n}{3k-2}
        < \frac{3k-3r+1}{3k-2} n,
    \end{align*}
    contradicting Inequality~\eqref{equ:assumption-mincodegree}.
\end{proof}
    
    %%%%%%%%%%%%%%%%%%%%%%%%%%%%%%%%%%%%%%%
    %%%%%%%%%%%%%%%%%%%%%%%%%%%%%%%%%https://www.mathcha.io/editor
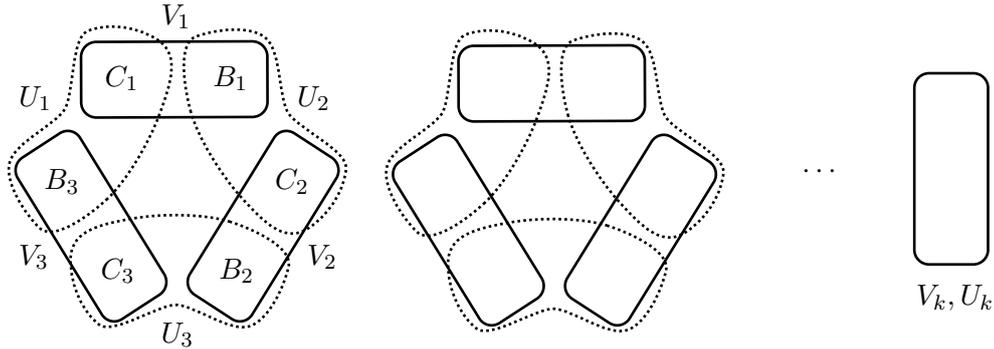
\begin{figure}[htbp]
\centering
%%%%%%%%%%%%%%%%%%%%%%%%%%%%%%%%
\tikzset{every picture/.style={line width=1pt}} %set default line width to 0.75pt        

\begin{tikzpicture}[x=0.75pt,y=0.75pt,yscale=-1,xscale=1]
%uncomment if require: \path (0,271); %set diagram left start at 0, and has height of 271

%Rounded Rect [id:dp7533771369893287] 
\draw   (254.82,122.74) .. controls (258.25,124.99) and (259.29,129.65) .. (257.15,133.15) -- (215.13,201.7) .. controls (212.98,205.19) and (208.46,206.2) .. (205.03,203.96) -- (186.39,191.74) .. controls (182.95,189.49) and (181.91,184.84) .. (184.05,181.34) -- (226.08,112.79) .. controls (228.22,109.29) and (232.74,108.28) .. (236.17,110.53) -- cycle ;
%Rounded Rect [id:dp04167339120298452] 
\draw   (129.71,72.2) .. controls (129.7,68) and (133.09,64.58) .. (137.29,64.57) -- (215.08,64.28) .. controls (219.28,64.27) and (222.7,67.66) .. (222.71,71.86) -- (222.79,94.69) .. controls (222.8,98.89) and (219.41,102.31) .. (215.21,102.33) -- (137.42,102.61) .. controls (133.22,102.63) and (129.8,99.23) .. (129.78,95.03) -- cycle ;
%Rounded Rect [id:dp27570357897643993] 
\draw   (150.22,203.96) .. controls (146.79,206.21) and (142.27,205.2) .. (140.13,201.7) -- (98.13,133.14) .. controls (95.99,129.64) and (97.03,124.98) .. (100.47,122.73) -- (119.11,110.52) .. controls (122.55,108.27) and (127.06,109.29) .. (129.21,112.78) -- (171.21,181.35) .. controls (173.35,184.85) and (172.3,189.5) .. (168.87,191.75) -- cycle ;
%Shape: Polygon Curved [id:ds4904054531436812] U3
\draw  [dash pattern=on 1pt off 1.2pt] (203.75,154.91) .. controls (215.51,158.6) and (235.37,167.79) .. (233.32,177.55) .. controls (231.27,187.3) and (223.32,205.04) .. (213.35,207.7) .. controls (203.37,210.36) and (186.17,196.97) .. (177.14,196.84) .. controls (168.1,196.71) and (145.72,212.37) .. (138.57,206.66) .. controls (131.42,200.95) and (124.41,188.49) .. (124.5,173.98) .. controls (124.59,159.47) and (138.99,156.39) .. (154.92,153.49) .. controls (170.86,150.59) and (192,151.22) .. (203.75,154.91) -- cycle ;
%Shape: Polygon Curved [id:ds5008970161692807] U2
\draw  [dash pattern=on 1pt off 1.2pt] (184.55,99.94) .. controls (181.15,87.74) and (177.8,65.51) .. (186.83,61.98) .. controls (195.85,58.45) and (214.58,55.68) .. (222.17,62.89) .. controls (229.75,70.1) and (228.25,92.3) .. (233.06,100.21) .. controls (237.87,108.12) and (262.76,119) .. (262.02,128.31) .. controls (261.28,137.62) and (254.97,150.49) .. (243.15,158.3) .. controls (231.33,166.12) and (220.99,155.31) .. (209.96,143.07) .. controls (198.94,130.83) and (187.95,112.14) .. (184.55,99.94) -- cycle ;
%Shape: Polygon Curved [id:ds02180995733226654] U1
\draw  [dash pattern=on 1pt off 1.2pt] (142.94,143.4) .. controls (133.89,151.98) and (116.13,164.98) .. (109.05,158.19) .. controls (101.97,151.41) and (91.25,135.3) .. (94.11,125.07) .. controls (96.98,114.84) and (116.88,106.35) .. (121.59,98.37) .. controls (126.3,90.4) and (124.65,62.51) .. (133.04,59.05) .. controls (141.43,55.6) and (155.38,55.71) .. (167.41,63.17) .. controls (179.44,70.64) and (174.69,85.02) .. (168.99,100.68) .. controls (163.29,116.33) and (152,134.83) .. (142.94,143.4) -- cycle ;
%Rounded Rect [id:dp36277684019423173] 
\draw   (575.1,80.33) .. controls (579.17,80.33) and (582.46,83.63) .. (582.46,87.69) -- (582.46,169.12) .. controls (582.46,173.19) and (579.17,176.48) .. (575.1,176.48) -- (553.03,176.48) .. controls (548.96,176.48) and (545.67,173.19) .. (545.67,169.12) -- (545.67,87.69) .. controls (545.67,83.63) and (548.96,80.33) .. (553.03,80.33) -- cycle ;
%Rounded Rect [id:dp4771640951438372] 
\draw   (443.21,124.72) .. controls (446.64,126.97) and (447.68,131.62) .. (445.54,135.12) -- (403.51,203.67) .. controls (401.37,207.17) and (396.85,208.18) .. (393.42,205.93) -- (374.78,193.72) .. controls (371.34,191.47) and (370.3,186.81) .. (372.44,183.31) -- (414.47,114.76) .. controls (416.61,111.27) and (421.13,110.25) .. (424.56,112.5) -- cycle ;
%Rounded Rect [id:dp0910006239561616] 
\draw   (318.1,74.18) .. controls (318.08,69.97) and (321.48,66.56) .. (325.68,66.54) -- (403.47,66.26) .. controls (407.67,66.24) and (411.09,69.63) .. (411.1,73.84) -- (411.18,96.66) .. controls (411.19,100.87) and (407.8,104.29) .. (403.59,104.3) -- (325.81,104.59) .. controls (321.61,104.6) and (318.19,101.21) .. (318.17,97) -- cycle ;
%Rounded Rect [id:dp511869779972467] 
\draw   (338.61,205.94) .. controls (335.18,208.18) and (330.66,207.17) .. (328.52,203.67) -- (286.52,135.11) .. controls (284.38,131.61) and (285.42,126.96) .. (288.85,124.71) -- (307.5,112.5) .. controls (310.93,110.25) and (315.45,111.26) .. (317.6,114.76) -- (359.6,183.32) .. controls (361.74,186.82) and (360.69,191.48) .. (357.26,193.73) -- cycle ;
%Shape: Polygon Curved [id:ds70656253427153] 
\draw  [dash pattern=on 1pt off 1.2pt] (392.14,156.89) .. controls (403.89,160.58) and (423.76,169.77) .. (421.71,179.52) .. controls (419.66,189.27) and (411.71,207.02) .. (401.73,209.67) .. controls (391.76,212.33) and (374.56,198.95) .. (365.52,198.81) .. controls (356.49,198.68) and (334.11,214.35) .. (326.96,208.64) .. controls (319.8,202.93) and (312.8,190.46) .. (312.89,175.95) .. controls (312.98,161.45) and (327.38,158.36) .. (343.31,155.46) .. controls (359.24,152.56) and (380.39,153.2) .. (392.14,156.89) -- cycle ;
%Shape: Polygon Curved [id:ds9936346754955268] 
\draw  [dash pattern=on 1pt off 1.2pt] (372.94,101.91) .. controls (369.53,89.71) and (366.19,67.48) .. (375.21,63.95) .. controls (384.24,60.42) and (402.97,57.66) .. (410.55,64.87) .. controls (418.14,72.07) and (416.64,94.28) .. (421.45,102.18) .. controls (426.26,110.09) and (451.15,120.98) .. (450.41,130.29) .. controls (449.66,139.6) and (443.36,152.46) .. (431.54,160.28) .. controls (419.72,168.09) and (409.38,157.29) .. (398.35,145.04) .. controls (387.33,132.8) and (376.34,114.11) .. (372.94,101.91) -- cycle ;
%Shape: Polygon Curved [id:ds6236684335689324] 
\draw  [dash pattern=on 1pt off 1.2pt] (331.33,145.38) .. controls (322.28,153.96) and (304.52,166.95) .. (297.44,160.17) .. controls (290.36,153.38) and (279.63,137.27) .. (282.5,127.04) .. controls (285.37,116.81) and (305.27,108.32) .. (309.98,100.35) .. controls (314.68,92.37) and (313.03,64.48) .. (321.43,61.03) .. controls (329.82,57.57) and (343.77,57.68) .. (355.8,65.15) .. controls (367.83,72.61) and (363.08,87) .. (357.38,102.65) .. controls (351.68,118.31) and (340.39,136.8) .. (331.33,145.38) -- cycle ;

% Text Node
\draw (168,44) node [anchor=north west][inner sep=0.75pt]   [align=left] {$V_1$};
% Text Node
\draw (241,165) node [anchor=north west][inner sep=0.75pt]   [align=left] {$V_2$};
% Text Node
\draw (97,165) node [anchor=north west][inner sep=0.75pt]   [align=left] {$V_3$};
% Text Node
\draw (97,85) node [anchor=north west][inner sep=0.75pt]   [align=left] {$U_1$};
% Text Node
\draw (236,85) node [anchor=north west][inner sep=0.75pt]   [align=left] {$U_2$};
% Text Node
\draw (168,204) node [anchor=north west][inner sep=0.75pt]   [align=left] {$U_3$};
% Text Node
\draw (545,185) node [anchor=north west][inner sep=0.75pt]   [align=left] {$V_k, U_k$};
% Text Node
\draw (194,75) node [anchor=north west][inner sep=0.75pt]   [align=left] {$B_1$};
% Text Node
\draw (140,75) node [anchor=north west][inner sep=0.75pt]   [align=left] {$C_1$};
% Text Node
\draw (225,127) node [anchor=north west][inner sep=0.75pt]   [align=left] {$C_2$};
% Text Node
\draw (197,172) node [anchor=north west][inner sep=0.75pt]   [align=left] {$B_2$};
% Text Node
\draw (138,172) node [anchor=north west][inner sep=0.75pt]   [align=left] {$C_3$};
% Text Node
\draw (109.78,127) node [anchor=north west][inner sep=0.75pt]   [align=left] {$B_3$};
% Text Node
\draw (488,125) node [anchor=north west][inner sep=0.75pt]   [align=left] {$\cdots$};
\end{tikzpicture}
%%%%%%%%%%%%%%%%%%%%%%%%%%%%
\caption{the intersection of two homomorphisms has cyclic structures.} 
\label{Fig:cyclic-structure}
\end{figure}

    \begin{proposition}\label{PROP:cyclic-structure}
        Let $n \ge k \ge r \ge 2$ be integers and $\mathcal{H}$ be an $n$-vertex $r$-graph satisfying Assumption~\ref{assume:2}. 
        Suppose that $\varphi, \vartheta \in \mathrm{Hom}(\mathcal{H}, K_{k}^{r})$ satisfy $|\vartheta(\varphi^{-1}(i))| \le 2$ for every $i \in [k]$. 
        Then the following statements hold. 
        \begin{enumerate}[label=(\roman*)]
            \item\label{PROP:cyclic-structure-1} 
                There exists a $t$-set $T \subseteq [k]$ for some $t \ge 2$, and by symmetry, we may assume that $T = [t]$, such that $\vartheta(V_i) = \{k_{i}, k_{i+1}\}$ for $i\in [t]$, where $\{k_1, \ldots, k_{t}\} \subseteq  [k]$ is a $t$-set, and $k_{t+1} \coloneqq k_1$ (see Figure~\ref{Fig:cyclic-structure}).  
            \item\label{PROP:cyclic-structure-2} 
                Every set $T \subseteq [k]$ satisfying the conclusion above must have $|T| \ge 3$. 
        \end{enumerate}
    \end{proposition}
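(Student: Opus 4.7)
My plan is to set up an auxiliary bipartite multigraph $B$ on vertex sets $[k]_{\varphi}$ and $[k]_{\vartheta}$, placing an edge $ij$ whenever $j \in C_i \coloneqq \vartheta(\varphi^{-1}(i))$. The hypothesis bounds left-degrees by $2$, and Proposition~\ref{PROP:Prelim-A}~\ref{PROP:Prelim-A-2} guarantees each right-degree is at least $1$. Write $d = |\{i : |C_i| = 2\}|$, let $K_2 \subseteq [k]_{\vartheta}$ be the set of colors appearing in some $2$-element $C_i$, and let $B'$ denote the multigraph on $K_2$ whose edges record these $2$-element $C_i$'s. Part (i) corresponds to exhibiting a cycle in $B'$ (we may implicitly assume $d \ge 1$, equivalently $\vartheta \not\cong \varphi$, since otherwise the conclusion is vacuous), and part (ii) corresponds to ruling out cycles of length two in $B'$.

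For (i) the linchpin is the following key claim: \emph{if $|C_{i_0}|=2$ with $C_{i_0}=\{k_1,k_2\}$, then both $k_1$ and $k_2$ have $B$-degree at least $2$.} I would prove this by contradiction: if $k_1$ had $B$-degree $1$, then $\vartheta^{-1}(k_1) \subseteq V_{i_0}$, so picking $v \in V_{i_0} \cap \vartheta^{-1}(k_2)$ (nonempty since $k_2 \in C_{i_0}$) and applying Proposition~\ref{PROP:Prelim-B}~\ref{PROP:Prelim-B-3} to the homomorphism $\vartheta$ with target color $k_1$ would yield an edge $e \ni v$ and some $u \in e$ with $\vartheta(u) = k_1$, forcing $u \in V_{i_0}$ and hence two vertices of $e$ into the same $\varphi$-class, a contradiction. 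With the claim in hand, a double count of the right-degrees of $B$ gives
\[
s + 2d \;=\; \sum_{j \in [k]_\vartheta} \deg_B(j) \;\ge\; |K_1| + 2|K_2| \;=\; k + |K_2|,
\]
where $s = k - d$ counts singleton $C_i$'s, which simplifies to $d \ge |K_2|$. Since any forest on $|K_2|$ vertices has at most $|K_2| - 1$ edges, $B'$ must contain a cycle; any such cycle $k_1, k_2, \ldots, k_t, k_1$ comes with indices $i_1, \ldots, i_t$ satisfying $\vartheta(V_{i_j}) = \{k_j, k_{j+1}\}$, delivering the desired cyclic $t$-set $T$.

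For (ii), I assume for contradiction that $T = \{1, 2\}$ with $\vartheta(V_1) = \vartheta(V_2) = \{k_1, k_2\}$ and partition $V_1 \cup V_2$ into four nonempty blocks $A = V_1 \cap \vartheta^{-1}(k_1)$, $B = V_1 \cap \vartheta^{-1}(k_2)$, $C = V_2 \cap \vartheta^{-1}(k_1)$, $D = V_2 \cap \vartheta^{-1}(k_2)$. Since $\vartheta$ is a homomorphism, no pair inside $A \cup C$ or inside $B \cup D$ can lie in $\partial_{r-2}\mathcal{H}$. The plan is to apply the codegree hypothesis to a judicious family of $(r-1)$-sets $S \in \partial \mathcal{H}$, chosen so that $S$ mixes vertices of the four blocks with vertices from $V_i$ for $i \ge 3$: each such $S$ has $d_{\mathcal{H}}(S) > \tfrac{3k-3r+1}{3k-2}n$, and the $\vartheta$-coloring of $S$ confines $N(S) \cap (V_1 \cup V_2)$ to a proper sub-union of $\{A, B, C, D\}$ depending on which of $k_1$, $k_2$ appears in $\vartheta(S)$. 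Summing these size inequalities in a balanced fashion---mirroring the small calculation one sees explicitly when $(k,r) = (4,3)$, where the sum simultaneously forces $|V_3|+|V_4| > \tfrac{3k-3r+1}{3k-2}n$ and $|V_3|+|V_4| < \tfrac{3k-3r+1}{3k-2}n$---yields the required numerical contradiction. The main obstacle lies in (ii): choosing the right family of $(r-1)$-sets so that positive and negative contributions of $A,B,C,D$ collapse into a strict numerical contradiction uniformly in $k \ge r \ge 2$, with the boundary case $k = r$ (where no $(r-1)$-set $S$ with $\varphi(S) \subseteq [k]\setminus\{1,2\}$ exists) needing a separate pair-based treatment; part (i) is essentially complete once the key claim and the double count are in place.
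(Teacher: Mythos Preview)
Your argument for part~(i) is correct and is a pleasant alternative to the paper's proof. The paper instead builds the cycle iteratively: given $i$ with $\vartheta(V_i)=\{p,q\}$, it first uses Proposition~\ref{PROP:Prelim-B}~\ref{PROP:Prelim-B-3} applied to $\vartheta$ to find some $j\neq i$ with $p\in\vartheta(V_j)$, and then separately uses Proposition~\ref{PROP:Prelim-B}~\ref{PROP:Prelim-B-3} applied to $\varphi$ to show $|\vartheta(V_j)|=2$. Your double-count plus the forest bound on $B'$ avoids the second step entirely, which is a small economy.

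Part~(ii), however, has a genuine gap. Your proposed method of summing codegree inequalities over an unspecified ``judicious family'' of $(r-1)$-sets, hoping the contributions of $A,B,C,D$ cancel uniformly, is not an argument; and the sketch for $(k,r)=(4,3)$ (forcing $|V_3|+|V_4|$ strictly above and strictly below the same threshold) does not stand on its own, let alone scale. The paper's proof supplies two ideas you are missing. First, rather than summing, it singles out the \emph{smallest} of the four blocks, say $V_1\cap U_1$, so that $|V_1\cap U_1|\le\tfrac{1}{4}(|V_1|+|V_2|)$; it then applies Proposition~\ref{PROP:Prelim-B}~\ref{PROP:Prelim-B-3} (to $\varphi$) at a vertex $v\in V_1\cap U_1$ to obtain a single edge $e\ni v$ with $\{1,2\}\subseteq\varphi(e)$ and with the remaining $\varphi$-colors chosen to be the largest available. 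The $\vartheta$-constraint forces $N_{\mathcal H}(e\setminus\{v\})\cap(V_1\cup V_2)\subseteq V_1\cap U_1$, so the codegree bound on $e\setminus\{v\}$ combines with the $\tfrac14$ estimate and an averaging bound on $\sum_{j\in\overline{\varphi(e)}}|V_j|$ to give the contradiction. Second, the paper splits into the regimes $k\ge\tfrac{4r-2}{3}$ and $k<\tfrac{4r-2}{3}$, invoking respectively the thresholds $\tfrac{3k-3r+1}{3k-2}$ and $\tfrac{k-r+1}{k+2}$ from Assumption~\ref{assume:2}; your proposal uses only the former, which is too weak when $k<\tfrac{4r-2}{3}$.
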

    \begin{proof}[Proof of Proposition~\ref{PROP:cyclic-structure}]
        Let $\mathcal{H}, \varphi, \vartheta$ be as assumed in Proposition~\ref{PROP:cyclic-structure}. 
        Let $V_i \coloneqq \varphi^{-1}(i)$ and $U_i \coloneqq \vartheta^{-1}(i)$ for $i \in [k]$. 
        We begin by proving Proposition~\ref{PROP:cyclic-structure}~\ref{PROP:cyclic-structure-1}. 
        Note that it suffices to show that if $|\vartheta(V_{i})|=2$ for some $i\in [k]$ (which is ensured by the assumption that $\vartheta \ncong \varphi$), then there exists $j \in [k] \setminus \{i\}$ with $|\vartheta(V_{j})|=2$ such that $|\vartheta(V_{i})\cap \vartheta(V_{j})|\ge 1$. 
        
        Fix an $i \in [k]$ such that $\vartheta(V_{i})=\{p,q\}$ for some distinct $p,q \in [k]$.
        This is equivalent to saying that $V_i \cap U_p\neq \emptyset$ and $V_i \cap U_q\neq \emptyset$. 
        Fix a vertex $v \in  V_{i} \cap U_{q}$. 
        By Proposition~\ref{PROP:Prelim-B}~\ref{PROP:Prelim-B-3}, there exists an edge $e\in \mathcal{H}$ containing $v$ such that $p\in \vartheta(e)$, i.e. $e\cap U_p \neq \emptyset$. 
        Let $u$ denote the vertex in $e\cap U_p$. 
        Since $e \cap V_{i} \neq \emptyset$, the index $j \in [k]$ such that $u \in V_j$ must be different from $i$, i.e. $j \in [k] \setminus \{i\}$. 
        Note that $u \in V_j \cap U_p$ and $\vartheta(u) = p$. 
        So we obtain $p \in \vartheta (V_{i}) \cap \vartheta (V_{j})$, and hence, $|\vartheta (V_{i}) \cap \vartheta (V_{j})| \ge 1$.  
        
        Now fix a vertex $w \in V_{i}\cap U_p$. 
        By Proposition~\ref{PROP:Prelim-B}~\ref{PROP:Prelim-B-3}, there exists an edge $\tilde{e}\in \mathcal{H}$ containing $w$ such that $j \in \varphi(\tilde{e})$, i.e. $\tilde{e} \cap V_j \neq \emptyset$. 
        This implies that $\vartheta(\tilde{e}\cap V_{j}) \neq \vartheta(w) = p$.
        Therefore, $\vartheta(V_{j})$ has size at least two, and based on the assumption, we conclude that $|\vartheta(V_{j})| = 2$.
        This completes the proof for Proposition~\ref{PROP:cyclic-structure}~\ref{PROP:cyclic-structure-1}. 

        Next, we prove Proposition~\ref{PROP:cyclic-structure}~\ref{PROP:cyclic-structure-2}.
        Suppose to the contrary that there exists a set $T \subseteq [k]$ of size two satisfying Proposition~\ref{PROP:cyclic-structure}~\ref{PROP:cyclic-structure-1}. 
        Let $\{k_1, k_2\} \subseteq [k]$ be the corresponding set guaranteed by Proposition~\ref{PROP:cyclic-structure}~\ref{PROP:cyclic-structure-1}. 
        By relabeling the vertices in $K_{k}^{r}$, we may assume that $T = \{1,2\}$. 
        We may also assume that $(k_1,k_2) = (1,2)$, since otherwise, we can replace $\vartheta$ with $\eta \circ \vartheta$, where $\eta \in \mathrm{Aut}(K_{k}^{r})$ satisfies $\left(\eta(k_1), \eta(k_2) \right) = (1,2)$. 
        
        %%%%%%%%%%%%%%%%%%%%%%%%%%%%%%
        %%%%%%%%%%%%%%%%%%%%%%%%%%%%%%%%%https://www.mathcha.io/editor
\begin{figure}[htbp]
\centering
%%%%%%%%%%%%%%%%%%%%%%%%%%%%%%%%

\tikzset{every picture/.style={line width=1pt}} %set default line width to 0.75pt        

\begin{tikzpicture}[x=0.75pt,y=0.75pt,yscale=-1,xscale=1]
%uncomment if require: \path (0,212); %set diagram left start at 0, and has height of 212

%Rounded Rect [id:dp699445331332456] 
\draw   (129.19,38.45) .. controls (129.19,33.03) and (133.58,28.65) .. (138.99,28.65) -- (168.39,28.65) .. controls (173.81,28.65) and (178.19,33.03) .. (178.19,38.45) -- (178.19,142.48) .. controls (178.19,147.89) and (173.81,152.28) .. (168.39,152.28) -- (138.99,152.28) .. controls (133.58,152.28) and (129.19,147.89) .. (129.19,142.48) -- cycle ;
%Rounded Rect [id:dp6222476398041308] 
\draw   (186.69,39.72) .. controls (186.69,34.16) and (191.2,29.65) .. (196.77,29.65) -- (227,29.65) .. controls (232.56,29.65) and (237.07,34.16) .. (237.07,39.72) -- (237.07,143.2) .. controls (237.07,148.77) and (232.56,153.28) .. (227,153.28) -- (196.77,153.28) .. controls (191.2,153.28) and (186.69,148.77) .. (186.69,143.2) -- cycle ;
%Rounded Rect [id:dp21237352667694043] U2
\draw  [dash pattern=on 1pt off 1.2pt] (231.64,36.84) .. controls (237.47,36.84) and (242.19,41.57) .. (242.19,47.39) -- (242.19,79.05) .. controls (242.19,84.88) and (237.47,89.61) .. (231.64,89.61) -- (134.4,89.61) .. controls (128.57,89.61) and (123.85,84.88) .. (123.85,79.05) -- (123.85,47.39) .. controls (123.85,41.57) and (128.57,36.84) .. (134.4,36.84) -- cycle ;
%Rounded Rect [id:dp35120795906456026] U2
\draw [dash pattern=on 1pt off 1.2pt]  (232.43,96.28) .. controls (237.82,96.28) and (242.19,100.65) .. (242.19,106.04) -- (242.19,135.32) .. controls (242.19,140.71) and (237.82,145.08) .. (232.43,145.08) -- (133.61,145.08) .. controls (128.22,145.08) and (123.85,140.71) .. (123.85,135.32) -- (123.85,106.04) .. controls (123.85,100.65) and (128.22,96.28) .. (133.61,96.28) -- cycle ;
%Rounded Rect [id:dp9423091811784499] b3
\draw   (253.69,42.72) .. controls (253.69,37.16) and (258.2,32.65) .. (263.77,32.65) -- (294,32.65) .. controls (299.56,32.65) and (304.07,37.16) .. (304.07,42.72) -- (304.07,146.2) .. controls (304.07,151.77) and (299.56,156.28) .. (294,156.28) -- (263.77,156.28) .. controls (258.2,156.28) and (253.69,151.77) .. (253.69,146.2) -- cycle ;
%Rounded Rect [id:dp05108933198714127] b4
\draw  (321.69,43.72) .. controls (321.69,38.16) and (326.2,33.65) .. (331.77,33.65) -- (362,33.65) .. controls (367.56,33.65) and (372.07,38.16) .. (372.07,43.72) -- (372.07,147.2) .. controls (372.07,152.77) and (367.56,157.28) .. (362,157.28) -- (331.77,157.28) .. controls (326.2,157.28) and (321.69,152.77) .. (321.69,147.2) -- cycle ;
%small parts
%Rounded Rect [id:dp8683981339356752] 
\draw   (396.19,86.65) .. controls (396.19,82.03) and (399.94,78.28) .. (404.57,78.28) -- (429.7,78.28) .. controls (434.32,78.28) and (438.07,82.03) .. (438.07,86.65) -- (438.07,148.9) .. controls (438.07,153.53) and (434.32,157.28) .. (429.7,157.28) -- (404.57,157.28) .. controls (399.94,157.28) and (396.19,153.53) .. (396.19,148.9) -- cycle ;
%Rounded Rect [id:dp5281059901021665] 
\draw   (448.19,86.65) .. controls (448.19,82.03) and (451.94,78.28) .. (456.57,78.28) -- (481.7,78.28) .. controls (486.32,78.28) and (490.07,82.03) .. (490.07,86.65) -- (490.07,148.9) .. controls (490.07,153.53) and (486.32,157.28) .. (481.7,157.28) -- (456.57,157.28) .. controls (451.94,157.28) and (448.19,153.53) .. (448.19,148.9) -- cycle ;
%Rounded Rect [id:dp4044887488248934] 
\draw   (500.19,87.65) .. controls (500.19,83.03) and (503.94,79.28) .. (508.57,79.28) -- (533.7,79.28) .. controls (538.32,79.28) and (542.07,83.03) .. (542.07,87.65) -- (542.07,149.9) .. controls (542.07,154.53) and (538.32,158.28) .. (533.7,158.28) -- (508.57,158.28) .. controls (503.94,158.28) and (500.19,154.53) .. (500.19,149.9) -- cycle ;
%Rounded Rect [id:dp21715674326505874] 
\draw   (552.19,87.65) .. controls (552.19,83.03) and (555.94,79.28) .. (560.57,79.28) -- (585.7,79.28) .. controls (590.32,79.28) and (594.07,83.03) .. (594.07,87.65) -- (594.07,149.9) .. controls (594.07,154.53) and (590.32,158.28) .. (585.7,158.28) -- (560.57,158.28) .. controls (555.94,158.28) and (552.19,154.53) .. (552.19,149.9) -- cycle ;

%Straight Lines [id:da09809964016727912] 
\draw  [line width=2pt]  (155,60) -- (213,123) -- (280,60) -- (348,60) -- (417,104) -- (470,104) ;
\draw [fill=green] (155,60) circle (2.5pt);
\draw [fill=uuuuuu] (213,123) circle (2.5pt);
\draw [fill=uuuuuu] (280,60) circle (2.5pt);
\draw [fill=uuuuuu] (348,60) circle (2.5pt);
\draw [fill=uuuuuu] (417,104) circle (2.5pt);
\draw [fill=uuuuuu] (470,104) circle (2.5pt);
% Text Node
\draw (148,67) node [anchor=north west][inner sep=0.75pt]   [align=left] {$v$};
% % Text Node
% \draw (207,129) node [anchor=north west][inner sep=0.75pt]   [align=left] {$u$};
% % Text Node
% \draw (276,68.88) node [anchor=north west][inner sep=0.75pt]   [align=left] {$a$};
% % Text Node
% \draw (342,67.45) node [anchor=north west][inner sep=0.75pt]   [align=left] {$b$};
% % Text Node
% \draw (411,112) node [anchor=north west][inner sep=0.75pt]   [align=left] {$c$};
% % Text Node
% \draw (462,109.12) node [anchor=north west][inner sep=0.75pt]   [align=left] {$d$};
% Text Node
\draw (145,162) node [anchor=north west][inner sep=0.75pt]   [align=left] {$V_1$};
% Text Node
\draw (206,162) node [anchor=north west][inner sep=0.75pt]   [align=left] {$V_2$};
% Text Node
\draw (97,55) node [anchor=north west][inner sep=0.75pt]   [align=left] {$U_1$};
% Text Node
\draw (97,115) node [anchor=north west][inner sep=0.75pt]   [align=left] {$U_2$};
% % Text Node
% \draw (271,161.82) node [anchor=north west][inner sep=0.75pt]   [align=left] {$V_1$};
% % Text Node
% \draw (330.77,161.28) node [anchor=north west][inner sep=0.75pt]   [align=left] {$V_1$};
% % Text Node
% \draw (408,161.82) node [anchor=north west][inner sep=0.75pt]   [align=left] {$V_1$};
% % Text Node
% \draw (453,160.82) node [anchor=north west][inner sep=0.75pt]   [align=left] {$V_1$};
% % Text Node
% \draw (502,159.82) node [anchor=north west][inner sep=0.75pt]   [align=left] {$V_1$};
% % Text Node
% \draw (548,160.82) node [anchor=north west][inner sep=0.75pt]   [align=left] {$V_1$};
\end{tikzpicture}
%%%%%%%%%%%%%%%%%%%%%%%%%%%%
\caption{auxiliary figure for the proof of Proposition~\ref{PROP:cyclic-structure}~\ref{PROP:cyclic-structure-2}.} 
\label{Fig:Claim2-19}
\end{figure}
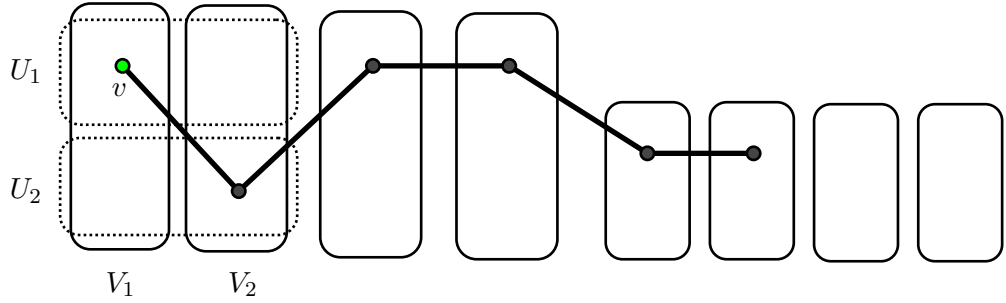
%%%%%%%%%%%%%%%%%%%%%%%%%%%%%%%%%
        %%%%%%%%%%%%%%%%%%%%%%%%%%%%%%
        
        %%%%%%%%%%%%%%%%%
        \textbf{Case 1}$\colon$ $k \ge \frac{4r-2}{3}$. 
        %%%%%%%%%%%%%%%%%
    
        Let $s \coloneqq |J_{\varphi}\setminus\{1,2\}| \le  |J_{\varphi}|\le  r-2$ (due to Proposition~\ref{PROP:Prelim-A}~\ref{PROP:Prelim-A-3}).
        By symmetry, we may assume 
        \begin{align*}
            |V_{1}\cap U_{1}|
            = \min\left\{|V_{i}\cap U_{j}| \colon (i,j) \in [2] \times [2] \right\}. 
        \end{align*}
        In particular, $|V_{1}\cap U_{1}| \le (|V_1|+|V_2|)/4$. 
        Fix a vertex $v \in  V_{1}\cap U_{1}$. 
        By Proposition~\ref{PROP:Prelim-B}~\ref{PROP:Prelim-B-3}, there exists an edge $e\in \mathcal{H}$ (see Figure~\ref{Fig:Claim2-19}) containing $v$ such that 
        \begin{align*}
            2 \in \varphi(e)
            \quad\text{and}\quad 
            \max\left\{|V_i| \colon i\in \overline{\varphi(e)} \right\} 
            \le \min\left\{|V_i| \colon i\in \varphi(e) \setminus \{1,2\} \right\}. 
        \end{align*}
        Since $|J_{\varphi} \cup \{1,2\}| \le r-2 + 2 = r$, it follows from the definition of $e$ that $J_{\varphi} \subseteq \varphi(e)$, which implies that $\overline{\varphi(e)} \subseteq \overline{J_{\varphi}\cup \{1,2\}}$. 
        Additionally, it follows from a simple averaging argument that 
        \begin{align*}
            \sum_{j\in \overline{\varphi(e)}}\left| V_{j}\right| 
            \le \frac{|\overline{\varphi(e)}|}{|\overline{J_{\varphi} \cup \{1,2\}}|} \cdot \sum_{j \in \overline{J_{\varphi} \cup \{1,2\}}}\left|V_{j}\right|
            = \frac{k-r}{k-s-2} \cdot \sum_{j \in \overline{J_{\varphi} \cup \{1,2\}}}\left|V_{j}\right|. 
        \end{align*}
        Therefore, 
        \begin{align*}
            \frac{3k-3r+1}{3k-2}n
            <d_{\mathcal{H}}\left( e\setminus\{v\}\right) 
            & \le  \left|V_{1}\cap U_1\right|
            +\sum_{j\in \overline{\varphi(e)}}\left| V_{j}\right|   \\
            & \le  \frac{|V_{1}|+|V_{2}|}{4}+\frac{k-r}{k-s-2} \cdot \sum_{j \in \overline{J_{\varphi} \cup \{1,2\}}}\left|V_{j}\right|. 
        \end{align*}
        Consequently, 
        \begin{align*}
            n
            & = \left|V_{1} \right|+\cdots+\left| V_{k}\right|\\
            & = 4\left(\frac{|V_{1}|+|V_{2}|}{4}
                +\frac{k-r}{k-s-2} \cdot \sum_{j \in \overline{J_{\varphi} \cup \{1,2\}}}\left|V_{j}\right| \right)  \\
            & \quad  - \frac{3k-(4r-2-s)}{k-s-2} \cdot \sum_{j \in \overline{J_{\varphi} \cup \{1,2\}}}\left|V_{j}\right|
                + \sum_{j \in J_{\varphi} \setminus \{1,2\}}\left|V_{j}\right| \\
            & > 4 \cdot \frac{3k-3r+1}{3k-2}n
                - \frac{3k-(4r-2-s)}{k-s-2} \cdot (k-s-2) \cdot \frac{3n}{3k-2}     
                + \frac{3s}{3k-2}n
            % &>4\cdot\frac{3k-3r+1}{3k-2}n+\left( k-2-s-4\left( k-r\right) \right) \frac{3}{3k-2}n+\frac{3s}{3k-2}n\\
            = n,  
        \end{align*}
        a contradiction. 
        Here we used the fact that $4 \cdot \frac{k-r}{k-s-2} - 1 = \frac{3k-(4r-2-s)}{k-s-2} \ge 0$, which follows from the assumption that $k \ge \frac{4r-2}{3}$. 

        \medskip 

        %%%%%%%%%%%%%%%%%%%%%%%%
        \textbf{Case 2}$\colon$ $k \le \frac{4r-2}{3}$. 
        %%%%%%%%%%%%%%%%%%%%%%%%
        
        Let $s \coloneqq |I_{\varphi} \setminus\{1,2\}| \ge |I_{\varphi}|-2 \ge r-2$ (due to Proposition~\ref{PROP:Prelim-D}).
        By symmetry, we may assume 
        \begin{align*}
            |V_{1}\cap U_{1}|
            = \min\left\{|V_{i}\cap U_{j}| \colon (i,j) \in [2] \times [2] \right\}. 
        \end{align*}
        In particular, $|V_{1}\cap U_{1}| \le (|V_1|+|V_2|)/4$. 
        Fix a vertex $v \in  V_{1}\cap U_{1}$. 
        By Proposition~\ref{PROP:Prelim-B}~\ref{PROP:Prelim-B-3}, there exists an edge $e\in \mathcal{H}$ containing $v$ such that 
        \begin{align*}
            2 \in \varphi(e)
            \quad\text{and}\quad 
            \max\left\{|V_i| \colon i \in \overline{\varphi(e)}\right\} 
            \le \min\left\{|V_i| \colon i \in \varphi(e)\setminus \{1,2\} \right\}.
        \end{align*} 
        Note that 
        \begin{align*}
            \frac{k-r+1}{k+2}n
            < d_{\mathcal{H}}\left( e\setminus\{v\}\right) 
            \le  \left|V_{1}\cap U_1 \right|+\sum_{i\in \overline{\varphi(e)}}\left| V_{i}\right|\le  \frac{|V_{1}|+|V_{2}|}{4}+\sum_{i\in \overline{\varphi(e)}}\left| V_{i}\right|. 
        \end{align*}
        Therefore, 
        \begin{align}\label{equ:main-lemma-n}
            n
             = \sum_{i\in [k]}|V_i|  
            & = 4\left(\frac{|V_{1}|+|V_{2}|}{4}+\sum_{i\in \overline{\varphi(e)}}\left| V_{i}\right| \right) - 4\sum_{i\in \overline{\varphi(e)}}\left| V_{i}\right| + \sum_{i \in [3,k]}  |V_i|  \notag \\
            & >4\cdot\frac{k-r+1}{k+2}n
            -4\sum_{i\in \overline{\varphi(e)}}\left| V_{i}\right|
            +\sum_{i \in [3,k]}  |V_i|.
            % &\ge 4\cdot\frac{k-r+1}{k+2}n+\max_{T \in \binom{[k]\setminus \{1,2\}}{4r-3k-2}} \left  \{ \sum_{i\in T} |V_i|  \right \}  \\
            % &>4\cdot\frac{k-r+1}{k+2}n+(4r-3k-2) \cdot \frac{1}{k+2}n    \\
            % &=n. 
        \end{align}
        Since both $k$ and $r$ are integers, the assumption $k < \frac{4r-2}{3}$ implies that $k \le \frac{4r-3}{3}$. 
        It follows that $k-3 \ge 4(k-r) = 4 \cdot |\overline{\varphi(e)}|$.
        The choice of $e$ implies that 
        \begin{align*}
            \sum_{i \in [3,k]}  |V_i| - 4\sum_{i\in \overline{\varphi(e)}}\left| V_{i}\right|
            \ge \max_{T \in \binom{[3,k]}{4r-2-3k}} \left\{ \sum_{i\in T} |V_i|  \right\} 
            \ge (4r-3k-2) \cdot \frac{n}{k+2},
        \end{align*}
        where the last inequality follows from the fact that $|I_{\varphi} \cap [3,k]| \ge r-2 \ge 4r-3k-2$. 
        Therefore, Inequality~\eqref{equ:main-lemma-n} continues as
        \begin{align*}
            n 
            >4\cdot\frac{k-r+1}{k+2}n+(4r-3k-2) \cdot \frac{n}{k+2}
            = n, 
        \end{align*}
        a contradiction.
    \end{proof}%PROP
%
% \begin{proposition}\label{PROP:J-phi-r-2-super-large}
%     Let $n \ge k \ge r \ge 2$ be integers satisfying $k \ge \frac{4r-2}{3}$. 
%     Let $\mathcal{H}$ be an $n$-vertex $k$-partite $r$-graph with no isolated vertices and with $\delta_{r-1}^{+}(\mathcal{H}) > \frac{3k-3r+1}{3k-2}n$. 
%     Suppose that $|J_{\varphi}| = r-2$ for some $\varphi \in \mathrm{Hom}(\mathcal{H},K_{k}^{r})$. 
%     Then $|V_i| < \frac{4n}{4k-2}$ for some $i \in J_{\varphi}$. 
% \end{proposition}
% \begin{proof}[Proof of Proposition~\ref{PROP:J-phi-r-2-super-large}]
% \end{proof}

Now we are ready to prove Proposition~\ref{PROP:Final-compute}, and we will proceed by considering two cases based on the value of $k$. 
%%%%
\begin{proof}[Proof of Proposition~\ref{PROP:Final-compute} for $k \ge \frac{4r-2}{3}$]
    Let $n \ge k \ge r \ge 2$ be integers satisfying $k \ge \frac{4r-2}{3}$. 
    Let $\mathcal{H}$ be an $n$-vertex $k$-partite $r$-graph with no isolated vertices and with $\delta_{r-1}^{+}(\mathcal{H}) > \frac{3k-3r+1}{3k-2}n$. 
    Fix $\varphi, \vartheta \in \mathrm{Hom}(\mathcal{H},K_{k}^{r})$ such that $|\vartheta(\varphi^{-1}(i))| \le 2$ for every $i \in [k]$. 
    Let $V_{i}\coloneqq \varphi^{-1}(i)$ and $U_{i}\coloneqq \vartheta^{-1}(i)$ for $i\in [k]$. 
    % Let $\vartheta \in \mathrm{Hom}(\mathcal{H},K_{k}^{r})$ be a homomorphism satisfying $|\vartheta(V_i)| \le 2$ for every $i\in [k]$. 
    Suppose to the contrary that $\varphi \ncong \vartheta$. 

    %
    %%%%%%%%%%%%%%%%%%%%%%%%%%%%%%%%
    %%%%%%%%%%%%%%%%%%%%%%%%%%%%%%%%%https://www.mathcha.io/editor
\begin{figure}[htbp]
\centering
%%%%%%%%%%%%%%%%%%%%%%%%%%%%%%%%
\tikzset{every picture/.style={line width=1pt}} %set default line width to 0.75pt        

\begin{tikzpicture}[x=0.75pt,y=0.75pt,yscale=-1,xscale=1]
%uncomment if require: \path (0,233); %set diagram left start at 0, and has height of 233

%Rounded Rect [id:dp46149500193274906] 
\draw   (236.52,105.68) .. controls (239.72,107.74) and (240.69,112.03) .. (238.67,115.27) -- (199.47,178.02) .. controls (197.45,181.25) and (193.21,182.2) .. (190,180.14) -- (172.58,168.94) .. controls (169.37,166.88) and (168.41,162.58) .. (170.43,159.35) -- (209.62,96.6) .. controls (211.64,93.37) and (215.88,92.42) .. (219.09,94.48) -- cycle ;
%Rounded Rect [id:dp05318070316326362] 
\draw   (119.62,59.34) .. controls (119.61,55.49) and (122.72,52.35) .. (126.57,52.34) -- (199.52,52.09) .. controls (203.37,52.07) and (206.5,55.18) .. (206.52,59.04) -- (206.59,79.96) .. controls (206.6,83.81) and (203.49,86.94) .. (199.64,86.96) -- (126.69,87.21) .. controls (122.84,87.23) and (119.71,84.11) .. (119.7,80.26) -- cycle ;
%Rounded Rect [id:dp018197595639181774] 
\draw   (138.79,180.14) .. controls (135.58,182.2) and (131.34,181.25) .. (129.33,178.02) -- (90.14,115.26) .. controls (88.12,112.02) and (89.08,107.73) .. (92.29,105.67) -- (109.72,94.47) .. controls (112.93,92.41) and (117.17,93.36) .. (119.19,96.6) -- (158.37,159.36) .. controls (160.39,162.59) and (159.43,166.88) .. (156.22,168.95) -- cycle ;
%Shape: Polygon Curved [id:ds3283207850636882] 
\draw  [dash pattern=on 1pt off 1.2pt] (188.81,135.17) .. controls (199.79,138.55) and (218.34,146.98) .. (216.43,155.92) .. controls (214.52,164.86) and (207.09,181.13) .. (197.77,183.57) .. controls (188.45,186.01) and (172.38,173.73) .. (163.93,173.61) .. controls (155.49,173.49) and (134.58,187.85) .. (127.9,182.62) .. controls (121.22,177.38) and (114.67,165.95) .. (114.76,152.65) .. controls (114.84,139.35) and (128.29,136.52) .. (143.18,133.86) .. controls (158.07,131.2) and (177.83,131.79) .. (188.81,135.17) -- cycle ;
%Shape: Polygon Curved [id:ds13991325509640884] 
\draw  [dash pattern=on 1pt off 1.2pt] (170.86,84.77) .. controls (167.68,73.59) and (164.55,53.2) .. (172.99,49.97) .. controls (181.42,46.73) and (198.92,44.2) .. (206.01,50.8) .. controls (213.1,57.41) and (211.69,77.77) .. (216.19,85.02) .. controls (220.68,92.27) and (243.94,102.25) .. (243.24,110.79) .. controls (242.55,119.32) and (236.66,131.11) .. (225.61,138.28) .. controls (214.57,145.44) and (204.91,135.54) .. (194.61,124.31) .. controls (184.31,113.09) and (174.04,95.95) .. (170.86,84.77) -- cycle ;
%Shape: Polygon Curved [id:ds36458625389823784] 
\draw  [dash pattern=on 1pt off 1.2pt] (131.99,124.62) .. controls (123.53,132.49) and (106.93,144.4) .. (100.32,138.18) .. controls (93.71,131.96) and (83.68,117.19) .. (86.36,107.81) .. controls (89.04,98.43) and (107.63,90.64) .. (112.03,83.33) .. controls (116.43,76.02) and (114.89,50.45) .. (122.73,47.28) .. controls (130.58,44.11) and (143.61,44.22) .. (154.85,51.06) .. controls (166.09,57.91) and (161.65,71.09) .. (156.32,85.45) .. controls (151,99.8) and (140.45,116.75) .. (131.99,124.62) -- cycle ;
%Rounded Rect [id:dp7128472071958956] 
\draw   (302.56,67.93) .. controls (306.36,67.93) and (309.44,71.01) .. (309.44,74.8) -- (309.44,149.2) .. controls (309.44,153) and (306.36,156.08) .. (302.56,156.08) -- (281.94,156.08) .. controls (278.14,156.08) and (275.06,153) .. (275.06,149.2) -- (275.06,74.8) .. controls (275.06,71.01) and (278.14,67.93) .. (281.94,67.93) -- cycle ;
% %Rounded Rect [id:dp7584586334461119] 
% \draw   (354.48,68.27) .. controls (358.28,68.27) and (361.36,71.35) .. (361.36,75.15) -- (361.36,149.55) .. controls (361.36,153.34) and (358.28,156.42) .. (354.48,156.42) -- (333.85,156.42) .. controls (330.05,156.42) and (326.98,153.34) .. (326.98,149.55) -- (326.98,75.15) .. controls (326.98,71.35) and (330.05,68.27) .. (333.85,68.27) -- cycle ;
%Rounded Rect [id:dp5161187543720538] 
\draw   (378.56,90.28) .. controls (382.36,90.28) and (385.44,93.36) .. (385.44,97.15) -- (385.44,149.54) .. controls (385.44,153.34) and (382.36,156.41) .. (378.56,156.41) -- (357.94,156.41) .. controls (354.14,156.41) and (351.06,153.34) .. (351.06,149.54) -- (351.06,97.15) .. controls (351.06,93.36) and (354.14,90.28) .. (357.94,90.28) -- cycle ;
%Rounded Rect [id:dp7338007012282846] 
\draw   (427.56,90.28) .. controls (431.36,90.28) and (434.44,93.36) .. (434.44,97.15) -- (434.44,149.54) .. controls (434.44,153.34) and (431.36,156.41) .. (427.56,156.41) -- (406.94,156.41) .. controls (403.14,156.41) and (400.06,153.34) .. (400.06,149.54) -- (400.06,97.15) .. controls (400.06,93.36) and (403.14,90.28) .. (406.94,90.28) -- cycle ;
%Rounded Rect [id:dp8398479696599825] 
\draw   (479.48,90.54) .. controls (483.28,90.54) and (486.36,93.61) .. (486.36,97.41) -- (486.36,149.8) .. controls (486.36,153.59) and (483.28,156.67) .. (479.48,156.67) -- (458.85,156.67) .. controls (455.05,156.67) and (451.98,153.59) .. (451.98,149.8) -- (451.98,97.41) .. controls (451.98,93.61) and (455.05,90.54) .. (458.85,90.54) -- cycle ;
%Rounded Rect [id:dp9841957640734433] 
\draw   (531.56,91.03) .. controls (535.36,91.03) and (538.44,94.11) .. (538.44,97.9) -- (538.44,150.29) .. controls (538.44,154.09) and (535.36,157.17) .. (531.56,157.17) -- (510.94,157.17) .. controls (507.14,157.17) and (504.06,154.09) .. (504.06,150.29) -- (504.06,97.9) .. controls (504.06,94.11) and (507.14,91.03) .. (510.94,91.03) -- cycle ;
% %Rounded Rect [id:dp7208420228451391] 
% \draw   (583.48,91.29) .. controls (587.28,91.29) and (590.36,94.36) .. (590.36,98.16) -- (590.36,150.55) .. controls (590.36,154.34) and (587.28,157.42) .. (583.48,157.42) -- (562.85,157.42) .. controls (559.05,157.42) and (555.98,154.34) .. (555.98,150.55) -- (555.98,98.16) .. controls (555.98,94.36) and (559.05,91.29) .. (562.85,91.29) -- cycle ;
%Straight Lines [id:da43717543009382553] 
\draw  [line width=2pt]  (109,115) -- (218,115) -- (292,92) -- (368,123) -- (417,123) -- (470,123) ;
\draw [fill=uuuuuu] (109,115) circle (2.5pt);
\draw [fill=uuuuuu] (218,115) circle (2.5pt);
\draw [fill=uuuuuu] (292,92) circle (2.5pt);
\draw [fill=uuuuuu] (368,123) circle (2.5pt);
\draw [fill=uuuuuu] (417,123) circle (2.5pt);
\draw [fill=green] (470,123) circle (2.5pt);
% Text Node
\draw (155,33.4) node [anchor=north west][inner sep=0.75pt]   [align=left] {$V_1$};
% Text Node
\draw (224,142) node [anchor=north west][inner sep=0.75pt]   [align=left] {$V_2$};
% Text Node
\draw (86,142) node [anchor=north west][inner sep=0.75pt]   [align=left] {$V_3$};
% Text Node
\draw (86,70) node [anchor=north west][inner sep=0.75pt]   [align=left] {$U_1$};
% Text Node
\draw (220,70) node [anchor=north west][inner sep=0.75pt]   [align=left] {$U_2$};
% Text Node
\draw (155,180) node [anchor=north west][inner sep=0.75pt]   [align=left] {$U_3$};
% Text Node
\draw (284,161) node [anchor=north west][inner sep=0.75pt]   [align=left] {$V_4$};
% % Text Node
% \draw (331.22,164.72) node [anchor=north west][inner sep=0.75pt]   [align=left] {$V_5$};
% Text Node
\draw (104,122) node [anchor=north west][inner sep=0.75pt]   [align=left] {$u$};
% Text Node
\draw (212,122) node [anchor=north west][inner sep=0.75pt]   [align=left] {$v$};
% Text Node
\draw (285,100) node [anchor=north west][inner sep=0.75pt]   [align=left] {$w_4$};
% % Text Node
% \draw (336,100) node [anchor=north west][inner sep=0.75pt]   [align=left] {$w_5$};
% % Text Node
% \draw (362,132) node [anchor=north west][inner sep=0.75pt]   [align=left] {$a$};
% % Text Node
% \draw (411,130) node [anchor=north west][inner sep=0.75pt]   [align=left] {$b$};
% Text Node
\draw (462,132) node [anchor=north west][inner sep=0.75pt]   [align=left] {$w$};

\end{tikzpicture}

%%%%%%%%%%%%%%%%%%%%%%%%%%%%
\caption{auxiliary figure for the proof of Claim~\ref{Claim:C2_Bt}.} 
\label{Fig:Claim2-20}
\end{figure}
%%%%%%%%%%%%%%%%%%%%%%%%%%%%%%%%%
    %%%%%%%%%%%%%%%%%%%%%%%%%%%%%%%%
    %
    Let $T \subseteq [k]$ and $\{k_1, \ldots, k_t\} \subseteq [k]$ be the $t$-sets guaranteed by Proposition~\ref{PROP:cyclic-structure}, where $t \ge 3$. 
    Similar to the proof of Proposition~\ref{PROP:cyclic-structure}~\ref{PROP:cyclic-structure-2}, we may assume that $T = [t]$ and $(k_1, \ldots, k_t) = (1, \ldots, t)$. 
    For convenience, let $B_{i} \coloneqq V_{i}\cap U_{i+1}$ and $C_{i} \coloneqq V_{i}\cap U_i$ for $i\in [t]$, with the indices taken modulo $t$ (see Figure~\ref{Fig:cyclic-structure}).  
    \begin{claim}\label{Claim:C2_Bt}
        We have $\{v,u\} \not\in \partial_{r-2}\mathcal{H}$ for every pair $(v,u) \in C_2 \times B_{t}$.
        %there is no edge in $\mathcal{H}$ containing $\{v,u\}$. 
    \end{claim}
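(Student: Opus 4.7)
The plan is to argue by contradiction, assuming $\{v,u\} \in \partial_{r-2}\mathcal{H}$; recall that we may assume $r \ge 3$, since the $r=2$ case of Proposition~\ref{PROP:Final-compute} is already handled by Proposition~\ref{PROP:case-all-small}. I first apply Proposition~\ref{PROP:Prelim-C} to the pair $\{v,u\}$ with respect to $\varphi$, obtaining an edge $e \in \mathcal{H}$ containing $\{v,u\}$ such that $|[w]_{\varphi}| \ge |\varphi^{-1}(i)|$ for every $w \in e \setminus \{v,u\}$ and every $i \in \overline{\varphi(e)}$, and in particular $\varphi^{-1}(i)$ is small for every $i \in \overline{\varphi(e)}$. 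Since $\varphi(v)=2$ and $\varphi(u)=t$, we automatically have $\{2,t\} \subseteq \varphi(e)$.

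The crucial step exploits $\vartheta$-injectivity on $e$ together with the cyclic structure $\vartheta(V_1)=\{1,2\}$ guaranteed by Proposition~\ref{PROP:cyclic-structure}. Since $\vartheta(v)=2$ and $\vartheta(u)=1$ both appear in $\vartheta(e)$, every other vertex $w \in e \setminus \{v,u\}$ must satisfy $\vartheta(w) \notin \{1,2\}$. On the other hand, if $\varphi(w)=1$, then $\vartheta(w) \in \vartheta(V_1)=\{1,2\}$, a contradiction. Hence $1 \notin \varphi(e)$, and consequently $V_1$ is small. The same reasoning applies to \emph{every} edge $e' \in \mathcal{H}$ containing $\{v,u\}$: such an edge must satisfy $1 \notin \varphi(e')$.

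The main obstacle is then to turn this qualitative restriction into a numerical contradiction. The natural route is to fix an $(r-1)$-subset $S \subseteq e$ containing both $v$ and $u$ (possible since $r \ge 3$). For every $x \in N_{\mathcal{H}}(S)$, the edge $S \cup \{x\}$ contains $\{v,u\}$, so the argument above gives $\varphi(x) \ne 1$; combined with $\varphi$-injectivity, $\varphi(x) \notin \varphi(S) \cup \{1\}$. Together with $d_{\mathcal{H}}(S) > \frac{3k-3r+1}{3k-2}n$ and $|\varphi(S) \cup \{1\}| = r$, this yields the upper bound $\sum_{j \in \varphi(S) \cup \{1\}} |V_j| < \frac{3(r-1)}{3k-2}n$. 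I then plan to contradict this upper bound by using Proposition~\ref{PROP:Prelim-C}'s min--max condition (each $|[w]_{\varphi}|$ for $w \in e \setminus \{v,u\}$ dominates all small parts in $\overline{\varphi(e)}$) and Proposition~\ref{PROP:Prelim-A}\ref{PROP:Prelim-A-3} (at most $r-2$ parts are large), choosing $S$ to exclude a vertex $w^{\ast} \in e \setminus \{v,u\}$ for which $|[w^{\ast}]_{\varphi}|$ is maximal among $e \setminus \{v,u\}$, so that the remaining parts contribute enough mass.

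The delicate point, which I expect to be the hardest part, is controlling $|V_2|$ and $|V_t|$, which are not directly bounded below by $|V_1|$ via Proposition~\ref{PROP:Prelim-C}; closing this gap will require careful bookkeeping combining the above ingredients with the cyclic structure (perhaps observing that $V_1 \cup V_2 \cup V_t$ already supports both $\vartheta$-colors $1$ and $2$, so its total size admits a lower bound via the positive codegree along pairs incident to $v$ or $u$).
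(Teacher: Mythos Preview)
Your opening steps match the paper exactly: assume $\{v,u\} \in \partial_{r-2}\mathcal{H}$, apply Proposition~\ref{PROP:Prelim-C} to obtain an edge $e$ containing $\{v,u\}$ with all parts indexed by $\overline{\varphi(e)}$ small, and observe via $\vartheta$-injectivity on $e$ together with $\vartheta(V_1)=\{1,2\}$ that $1 \notin \varphi(e')$ for every edge $e' \supseteq \{v,u\}$ (so in particular $V_1$ is small). Your codegree inequality for $S = e \setminus \{w\}$ is also exactly the mechanism the paper uses.

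The gap is in how you deploy this inequality. You apply it only for the single vertex $w^{\ast}$ maximizing $|[w^{\ast}]_{\varphi}|$, then try to lower-bound $|V_1|+|V_2|+|V_t|+\sum_{j \in \varphi(e)\setminus\{2,t,\varphi(w^{\ast})\}}|V_j|$ directly; this runs into the uncontrolled quantities $|V_2|, |V_t|$ that you flag as the hardest part, and your suggested fix via the cyclic structure is vague and not obviously workable. The paper sidesteps this entirely by applying the same codegree bound for \emph{every} $w \in e\setminus\{v,u\}$: since $N_{\mathcal{H}}(e\setminus\{w\}) \subseteq \bigcup_{j \in (\overline{\varphi(e)}\setminus\{1\}) \cup \{\varphi(w)\}} V_j$ and the $k-r-1$ parts indexed by $\overline{\varphi(e)}\setminus\{1\}$ are all small, one gets
\[
|V_{\varphi(w)}| > \frac{3k-3r+1}{3k-2}n - (k-r-1)\cdot\frac{3n}{3k-2} = \frac{4n}{3k-2}
\]
for each such $w$. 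Thus $\varphi(e\setminus\{v,u\}) \subseteq J_{\varphi}$, and combined with $|J_{\varphi}| \le r-2$ (Proposition~\ref{PROP:Prelim-A}~\ref{PROP:Prelim-A-3}) this forces $\varphi(e\setminus\{v,u\}) = J_{\varphi}$ with $|J_{\varphi}|=r-2$. The contradiction is then purely additive and never touches $|V_1|, |V_2|, |V_t|$ individually:
\[
n = \sum_{j\in J_{\varphi}}|V_j| + \sum_{j\in \overline{J_{\varphi}}}|V_j| > (r-2)\cdot\frac{4n}{3k-2} + \frac{k-r+2}{k-r+1}\cdot\frac{3k-3r+1}{3k-2}\,n \ge n,
\]
where the second sum is handled by Proposition~\ref{PROP:Prelim-A}~\ref{PROP:Prelim-A-1} (note $|\overline{J_{\varphi}}| = k-r+2 \ge k-r+1$), and the last inequality is a short calculation using $k \ge \lceil (4r-2)/3 \rceil$. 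So the fix to your plan is simple: drop the ``maximal $w^{\ast}$'' choice, run the bound for all $w$, and finish with Proposition~\ref{PROP:Prelim-A}~\ref{PROP:Prelim-A-1} rather than trying to control $|V_2|$ and $|V_t|$.
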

    \begin{proof}[Proof of Claim~\ref{Claim:C2_Bt}]
        Suppose to the contrary that there exists a pair of vertices $(v,u) \in C_2 \times B_{t}$ such that $\{v,u\} \in \partial_{r-2}\mathcal{H}$.
        By Proposition~\ref{PROP:Prelim-C}, there exists an edge $e\in \mathcal{H}$ containing $\{v, u\}$ such that $V_j$ is small for every $j \in \overline{\varphi(e)}$. 
        Since $V_1 = (U_1 \cap V_1) \cup (U_2 \cap V_1)$ and $\{v, u\} \subseteq e$, it follows that $1 \not\in \varphi(e)$ (see Figure~\ref{Fig:Claim2-20}).
        If there exists a vertex $w \in e \setminus \{v,u\}$ such that $[w]_{\varphi}$ is small, then we would have 
        \begin{align*}
            d_{\mathcal{H}}(e\setminus\{w\}) 
            \le \sum_{j\in \overline{\varphi \left(e\setminus\{w\} \right)\cup\{1\}}}\left| V_{j}\right| 
            < (k-r) \cdot \frac{3 n}{3k-2}
            <\frac{3k-3r+1}{3k-2}n, 
        \end{align*}
        a contradiction. 
        Therefore $[w]_{\varphi}$ is large for every $w \in e \setminus \{v,u\}$. 
        Together with Proposition~\ref{PROP:Prelim-A}~\ref{PROP:Prelim-A-3}, this implies that  $|J_{\varphi}| = r-2$ and $\varphi(e) = J_{\varphi} \cup \{2,t\}$.  
        
        For every $i \in J_{\varphi}$, let $w_i$ denote the vertex in $e\cap V_i$. 
        From the inequality  
        \begin{align*}
            \frac{3k-3r+1}{3k-2}n
            < d_{\mathcal{H}}(e\setminus\{w_i\})
            \le |V_{i}|+\sum_{j\in \overline{\varphi(e)\cup\{1\}}}|V_{j}|
            < |V_{i}|+ (k-r-1) \cdot \frac{3n}{3k-2}, 
        \end{align*}
        it follows that $|V_i| > \frac{4n}{3k-2}$. 
        Combining this with Proposition~\ref{PROP:Prelim-A}~\ref{PROP:Prelim-A-1}, we obtain 
        \begin{align*}
            n
            = \sum_{j\in [k]} |V_j|
            = \sum_{j\in J_{\varphi}} |V_j| + \sum_{j\in \overline{J_{\varphi}}} |V_j| 
            & > (r-2) \cdot \frac{4n}{3k-2} + \frac{k-r+2}{k-r+1} \cdot \frac{3k-3r+1}{3k-2} n \\
            & = \left(1 + \frac{(r-2)k-(r^2-3r+4)}{(3k-2)(k-r+1)}\right) n. 
        \end{align*}
        Since $k \ge \left\lceil \frac{4r-2}{3} \right\rceil$, simple calculations show that $\frac{(r-2)k-(r^2-3r+4)}{(3k-2)(k-r+1)} \ge 0$ for all $r \ge 3$, implying that the inequality above is a contradiction. 
        % By Proposition~\ref{PROP:Prelim-B}, there exist an edge $e' \in \mathcal{H}$ and a vertex $u' \in e'$ such that 
        % \begin{align*}
        %     J_{\varphi} \subseteq  \varphi(e'\setminus \{u'\})
        %     \quad\text{and}\quad
        %     \min\left\{|V_j| \colon j \in \varphi(e'\setminus\{u'\}) \right\}
        %     \ge \max\left\{|V_j| \colon j \in \overline{J_{\varphi}} \setminus \varphi(e'\setminus\{u'\}) \right\}
        %     % \max\left\{|V_j| \colon j \in [k]\setminus\varphi(e\setminus\{u'\}) \right\}
        %     % \le \min\left\{|V_j| \colon j \in \overline{J_{\varphi}} \right\} ??. 
        % \end{align*}
        % %
        % It follows that 
        % \begin{align*}
        %     d_{\mathcal{H}}(e'\setminus\{u'\})
        %     \le  \sum_{j\in [k]\setminus\varphi(e\setminus\{u'\})}\left| V_{j}\right| 
        %     & \le  \frac{k-r+1}{k-r+2}\sum_{j\in \overline{J_{\varphi}}}\left| V_{j}\right|\\
        %     & = \frac{k-r+1}{k-r+2}\left( n - \sum_{j\in J_{\varphi}}\left| V_{j}\right| \right) \\
        %     & < \frac{k-r+1}{k-r+2}\cdot\left(n -(r-2) \cdot \frac{4n}{3k-2}\right)
        %     \le \frac{3k-3r+1}{3k-2}n, 
        % \end{align*}
        % a contradiction. 
    \end{proof}

    %%%%%%%%%%%%%%%%%%%%%%%%%%%%%%
    %%%%%%%%%%%%%%%%%%%%%%%%%%%%%%%%%https://www.mathcha.io/editor
\begin{figure}[htbp]
\centering
%%%%%%%%%%%%%%%%%%%%%%%%%%%%%%%%
\tikzset{every picture/.style={line width=1pt}} %set default line width to 0.75pt        

\begin{tikzpicture}[x=0.75pt,y=0.75pt,yscale=-1,xscale=1]
%uncomment if require: \path (0,233); %set diagram left start at 0, and has height of 233

%Rounded Rect [id:dp46149500193274906] 
\draw   (236.52,105.68) .. controls (239.72,107.74) and (240.69,112.03) .. (238.67,115.27) -- (199.47,178.02) .. controls (197.45,181.25) and (193.21,182.2) .. (190,180.14) -- (172.58,168.94) .. controls (169.37,166.88) and (168.41,162.58) .. (170.43,159.35) -- (209.62,96.6) .. controls (211.64,93.37) and (215.88,92.42) .. (219.09,94.48) -- cycle ;
%Rounded Rect [id:dp05318070316326362] 
\draw   (119.62,59.34) .. controls (119.61,55.49) and (122.72,52.35) .. (126.57,52.34) -- (199.52,52.09) .. controls (203.37,52.07) and (206.5,55.18) .. (206.52,59.04) -- (206.59,79.96) .. controls (206.6,83.81) and (203.49,86.94) .. (199.64,86.96) -- (126.69,87.21) .. controls (122.84,87.23) and (119.71,84.11) .. (119.7,80.26) -- cycle ;
%Rounded Rect [id:dp018197595639181774] 
\draw   (138.79,180.14) .. controls (135.58,182.2) and (131.34,181.25) .. (129.33,178.02) -- (90.14,115.26) .. controls (88.12,112.02) and (89.08,107.73) .. (92.29,105.67) -- (109.72,94.47) .. controls (112.93,92.41) and (117.17,93.36) .. (119.19,96.6) -- (158.37,159.36) .. controls (160.39,162.59) and (159.43,166.88) .. (156.22,168.95) -- cycle ;
%Shape: Polygon Curved [id:ds3283207850636882] 
\draw  [dash pattern=on 1pt off 1.2pt] (188.81,135.17) .. controls (199.79,138.55) and (218.34,146.98) .. (216.43,155.92) .. controls (214.52,164.86) and (207.09,181.13) .. (197.77,183.57) .. controls (188.45,186.01) and (172.38,173.73) .. (163.93,173.61) .. controls (155.49,173.49) and (134.58,187.85) .. (127.9,182.62) .. controls (121.22,177.38) and (114.67,165.95) .. (114.76,152.65) .. controls (114.84,139.35) and (128.29,136.52) .. (143.18,133.86) .. controls (158.07,131.2) and (177.83,131.79) .. (188.81,135.17) -- cycle ;
%Shape: Polygon Curved [id:ds13991325509640884] 
\draw  [dash pattern=on 1pt off 1.2pt] (170.86,84.77) .. controls (167.68,73.59) and (164.55,53.2) .. (172.99,49.97) .. controls (181.42,46.73) and (198.92,44.2) .. (206.01,50.8) .. controls (213.1,57.41) and (211.69,77.77) .. (216.19,85.02) .. controls (220.68,92.27) and (243.94,102.25) .. (243.24,110.79) .. controls (242.55,119.32) and (236.66,131.11) .. (225.61,138.28) .. controls (214.57,145.44) and (204.91,135.54) .. (194.61,124.31) .. controls (184.31,113.09) and (174.04,95.95) .. (170.86,84.77) -- cycle ;
%Shape: Polygon Curved [id:ds36458625389823784] 
\draw  [dash pattern=on 1pt off 1.2pt] (131.99,124.62) .. controls (123.53,132.49) and (106.93,144.4) .. (100.32,138.18) .. controls (93.71,131.96) and (83.68,117.19) .. (86.36,107.81) .. controls (89.04,98.43) and (107.63,90.64) .. (112.03,83.33) .. controls (116.43,76.02) and (114.89,50.45) .. (122.73,47.28) .. controls (130.58,44.11) and (143.61,44.22) .. (154.85,51.06) .. controls (166.09,57.91) and (161.65,71.09) .. (156.32,85.45) .. controls (151,99.8) and (140.45,116.75) .. (131.99,124.62) -- cycle ;
%Rounded Rect [id:dp7128472071958956] 
\draw   (302.56,67.93) .. controls (306.36,67.93) and (309.44,71.01) .. (309.44,74.8) -- (309.44,149.2) .. controls (309.44,153) and (306.36,156.08) .. (302.56,156.08) -- (281.94,156.08) .. controls (278.14,156.08) and (275.06,153) .. (275.06,149.2) -- (275.06,74.8) .. controls (275.06,71.01) and (278.14,67.93) .. (281.94,67.93) -- cycle ;
% %Rounded Rect [id:dp7584586334461119] 
% \draw   (354.48,68.27) .. controls (358.28,68.27) and (361.36,71.35) .. (361.36,75.15) -- (361.36,149.55) .. controls (361.36,153.34) and (358.28,156.42) .. (354.48,156.42) -- (333.85,156.42) .. controls (330.05,156.42) and (326.98,153.34) .. (326.98,149.55) -- (326.98,75.15) .. controls (326.98,71.35) and (330.05,68.27) .. (333.85,68.27) -- cycle ;
%Rounded Rect [id:dp5161187543720538] 
\draw   (378.56,90.28) .. controls (382.36,90.28) and (385.44,93.36) .. (385.44,97.15) -- (385.44,149.54) .. controls (385.44,153.34) and (382.36,156.41) .. (378.56,156.41) -- (357.94,156.41) .. controls (354.14,156.41) and (351.06,153.34) .. (351.06,149.54) -- (351.06,97.15) .. controls (351.06,93.36) and (354.14,90.28) .. (357.94,90.28) -- cycle ;
%Rounded Rect [id:dp7338007012282846] 
\draw   (427.56,90.28) .. controls (431.36,90.28) and (434.44,93.36) .. (434.44,97.15) -- (434.44,149.54) .. controls (434.44,153.34) and (431.36,156.41) .. (427.56,156.41) -- (406.94,156.41) .. controls (403.14,156.41) and (400.06,153.34) .. (400.06,149.54) -- (400.06,97.15) .. controls (400.06,93.36) and (403.14,90.28) .. (406.94,90.28) -- cycle ;
%Rounded Rect [id:dp8398479696599825] 
\draw   (479.48,90.54) .. controls (483.28,90.54) and (486.36,93.61) .. (486.36,97.41) -- (486.36,149.8) .. controls (486.36,153.59) and (483.28,156.67) .. (479.48,156.67) -- (458.85,156.67) .. controls (455.05,156.67) and (451.98,153.59) .. (451.98,149.8) -- (451.98,97.41) .. controls (451.98,93.61) and (455.05,90.54) .. (458.85,90.54) -- cycle ;
%Rounded Rect [id:dp9841957640734433] 
\draw   (531.56,91.03) .. controls (535.36,91.03) and (538.44,94.11) .. (538.44,97.9) -- (538.44,150.29) .. controls (538.44,154.09) and (535.36,157.17) .. (531.56,157.17) -- (510.94,157.17) .. controls (507.14,157.17) and (504.06,154.09) .. (504.06,150.29) -- (504.06,97.9) .. controls (504.06,94.11) and (507.14,91.03) .. (510.94,91.03) -- cycle ;
% %Rounded Rect [id:dp7208420228451391] 
% \draw   (583.48,91.29) .. controls (587.28,91.29) and (590.36,94.36) .. (590.36,98.16) -- (590.36,150.55) .. controls (590.36,154.34) and (587.28,157.42) .. (583.48,157.42) -- (562.85,157.42) .. controls (559.05,157.42) and (555.98,154.34) .. (555.98,150.55) -- (555.98,98.16) .. controls (555.98,94.36) and (559.05,91.29) .. (562.85,91.29) -- cycle ;
%Straight Lines [id:da43717543009382553] 
\draw  [line width=2pt]  (133.19,155.28) -- (218,115) -- (292,92) -- (368,123) -- (417,123) -- (470,123) ;
\draw [fill=uuuuuu] (133.19,155.28) circle (2.5pt);
\draw [fill=green] (218,115) circle (2.5pt);
\draw [fill=uuuuuu] (292,92) circle (2.5pt);
\draw [fill=uuuuuu] (368,123) circle (2.5pt);
\draw [fill=uuuuuu] (417,123) circle (2.5pt);
\draw [fill=uuuuuu] (470,123) circle (2.5pt);
% Text Node
\draw (155,33.4) node [anchor=north west][inner sep=0.75pt]   [align=left] {$V_1$};
% Text Node
\draw (224,142) node [anchor=north west][inner sep=0.75pt]   [align=left] {$V_2$};
% Text Node
\draw (86,142) node [anchor=north west][inner sep=0.75pt]   [align=left] {$V_3$};
% Text Node
\draw (86,70) node [anchor=north west][inner sep=0.75pt]   [align=left] {$U_1$};
% Text Node
\draw (220,70) node [anchor=north west][inner sep=0.75pt]   [align=left] {$U_2$};
% Text Node
\draw (155,180) node [anchor=north west][inner sep=0.75pt]   [align=left] {$U_3$};
% % Text Node
% \draw (284,161) node [anchor=north west][inner sep=0.75pt]   [align=left] {$V_4$};
% % Text Node
% \draw (331.22,164.72) node [anchor=north west][inner sep=0.75pt]   [align=left] {$V_5$};
% % Text Node
% \draw (104,122) node [anchor=north west][inner sep=0.75pt]   [align=left] {$u$};
% Text Node
\draw (212,122) node [anchor=north west][inner sep=0.75pt]   [align=left] {$v$};
% Text Node
\draw (128,162) node [anchor=north west][inner sep=0.75pt]   [align=left] {$u$};
% Text Node
% \draw (285,100) node [anchor=north west][inner sep=0.75pt]   [align=left] {$w_4$};
% % % Text Node
% % \draw (336,100) node [anchor=north west][inner sep=0.75pt]   [align=left] {$w_5$};
% % Text Node
% \draw (362,132) node [anchor=north west][inner sep=0.75pt]   [align=left] {$a$};
% % Text Node
% \draw (411,130) node [anchor=north west][inner sep=0.75pt]   [align=left] {$b$};
% % Text Node
% \draw (462,132) node [anchor=north west][inner sep=0.75pt]   [align=left] {$w$};

\end{tikzpicture}

%%%%%%%%%%%%%%%%%%%%%%%%%%%%
\caption{auxiliary figure for the proof of Claim~\ref{Claim:size-of-Bi}.} 
\label{Fig:Claim2-21}
\end{figure}
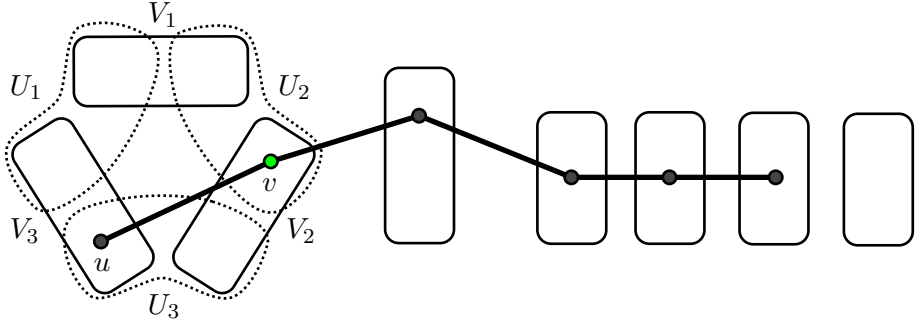
%%%%%%%%%%%%%%%%%%%%%%%%%%%%%%%%%
    %%%%%%%%%%%%%%%%%%%%%%%%%%%%%%
    % Now we can estimate the size of $B_{i}$ and $C_{i}$ for $i\in[t]$. 
    %
    \begin{claim}\label{Claim:size-of-Bi}
        We have 
        \begin{align*}
            \min\left\{|B_{i}|+|B_{i+1}|,\  |C_{i}|+|C_{i+1}|\right\} 
            > \frac{4n}{3k-2}
            \quad\text{for every}\quad 
            i\in [t]. 
        \end{align*}
        In particular, 
        \begin{align*}
            |V_i|+|V_{i+1}|
            > \frac{8n}{3k-2}
            \quad\text{for every}\quad 
            i\in [t]. 
        \end{align*}
        Here, the indices are taken modulo $t$. 
    \end{claim}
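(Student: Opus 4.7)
The plan is to establish both $|B_i| + |B_{i+1}| > \frac{4n}{3k-2}$ and $|C_i| + |C_{i+1}| > \frac{4n}{3k-2}$ via parallel arguments, after which $|V_i| + |V_{i+1}| > \frac{8n}{3k-2}$ follows by addition. I sketch the $B$-side in detail; the $C$-side is completely analogous, starting from a vertex in $C_{i+2}$ in place of $B_{i-1}$ and invoking the two forbidden pairs $(C_{i+2}, B_{i+1})$ (trivially forbidden by the shared $\vartheta$-color $i+2$) and $(C_{i+2}, B_i)$ (an instance of Claim~\ref{Claim:C2_Bt} under the cyclic symmetry of Proposition~\ref{PROP:cyclic-structure}~\ref{PROP:cyclic-structure-1}).

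For the $B$-side, fix $v \in B_{i-1}$; this set is nonempty because $\vartheta(V_{i-1}) = \{i-1, i\}$ attains both values. The key structural observation is that for every edge $e \in \mathcal{H}$ containing $v$, we have $e \cap V_i \subseteq B_i$ and $e \cap V_{i+1} \subseteq B_{i+1}$: the first because $\vartheta(v) = i$ together with the $\vartheta$-homomorphism property forces any $w \in e \cap V_i$ to satisfy $\vartheta(w) = i+1$, and the second because Claim~\ref{Claim:C2_Bt}, applied cyclically with $j = i$, forbids $(C_{i+1}, B_{i-1})$ from $\partial_{r-2}\mathcal{H}$. Next, using Proposition~\ref{PROP:Prelim-B}~\ref{PROP:Prelim-B-1} (if $i-1 \in J_\varphi$) or~\ref{PROP:Prelim-B-2} (otherwise), I would produce an edge $e \ni v$ whose color set $\varphi(e)$ contains $(J_\varphi \setminus \{i+1\}) \cup \{i-1, i\}$ but avoids $i+1$. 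Since $|J_\varphi| \le r - 2$ by Proposition~\ref{PROP:Prelim-A}~\ref{PROP:Prelim-A-3}, the free colors chosen from $\overline{J_\varphi}$ by the proposition can be arranged to include $i$ and avoid $i+1$ in the generic situation. The delicate subcase is $\{i, i+1\} \subseteq J_\varphi$, where the proposition forces $i+1 \in \varphi(e)$; I expect this to be the main obstacle, and plan to handle it either by running the symmetric argument with $u_{i+1}$ in place of $u_i$ (when $i \notin J_\varphi$) or by exploiting that $|V_i|, |V_{i+1}| > \frac{3n}{3k-2}$ directly, possibly combined with Proposition~\ref{PROP:Prelim-C} applied to a shadow pair already built into~$e$.

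Finally, let $u_i$ be the unique vertex of $e \cap V_i$, noting $u_i \in B_i$ by the key observation, and apply the positive codegree hypothesis to the $(r-1)$-set $S := e \setminus \{u_i\}$ to obtain $|N_\mathcal{H}(S)| > \frac{3k-3r+1}{3k-2}n$. Because $\varphi(S) = \varphi(e) \setminus \{i\}$ is disjoint from $\{i, i+1\}$, every $w \in N_\mathcal{H}(S)$ has $\varphi(w) \in \overline{\varphi(e)} \cup \{i\}$, and the structural observation refines this to $N_\mathcal{H}(S) \subseteq B_i \cup B_{i+1} \cup \bigcup_{j \in \overline{\varphi(e)} \setminus \{i+1\}} V_j$. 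The condition $J_\varphi \setminus \{i+1\} \subseteq \varphi(e)$ forces every $V_j$ appearing in the last union to be small, so their total contribution is at most $(k-r-1) \cdot \frac{3n}{3k-2}$. Subtracting then yields
\[
|B_i| + |B_{i+1}| > \frac{3k-3r+1}{3k-2}\, n - (k-r-1) \cdot \frac{3n}{3k-2} = \frac{4n}{3k-2},
\]
as required, and the $C$-side produces the matching bound by the symmetric argument outlined above.
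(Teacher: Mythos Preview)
Your overall strategy mirrors the paper's: pick a vertex in a specific $B$ (or $C$) piece, use the structural observation that any edge through it meets $V_i$ and $V_{i+1}$ only in $B_i$ and $B_{i+1}$ respectively, then bound the codegree of a well-chosen $(r-1)$-subset. The difference is that the paper argues by contradiction rather than directly, and this is not merely cosmetic.

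The gap is exactly the subcase you flag, $\{i,i+1\}\subseteq J_\varphi$. Your option (a) is conditioned on $i\notin J_\varphi$, so it does not apply here. Your option (b) yields only $|V_i|+|V_{i+1}|>\frac{6n}{3k-2}$, which is short of $\frac{8n}{3k-2}$, and the appeal to Proposition~\ref{PROP:Prelim-C} does not help: any edge it produces through a pair such as $\{v,u_i\}$ still must contain $J_\varphi$, so $i+1$ reappears in $\varphi(e)$. With both $i$ and $i+1$ in $\varphi(e)$, removing $u_i$ gives only $|B_i|>\frac{n}{3k-2}$ and removing $u_{i+1}$ gives only $|B_{i+1}|>\frac{n}{3k-2}$; summed, this is $\frac{2n}{3k-2}$, not $\frac{4n}{3k-2}$.

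The paper closes this with a \emph{swap trick}, which is why the contradiction framing matters. Assuming $|B_i|+|B_{i+1}|\le \frac{4n}{3k-2}$, one has in particular $|B_{i+1}|\le\frac{4n}{3k-2}$. Take the edge $e\ni v\in B_{i-1}$ from Proposition~\ref{PROP:Prelim-B} with $i\in\varphi(e)$; if $i+1\in\varphi(e)$, let $w$ be the vertex in $e\cap V_{i+1}$, which lies in $B_{i+1}$ by your key observation. Since $k\ge \lceil(4r-2)/3\rceil$ forces $3k-3r+1\ge 4$, we get
\[
d_{\mathcal H}(e\setminus\{w\})>\tfrac{3k-3r+1}{3k-2}\,n\ \ge\ \tfrac{4n}{3k-2}\ \ge\ |B_{i+1}|,
\]
and because $N_{\mathcal H}(e\setminus\{w\})\cap V_{i+1}\subseteq B_{i+1}$, there exists $\tilde w\notin V_{i+1}$ with $(e\setminus\{w\})\cup\{\tilde w\}\in\mathcal H$. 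This new edge still contains $v$ and $u_i$, now avoids color $i+1$, and all colors outside $\varphi(\tilde e)\cup\{i+1\}$ remain small (they lie in $\overline{\varphi(e)}$). Your final counting then goes through verbatim and yields the contradiction. The swap is the missing ingredient; Proposition~\ref{PROP:Prelim-B} alone cannot expel a color from $J_\varphi$.
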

    \begin{proof}[Proof of Claim~\ref{Claim:size-of-Bi}]
        Suppose to the contrary that this is not true. 
        By symmetry, we may assume that  $|C_{1}|+|C_{t}| \le \frac{4n}{3k-2}$. 
        Fix a vertex $v \in  C_{2}$.
        By Proposition~\ref{PROP:Prelim-B}~\ref{PROP:Prelim-B-3}, there exists an edge $e\in \mathcal{H}$ containing $v$ such that  
        \begin{align*}
            \text{$t \in \varphi(e)$ and $V_j$ is small for every $j \in \overline{\varphi(e) \cup \{1\}}$ (see Figure~\ref{Fig:Claim2-21}).}
        \end{align*}
        Additionally, we may assume that $1\not\in \varphi(e)$.  
        Indeed, suppose that $1 \in \varphi(e)$. 
        Then let $w$ denote the vertex in $e\cap V_1$. 
        Observe that $w$ must be contained in $C_1$ (see Figure~\ref{Fig:Claim2-21}). 
        Since $k \ge \left\lceil \frac{4r-2}{3} \right\rceil$, simple calculations show that $3k-3r+1 \ge 4$ for all integers $r \ge 3$. 
        Therefore, 
        \begin{align*}
            d_{\mathcal{H}}(e\setminus \{w\})
            > \frac{3k-3r+1}{3k-2}n 
            \ge \frac{4n}{3k-2}
            \ge |C_{1}|, 
        \end{align*}
        which means that there exists a vertex $\tilde{w} \in V(\mathcal{H}) \setminus C_1$ such that $(e\setminus\{w\}) \cup \{\tilde{w}\} \in \mathcal{H}$. 
        It is easy to see that $\tilde{w}$ cannot lie in $V_1$. 
        Thus, we can replace $e$ with $(e\setminus\{w\}) \cup \{\tilde{w}\}$, and this new edge will have empty intersection with $V_1$, as desired. 
        Therefore, we may initially assume that $1\not\in \varphi(e)$. 
        
        % if  $|C_{1}| > \frac{4}{3k-2}n$, the proof is complete; 
        % if not, let $w$ be the vertex in $e\cap V_{1}$ (or equivalent, $e\cap C_{1}$), then \todo{$d_{\mathcal{H}}(e\setminus\{ w\})> \frac{3k-3r+1}{3k-2}n \ge |C_{1}|$} and thus there exists a vertex $w'\in N_{\mathcal{H}}(e\setminus \{w\}) \setminus V_{1}$ by the fact that $N_{\mathcal{H}}(e\setminus \{w\}) \cap V_{1}\subseteq C_{1}$, and hence we can replace $e$ by $(e\setminus\{ w\}) \cup \{w'\}$ while the resulting edge still satisfies the property mentioned before and has no intersection with $V_{1}$.  
        Let $u$ denote the vertex in $e\cap V_{t}$, noting that $u$ must lie in $C_t$ (see Figure~\ref{Fig:Claim2-21}). 
        Additionally, observe that $N_{\mathcal{H}}(e\setminus\{u\}) \cap V_1 \subseteq C_1$ and $N_{\mathcal{H}}(e\setminus\{u\}) \cap V_t \subseteq C_t$. 
        Therefore, similar to the proof of Claim~\ref{Claim:C2_Bt}, we obtain 
        \begin{align*}
            \frac{3k-3r+1}{3k-2}n
            < d_{\mathcal{H}}(e\setminus\{u\}) 
            & \le  |C_{1}|+|C_{t}|+\sum_{j\in \overline{\varphi(e)\cup\{1\}}}|V_{j}| \\
            & \le  \frac{4n}{3k-2}+ (k-r-1) \cdot \frac{3n}{3k-2}
            = \frac{3k-3r+1}{3k-2}n, 
        \end{align*}
        a contradiction. 
    \end{proof}

    % %%%%%%%%%%%%%%%%%%%%%%%%%%%%%% new proof do not need this figure
    % \input{Fig-Claim-2-22} 
    % %%%%%%%%%%%%%%%%%%%%%%%%%%%%%%
    % Then we can show that almost all $V_{1},\dots,V_{t}$ are large. 
    \begin{claim}\label{Claim:many-large-parts}
        All but at most two sets in $\{V_{1}, \dots, V_{t}\}$ are large. 
        In particular, by Proposition~\ref{PROP:Prelim-A}~\ref{PROP:Prelim-A-3}, 
        \begin{align*}
            t \le |J_{\varphi}| + 2 \le r. 
        \end{align*}
    \end{claim}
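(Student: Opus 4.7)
\noindent\emph{Proof plan.}
The plan is to argue by contradiction: suppose at least three of $V_{1},\ldots,V_{t}$ are small, say $V_{a}$, $V_{b}$, $V_{c}$ with $a<b<c$ in $[t]$.

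The first step is to extract combinatorial information from Claim~\ref{Claim:size-of-Bi}. Its bound $|V_i|+|V_{i+1}|>\frac{8n}{3k-2}$ (cyclically on $[t]$) already rules out two adjacent smalls, so the cyclic gaps among $\{a,b,c\}$ are all at least two and hence $t\ge 6$. Applying the same inequality at each of $\{a,b,c\}$ together with $|V_x|<\frac{3n}{3k-2}$ for $x\in\{a,b,c\}$ forces $|V_{x\pm 1}|>\frac{5n}{3k-2}$, so every cyclic neighbor of a small set lies in $J_\varphi$. In particular the six (possibly coinciding) neighbors of the three smalls contribute at least $\min(|N(\{a,b,c\})|,6)$ large indices to $J_\varphi\cap[t]$.

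Next, I would pick a vertex $v\in V_a$ and use Proposition~\ref{PROP:Prelim-B}~\ref{PROP:Prelim-B-2} (which applies since $a\in\overline{J_\varphi}$), together with the bound $|J_\varphi|\le r-2$ from Proposition~\ref{PROP:Prelim-A}~\ref{PROP:Prelim-A-3}, to produce an edge $e\ni v$ whose $\varphi$-image contains $J_\varphi\cup\{a\}$ and at least one of $\{b,c\}$. The codegree hypothesis bounds $d_\mathcal{H}(e\setminus\{v\})$ from below by $\frac{3k-3r+1}{3k-2}n$, while the smallness of $V_a,V_b,V_c$ combined with the non-adjacency restriction from Claim~\ref{Claim:C2_Bt} (used in its rotated form obtained from the cyclic symmetry of $T=[t]$) should give a matching upper bound. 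Pushing these two estimates against one another, using the hypothesis $k\ge\frac{4r-2}{3}$ to calibrate constants, should deliver the contradiction.

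The hard part is closing the counting. Summing Claim~\ref{Claim:size-of-Bi} cyclically yields only $\sum_{i\in[t]}|V_i|>\frac{4tn}{3k-2}$, which together with $\sum_{i\in[k]}|V_i|=n$ gives $t<\frac{3k-2}{4}$; when $k$ is much larger than $\frac{4r-2}{3}$, this alone no longer implies $t\le r$. The main obstacle is therefore to leverage the structural restriction of Claim~\ref{Claim:C2_Bt} (forbidding certain $C_{i+1}$--$B_{i-1}$ pairs from lying in any edge), in combination with the refined neighbor-size bound $|V_{x\pm 1}|>\frac{5n}{3k-2}$, in order to force the contradiction uniformly across all admissible values of $k$.
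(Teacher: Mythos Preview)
Your setup is right, and you already have the crucial observation: if three of the $V_i$'s (for $i\in[t]$) are small, then by Claim~\ref{Claim:size-of-Bi} their cyclic neighbours each have size strictly greater than $\frac{5n}{3k-2}$, so in particular there exist three \emph{distinct} indices $i_1,i_2,i_3\in[t]$ with $|V_{i_\ell}|>\frac{5n}{3k-2}$. From here, however, you take a wrong turn. The edge-based argument you propose --- building an edge through $V_a$ via Proposition~\ref{PROP:Prelim-B}~\ref{PROP:Prelim-B-2} and then invoking Claim~\ref{Claim:C2_Bt} --- does not close, exactly as you suspect: the upper bound on $d_{\mathcal H}(e\setminus\{v\})$ one obtains this way is $|V_a|+\sum_{j\in\overline{\varphi(e)}}|V_j|<\frac{3(k-r+1)}{3k-2}n$, which is \emph{larger} than the lower bound $\frac{3k-3r+1}{3k-2}n$, and the restriction from Claim~\ref{Claim:C2_Bt} is too local to recover the missing factor.

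The missing idea is simpler and purely counting: once you have three parts of size exceeding $\frac{5n}{3k-2}$, forget the cyclic structure entirely and lower-bound the \emph{remaining} $k-3$ parts using Proposition~\ref{PROP:Prelim-A}~\ref{PROP:Prelim-A-1}. That proposition (applied with $J=[k]\setminus\{i_1,i_2,i_3\}$, noting $k-3\ge k-r+1$ since the existence of three large parts forces $r\ge 5$) gives
\[
\sum_{j\notin\{i_1,i_2,i_3\}}|V_j|\;\ge\;\frac{k-3}{k-r+1}\cdot\frac{3k-3r+1}{3k-2}\,n.
\]
Adding this to $|V_{i_1}|+|V_{i_2}|+|V_{i_3}|>\frac{15n}{3k-2}$ yields
\[
n=\sum_{j\in[k]}|V_j|>\left(1+\frac{2(3k-4r+7)}{(3k-2)(k-r+1)}\right)n,
\]
and the hypothesis $k\ge\frac{4r-2}{3}$ makes the extra term nonnegative (in fact strictly positive), giving the contradiction. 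This is exactly the paper's argument; no appeal to Claim~\ref{Claim:C2_Bt} or to the edge structure is needed at this point.
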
    
    \begin{proof}[Proof of Claim~\ref{Claim:many-large-parts}]
        Suppose to the contrary that there exist three sets in $\{V_{1}, \dots, V_{t}\}$ with size smaller than $\frac{3n}{3k-2}$. 
        By Claim~\ref{Claim:size-of-Bi}, $|V_{i}|+|V_{i+1}| > \frac{8n}{3k-2}$ for every $i\in [t]$.
        Therefore, there exist three sets  $V_{i_1}, V_{i_2}, V_{i_3}$ with size greater than $\frac{8n}{3k-2} - \frac{3n}{3k-2} = \frac{5n}{3k-2}$ (note that this implies that $|J_{\varphi}| \ge 3$, and hence, $r\ge 5$). 
        Combining this with Proposition~\ref{PROP:Prelim-A}~\ref{PROP:Prelim-A-1}, we obtian 
        \begin{align*}
            \sum_{j\in [k]}|V_j|
            & = |V_{i_1}| + |V_{i_2}| + |V_{i_3}| +\sum_{j \in \overline{\{i_1, i_2, i_3\}}} |V_j| \\
            & > 3\cdot \frac{5n}{3k-2} 
                + \frac{k-3}{k-r+1} \cdot \frac{3k-3r+1}{3k-2}n 
             = \left(1+ \frac{2 (3k - 4r +7)}{(3 k-2) (k-r+1)}\right) n 
            > n, 
        \end{align*}
        a contradiction. 
        % Choose a set $V_{i_{0}}$ such that $|V_{i_{0}}|=\min \{|V_{i}|\colon i\in [k]\}$ and let $v\in V_{i_{0}}$ be an arbitrary vertex. 
        % In particular, $i_{1}, i_{2}, i_{3} \in J_{\varphi}$. 
        % By Proposition~\ref{PROP:Prelim-B}~\ref{PROP:Prelim-B-3}, there exist a vertex $v$ and an edge $e\in \mathcal{H}$ (see Figure~\ref{Fig:Claim2-22})  \todo{containing $v$ such that $\{i_1, i_2, i_3\} \subseteq  \varphi(e)$ }and 
        % \begin{align*}
        %     \max\left\{|V_j| \colon j \in \overline{\varphi(e\setminus \{v\})} \right\}
        %     \le \min\left\{|V_j| \colon j \in \varphi(e\setminus \{v\}) \right\}. 
        % \end{align*}
        % It follows that 
        % \begin{align*}
        %     \frac{3k-3r+1}{3k-2}n 
        %     < d_{\mathcal{H}}(e\setminus\{v\})  
        %     & \le \sum_{j\in [k]\setminus\varphi(e\setminus\{v\})}|V_{j}| \\
        %     &\le  \frac{k-r+1}{k-3}\left(n-\left| V_{i_{1}}\right| -\left| V_{i_{2}}\right| -\left| V_{i_{3}}\right| \right) 
        %     \le  \frac{3k-3r+1}{3k-2}n, 
        % \end{align*}
        % a contradiction. 
    \end{proof}

    %%%%%%%%%%%%%%%%%%%%%%%%%%%%%%%%
    %%%%%%%%%%%%%%%%%%%%%%%%%%%%%%%%%https://www.mathcha.io/editor
\begin{figure}[htbp]
\centering
%%%%%%%%%%%%%%%%%%%%%%%%%%%%%%%%
\tikzset{every picture/.style={line width=1pt}} %set default line width to 0.75pt        

\begin{tikzpicture}[x=0.75pt,y=0.75pt,yscale=-1,xscale=1]
%uncomment if require: \path (0,233); %set diagram left start at 0, and has height of 233

%Rounded Rect [id:dp46149500193274906] 
\draw   (236.52,105.68) .. controls (239.72,107.74) and (240.69,112.03) .. (238.67,115.27) -- (199.47,178.02) .. controls (197.45,181.25) and (193.21,182.2) .. (190,180.14) -- (172.58,168.94) .. controls (169.37,166.88) and (168.41,162.58) .. (170.43,159.35) -- (209.62,96.6) .. controls (211.64,93.37) and (215.88,92.42) .. (219.09,94.48) -- cycle ;
%Rounded Rect [id:dp05318070316326362] 
\draw   (119.62,59.34) .. controls (119.61,55.49) and (122.72,52.35) .. (126.57,52.34) -- (199.52,52.09) .. controls (203.37,52.07) and (206.5,55.18) .. (206.52,59.04) -- (206.59,79.96) .. controls (206.6,83.81) and (203.49,86.94) .. (199.64,86.96) -- (126.69,87.21) .. controls (122.84,87.23) and (119.71,84.11) .. (119.7,80.26) -- cycle ;
%Rounded Rect [id:dp018197595639181774] 
\draw   (138.79,180.14) .. controls (135.58,182.2) and (131.34,181.25) .. (129.33,178.02) -- (90.14,115.26) .. controls (88.12,112.02) and (89.08,107.73) .. (92.29,105.67) -- (109.72,94.47) .. controls (112.93,92.41) and (117.17,93.36) .. (119.19,96.6) -- (158.37,159.36) .. controls (160.39,162.59) and (159.43,166.88) .. (156.22,168.95) -- cycle ;
%Shape: Polygon Curved [id:ds3283207850636882] 
\draw  [dash pattern=on 1pt off 1.2pt] (188.81,135.17) .. controls (199.79,138.55) and (218.34,146.98) .. (216.43,155.92) .. controls (214.52,164.86) and (207.09,181.13) .. (197.77,183.57) .. controls (188.45,186.01) and (172.38,173.73) .. (163.93,173.61) .. controls (155.49,173.49) and (134.58,187.85) .. (127.9,182.62) .. controls (121.22,177.38) and (114.67,165.95) .. (114.76,152.65) .. controls (114.84,139.35) and (128.29,136.52) .. (143.18,133.86) .. controls (158.07,131.2) and (177.83,131.79) .. (188.81,135.17) -- cycle ;
%Shape: Polygon Curved [id:ds13991325509640884] 
\draw  [dash pattern=on 1pt off 1.2pt] (170.86,84.77) .. controls (167.68,73.59) and (164.55,53.2) .. (172.99,49.97) .. controls (181.42,46.73) and (198.92,44.2) .. (206.01,50.8) .. controls (213.1,57.41) and (211.69,77.77) .. (216.19,85.02) .. controls (220.68,92.27) and (243.94,102.25) .. (243.24,110.79) .. controls (242.55,119.32) and (236.66,131.11) .. (225.61,138.28) .. controls (214.57,145.44) and (204.91,135.54) .. (194.61,124.31) .. controls (184.31,113.09) and (174.04,95.95) .. (170.86,84.77) -- cycle ;
%Shape: Polygon Curved [id:ds36458625389823784] 
\draw  [dash pattern=on 1pt off 1.2pt] (131.99,124.62) .. controls (123.53,132.49) and (106.93,144.4) .. (100.32,138.18) .. controls (93.71,131.96) and (83.68,117.19) .. (86.36,107.81) .. controls (89.04,98.43) and (107.63,90.64) .. (112.03,83.33) .. controls (116.43,76.02) and (114.89,50.45) .. (122.73,47.28) .. controls (130.58,44.11) and (143.61,44.22) .. (154.85,51.06) .. controls (166.09,57.91) and (161.65,71.09) .. (156.32,85.45) .. controls (151,99.8) and (140.45,116.75) .. (131.99,124.62) -- cycle ;
%Rounded Rect [id:dp7128472071958956] 
\draw   (302.56,67.93) .. controls (306.36,67.93) and (309.44,71.01) .. (309.44,74.8) -- (309.44,149.2) .. controls (309.44,153) and (306.36,156.08) .. (302.56,156.08) -- (281.94,156.08) .. controls (278.14,156.08) and (275.06,153) .. (275.06,149.2) -- (275.06,74.8) .. controls (275.06,71.01) and (278.14,67.93) .. (281.94,67.93) -- cycle ;
% %Rounded Rect [id:dp7584586334461119] 
% \draw   (354.48,68.27) .. controls (358.28,68.27) and (361.36,71.35) .. (361.36,75.15) -- (361.36,149.55) .. controls (361.36,153.34) and (358.28,156.42) .. (354.48,156.42) -- (333.85,156.42) .. controls (330.05,156.42) and (326.98,153.34) .. (326.98,149.55) -- (326.98,75.15) .. controls (326.98,71.35) and (330.05,68.27) .. (333.85,68.27) -- cycle ;
%Rounded Rect [id:dp5161187543720538] 
\draw   (378.56,90.28) .. controls (382.36,90.28) and (385.44,93.36) .. (385.44,97.15) -- (385.44,149.54) .. controls (385.44,153.34) and (382.36,156.41) .. (378.56,156.41) -- (357.94,156.41) .. controls (354.14,156.41) and (351.06,153.34) .. (351.06,149.54) -- (351.06,97.15) .. controls (351.06,93.36) and (354.14,90.28) .. (357.94,90.28) -- cycle ;
%Rounded Rect [id:dp7338007012282846] 
\draw   (427.56,90.28) .. controls (431.36,90.28) and (434.44,93.36) .. (434.44,97.15) -- (434.44,149.54) .. controls (434.44,153.34) and (431.36,156.41) .. (427.56,156.41) -- (406.94,156.41) .. controls (403.14,156.41) and (400.06,153.34) .. (400.06,149.54) -- (400.06,97.15) .. controls (400.06,93.36) and (403.14,90.28) .. (406.94,90.28) -- cycle ;
%Rounded Rect [id:dp8398479696599825] 
\draw   (479.48,90.54) .. controls (483.28,90.54) and (486.36,93.61) .. (486.36,97.41) -- (486.36,149.8) .. controls (486.36,153.59) and (483.28,156.67) .. (479.48,156.67) -- (458.85,156.67) .. controls (455.05,156.67) and (451.98,153.59) .. (451.98,149.8) -- (451.98,97.41) .. controls (451.98,93.61) and (455.05,90.54) .. (458.85,90.54) -- cycle ;
%Rounded Rect [id:dp9841957640734433] 
\draw   (531.56,91.03) .. controls (535.36,91.03) and (538.44,94.11) .. (538.44,97.9) -- (538.44,150.29) .. controls (538.44,154.09) and (535.36,157.17) .. (531.56,157.17) -- (510.94,157.17) .. controls (507.14,157.17) and (504.06,154.09) .. (504.06,150.29) -- (504.06,97.9) .. controls (504.06,94.11) and (507.14,91.03) .. (510.94,91.03) -- cycle ;
% %Rounded Rect [id:dp7208420228451391] 
% \draw   (583.48,91.29) .. controls (587.28,91.29) and (590.36,94.36) .. (590.36,98.16) -- (590.36,150.55) .. controls (590.36,154.34) and (587.28,157.42) .. (583.48,157.42) -- (562.85,157.42) .. controls (559.05,157.42) and (555.98,154.34) .. (555.98,150.55) -- (555.98,98.16) .. controls (555.98,94.36) and (559.05,91.29) .. (562.85,91.29) -- cycle ;
%Straight Lines [id:da43717543009382553] 
\draw  [line width=2pt]  (188,67) -- (109,115) -- (193,158) -- (293,97) -- (368,123) -- (417,123) ;
\draw [fill=green] (188,67) circle (2.5pt);
\draw [fill=uuuuuu] (109,115) circle (2.5pt);
\draw [fill=uuuuuu] (193,158) circle (2.5pt);
\draw [fill=uuuuuu] (293,97) circle (2.5pt);
\draw [fill=uuuuuu] (368,123) circle (2.5pt);
\draw [fill=uuuuuu] (417,123) circle (2.5pt);
% Text Node
\draw (183,73) node [anchor=north west][inner sep=0.75pt]   [align=left] {$v$};
% Text Node
\draw (155,33.4) node [anchor=north west][inner sep=0.75pt]   [align=left] {$V_1$};
% Text Node
\draw (224,142) node [anchor=north west][inner sep=0.75pt]   [align=left] {$V_2$};
% Text Node
\draw (86,142) node [anchor=north west][inner sep=0.75pt]   [align=left] {$V_3$};
% Text Node
\draw (86,70) node [anchor=north west][inner sep=0.75pt]   [align=left] {$U_1$};
% Text Node
\draw (220,70) node [anchor=north west][inner sep=0.75pt]   [align=left] {$U_2$};
% Text Node
\draw (155,180) node [anchor=north west][inner sep=0.75pt]   [align=left] {$U_3$};
% % Text Node
% \draw (284,161) node [anchor=north west][inner sep=0.75pt]   [align=left] {$V_4$};
% % Text Node
% \draw (331.22,164.72) node [anchor=north west][inner sep=0.75pt]   [align=left] {$V_5$};
% Text Node
% \draw (104,122) node [anchor=north west][inner sep=0.75pt]   [align=left] {$u$};
% % Text Node
% \draw (212,122) node [anchor=north west][inner sep=0.75pt]   [align=left] {$v$};
% % Text Node
% \draw (285,100) node [anchor=north west][inner sep=0.75pt]   [align=left] {$w_4$};
% % Text Node
% \draw (336,100) node [anchor=north west][inner sep=0.75pt]   [align=left] {$w_5$};
% Text Node
% \draw (362,132) node [anchor=north west][inner sep=0.75pt]   [align=left] {$a$};
% % Text Node
% \draw (411,130) node [anchor=north west][inner sep=0.75pt]   [align=left] {$b$};
% % Text Node
% \draw (462,132) node [anchor=north west][inner sep=0.75pt]   [align=left] {$w$};

\end{tikzpicture}

%%%%%%%%%%%%%%%%%%%%%%%%%%%%
\caption{auxiliary figure for the proof of Proposition~\ref{PROP:Final-compute}.} 
\label{Fig:Prop-2-4}
\end{figure}
%%%%%%%%%%%%%%%%%%%%%%%%%%%%%%%%%
    %%%%%%%%%%%%%%%%%%%%%%%%%%%%%%%%
    
    Let $\beta$ be the real number such that $\sum_{i\in [t]}|V_{i}|=\frac{\beta n}{3k-2}$.
    It follows from Claim~\ref{Claim:size-of-Bi} that $\beta \ge 4t$. 
    By symmetry, we may assume that $B_1$ is the smallest among $\left\{B_i \colon i \in [t]\right\} \cup \left\{C_i \colon i \in [t]\right\}$.
    In particular, $|B_{1}| \le  \frac{1}{2t}\sum_{i\in [t]}|V_{i}| = \frac{\beta n}{2t(3k-2)}$. 
    Fix a vertex $v \in  B_{1}$.
    By Proposition~\ref{PROP:Prelim-B}~\ref{PROP:Prelim-B-1},~\ref{PROP:Prelim-B-2}, and Claim~\ref{Claim:many-large-parts}, there exists an edge $e$ (see Figure~\ref{Fig:Prop-2-4}) containing $v$ such that 
    %$[t] \subseteq  \varphi(e)$ and 
    %$V_{j}$'s, $j\in \overline{\varphi(e)}$ are smallest $k-r$ parts among $\{V_{t+1},\dots,V_{k}\}$.
    \begin{align*}
        [t] \subseteq  \varphi(e)
        \quad\text{and}\quad
        \max\left\{|V_j| \colon j\in \overline{\varphi(e)} \right\} 
        \le \min\left\{|V_j| \colon j \in \varphi(e\setminus \{v\})\right\}. 
    \end{align*}
    In particular, it follows from a simple averaging argument that 
    \begin{align*}
        \sum_{j\in \overline{\varphi(e)}}|V_{j}|
        \le \frac{k-r}{k-t} \cdot \sum_{j \in [t+1,k]} |V_j|
        \le \frac{k-r}{k-t} \cdot \left(n - \frac{\beta n}{3k-2}\right). 
    \end{align*}
    Since $B_1\cap e \neq \emptyset$ and $[t] \subseteq  \varphi(e)$, it is easy to see inductively that $V_{i} \cap e \subseteq  B_{i}$ for $i \in [2,t]$ (see Figure~\ref{Fig:Prop-2-4}).
    Therefore, $N_{\mathcal{H}}(e\setminus \{v\}) \cap V_1 \subseteq B_1$. 
    It follows that 
    \begin{align*}
        % \frac{3k-3r+1}{3k-2}n
        d_{\mathcal{H}}(e\setminus\{v\})
        \le  |B_{1}|+\sum_{j\in \overline{\varphi(e)}}|V_{j}|
        & \le  \frac{\beta n}{2t(3k-2)} + \frac{k-r}{k-t} \cdot \left(n - \frac{\beta n}{3k-2}\right) \\
        & = \frac{k-r}{k-t} n - \left(\frac{k-r}{k-t} - \frac{1}{2t}\right) \cdot \frac{\beta n}{3k-2}. 
        % & \le  \frac{2n}{3k-2} + \frac{k-r}{k-t}\cdot\frac{3k-2-4t}{3k-2}n
        % < \frac{3k-3r+1}{3k-2}n, 
    \end{align*}
    Since $k \ge \frac{4r-2}{3}$ and $t \ge 3$, we have 
    \begin{align*}
        \frac{k-r}{k-t} - \frac{1}{2t} 
        = \frac{(2k-2r+1)t-k}{2t(k-t)} 
        \ge \frac{3(2k-2r+1)-k}{2t(k-t)}
        = \frac{5k-6r+3}{2t(k-t)}
        > 0. 
    \end{align*}
    Therefore, the inequality above for $d_{\mathcal{H}}(e\setminus\{v\})$ continues as 
    \begin{align*}
        d_{\mathcal{H}}(e\setminus\{v\})
        & \le \frac{k-r}{k-t} n - \left(\frac{k-r}{k-t} - \frac{1}{2t}\right) \cdot \frac{4t n}{3k-2} \\
        & = \frac{3k-3r+1}{3k-2} n - \frac{(k-r+1) t + k-2 r}{(3 k-2) (k-t)} n \\
        & \le \frac{3k-3r+1}{3k-2} n - \frac{4 k-5 r+3}{(3 k-2) (k-t)} n
        < \frac{3k-3r+1}{3k-2} n
    \end{align*}
    a contradiction. 
    This completes the proof of Proposition~\ref{PROP:Final-compute} for the case $k \ge \frac{4r-2}{3}$.  
\end{proof}

Next, we consider the case $k < \frac{4r-2}{3}$. 

\begin{proof}[Proof of Proposition~\ref{PROP:Final-compute} for $k < \frac{4r-2}{3}$]
     Let $n \ge k \ge r \ge 2$ be integers satisfying $k < \frac{4r-2}{3}$. 
    Let $\mathcal{H}$ be an $n$-vertex $k$-partite $r$-graph with no isolated vertices and with $\delta_{r-1}^{+}(\mathcal{H}) > \frac{k-r+1}{k+2}n$. 
    Fix $\varphi, \vartheta \in \mathrm{Hom}(\mathcal{H},K_{k}^{r})$ such that $|\vartheta(\varphi^{-1}(i))| \le 2$ for every $i \in [k]$. 
    Let $V_{i}\coloneqq \varphi^{-1}(i)$ and $U_{i}\coloneqq \vartheta^{-1}(i)$ for $i\in [k]$.  
    Suppose to the contrary that $\varphi \ncong \vartheta$. 

    Let $T \subseteq [k]$ and $\{k_1, \ldots, k_t\} \subseteq [k]$ be the $t$-sets guaranteed by Proposition~\ref{PROP:cyclic-structure}, where $t \ge 3$. 
    Similar to the proof of Proposition~\ref{PROP:cyclic-structure}~\ref{PROP:cyclic-structure-2}, we may assume that $T = [t]$ and $(k_1, \ldots, k_t) = (1, \ldots, t)$. 
    For convenience, let $B_{i} \coloneqq V_{i}\cap U_{i+1}$ and $C_{i} \coloneqq V_{i}\cap U_i$ for $i\in [t]$, with the indices taken modulo $t$ (see Figure~\ref{Fig:cyclic-structure}). 
    \begin{claim}\label{Claim:C2_Bt-b}
        We have $\{v,u\} \not\in \partial_{r-2}\mathcal{H}$ for every pair $(v,u) \in C_2 \times B_t$.
        %there is no edge in $\mathcal{H}$ containing $\{v,u\}$.
    \end{claim}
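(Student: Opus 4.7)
My plan is to mimic the proof of Claim~\ref{Claim:C2_Bt} essentially verbatim, replacing its use of $\delta^{+}_{r-1}(\mathcal{H}) > \frac{3k-3r+1}{3k-2}n$ by the bound $\delta^{+}_{r-1}(\mathcal{H}) > \frac{k-r+1}{k+2}n$, which is the larger of the two whenever $k < (4r-2)/3$. Suppose for contradiction that $(v,u) \in C_2 \times B_t$ satisfies $\{v,u\} \in \partial_{r-2}\mathcal{H}$. Proposition~\ref{PROP:Prelim-C} then supplies an edge $e \in \mathcal{H}$ containing $\{v,u\}$ for which $V_j$ is small for every $j \in \overline{\varphi(e)}$. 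Since $v \in U_2$, $u \in U_1$ and the cyclic structure gives $V_1 \subseteq U_1 \cup U_2$, any vertex $x \in V_1$ inside $e$ would force $\vartheta(x) \in \{1,2\}$ and clash with $\vartheta(u)$ or $\vartheta(v)$; this rules out $1 \in \varphi(e)$ and also ensures $N_{\mathcal{H}}(e \setminus \{w\}) \cap V_1 = \emptyset$ for every $w \in e$.

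I would then split into the same two subcases as in Claim~\ref{Claim:C2_Bt}. In the first subcase, where some $w \in e \setminus \{v,u\}$ has $[w]_\varphi$ small, every index in $\overline{\varphi(e \setminus \{w\}) \cup \{1\}}$ labels a small class, so
\[
d_{\mathcal{H}}(e \setminus \{w\}) < (k-r) \cdot \frac{3n}{3k-2}.
\]
Since $k \le (4r-3)/3$, a direct computation (equivalent to $5k - 8r + 2 < 0$) gives $3(k-r)(k+2) < (k-r+1)(3k-2)$, hence $d_{\mathcal{H}}(e \setminus \{w\}) < \frac{k-r+1}{k+2}n$, contradicting Assumption~\ref{assume:2}.

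Otherwise $[w]_\varphi$ is large for every $w \in e \setminus \{v,u\}$, forcing $\varphi(e) \setminus \{2,t\} \subseteq J_\varphi$ which, combined with Proposition~\ref{PROP:Prelim-A}\ref{PROP:Prelim-A-3}, yields $J_\varphi = \varphi(e) \setminus \{2,t\}$ and $|J_\varphi| = r-2$. For each $i \in J_\varphi$ with $w_i = e \cap V_i$, the degree bound $d_{\mathcal{H}}(e \setminus \{w_i\}) \le |V_i| + (k-r-1)\cdot \frac{3n}{3k-2}$ together with $\delta^{+}_{r-1}(\mathcal{H}) > \frac{k-r+1}{k+2}n$ gives
\[
|V_i| > \frac{k-r+1}{k+2}n - \frac{3(k-r-1)}{3k-2}n = \frac{2(4r+2-k)}{(k+2)(3k-2)}n.
\]
Applying Proposition~\ref{PROP:Prelim-A}\ref{PROP:Prelim-A-1} to $\overline{J_\varphi}$ (of size $k-r+2$) produces $\sum_{j \in \overline{J_\varphi}} |V_j| > \frac{k-r+2}{k+2}n$, so summing yields
\[
n = \sum_{j \in [k]}|V_j| > \frac{2(r-2)(4r+2-k)}{(k+2)(3k-2)}n + \frac{k-r+2}{k+2}n,
\]
which after clearing denominators is equivalent to $k(5r-4) > 8r^2 - 10r - 8$.

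The main obstacle is then to verify that this last inequality cannot hold under $k \le (4r-3)/3$ and $r \ge 3$. A direct computation gives $3(8r^2 - 10r - 8) - (4r-3)(5r-4) = 4r^2 + r - 36$, which is strictly positive for every $r \ge 3$; hence $k(5r-4) \le \frac{(4r-3)(5r-4)}{3} < 8r^2 - 10r - 8$ for all admissible $k$, contradicting the inequality displayed above. (For $r = 2$ the claim is vacuous, as $k < 2 = r$ is impossible.) This completes the contradiction in both subcases.
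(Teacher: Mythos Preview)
Your argument is correct, and the numerics all check out (I verified both the Subcase~1 inequality $3(k-r)(k+2)<(k-r+1)(3k-2)$, equivalent to $5k<8r-2$, and the Subcase~2 reduction to $k(5r-4)>8r^2-10r-8$ together with $4r^2+r-36>0$ for $r\ge 3$). One cosmetic slip: the assertion $N_{\mathcal{H}}(e\setminus\{w\})\cap V_1=\emptyset$ ``for every $w\in e$'' should read ``for every $w\in e\setminus\{v,u\}$''; you only ever use it in that form.

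However, the paper does \emph{not} mimic Claim~\ref{Claim:C2_Bt}; it takes a different route that avoids the case split entirely. Instead of distinguishing whether some $[w]_\varphi$ is small, the paper picks $w$ to be the vertex of $e\setminus\{v,u\}$ with $|V_{\varphi(w)}|$ minimal, obtaining $\frac{k-r+1}{k+2}n<\sum_{j\in T}|V_j|$ for a set $T\subseteq K\coloneqq[k]\setminus\{1,2,t\}$ of the $k-r$ smallest parts in $K$. It then chooses any $S\subseteq[k]$ of size $2(k-r+1)$ containing $\{1,2,t\}$, applies Proposition~\ref{PROP:Prelim-A}\ref{PROP:Prelim-A-1} to $S$, and uses the ordering of $T$ inside $K$ to lower-bound $\sum_{j\in\overline S}|V_j|$ by $\frac{2r-2-k}{k-r}\cdot\frac{k-r+1}{k+2}n$; summing gives $n>n$. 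Your approach trades this averaging trick for a direct adaptation of the large-$k$ proof: Subcase~1 is immediate once one checks $(k-r)\frac{3}{3k-2}<\frac{k-r+1}{k+2}$, while Subcase~2 requires bounding each $|V_i|$ for $i\in J_\varphi$ individually and then invoking Proposition~\ref{PROP:Prelim-A}\ref{PROP:Prelim-A-1} on $\overline{J_\varphi}$. Both arguments are of comparable length; yours has the virtue of reusing the template of Claim~\ref{Claim:C2_Bt} verbatim, while the paper's has the virtue of being case-free and not needing the intermediate structural conclusion $|J_\varphi|=r-2$.
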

    \begin{proof}[Proof of Claim~\ref{Claim:C2_Bt-b}]
        Suppose to the contrary that there exists a pair $(v,u) \in C_2 \times B_t$ such that $\{u,v\} \in \partial_{r-2}\mathcal{H}$. 
        Notice that every edge containing $\{u,v\}$ must have empty intersection with $V_1$. 
        Therefore, we have $k \ge r+1$. 
        By Proposition~\ref{PROP:Prelim-C}, there exists an edge $e\in \mathcal{H}$ containing $\{u,v\}$ such that 
        %$V_{i}$'s, $i\in \overline{\varphi(e)}$ are smallest $(k-r)$ parts except $V_{t}$ and $V_{2}$.
        \begin{align*}
            \max\left\{|V_i| \colon i \in \overline{\varphi(e)} \right\}
            \le \min\left\{|V_i| \colon i \in \varphi(e) \setminus \{2,t\}\right\}. 
        \end{align*}
        Fix $i_{\ast} \in \varphi(e)\setminus \{2,t\}$ such that 
        \begin{align*}
            |V_{i_{\ast}}|
            = \min\left\{|V_j| \colon j \in \varphi(e) \setminus \{2,t\} \right\}. 
        \end{align*}
        Let $w$ denote the vertex in $e\cap V_{i_{\ast}}$. 
        Let  $K \coloneqq [k]\setminus \{1,2,t\}$ and $T \coloneqq \overline{\varphi(e\setminus\{w\}) \cup \{1\}} \subseteq K$. 
        Since $N_{\mathcal{H}}(e\setminus\{w\}) \cap V_1 = \emptyset$, we obtain 
        \begin{align*}
            \frac{k-r+1}{k+2}n 
            <  d_{\mathcal{H}}\left( e\setminus\{w\}\right)
            \le  \sum_{j\in T}\left| V_{j}\right|. 
        \end{align*}
        %
        % Fix a vertex $w \in e \setminus \{u,v\}$ such that 
        % \begin{align*}
        %     |V_{\varphi(w)}|
        %     = \min\left\{|V_j| \colon j \in \varphi(e) \setminus \{2,t\} \right\}. 
        % \end{align*}
        % Then 
        % \begin{align*}
        %     \frac{k-r+1}{k+2}n 
        %     <  d_{\mathcal{H}}\left( e\setminus\left\{v\right\} \right)
        %     \le  \sum_{j\in [2,k]\setminus \varphi \left(e\setminus\{v\} \right)}\left| V_{j}\right|
        % \end{align*}
        %
        Choose a set $S \subseteq [k]$ of size $2(k-r+1)$ such that $\{1,2,t\} \subseteq S$. 
        Note that this is possible since $k \ge r+1$ (implying $2(k-r+1) \ge 4$) and $k < \frac{4r-2}{3}$ (implying $2(k-r+1) \le k$). 
        By Proposition~\ref{PROP:Prelim-A}~\ref{PROP:Prelim-A-1}, we have $\sum_{j \in S} |V_j| \ge 2\cdot \delta_{r-1}^{+}(\mathcal{H}) \ge 2\cdot \frac{k-r+1}{k+2}n$. 
        On the other hand, it follows from the definition of $e$ and $w$ that 
        \begin{align*}
            \max\left\{|V_j| \colon j \in T\right\}
            \le \max\left\{|V_j| \colon j \in K\setminus T\right\},  
        \end{align*}
        Since $\overline{S} \subseteq K$ and $|\overline{S}| = k-2(k-r+1) \ge k-r = |T|$, a simple averaging argument shows that 
        \begin{align*}
            \sum_{j \in \overline{S}} |V_j|
            \ge \frac{k-2(k-r+1)}{k-r} \cdot \sum_{j\in T}\left| V_{j}\right|
            > \frac{2r-2-k}{k-r} \cdot \frac{k-r+1}{k+2}n. 
        \end{align*}
        Consequently, 
        \begin{align*}
            \sum_{j\in [k]}|V_j|
             = \sum_{j\in S}|V_j| + \sum_{j\in \overline{S}}|V_j|
            & > 2\cdot \frac{k-r+1}{k+2}n + \frac{2r-2-k}{k-r} \cdot \frac{k-r+1}{k+2}n  \\
            & = \left(1 + \frac{4 r-2 -3 k}{(k+2) (k-r)}\right)n 
            \ge n, 
        \end{align*}
        a contradiction. 
        % Since $k \ge r+1$ and $k < \frac{4r-2}{3}$, there exist two sets $S,T \in \binom{[k]}{k-r+1}$ such that $\{1,2,t\} \subseteq  S\cup T$ and $S \cap T = \emptyset$. 
        % It follows from Proposition~\ref{PROP:Prelim-A}~\ref{PROP:Prelim-A-1} and the inequality above that 
        % \begin{align*}
        %     n
        %     % & = \left|V_{1}\right|+\cdots+\left| V_{k}\right|\\
        %     & = \sum_{i \in S} |V_i| + \sum_{i \in T} |V_i| +\sum_{i \notin S \cup T} |V_i| \\
        %     & > 2\cdot \frac{k-r+1}{k+2}n +\frac{2r-k-2}{k-r} \sum_{j\in [2,k]\setminus \varphi \left(e\setminus\{v\} \right)}\left| V_{j}\right| \\
        %     &>\frac{2(k-r+1)}{k+2}n +\frac{2r-k-2}{k-r} \cdot \frac{k-r+1}{k+2}n 
        %     = \left(1+\frac{4 r-2 -3 k}{(k+2) (k-r)}\right)n >n,  
        % \end{align*}
        % a contradiction.
    \end{proof}

     %
    % Next, we aim to find $(k-r-1)$-sets $I, J \subseteq [k]\setminus \{1,2\}$ such that 
    %     \begin{align*}
    %         \min\left\{|B_1| + |B_2| + \sum_{j \in I} |V_j|,  
    %         |C_1| + |C_2| + \sum_{j \in J} |V_j| \right\}
    %         > \frac{k-r+1}{k+2}n.
    %     \end{align*}
    Fix a vertex $v \in  B_t \subseteq V_t$.
    By Proposition~\ref{PROP:Prelim-B}~\ref{PROP:Prelim-B-3}, there exists an edge $e\in \mathcal{H}$ containing $v$ such that 
    \begin{align}\label{equ:main-lemma-e}
        2 \in \varphi(e)
        \quad\text{and}\quad 
        \max\left\{|V_i| \colon i \in \overline{\varphi(e)}\right\} 
        \le \min\left\{|V_i| \colon i \in \varphi(e)\setminus \{2,t\} \right\}.  
    \end{align}
    Let $u$ denote the vertex in $e\cap V_2$.
    It follows from Claim~\ref{Claim:C2_Bt-b} that $u \in B_2$. 
    Let $K \coloneqq [k]\setminus \{1,2,t\}$. 
    Note that $\{2,t\} \subseteq \varphi(e)$. 
    \begin{claim}\label{CLAIM:C1-C2-J}
        There exists a $(k-r-1)$-set $I \subseteq \overline{\varphi(e) \cup \{1\}} \subseteq K$ such that 
        \begin{align}\label{equ:B1-B2-I-lower}
            % I \subseteq \overline{\varphi(e)}
            % \quad\text{and}\quad 
            |B_1| + |B_2| + \sum_{j \in I} |V_j|
            > \frac{k-r+1}{k+2}n. 
        \end{align}
    \end{claim}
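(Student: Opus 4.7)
The plan is to bound $d_{\mathcal{H}}(e\setminus\{u\})$ from above using the color constraints imposed by $\varphi$ and $\vartheta$, and then combine with the codegree hypothesis $\delta^+_{r-1}(\mathcal{H}) > \frac{k-r+1}{k+2}n$. For every $w \in N_{\mathcal{H}}(e\setminus\{u\})$, we have $\varphi(w) \in \overline{\varphi(e)}\cup\{2\}$ and, since $\vartheta(v) = 1$, also $\vartheta(w) \neq 1$. I will partition this neighborhood according to $\varphi(w)$. When $\varphi(w) = 2$, the pair $\{v,w\}$ lies in $\partial_{r-2}\mathcal{H}$, so since $v \in B_t$, Claim~\ref{Claim:C2_Bt-b} forces $w \notin C_2$, giving $w \in B_2$. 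When $\varphi(w) = 1$ (only possible if $1 \notin \varphi(e)$), the identity $\vartheta(V_1) = \{1,2\}$ together with $\vartheta(w) \neq 1$ forces $w \in V_1 \cap U_2 = B_1$. For all other $j = \varphi(w) \in \overline{\varphi(e)\cup\{1\}}$, I simply use the trivial bound $w \in V_j$.

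If $1 \notin \varphi(e)$, then $\overline{\varphi(e)\cup\{1\}}$ has size exactly $k-r-1$, and the above decomposition yields
\begin{align*}
    d_{\mathcal{H}}(e\setminus\{u\})
    \le |B_1| + |B_2| + \sum_{j\in \overline{\varphi(e)\cup\{1\}}} |V_j|,
\end{align*}
so the codegree bound gives the claim directly with $I \coloneqq \overline{\varphi(e)\cup\{1\}}$.

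If instead $1 \in \varphi(e)$, the set $\overline{\varphi(e)\cup\{1\}} = \overline{\varphi(e)}$ has size $k-r$, and I must select a proper $(k-r-1)$-subset. If $|B_1| \ge \frac{k-r+1}{k+2}n$, any such subset trivially works, so I may assume $|B_1|$ is strictly below this threshold. The plan is then to \emph{swap} the unique vertex $w_1 \in e \cap V_1$ out of $e$: since $\vartheta(v) = 1$ and $\vartheta(V_1) = \{1,2\}$, we have $w_1 \in B_1$, and the same argument as above yields $N_{\mathcal{H}}(e\setminus\{w_1\}) \cap V_1 \subseteq B_1$. The assumption $|B_1| < \frac{k-r+1}{k+2}n < d_{\mathcal{H}}(e\setminus\{w_1\})$ then provides some $\tilde{w} \in N_{\mathcal{H}}(e\setminus\{w_1\}) \setminus V_1$, and $e' \coloneqq (e\setminus\{w_1\}) \cup \{\tilde{w}\}$ is an edge of $\mathcal{H}$ that still contains both $v$ and $u$ and satisfies $1 \notin \varphi(e')$. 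Applying the preceding paragraph to $e'$ delivers the desired inequality with $I' \coloneqq \overline{\varphi(e')\cup\{1\}}$; a short verification shows $I' = \overline{\varphi(e)} \setminus \{\varphi(\tilde{w})\} \subseteq \overline{\varphi(e)\cup\{1\}}$ and $|I'| = k-r-1$, so we take $I \coloneqq I'$.

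The main obstacle I anticipate is organising the swap so that the resulting subset $I'$ sits inside $\overline{\varphi(e)\cup\{1\}}$ with the correct cardinality. This will reduce to the observation that the swap removes $\{1\}$ from $\varphi(e)$ and adjoins a single color from $\overline{\varphi(e)}$, so that $\overline{\varphi(e')\cup\{1\}}$ is obtained from $\overline{\varphi(e)}$ simply by deleting that new color.
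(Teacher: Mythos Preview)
Your proof is correct and follows essentially the same approach as the paper: the same three-way case split on whether $1 \in \varphi(e)$ and whether $|B_1|$ exceeds the threshold, and the same swap argument replacing $w_1 \in e \cap V_1$ by some $\tilde{w}\notin V_1$ to reduce to the case $1\notin\varphi(e')$. Your neighborhood decomposition (using $\vartheta(v)=1$ to force $V_1$-neighbors into $B_1$, and Claim~\ref{Claim:C2_Bt-b} to force $V_2$-neighbors into $B_2$) is exactly what the paper uses, just spelled out more explicitly.
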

    \begin{proof}[Proof of Claim~\ref{CLAIM:C1-C2-J}]
    Define an edge $\tilde{e} \in \mathcal{H}$ and a $(k-r-1)$-set $I \subseteq  [k]\setminus \{1,2\}$ according to the following rules$\colon$
    \begin{itemize}
        \item If $1 \notin \varphi(e)$, then let $\tilde{e}\coloneqq e$ and $I \coloneqq \overline{\varphi(e) \cup \{1\}}$.
        \item If $1 \in \varphi(e)$ and $|B_1| > \frac{k-r+1}{k+2}n$, then let $\tilde{e} \coloneqq e$ and $I \subseteq \overline{\varphi(e)}$ be an arbitrary set of size $k-r-1$.
        \item Suppose that $1 \in \varphi(e)$ but $|B_1| \le \frac{k-r+1}{k+2}n$. 
        Then let $w$ denote the vertex in $e \cap V_1$.
        Note that $w$ is contained in $B_1$ since $e\cap B_t \neq \emptyset$. 
        For the same reason, we have $N_{\mathcal{H}}(e\setminus \{w\}) \cap V_1 \subseteq  B_1$. 
        Since $d_{\mathcal{H}}(e\setminus\{w\}) > \frac{k-r+1}{k+2} \ge |B_1|$, the set $N_{\mathcal{H}}(e\setminus \{u\}) \setminus B_1$ is nonempty.
        Fix a vertex $\tilde{w} \in  N_{\mathcal{H}}(e\setminus \{u\}) \setminus B_1$, noting that $\tilde{w} \not\in V_1$. 
        Let $\tilde{e} \coloneqq (e\setminus \{w\}) \cup \{\tilde{w}\}$ and $I \coloneqq \overline{\varphi(\tilde{e}) \cup \{1\}} \subseteq \overline{\varphi(e)}$, noting that $1\not\in \varphi(\tilde{e})$.   
         % Else assume $e \cap V_1=\{z\} \subseteq  B_1$. Since $v\in e$ , we have $k_1 \in \vartheta (e)$ and consequently $N_{\mathcal{H}}(e\setminus\{z\})\cap V_{1}\subseteq  B_{1}$. By the assumption $d_{\mathcal{H}}(e \setminus \{z\})>\frac{k-r+1}{k+2}n \ge |B_1|$, there exists a vertex $z'\in N_{\mathcal{H}}(e\setminus \{z\})\cap (V(\mathcal{H})\setminus V_{1})$. Let $\tilde{e}\coloneqq (e\setminus \{z\})\cup \{z'\}\in \mathcal{H}$ and $I\coloneqq [k] \setminus ( \varphi(\tilde{e}) \cup \{1\} )$. 
    \end{itemize}
    Note that in all three cases we have $1\not\in I$ and $I \subseteq \overline{\varphi(e)}$, and hence, $I \subseteq \overline{\varphi(e) \cup \{1\}}$. 
    Additionally, in all three cases we have $\{v\} = \tilde{e} \cap B_t$. 
    Therefore, $N_{\mathcal{H}}(\tilde{e}\setminus\{u\})\cap V_{1} \subseteq  B_{1}$. 
    Furthermore, by Claim~\ref{Claim:C2_Bt-b}, we have $N_{\mathcal{H}}(\tilde{e}\setminus\{u\}) \cap V_{2}\subseteq  B_{2}$. 
    Therefore, it follows from the definition of $\tilde{e}$ and $I$ that  
    \begin{align*}
        \mathrm{either}\quad 
        |B_1| 
        > \frac{k-r+1}{k+2}n
        \quad\mathrm{or}\quad
        \frac{k-r+1}{k+2}n
        < d_{\mathcal{H}}(e\setminus \{u\})
        \le |B_1|+|B_2| +\sum_{i\in I}|V_i|.
    \end{align*} 
    % Since $v\in \tilde{e}$, we have $k_{1}\in \vartheta(\tilde{e})$ and consequently $N_{\mathcal{H}}(\tilde{e}\setminus\{u\})\cap V_{1}\subseteq  B_{1}$. By Claim~\ref{Claim:C2_Bt-b}, we also have $N_{\mathcal{H}}(\tilde{e}\setminus\{u\})\cap V_{2}\subseteq  B_{2}$. 
    % Regardless of the situation, we have 
    % \[
    %     \frac{k-r+1}{k+2}n < |B_{1}| \text{ or } d_{\mathcal{H}}(\tilde{e} \setminus \{u\}) \le  |B_1|+|B_2|+\sum_{i\in I} |V_i|.
    % \]
    In both cases, we have $\frac{k-r+1}{k+2}n < |B_1|+|B_2| +\sum_{i\in I}|V_i|$.  
    \end{proof}

    Let $I \subseteq \overline{\varphi(e) \cup \{1\}} \subseteq K$ be the $(k-r-1)$-set guaranteed by Claim~\ref{CLAIM:C1-C2-J}. 
    By symmetry, there exists a $(k-r-1)$-set $J \subseteq  K$ such that 
    \begin{align}\label{equ:C1-C2-J-lower}
        |C_1|+|C_2|+\sum_{j\in J} |V_j|
        > \frac{k-r+1}{k+2}n. 
    \end{align}
    \begin{claim}\label{CLAIM:Q-vs-I}
        There exists a $(k-r-1)$-set $Q \subseteq K\setminus J$ such that $\sum_{j\in Q}|V_j| \ge \sum_{j\in I}|V_j|$. 
    \end{claim}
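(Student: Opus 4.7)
The plan is to find $Q$ entirely inside $R \cap K \setminus J$, where $R := \varphi(e) \setminus \{2,t\}$ is determined by the edge $e$ fixed in~\eqref{equ:main-lemma-e}. By Proposition~\ref{PROP:Prelim-B}~\ref{PROP:Prelim-B-3}, this edge $e$ satisfies $|V_j| \le |V_\ell|$ for all $j \in \overline{\varphi(e)}$ and $\ell \in R$. A routine inspection of the three cases in the proof of Claim~\ref{CLAIM:C1-C2-J} shows that $I \subseteq \overline{\varphi(e)} \cap K$ in each case; the only case requiring a brief check is the third, where the identity $\varphi(\tilde e) = (\varphi(e) \setminus \{1\}) \cup \{\varphi(\tilde w)\}$ (with $\varphi(\tilde w) \in \overline{\varphi(e)}$) yields $I = \overline{\varphi(e)} \setminus \{\varphi(\tilde w)\} \subseteq \overline{\varphi(e)}$. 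Consequently $|V_i| \le \min_{\ell \in R}|V_\ell|$ for every $i \in I$.

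To produce such a $Q$, I count as follows. Since $R$ has $r-2$ elements and $K = [k] \setminus \{1,2,t\}$, we have $|R \cap K| = r-2-[1 \in \varphi(e)] \ge r-3$. Combined with $|J| = k-r-1$, this yields
\[
|(R \cap K) \setminus J| \ge (r-3)-(k-r-1) = 2r-k-2.
\]
Since $k$ is an integer with $k < (4r-2)/3$, we have $k \le (4r-3)/3$, whence $2k \le (8r-6)/3 \le 3r-1$; this is equivalent to $2r-k-2 \ge k-r-1$. Therefore $R \cap K \setminus J$ contains a subset $Q$ of size $k-r-1$, and $Q \subseteq K \setminus J$ automatically.

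Finally, every $|V_j|$ with $j \in Q \subseteq R$ is at least $\min_{\ell \in R} |V_\ell|$, while every $|V_i|$ with $i \in I$ is at most this same quantity. Pairing the $k-r-1$ elements of $Q$ with those of $I$,
\[
\sum_{j \in Q} |V_j| \;\ge\; (k-r-1)\cdot \min_{\ell \in R}|V_\ell| \;\ge\; \sum_{i \in I}|V_i|,
\]
as desired. The only (mild) obstacle is the verification that $I \subseteq \overline{\varphi(e)}$ in all three cases of Claim~\ref{CLAIM:C1-C2-J}; everything else is a short counting argument driven by the extremal property of $e$ from Proposition~\ref{PROP:Prelim-B}~\ref{PROP:Prelim-B-3}.
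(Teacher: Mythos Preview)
Your proof is correct and takes a somewhat different route from the paper's. The paper fixes an \emph{arbitrary} $(k-r)$-set $\hat Q\subseteq K\setminus J$ (using only $|K\setminus J|\ge r-2\ge k-r$), removes its smallest-weight element to form $Q$, and then argues that $\sum_{j\in Q}|V_j|\ge \sum_{j\in\overline{\varphi(e)}\setminus\{j_\ast\}}|V_j|$ (where $j_\ast$ minimises $|V_j|$ over $\overline{\varphi(e)}$), which in turn dominates $\sum_{j\in I}|V_j|$; establishing the first of these two inequalities requires a short case split on how $\hat Q$ meets $\overline{\varphi(e)}$. By contrast, you place $Q$ entirely inside $R=\varphi(e)\setminus\{2,t\}$, where the extremal property~\eqref{equ:main-lemma-e} makes the comparison with $I\subseteq\overline{\varphi(e)}$ element-wise and immediate. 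The cost is a slightly sharper size estimate---you need $|(R\cap K)\setminus J|\ge k-r-1$, i.e.\ $2r-k-2\ge k-r-1$---but this follows from $k\le(4r-3)/3$ exactly as you show.

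One simplification: your case-by-case verification that $I\subseteq\overline{\varphi(e)}$ is unnecessary. The \emph{statement} of Claim~\ref{CLAIM:C1-C2-J} already asserts $I\subseteq\overline{\varphi(e)\cup\{1\}}=\overline{\varphi(e)}\setminus\{1\}\subseteq\overline{\varphi(e)}$, so no inspection of the three cases is needed.
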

    \begin{proof}[Proof of Claim~\ref{CLAIM:Q-vs-I}]
        Note that $|K\setminus J| \ge k-3 - (k-r-1) = r-2 \ge k-r$ (since $k < \frac{4r-2}{3}$). 
        So there exists a $(k-r)$-set $\hat{Q} \subseteq K\setminus J$.  
        % \begin{align*}
        %     \min\left\{|V_j| \colon j \in \hat{Q}\right\}
        %     \ge \max\left\{|V_j| \colon j \in K\setminus (J\cup \hat{Q})\right\}. 
        % \end{align*} 
        %
        Fix $i_{\ast}\in \hat{Q}$ such that $|V_{i_{\ast}}| = \min \left\{ |V_{j}| \colon j\in \hat{Q} \right\}$. 
        Fix $j_{\ast} \in \overline{\varphi(e)}$ such that $|V_{j_{\ast}}| = \min \left\{ |V_{j}| \colon j\in \overline{\varphi(e)} \right\}$. 
        Let $Q \coloneqq \hat{Q}\setminus \{i_{\ast}\}$. 
        Since $\overline{\varphi(e)} \subseteq K$, it follows from~\eqref{equ:main-lemma-e} that 
        \begin{align*}
            \sum_{j\in Q}|V_j| 
            \ge \sum_{j\in \overline{\varphi(e)}\setminus \{j_{\ast}\}}|V_j| 
            = \sum_{j\in \overline{\varphi(e)}}|V_j| - \min\left\{|V_j| \colon j \in \overline{\varphi(e)}\right\}. 
        \end{align*}
        Since $I \subseteq \overline{\varphi(e)}$ has size $|\overline{\varphi(e)}|-1$, we have 
        \begin{align*}
            \sum_{j\in I}|V_j|
            \le \sum_{j\in \overline{\varphi(e)}}|V_j| - \min\left\{|V_j| \colon j \in \overline{\varphi(e)}\right\}, 
        \end{align*}
        which implies that $\sum_{j\in Q}|V_j| \ge \sum_{j\in I}|V_j|$. 
    \end{proof}
     
    Let $Q \subseteq K\setminus J$ be the $(k-r-1)$-set ensured by Claim~\ref{CLAIM:Q-vs-I}. 
    Let $R \coloneqq \overline{J \cup Q\cup \{1,2\}}$. 
    Since $|R| = k-2-(k-r-1)-(k-r-1) = 2r-k \ge k-r-1$ (by $k < \frac{4r-2}{3}$), it follows from Proposition~\ref{PROP:Prelim-A}~\ref{PROP:Prelim-A-1} that 
    \begin{align*}
        \sum_{j\in R} |V_j|
        \ge \frac{2r-k}{k-r+1} \cdot \frac{k-r+1}{k+2} n
        = \frac{2r-k}{k+2}n. 
    \end{align*}
    Combining this with~\eqref{equ:B1-B2-I-lower},~\eqref{equ:C1-C2-J-lower}, and Claim~\ref{CLAIM:Q-vs-I}, we obtain  
    \begin{align*}
        \sum_{j\in [k]}|V_j|
        & = \left(|B_1|+|B_2| + \sum_{i\in Q} |V_i|\right)
            +\left(|C_1|+|C_2| + \sum_{i\in J} |V_i|\right) 
            + \sum_{i\in R} |V_i| \\ 
        & \ge \left(|B_1|+|B_2| + \sum_{i\in I} |V_i|\right)
            +\left(|C_1|+|C_2| + \sum_{i\in J} |V_i|\right) 
            + \sum_{i\in R} |V_i| \\
        &> \frac{k-r+1}{k+2}n + \frac{k-r+1}{k+2}n + \frac{2r-k}{k+2}n
        =n, 
    \end{align*}
    a contradiction. 
    This completes the proof of Proposition~\ref{PROP:Final-compute} for the case $k < \frac{4r-2}{3}$. 
\end{proof}

%%%%%%%%%%%%%%%%%%%%%%%%%%%%%%%%%%
\section{Proofs for Theorem~\ref{THM:s-degree-beta} and Corollary~\ref{CORO:i-degree-tight-bound}}\label{SEC:proof-i-degree}
We prove Theorem~\ref{THM:s-degree-beta} and Corollary~\ref{CORO:i-degree-tight-bound} in this section. 
%Let $\lambda_{k,r,i} \coloneqq  \frac{1}{k^{r-i}} \binom{k-i}{r-i}$.

We will use the following result, proved by Frankl--F\"{u}redi--Kalai~\cite{FFK88} and later extended in~\cite{LM21,LMuk23}. 
\begin{theorem}[\cite{FFK88},~{\cite[Theorem~1.17]{LM21}}]\label{THM:k-partite-kruskal-katona}
    Let $k \ge r > i \ge 1$ be integers.
    Suppose that $\mathcal{H}$ is a $k$-partite $r$-graph. Then 
    \begin{align*}
        \left(\frac{|\mathcal{H}|}{\binom{k}{r}}\right)^{1/r}
        \le \left(\frac{|\partial_{i}\mathcal{H}|}{\binom{k}{r-i}}\right)^{1/(r-i)}.
    \end{align*}
\end{theorem}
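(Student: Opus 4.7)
The plan is to prove the inequality by induction on $i$, with the essential case being $i=1$. The inductive step is immediate from $\partial_i \mathcal{H} = \partial (\partial_{i-1}\mathcal{H})$ together with the observation that every shadow of a $k$-partite hypergraph is again $k$-partite: each edge of $\mathcal{H}$ contains at most one vertex from each part $V_j$, so the same holds for every $(r-j)$-subset of an edge. Hence $\partial_{i-1}\mathcal{H}$ is a $k$-partite $(r-i+1)$-graph, and applying the $i=1$ case to it gives
\begin{equation*}
\left(\frac{|\partial_{i-1}\mathcal{H}|}{\binom{k}{r-i+1}}\right)^{1/(r-i+1)} \le \left(\frac{|\partial_i\mathcal{H}|}{\binom{k}{r-i}}\right)^{1/(r-i)}.
\end{equation*}
Chaining this with the inductive hypothesis applied to $\mathcal{H}$ (which relates $|\mathcal{H}|$ to $|\partial_{i-1}\mathcal{H}|$) yields the claim.

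For the base case $i = 1$, the plan is to decompose $\mathcal{H}$ according to which $r$-subset of parts its edges occupy. Write $\mathcal{H} = \bigsqcup_{S \in \binom{[k]}{r}} \mathcal{H}_S$, where $\mathcal{H}_S$ consists of the edges that pick exactly one vertex from $V_j$ for each $j \in S$; similarly decompose $\partial \mathcal{H} = \bigsqcup_{T \in \binom{[k]}{r-1}} (\partial \mathcal{H})_T$, with the crucial point that $(\partial \mathcal{H})_T = \bigcup_{j \notin T} \partial \mathcal{H}_{T \cup \{j\}}$. Within each slice $\mathcal{H}_S$, I would apply a partite Kruskal--Katona inequality: using an appropriate shifting operation implemented by bijections between pairs of parts $V_i \leftrightarrow V_j$, reduce $\mathcal{H}$ to a canonical "compressed" form which preserves $|\mathcal{H}|$ and does not increase $|\partial \mathcal{H}|$. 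In the compressed model, a direct Loomis--Whitney-type comparison bounds $|\mathcal{H}_S|$ in terms of the sizes of its $r$ partite shadows $(\partial \mathcal{H}_S)_{S\setminus\{j\}}$. Finally, I would combine the local slice bounds by an appropriately weighted power-mean (H\"older) inequality over the $\binom{k}{r}$ choices of $S$, arranging the weights so that the binomial factors $\binom{k}{r}$ and $\binom{k}{r-1}$ surface naturally in the denominators.

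The main obstacle is the base case, and specifically the step of passing from the local slice-wise bounds to the global inequality with the correct binomial normalization. Edges in a single slice $\mathcal{H}_S$ contribute shadow $(r-1)$-sets to $r$ different slices $(\partial \mathcal{H})_{S\setminus\{j\}}$, and the same $(r-1)$-set may lie in multiple slices' shadows when extended by different $j$'s, so a careful double counting over incidences between $r$-subsets $S$ and $(r-1)$-subsets $T \subset S$ is needed. Calibrating the weighting so that equality holds for the complete balanced $k$-partite $r$-graph (the extremal configuration) is delicate; a clean way to manage this is likely to be an entropy argument, replacing the slice-by-slice compression by a Shearer-type inequality applied to the distribution of a uniformly random edge of $\mathcal{H}$, viewed as a random transversal of the parts it meets.
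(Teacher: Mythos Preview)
The paper does not prove this theorem; it is quoted as a known result of Frankl--F\"uredi--Kalai~\cite{FFK88} (see also~\cite{LM21}) and is used as a black box in the proof of Theorem~\ref{THM:s-degree-beta}. So there is no ``paper's own proof'' to compare against.

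As for your outline on its own merits: the reduction to $i=1$ by iterating the one-step shadow inequality is correct and standard. Your sketch of the base case names the right circle of ideas (partite compression, Loomis--Whitney/Shearer, a H\"older-type aggregation over the $\binom{k}{r}$ color classes), and you are candid that the hard part is the global recombination step. That honesty is appropriate: what you have written is a plan, not a proof. The delicate point you flag---turning slice-wise bounds $|\mathcal{H}_S| \le \prod_{j\in S} |(\partial\mathcal{H}_S)_{S\setminus\{j\}}|^{1/(r-1)}$ into the normalized global inequality---is exactly where the work lies, and your proposal does not carry it out. A clean execution is the entropy route you allude to: take a uniform random edge, apply Shearer's lemma to the $r$ coordinate projections that drop one part at a time, and then average appropriately over the choice of occupied parts; the binomial factors emerge from counting how many $r$-sets $S$ contain a given $(r-1)$-set $T$. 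If you intend to supply a self-contained proof, that is the line to make precise.
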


\begin{proof}[Proof of Theorem~\ref{THM:s-degree-beta}]
    Let $n \ge k \ge r > i \ge 1$ be integers. 
    It suffices to show that 
    \begin{align*}
        \left ( \frac{\Phi_{k,r,i}}{\binom{k-i}{r-i}} \right ) ^{1/(r-i)} 
        \le  \left ( \frac{\Phi_{k,r-1,i}}{\binom{k-i}{r-1-i}} \right ) ^{1/(r-1-i)}. 
    \end{align*}
    Suppose to the contrary that the inequality above is not true. 
    Then there exists a $k$-partite $n$-vertex $r$-graph $\mathcal{H}$ with 
    \begin{align*}
        \delta^{+}_{i}(\mathcal{H}) 
        > \binom{k-i}{r-i} \left ( \frac{\Phi_{k,r-1,i}}{\binom{k-i}{r-1-i}} \right ) ^{\frac{r-i}{r-1-i}} n^{r-i}
    \end{align*}
    and without isolated vertices that is not uniquely $k$-colorable. 
    
    Take an arbitrary $e \in \partial_{r-i} \mathcal{H}$. 
    Since $\mathcal{H}$ is $k$-partite, the link $L_{\mathcal{H}}(e)$ is $(k-i)$-partite. 
    Note that $\partial L_{\mathcal{H}}(e) = L_{\partial\mathcal{H}}(e)$, so it follows from Theorem~\ref{THM:k-partite-kruskal-katona} that 
    \begin{align*}
        d_{\partial\mathcal{H}}(e)
        = |L_{\partial\mathcal{H}}(e)|
        = |\partial L_{\mathcal{H}}(e)| 
        & \ge \binom{k-i}{r-i-1} \left(\frac{|L_{\mathcal{H}}(e)|}{\binom{k-i}{r-i}}\right)^{\frac{r-i-1}{r-i}} \\
        & \ge \binom{k-i}{r-i-1} \left(\frac{\delta_{i}^{+}(\mathcal{H})}{\binom{k-i}{r-i}}\right)^{\frac{r-i-1}{r-i}} \\ 
        & > \binom{k-i}{r-i-1} \left( \left ( \frac{\Phi_{k,r-1,i}}{\binom{k-i}{r-1-i}} \right ) ^{\frac{r-i}{r-1-i}} n^{r-i} \right)^{\frac{r-i-1}{r-i}} \\
        & = \Phi_{k,r-1,i} \cdot n^{r-1-i}. 
        % & > \binom{k-i}{r-i-1} \left(\frac{\Phi_{k,r,i} \cdot n^{r-i}}{\binom{k-i}{r-i}}\right)^{\frac{r-i-1}{r-i}} \\
        % & = \binom{k-i}{r-i-1} \left(\frac{\Phi_{k,r,i}}{\binom{k-i}{r-i}}\right)^{\frac{r-i-1}{r-i}} n^{r-1-i}. 
    \end{align*}
    This implies that $\partial\mathcal{H}$ is a $k$-partite $n$-vertex $(r-1)$-graph with $\delta^{+}_{i}(\partial\mathcal{H}) > \Phi_{k,r-1,i} \cdot n^{r-1-i}$. 
    Since $\mathcal{H}$ has no isolated vertices, $\partial\mathcal{H}$ has no isolated vertices as well. 
    Therefore, $\partial\mathcal{H}$ is uniquely $k$-colorable. Consequently, $\mathcal{H}$ is uniquely $k$-colorbale, a contradiction. 
\end{proof}

\begin{proof}[Proof of Corollary~\ref{CORO:i-degree-tight-bound}]
    Applying Theorem~\ref{THM:s-degree-beta} with $(r_1, r_2) = (r, i+1)$, we obtain 
    \begin{align*}
        \Phi_{k,r,i} 
        \le \binom{k-i}{r-i} \left(\frac{\Phi_{k,i+1,i}}{k-i}\right)^{r-i}
        = \binom{k-i}{r-i} \cdot \left(\frac{1}{k-i} \cdot \max \left \{ \frac{k-i}{k+2} ,\ \frac{3k-3i-2}{3k-2} \right \}\right)^{r-i}.
        % \le \left ( \frac{k}{k-i} \cdot \max \left \{ \frac{3k-3i-2}{3k-2},\ \frac{k-i}{k+2}  \right \}  \right )^{r-i} \cdot \lambda_{k,r,i}.
    \end{align*}
    On the other hand, simply calculations show that the construction $\mathcal{H}_{k,r}(1, m)$ defined in Section~\ref{SEC:Intorduction} satisfies 
    \begin{align*}
        \delta_{i}^{+}(\mathcal{H}_{k,r}(1, m))
        = \binom{k-i}{r-i}\left(\frac{1}{k+2}\right)^{r-i} \cdot \left(v(\mathcal{H}_{k,r}(1, m))\right)^{r-i}. 
    \end{align*}
    Since $\mathcal{H}_{k,r}(1, m)$ is not uniquely $k$-colorable, we have 
    $\Phi_{k,r,i} \ge \binom{k-i}{r-i}\left(\frac{1}{k+2}\right)^{r-i}$ for all $k \ge r> i \ge 1$. 
    Therefore, $\Phi_{k,r,i} = \binom{k-i}{r-i}\left(\frac{1}{k+2}\right)^{r-i}$ for all $k \ge r> i \ge 1$ that satisfy $\frac{k-i}{k+2} \ge \frac{3k-3i-2}{3k-2}$, which is equivalent to $k \le \frac{4i+2}{3}$. 
\end{proof}

%%%%%%%%%%%%%%%%%%%%%%%%%%%%%%%%%%%%%%%%%%%%%%%%
\section{Concluding remarks}\label{SEC:Remarks}
In general, given two $r$-graphs $\mathcal{H}$ and $\mathcal{G}$, let $\mathrm{SHom}(\mathcal{H},\mathcal{G})$ denote the collection of all surjective homomorphisms from $\mathcal{H}$ to $\mathcal{G}$. 
We say $\mathcal{H}$ is \textbf{surjectively $\mathcal{G}$-colorable} if $\mathrm{SHom}(\mathcal{H},\mathcal{G}) \neq \emptyset$. 
Similarly, an $r$-graph $\mathcal{H}$ is \textbf{uniquely surjectively $\mathcal{G}$-colorable} if for every pair $\psi_1, \psi_2 \in \mathrm{SHom}(\mathcal{H}, \mathcal{G})$ there exists an automorphism $\eta\in \mathrm{Aut}(\mathcal{G})$ such that $\eta \circ \psi_1 = \psi_2$.

As one way to extend Theorem~\ref{THM:Bollobas-clique}, Lai~\cite{Lai87a,Lai89b} considered the minimum degree constraint that forces a graph to be uniquely surjectively $C_{2k+1}$-colorable.
\begin{theorem}[Lai~\cite{Lai87a}]\label{THM:Lai-odd-cycle-1}
    Let $k \ge 2$ and $n\ge 2k+1$ be integers.
    Suppose that $G$ is a surjectively $C_{2k+1}$-colorable graph on $n$ vertices with 
    \begin{align*}
        \delta(G) \ge \frac{n}{k+1}.
    \end{align*}
    Then $G$ is uniquely surjectively $C_{2k+1}$-colorable. Moreover, the constant $\frac{1}{k+1}$ is optimal. 
\end{theorem}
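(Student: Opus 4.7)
The plan is to identify $C_{2k+1}$ with $\mathbb{Z}/(2k+1)\mathbb{Z}$ with edges $\{i,i+1\}$, so that a homomorphism $\psi\colon G\to C_{2k+1}$ becomes a map $V(G)\to\mathbb{Z}/(2k+1)\mathbb{Z}$ satisfying $\psi(u)-\psi(v)\in\{+1,-1\}$ on each edge $uv$. The group $\mathrm{Aut}(C_{2k+1})$ is dihedral, generated by rotations $x\mapsto x+c$ and reflections $x\mapsto -x+c$; hence $\psi_1\cong\psi_2$ iff one of $\alpha\coloneqq\psi_2-\psi_1$ or $\beta\coloneqq\psi_2+\psi_1$ is globally constant on $V(G)$. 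Given two surjective homomorphisms $\psi_1,\psi_2\in\mathrm{SHom}(G,C_{2k+1})$, for every edge $uv$ of $G$ the two signed differences $\psi_1(u)-\psi_1(v)$ and $\psi_2(u)-\psi_2(v)$ in $\{+1,-1\}$ either agree or disagree; call $uv$ \emph{parallel} or \emph{antiparallel} accordingly. A direct check shows $\alpha$ is constant along every parallel edge and $\beta$ is constant along every antiparallel edge.

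I would focus on the case where $G$ is connected, the general case being reducible to this via a short argument using surjectivity and the minimum-degree bound. Under connectedness, if all edges of $G$ are parallel then $\alpha\equiv c$ is a global constant and $\psi_2=\psi_1+c$, so $\psi_2\cong\psi_1$ via a rotation; symmetrically, if all edges are antiparallel then $\psi_2\cong\psi_1$ via a reflection. The proof therefore reduces to ruling out the \emph{mixed case} where $G$ contains both a parallel edge and an antiparallel edge.

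To eliminate the mixed case, set $V_i\coloneqq\psi_1^{-1}(i)$ for $i\in\mathbb{Z}/(2k+1)\mathbb{Z}$. Since $N(v)\subseteq V_{i-1}\cup V_{i+1}$ for $v\in V_i$, the minimum-degree hypothesis yields
\begin{align*}
|V_{i-1}|+|V_{i+1}|\ge \frac{n}{k+1}\quad\text{for every } i\in\mathbb{Z}/(2k+1)\mathbb{Z}.
\end{align*}
Pick a vertex $v\in V_i$ incident to a parallel edge $vu_+$ and an antiparallel edge $vu_-$. If $u_+,u_-\in V_{i+1}$ (the other cases being symmetric), then $\psi_2(u_+)=\psi_2(v)+1$ while $\psi_2(u_-)=\psi_2(v)-1$, so $\psi_2|_{V_{i+1}}$ attains two distinct values differing by exactly $2$. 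A walk-based propagation argument would then force several consecutive parts $V_j$ to each split into two nonempty blocks with prescribed $\psi_2$-labels, where each block has its adjacency restricted to only one side of $V_{j-1}\cup V_{j+1}$. Accounting for the vertex mass of these splits together with the cyclic degree bound around $C_{2k+1}$ should inflate $\sum_i|V_i|$ beyond $n$. The hard part will be quantitative: the naive cyclic sum merely gives $2n\ge (2k+1)\cdot n/(k+1)$, which is not yet a contradiction, so one must extract an additional $\Theta(n/k)$ deficit from the mixed-edge structure, exploiting the odd-cycle parity to prevent the obstruction from being absorbed by re-routing around the cycle.

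Finally, for the optimality of $1/(k+1)$, I would construct an extremal blow-up of $C_{2k+1}$ on $n$ vertices with part sizes $|V_0|,\ldots,|V_{2k}|$ tuned so that $\delta(G)=\lfloor n/(k+1)\rfloor$ is essentially attained, and so that one part, say $V_0$, has $|V_0|\ge 2$ and contains a vertex $v_0$ whose neighbors all lie in $V_{2k}$ (with none in $V_1$). Then $\psi_2$ defined by $\psi_2(v_0)\coloneqq 2k-1$ and $\psi_2\equiv\psi_1$ elsewhere is a second surjective homomorphism whose color-class size multiset differs from that of $\psi_1$, so $\psi_2$ is not equivalent to $\psi_1$ under any element of $\mathrm{Aut}(C_{2k+1})$.
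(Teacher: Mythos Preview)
The paper does not prove Theorem~\ref{THM:Lai-odd-cycle-1}; it is quoted as a known result of Lai with a citation, and no argument is supplied. So there is nothing in the paper to compare your proposal against.

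That said, let me comment on your proposal on its own merits. The framework you set up---identifying $C_{2k+1}$ with $\mathbb{Z}/(2k+1)\mathbb{Z}$, classifying edges as parallel or antiparallel under a pair $(\psi_1,\psi_2)$, and observing that the all-parallel and all-antiparallel cases correspond exactly to equivalence by a rotation or a reflection---is sound and is a natural way to organise the problem. The reduction to the mixed case is correct.

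The genuine gap is precisely where you say it is: the mixed case. You write that a ``walk-based propagation argument would then force several consecutive parts $V_j$ to each split into two nonempty blocks'' and that ``accounting for the vertex mass\ldots should inflate $\sum_i|V_i|$ beyond $n$'', and then you explicitly concede that the naive cyclic sum $2n\ge(2k+1)\cdot n/(k+1)$ is not a contradiction and that one must ``extract an additional $\Theta(n/k)$ deficit\ldots exploiting the odd-cycle parity''. This is not yet an argument; it is an outline of the shape an argument might take, with the decisive quantitative step left open. In particular you have not specified which parts split, how the split sizes are controlled, or how parity is used to obtain the extra mass. Until that is filled in, the proof is incomplete. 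A smaller point: your claim that the reduction from general $G$ to connected $G$ is ``short'' deserves a few lines, since a priori one could try to rotate colourings on different components independently; the degree hypothesis does rule out the problematic configurations, but this has to be checked.

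For the optimality part, your idea is in the right direction, but the construction as written is not quite right: in an honest blow-up of $C_{2k+1}$ every vertex of $V_0$ is adjacent to all of $V_1$ and all of $V_{2k}$, so you cannot simply declare that some $v_0\in V_0$ has no neighbour in $V_1$. You need to delete edges (or take a proper subgraph of the blow-up) while keeping $\delta(G)$ just below $n/(k+1)$; this can be done, but the details should be spelled out.
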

Extensions of Theorems~\ref{THM:Bollobas-clique} and~\ref{THM:Lai-odd-cycle-1} to general graphs were explored in recent work~\cite{HLZ24}, but $K_{k}$ and $C_{2k+1}$ remain the only two classes of graphs for which tight bounds are known. 

One could also explore extensions of Theorems~\ref{THM:Bollobas-clique} and~\ref{THM:Lai-odd-cycle-1} to $r$-graphs with $r\ge 3$. In particular, the following question concerning the tight cycles seems durable. 
\begin{problem}
    Let $k \ge r \ge 3$ and $1 \le i \le r-1$ be integers. 
    Determine the minimum real number $\Psi_{k,r,i}$ such that for $n \ge k$, every surjectively $C_{k}^{r}$-colorable $r$-graph without isolated vertices, and satisfying $\delta^{+}_{i}(\mathcal{H}) > \Psi_{k,r,i}\cdot n^{r-i}$, is uniquely surjectively $C_{k}^{r}$-colorable. 
    Here, $C_{k}^{r}$ denotes the $r$-uniform tight cycle on $k$ vertices. 
\end{problem}
Corollary~\ref{CORO:i-degree-tight-bound} suggests that the extremal constructions for $\Phi_{k,r,i}$, when $k >\frac{4i+2}{3}$, might come from $r$-graphs $\mathcal{H}_{k,r}(\alpha, m)$.
Hence, we propose the following conjecture. 

\begin{conjecture}\label{CONJ:i-degree}
    Suppose that $k \ge r > i \ge 1$ are integers satisfying $k > \frac{4i+2}{3}$. 
    Then 
    \begin{align*}
        \Phi_{k,r,i}
        = \sup\left \{ \frac{\delta_{i}^{+}(\mathcal{H}_{k,r}(\alpha, m))}{\left(v(\mathcal{H}_{k,r}(\alpha, m))\right)^{r-i}}
        \colon \alpha>0 ~\mathrm{and}~ m\in \mathbb{N}^{+} \right \}. 
    \end{align*}
\end{conjecture}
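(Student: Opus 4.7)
The plan is to adapt the two-case link-based induction behind Theorem~\ref{THM:main}, noting that Corollary~\ref{CORO:i-degree-tight-bound} already settles the regime $k \le \frac{4i+2}{3}$ via $\mathcal{H}_{k,r}(1,m)$. Write $\Phi_{k,r,i}^{\star}$ for the supremum on the right-hand side of the conjecture.

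\textbf{Lower bound (construction).} First I would compute $\delta_{i}^{+}(\mathcal{H}_{k,r}(\alpha, m))$ explicitly. A positively-covered $i$-set $S$ in $\mathcal{H}_{k,r}(\alpha, m)$ is classified by its intersection with the split parts $V_{k-1,1}, V_{k-1,2}, V_{k,1}, V_{k,2}$: either $S \subseteq V_{1}\cup\cdots\cup V_{k-2}$ and thus extends to edges in both $U_{1}$ and $U_{2}$; or $S$ meets exactly one of $\{V_{k-1,1}, V_{k,1}\}$ (symmetrically for the other side) and extends only to edges in $U_{1}$; or $S$ meets both of $V_{k-1,1}, V_{k,1}$ and again extends only to edges in $U_{1}$. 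In each case the link of $S$ is a complete $(k-i)$-partite $(r-i)$-graph with part sizes drawn from $\{\lfloor \alpha m \rfloor, m\}$, and its size is the corresponding elementary symmetric polynomial in those sizes. Minimizing over the three classes and taking the supremum over $\alpha > 0$ should yield $\Phi_{k,r,i}^{\star}$ in closed form, with the optimal $\alpha^{\star}$ interpolating between $\alpha^{\star} = 1$ when $k$ is close to $\frac{4i+2}{3}$ and $\alpha^{\star} = 3$ in the codegree base case $r = i+1$ where the answer must match Theorem~\ref{THM:main}.

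\textbf{Upper bound (the substance).} For the upper bound I would induct on the uniformity $r$ while keeping $r - i$ fixed, taking the base case $r = i+1$ directly from Theorem~\ref{THM:main}. Given an $r$-graph $\mathcal{H}$ on $n$ vertices satisfying $\delta_{i}^{+}(\mathcal{H}) > \Phi_{k,r,i}^{\star} n^{r-i}$, fix a vertex $v$ and consider its link $L_{\mathcal{H}}(v)$, which is a $(k-1)$-partite $(r-1)$-graph on at most $n - |[v]_{\varphi}|$ vertices with no isolated vertices and satisfies $\delta_{i-1}^{+}(L_{\mathcal{H}}(v)) \ge \delta_{i}^{+}(\mathcal{H})$. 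Pushing the induction through requires the numerical inequality
\[
    \Phi_{k,r,i}^{\star} \cdot n^{r-i}
    \;\ge\; \Phi_{k-1,r-1,i-1}^{\star} \cdot \bigl(n - |[v]_{\varphi}|\bigr)^{r-i},
\]
to be checked against the closed form above. Granted this, $L_{\mathcal{H}}(v)$ is uniquely $(k-1)$-colorable, and analogs of Claim~\ref{CLAIM:k-large-induction} together with Claims~\ref{Claim:small-part} and~\ref{Claim:large-part} should go through with rescaled thresholds, forcing every homomorphism $\vartheta \in \mathrm{Hom}(\mathcal{H}, K_{k}^{r})$ to satisfy $|\vartheta(\varphi^{-1}(j))| \le 2$ for all $j \in [k]$.

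\textbf{Main obstacle.} The hardest step is the $i$-degree analog of Proposition~\ref{PROP:Final-compute}, ruling out the cyclic structure of Proposition~\ref{PROP:cyclic-structure}. In the codegree case the contradiction comes from bounding the codegrees of specific pairs drawn from $C_{2} \times B_{t}$, a single-degree-of-freedom argument. For $i < r-1$ the corresponding obstruction lives on $(r-i)$-tuples of parts rather than pairs, and the extra $r-i-1$ ambient vertices introduce slack that must be absorbed by an averaging argument across many parts simultaneously — naive applications of the $\delta_{i}^{+}$ hypothesis give up too much in the inequalities that drive Claims~\ref{Claim:C2_Bt} and~\ref{Claim:size-of-Bi}. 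This step, combined with verifying the arithmetic $\Phi_{k,r,i}^{\star} \ge \Phi_{k-1,r-1,i-1}^{\star}$ that powers the induction, is where I expect the proof to require genuinely new ideas beyond those in Section~\ref{SEC:proof-r-1-degree}.
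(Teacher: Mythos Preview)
The statement is a \emph{conjecture} in the paper, not a theorem: the paper offers no proof, only the remark that the difficulty increases as $i$ decreases and the single verified instance $\Phi_{3,3,1}=\tfrac{1}{18}$ (Theorem~\ref{THM:Phi-331}) as weak evidence. There is therefore no paper proof to compare your proposal against.

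Your proposal is candid about being a plan with a major acknowledged gap, but it also contains a structural inconsistency in the induction. You write that you will induct on $r$ while keeping $r-i$ fixed, with base case $r=i+1$ from Theorem~\ref{THM:main}. These are incompatible: passing to the link of a vertex sends $(k,r,i)\mapsto(k-1,r-1,i-1)$, so $r-i$ stays constant and you never reach $r-i=1$; the induction instead bottoms out at $i=1$, which is precisely the case the paper flags as hardest and for which nothing is known beyond $\Phi_{3,3,1}$. The alternative route---fixing $i$ and decreasing $r$ via the shadow as in Theorem~\ref{THM:s-degree-beta}---does terminate at $r=i+1$, but the Frankl--F\"{u}redi--Kalai step there is exactly what produces the non-tight upper bound of Corollary~\ref{CORO:i-degree-tight-bound}, so it cannot recover $\Phi_{k,r,i}^{\star}$ on the nose.

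Beyond this, your own assessment of the main obstacle is accurate: an $i$-degree analog of Proposition~\ref{PROP:Final-compute} would require controlling $(r-i)$-tuples rather than single vertices in the cyclic structure, and the slack this introduces is not obviously absorbed by the existing averaging arguments. As written, the proposal is a reasonable outline of what a proof would have to do, but it is not a proof, and the paper does not pretend to have one either.
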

Heuristically, Conjecture~\ref{CONJ:i-degree} becomes more challenging as $i$ decreases, with the hardest case likely being $i=1$.
In the following theorem, we determine $\Phi_{3,3,1}$, providing weak evidence in support of Conjecture~\ref{CONJ:i-degree}.
\begin{theorem}\label{THM:Phi-331}
    We have $\Phi_{3,3,1}=\frac{1}{18}$.
\end{theorem}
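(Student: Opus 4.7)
The lower bound $\Phi_{3,3,1}\ge 1/18$ is delivered by $\mathcal{H}_{3,3}(2,m)$: it has $n=6m$ vertices, is $3$-partite with no isolated vertices, admits two non-equivalent $3$-colorings, and a direct computation gives $\delta_1^+(\mathcal{H}_{3,3}(2,m))=2m^2=n^2/18$. For the upper bound I argue by contradiction: suppose $\mathcal{H}$ is an $n$-vertex $3$-partite $3$-graph with no isolated vertices and $\delta_1^+(\mathcal{H})>n^2/18$, admitting non-equivalent $\varphi,\vartheta\in\mathrm{Hom}(\mathcal{H},K_3^3)$. Set $V_i=\varphi^{-1}(i)$, $U_j=\vartheta^{-1}(j)$, and $a_{ij}=|V_i\cap U_j|$. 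The pivotal observation is that every edge through $v\in V_i\cap U_j$ uses one further vertex from each of $V_{i'}\cap U_{j'}$ and $V_{i''}\cap U_{j''}$ with $\{i,i',i''\}=\{j,j',j''\}=[3]$, so $d_{\mathcal{H}}(v)\le\mathrm{perm}(M_{ij})$, where $M_{ij}$ is the $2\times 2$ minor of $A=(a_{ij})$ obtained by deleting row $i$ and column $j$. Non-equivalence $\varphi\not\cong\vartheta$ means $A$ is not a generalized permutation matrix. The problem reduces to the matrix inequality: for any nonnegative $3\times 3$ matrix $A$ with entry-sum $n$, whose support is a union of permutation supports of $[3]$ (forced by the no-isolated-vertex condition) but not a single permutation, $\min\{\mathrm{perm}(M_{ij}):a_{ij}>0\}\le n^2/18$.

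\textbf{Case analysis on the support.} Up to row/column permutations, support sizes are $3$ (excluded), $5, 6, 7, 8, 9$. Size $9$ falls to summing all nine permanents: the identity $\sum_{i,j}\mathrm{perm}(M_{ij})=\tfrac12(n^2-\sum_i r_i^2-\sum_j c_j^2+\sum_{i,j} a_{ij}^2)$, combined with $\sum a_{ij}^2\le\sum r_i^2$ (since $a_{ij}\le r_i$) and $\sum r_i^2\ge n^2/3$, gives the sum $\le n^2/3$, contradicting the required sum $>9n^2/18=n^2/2$. For size $8$ with complement $\{(1,1)\}$, the eight relevant permanents sum to a specific sub-sum of $(a_{12}+a_{13}+a_{21}+a_{31})(a_{22}+a_{23}+a_{32}+a_{33})\le n^2/4$, yet must exceed $4n^2/9>n^2/4$. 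For size $7$ with complement $\{(1,3),(3,1)\}$, the bounds $a_{11}(a_{23}+a_{32}+a_{33})>n^2/6$ and $a_{33}(a_{11}+a_{12}+a_{21})>n^2/6$, combined with $a_{22}>0$, give $1/a_{11}+1/a_{33}<6/n$, so by AM-HM $a_{11}+a_{33}>2n/3$; then each of $a_{12},a_{21},a_{23},a_{32}$ exceeds $n^2/(18\max(a_{11},a_{33}))$, and summing these four lower bounds gives a total $>4n/9>n/3$, contradicting their sum $\le n-a_{11}-a_{33}-a_{22}<n/3$. For size $6$ with complement a permutation transversal, the permanents split into two triples of two-entry products $\{a_{11}a_{22},a_{11}a_{33},a_{22}a_{33}\}$ and $\{a_{12}a_{23},a_{12}a_{31},a_{23}a_{31}\}$; WLOG-ordering each triple forces four of the six entries above $n\sqrt{2}/6$, so the remaining two entries sum to $<n(1-2\sqrt{2}/3)\approx 0.057n$, yet each exceeds $n/18$, giving their sum $>n/9\approx 0.111n$, a contradiction.

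\textbf{The tight size-5 case and main obstacle.} The main obstacle is size $5$. The canonical support is $\{(1,1),(2,2),(2,3),(3,2),(3,3)\}$; write $a=a_{11}$ and $b,c,d,e$ for the $2\times 2$ block entries. The five support permanents are $ab,ac,ad,ae$, and $be+cd$. Assuming all exceed $n^2/18$ gives $am>n^2/18$ with $m=\min(b,c,d,e)$, and the optimization of $be+cd$ over $b+c+d+e=n-a$ with each entry $\ge m$ yields the sharp bound $be+cd\le m^2+((n-a)-2m)^2/4$ (attained by pushing one of the pairs $\{b,e\},\{c,d\}$ to sum $2m$). Substituting $m=n^2/(18a)$ and $\alpha=a/n$, the inequality $m^2+((n-a)-2m)^2/4\le n^2/18$ reduces (after clearing denominators) to
\[
p(\alpha)\coloneqq 81\alpha^4-162\alpha^3+81\alpha^2-18\alpha+2\le 0,
\]
which factors as $(3\alpha-1)(27\alpha^3-45\alpha^2+12\alpha-2)$. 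The linear factor vanishes precisely at $\alpha=1/3$, corresponding to $\mathcal{H}_{3,3}(2,m)$; the cubic factor is strictly negative on $(0,2/3]$ (by direct evaluation at $0,1/3,2/3$ together with the fact that its unique real root lies in $(1,2)$). The constraints $m\le(n-a)/4$ and $am>n^2/18$ force $\alpha\in(1/3,2/3)$, on which $p(\alpha)<0$ strictly, yielding the needed strict inequality $be+cd<n^2/18$. The main technical difficulty lies in this tight size-$5$ analysis: since it matches the extremal construction exactly, there is no slack in the argument, and one must carefully preserve strict inequalities through the optimization and the polynomial factorization.
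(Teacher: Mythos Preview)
Your approach is correct in spirit but takes a genuinely different route from the paper. The paper's proof is much shorter: it first observes that the largest part $V_1$ (with $|V_1|\ge n/3$) must satisfy $|\vartheta(V_1)|=1$, because otherwise two vertices $v_1,v_2\in V_1$ with different $\vartheta$-colours would have $d(v_1)+d(v_2)\le |V_2||V_3|\le (n/3)^2<2\cdot n^2/18$. This single observation collapses \emph{all} of your cases $6,7,8,9$ at once (and also pins down the structure further, forcing $V_1=U_1$), landing directly in your size-$5$ configuration. The paper then finishes with an optimization equivalent to yours. So your matrix--permanent framework and support enumeration is a valid and self-contained alternative, but it replaces one two-line lemma by four separate case arguments.

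There is one concrete error. In your size-$8$ case (complement $\{(1,1)\}$), the claim that the eight permanents sum to a sub-sum of $(a_{12}+a_{13}+a_{21}+a_{31})(a_{22}+a_{23}+a_{32}+a_{33})$ is false: for instance $\mathrm{perm}(M_{33})=a_{11}a_{22}+a_{12}a_{21}=a_{12}a_{21}$, and $a_{12}a_{21}$ is a product of two ``cross'' entries, not a term of $PQ$. Fortunately this case (and size $7$ as well) follows immediately from your own size-$9$ identity: since all nine $2\times 2$ permanents are nonnegative, the sum over any $s\le 9$ support positions is at most $\tfrac12(n^2-\sum_j c_j^2)\le n^2/3$, which already contradicts $s\cdot n^2/18$ for every $s\ge 7$. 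So the mistake is local and harmless, but as written the size-$8$ paragraph does not prove what it claims.

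A minor gap worth making explicit in the size-$5$ case: when you ``substitute $m=n^2/(18a)$'' you are really replacing the true minimum $m$ by its lower bound $m_0=n^2/(18a)$, and you need that the bound $g(m)=m^2+\bigl((n-a)-2m\bigr)^2/4$ is decreasing in $m$ on $[m_0,(n-a)/4]$. This holds since $g'(m)=4m-(n-a)\le 0$ there, and $m_0<(n-a)/4$ is exactly your constraint $\alpha\in(1/3,2/3)$; but it should be stated, since otherwise the substitution is not justified. With these two small fixes your argument is complete.
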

\begin{proof}[Proof of Theorem~\ref{THM:Phi-331}]
    The lower bound $\Phi_{3,3,1}\ge \frac{1}{18}$ comes from the construction $\mathcal{H}_{3,3}(2,m)$. 
    So it suffices to prove the upper bound. 
    Let $\mathcal{H}$ be an $n$-vertex $3$-partite $3$-graph  with $\delta(\mathcal{H})>\frac{n^{2}}{18}$. We will show that $\mathcal{H}$ is uniquely $3$-colorable. 
    %Note that we may assume $\delta(\mathcal{H}) < \frac{n^2}{9}$, since the balanced complete $3$-partite $3$-graph on $n$ vertices with $3 \mid n$ is the unique $n$-vertex $3$-partite $3$-graph with $\delta(\mathcal{H}) \ge \frac{n^2}{9}$.
    
    Suppose to the contrary that there exist two non-equivalent homomorphisms $\varphi, \vartheta\in \mathrm{Hom}(\mathcal{H},K_{3}^{3})$. 
    By symmetry, we may assume that 
    \begin{align*}
        \max\left\{|\varphi^{-1}(i)| \colon i\in [3] \right\} 
        \ge \max\left\{|\vartheta^{-1}(i)| \colon i\in [3] \right\}.
    \end{align*}
    Let $V_{i}\coloneqq \varphi^{-1}(i)$ for $i\in [3]$ and by relabelling the vertices in $K_{3}^{3}$, we may assume $|V_{1}|\ge |V_{2}|\ge |V_{3}|$. So $|V_1|\ge\frac{n}{3}.$
    \begin{claim}\label{Claim:(3,3,1)-a}
        For every pair of distinct vertices $v_{1}, v_{2}\in V_{1}$, there exist vertices $u \in V_2$ and $w\in V_3$ such that $\{v_1, u, w\} \in \mathcal{H}$ and $\{v_2, u, w\} \in \mathcal{H}$.
        %there exist two edges $e_{1}, e_{2} \in \mathcal{H}$ such that $v_{1}\in e_{1}$,  $v_{2}\in e_{2}$, and $|e_{1}\cap e_{2}|=2$. 
    \end{claim}
    \begin{proof}[Proof of Claim~\ref{Claim:(3,3,1)-a}]
        Suppose to the contrary that this is not true. 
        Then for every $u \in V_2$ we have $N_{\mathcal{H}}(v_1 u) \cap N_{\mathcal{H}}(v_2 u) = \emptyset$.
        Notice that $N_{\mathcal{H}}(v_1 u) \subseteq  V_3$ and $N_{\mathcal{H}}(v_2 u) \subseteq  V_3$, hence, 
        \begin{align*}
            2 \cdot \delta(\mathcal{H})
            \le d_{\mathcal{H}}(v_{1})+d_{\mathcal{H}}(v_{2}) 
            & = \sum_{u\in V_{2}} \left( |N_{\mathcal{H}}(v_1 u)| + |N_{\mathcal{H}}(v_2 u)|\right) \\
            & \le \sum_{u\in V_{2}}|V_{3}|
            =|V_{2}|\cdot |V_{3}|
            \le \left(\frac{n-|V_1|}{2}\right)^2
            \le \frac{n^{2}}{9}, 
        \end{align*}
        % For every $u\in V_{2}$, let 
        % \begin{align*}
        %     B_{u}
        %     \coloneqq \{w\in V_{3}\colon \{v_{1},u,w\}\in \mathcal{H}\} 
        %     \quad\text{and}\quad  
        %     C_{u}
        %     \coloneqq \{w\in V_{3}\colon \{v_{2},u,w\}\in \mathcal{H} \}.
        % \end{align*}
        % Then $B_{u}$ and $C_{u}$ are disjoint for every $u\in V_{2}$. Thus 
        % \begin{align*}
        %     2 \cdot \delta(\mathcal{H})
        %     \le d_{\mathcal{H}}(v_{1})+d_{\mathcal{H}}(v_{2}) =\sum_{u\in V_{2}} \left( |B_{u}|+|C_{u}|\right)
        %     \le \sum_{u\in V_{2}}|V_{3}|=|V_{2}|\cdot |V_{3}|
        %     \le \frac{n^{2}}{9}, 
        % \end{align*}
        a contradiction. 
    \end{proof}
    It follows from Claim~\ref{Claim:(3,3,1)-a} that  $\vartheta(v_{1})=\vartheta(v_{2})$ for every pair $\{v_{1}, v_{2}\} \subseteq  V_{1}$, i.e. $|\vartheta(V_{1})|=1$. 
    By symmetry, we may assume $\vartheta(V_{1})=\{1\}$.
    Let $V_{i,j}\coloneqq V_{i}\cap \vartheta^{-1}(j)$ and $n_{i,j} \coloneqq |V_{i,j}|$ for $(i,j) \in \{2,3\} \times [3]$. 
    Note that $n_{2,1}=n_{3,1}=0$, since otherwise we would have $\max\{|\varphi^{-1}(i)| \colon i\in [3]\} < \max\{|\vartheta^{-1}(i)| \colon i\in [3]\}$, a contradiction.
    So $\phi^{-1}(1)=V_1=\vartheta^{-1}(1)$.
    \begin{claim}\label{Claim:(3,3,1)-b}
        We have $n_{i, j}>0$ for every $(i,j) \in \{2,3\} \times \{2,3\}$. 
    \end{claim}
    \begin{proof}[Proof of Claim~\ref{Claim:(3,3,1)-b}]
        Let us present only the proof for the case $(i,j) = (2,2)$ since the other cases are analogous. 
        Suppose to the contrary that $n_{2, 2} = 0$. 
        Then we must have $n_{3, 3} > 0$ since otherwise we would have $\varphi\cong \vartheta$, a contradiction. 
        Let $v \in V_{3, 3}$ be a vertex. 
        Suppose that $\{u,w\}$ is an element in $L_{\mathcal{H}}(v)$. 
        Then either $(u,w) \in V_1 \times V_2$ or $(w,u) \in V_1 \times V_2$. 
        By symmetry, we may assume that $(u,w) \in V_1 \times V_2$. 
        Then $\{\vartheta(w)\} = [3]\setminus \{\vartheta(v), \vartheta(u)\} = \{2\}$, meaning that $w \in V_2\cap W_2$, contradicting the assumption that $n_{2, 2} = 0$. 
        % We only show that $n_{2}^{2}>0$. Suppose not. If $n_{3}^{3}=0$, then $\varphi\cong \vartheta$, a contradiction. So $n_{3}^{3}>0$. However, every vertex in $V_{3}^{3}$ must has degree $0$ in $\mathcal{H}$, which is also a contradiction. 
    \end{proof}
    Let $n_{1} \coloneqq |V_{1}|$. 
    Fix a vertex $u \in V_{2,2}$.
    Note that $\delta(\mathcal{H})
        \le d_{\mathcal{H}}(u)
        \le n_1 \cdot n_{3, 3}$.
    % \begin{align*}
    %     \delta(\mathcal{H})
    %     \le d_{\mathcal{H}}(u)
    %     \le n_1 \cdot n_{3, 3}.
    % \end{align*}
    %
    Similarly, we have 
    \begin{align}\label{equ:delta-H-lower}
        \delta(\mathcal{H})
        \le n_1 \cdot n_{i, j}
        \quad\text{for every}\quad (i,j) \in \{2,3\} \times \{2,3\}.
    \end{align}
    Summing them up, we obtain 
    \begin{align*}
        \delta(\mathcal{H})
        \le n_1 \cdot \frac{n_{2,2}+n_{2,3}+n_{3,2}+n_{3,3}}{4}
        = \frac{n_1 (n-n_1)}{4}
        \le \frac{n^2}{16}.
    \end{align*}
    Fix a vertex $v\in V_{1}$.
    Then 
    \begin{align*}
        \delta(\mathcal{H})
        \le d_{\mathcal{H}}(v)
        \le n_{2,2}\cdot n_{3,3}+n_{2,3}\cdot n_{3,2}
        \le \left(\frac{n_{2,2}+n_{3,3}}{2}\right)^{2}+\left(\frac{n_{2,3}+n_{3,2}}{2}\right)^{2}. 
        % & = \left(\frac{n_{2,2}+n_{3,3}}{2}\right)^{2}+\left(\frac{n-n_1}{2} - \frac{n_{2,2}+n_{3,3}}{2} \right)^{2} \\
        % & \le \left( \frac{\delta(\mathcal{H})}{n_{1}} \right)^{2}+\left( \frac{n-n_{1}}{2}-\frac{\delta(\mathcal{H)}}{n_{1}}\right)^{2}.
    \end{align*}
    Let $x \coloneqq \min \left\{\frac{n_{2,2}+n_{3,3}}{2}, \frac{n_{2,3}+n_{3,2}}{2}\right\}$ and $A \coloneqq \frac{n-n_1}{2}$. 
    Note that $x \le \frac{n-n_1}{4}$ and $x \ge \frac{\delta(\mathcal{H})}{n_1}$ (due to~\eqref{equ:delta-H-lower}). 
    Since the function $x^2+(A-x)^2$ is decreasing when $x\leq \frac{A}{2} = \frac{n-n_1}{4}$, we obtain 
    \begin{align*}
        \delta(\mathcal{H})
        \le x^2 + (A-x)^2
        \le \left( \frac{\delta(\mathcal{H})}{n_{1}} \right)^{2}+\left( \frac{n-n_{1}}{2}-\frac{\delta(\mathcal{H)}}{n_{1}}\right)^{2}.
    \end{align*}
    % Since $\frac{n_{2,2}+n_{3,3}}{2}\ge \frac{\delta(\mathcal{H})}{n_1}\le \frac{n-n_1}{4}$ and the function $x^2+(A-x)^2$ is decreasing when $x\leq \frac{A}{2}$, the above inequality implies that 
    Rearrange this inequality and using the fact $n_1\ge \frac{n}{3}$, we obtain 
    \begin{align*}
        0 
        &\le \frac{(n-n_1)^2 n_1}{4} + \frac{2\cdot \delta^2(\mathcal{H})}{n_1} - n\cdot \delta(\mathcal{H})\\ 
        &\le \frac{n^3}{27} + \frac{6\cdot \delta^2(\mathcal{H})}{n} - n\cdot \delta(\mathcal{H})  = \frac{6}{n}\left(\delta(\mathcal{H}) - \frac{n^2}{18}\right)\left(\delta(\mathcal{H}) - \frac{n^2}{9}\right)
        <0, 
    \end{align*}
    a contradiction. 
    Here the last inequality is due to the fact $\frac{n^2}{18} < \delta(\mathcal{H})\le \frac{n^2}{16}$. 
    This finishes the proof of Theorem~\ref{THM:Phi-331}.
\end{proof}

\textbf{Remark.} From the above proof, we see that the unique extremal $3$-graph achieving $\Phi_{3,3,1}=\frac{1}{18}$ is the $3$-graph $\mathcal{H}_{3,3}(2,m)$.

%%%%%%%%%%%%%%%%%%%%%%%%%%%%%%%%%%%%%%%%%%%%%%%%%%
\bibliographystyle{alpha}%abbrv
\bibliography{uniquecolorable}
%%%%%%%%%%%%%%%%%%%%%%%%%%%%%%%%%%%%%%%%%%%%%%%%%
\end{document}